\theoremstyle{plain}
\newtheorem{theorem}{Theorem}[section]
\newtheorem{lemma}[theorem]{Lemma}
\newtheorem{corollary}[theorem]{Corollary}
\newtheorem{proposition}[theorem]{Proposition}
\theoremstyle{definition}
\newtheorem{definition}[theorem]{Definition}
\newtheorem*{notation}{Notation}
\newtheorem{claim}{Claim}
\newtheorem{claimA}{Claim}
\newtheorem{claimB}{Claim}
\newtheorem{claimC}{Claim}
\newtheorem{claimD}{Claim}
\newtheorem*{claim*}{Claim}
\theoremstyle{remark}
\newtheorem*{remark*}{Remark}
\newtheorem*{acknowledgments}{Acknowledgments}
\newtheorem{question}{Question}
\numberwithin{figure}{section}
\newcommand{\Int}{\mathrm{int}}
\begin{document}

\title{Rank and genus of 3-manifolds}
\author{Tao Li}
\thanks{Partially supported by NSF grants DMS-1005556 and DMS-0705285}

\address{Department of Mathematics \\
 Boston College \\
 Chestnut Hill, MA 02467}
\email{taoli@bc.edu}

\begin{abstract}
We construct a counterexample to the Rank versus Genus Conjecture, i.e.~a closed orientable hyperbolic 3-manifold with rank of its fundamental group smaller than its Heegaard genus.  Moreover, we show that the discrepancy between rank and Heegaard genus can be arbitrarily large for hyperbolic 3-manifolds.  We also construct toroidal such examples containing hyperbolic JSJ pieces.
\end{abstract}

\maketitle

\tableofcontents

\section{Introduction}\label{Sintro} 
A Heegaard splitting of a closed orientable 3-manifold $M$ is a decomposition of $M$ into two handlebodies along a closed surface $S$.  The genus of $S$ is the genus of the Heegaard splitting.  The Heegaard genus of $M$, which we denote by $g(M)$, is the minimal genus over all Heegaard splittings of $M$.  A Heegaard splitting of genus $g$ naturally gives a balanced presentation of the fundamental group $\pi_1(M)$: the core of one handlebody gives $g$ generators and the compressing disks of the other handlebody give a set of $g$ relators.

The rank of $M$, which we denoted by $r(M)$, is the rank of the fundamental group $\pi_1(M)$, that is the minimal number of elements needed to generate $\pi_1(M)$.  By the relation between Heegaard splitting and $\pi_1(M)$ above, it is clear that $r(M)\le g(M)$.   In the 1960s, Waldhausen asked whether $r(M)=g(M)$ for all $M$, see \cite{W2, H2}.  This was called the generalized Poincar\'{e} Conjecture in \cite{H2}, as the case $r(M)=0$ is the Poincar\'{e} conjecture.  

In \cite{BZ},  Boileau and Zieschang found a Seifert fibered space with $r(M)=2$ and $g(M)=3$.  Later, Schultens and Weidmann \cite{SW} showed that there are graph manifolds $M$ with discrepancy $g(M)-r(M)$ arbitrarily large.  A crucial ingredient in all these examples is that the fundamental group of a Seifert fibered space has an element commuting with other elements and, for a certain class of Seifert fibered spaces, one can use this property to find a smaller generating set of $\pi_1(M)$ than the one given by a Heegaard splitting.  However, these examples are very special and the fundamental group of a closed hyperbolic 3-manifold does not contain such commuting elements, so the modernized version of this old conjecture is whether $r(M)=g(M)$ for hyperbolic 3-manifolds, see \cite[Conjecture 1.1]{Sh}. This conjecture is sometimes called the Rank versus Genus Conjecture or the Rank Conjecture, as $r(M)$ can be viewed as the algebraic rank and $g(M)$ can be regarded as the geometric rank of $M$.  This conjecture is also related to the Fixed Price Conjecture in topological dynamics \cite{AN}.

Indeed, there are some positive evidences for this conjecture.  In \cite{So}, Souto proved $r(M)=g(M)$ for any fiber bundle whose monodromy is a high power of a pseudo-Anosov map.  In \cite{NS}, Namazi and Souto showed that rank equals genus if the gluing map of a Heegaard splitting is a high power of a generic pseudo-Anosov map.  This means that, in some sense, a sufficiently complicated hyperbolic 3-manifold satisfies this conjecture.  On the other hand, many simple hyperbolic 3-manifolds also satisfy the conjecture, e.g., if $g(M)=2$ then $\pi_1(M)$ cannot be cyclic and hence $r(M)=g(M)=2$.

In this paper, we give a negative answer to this conjecture.

\begin{theorem}\label{Tclosed}
There is a closed orientable hyperbolic 3-manifold with rank of its fundamental group smaller than its Heegaard genus.  Moreover, the discrepancy between its rank and Heegaard genus can be arbitrarily large.
\end{theorem}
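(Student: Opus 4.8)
The plan is to realize the examples as cyclic amalgamations of many copies of a single hyperbolic building block, engineered so that van Kampen's theorem forces $\pi_1$ to need only a bounded number of generators while the amalgamation structure forces the Heegaard genus to grow linearly with the number of blocks. This is the natural place to look for $r<g$: it is exactly the graph-of-groups mechanism behind the known Seifert and graph-manifold examples of Boileau--Zieschang and Schultens--Weidmann, and the task is to find a purely hyperbolic substitute for the ``fiber commuting'' trick those examples rely on.

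First I would construct a building block: a compact orientable hyperbolic $3$-manifold $P$ with (totally geodesic) boundary two copies $\Sigma_-,\Sigma_+$ of a closed surface of genus $g$, possibly together with some torus cusps, satisfying two competing requirements. On the algebraic side, $\pi_1(P)$ should be generated by the image of $\pi_1(\Sigma_-)$ plus a bounded number of extra elements, and — crucially for a cyclic gluing to keep the total rank bounded — those extra elements should become expressible, after the gluing, in terms of generators coming from the adjacent block together with a single stable-letter-type element. On the topological side, $\Sigma_-$ and $\Sigma_+$ should be incompressible and of large Hempel distance in $P$ (equivalently, $P$ should have large Heegaard complexity rel $\partial$), which is arranged by choosing the knotting/gluing data defining $P$ complicated enough and invoking the geometry of the curve complex; hyperbolicity of $P$ with the prescribed boundary follows from Thurston's geometrization of the underlying Haken manifold.

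Next, take $n$ copies $P_1,\dots,P_n$ and glue $\Sigma_+(P_i)$ to $\Sigma_-(P_{i+1})$ (indices mod $n$) by a fixed, sufficiently complicated pseudo-Anosov homeomorphism, producing $X_n$; any torus cusps of the $P_i$ survive, and I eliminate them by generic Dehn filling to obtain a closed $M_n$ (if $P$ has geodesic boundary only, one works with $X_n$ itself). Hyperbolicity of $M_n$: gluing finite-volume hyperbolic pieces with geodesic boundary along surfaces of genus $\ge 2$ by a generic map yields a Haken manifold containing no essential torus or annulus, hence hyperbolic by geometrization, and for the filled version one invokes Thurston's hyperbolic Dehn surgery theorem, valid for all but finitely many slopes. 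Rank: $\pi_1(M_n)$ is a quotient of the fundamental group of a cyclic graph of groups with vertex groups $\pi_1(P_i)$ and edge groups $\pi_1(\Sigma_g)$; using the building-block property and bookkeeping generators once around the cycle, one bounds $r(M_n)\le C$ with $C$ independent of $n$ (Dehn filling only decreases rank). Genus: by the theory of Heegaard splittings of manifolds amalgamated along surfaces of high distance (Scharlemann--Tomova and related work), once the gluing maps are complicated enough, any Heegaard surface of $X_n$, hence of $M_n$, restricts on each block to something of controlled complexity, forcing $g(M_n)\ge c\,n$ for some $c>0$. Combining, $g(M_n)-r(M_n)\ge c\,n-C\to\infty$, so in particular $r(M_n)<g(M_n)$ for $n$ large; the toroidal examples with hyperbolic JSJ pieces arise from the same idea applied to cusped blocks glued directly along torus boundaries, where the gluing tori persist as essential tori and no filling is needed.

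The main obstacle is the simultaneous engineering of the building block — making $\pi_1(P)$ ``small'' (so the cyclic union has bounded rank) while $P$ has ``large'' Heegaard complexity rel boundary (so the cyclic union has large genus) — together with proving that the genus lower bound for the amalgamation genuinely survives all the gluings. The genus lower bound is the deepest ingredient: it amounts to showing that a low-genus Heegaard surface cannot ``spread thinly'' across all $n$ blocks when the gluing maps have high distance, which is where the curve-complex distance estimates and the thin-position/amalgamation machinery carry the argument.
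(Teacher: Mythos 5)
Your proposal has a genuine gap at its core: the mechanism that is supposed to keep the rank bounded is asserted rather than constructed. You require a hyperbolic block $P$ with incompressible boundary $\Sigma_-\sqcup\Sigma_+$ such that $\pi_1(P)$ is generated by the image of $\pi_1(\Sigma_-)$ plus a few extra elements which ``become expressible, after the gluing, in terms of generators coming from the adjacent block together with a single stable-letter-type element.''  If no extra elements were needed, $\pi_1(\Sigma_-)\to\pi_1(P)$ would be a surjection between the fundamental group of an incompressible boundary component and $\pi_1(P)$, hence an isomorphism, forcing $P$ to be an $I$-bundle and not hyperbolic with the prescribed boundary; so each block genuinely contributes extra generators, and in a cyclic amalgamation of $n$ blocks the naive count gives rank growing linearly in $n$.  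Saying these extra generators ``become expressible after the gluing'' is the open problem in disguise: if all the gluings are the same map, $X_n$ is the $n$-fold cyclic cover of $X_1$, and bounding $r(X_n)$ independently of $n$ while $g(X_n)$ grows is essentially the rank-gradient versus Heegaard-gradient problem (cf.\ the reference to Ab\'ert--Nikolov in the introduction).  The known examples where rank drops below genus (Boileau--Zieschang, Schultens--Weidmann) all exploit commuting elements in Seifert pieces, which are unavailable in your hyperbolic blocks; the entire content of the theorem is to find a hyperbolic substitute for that trick, and your outline does not supply one.

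For comparison, the paper does not use a long chain of blocks at all.  It builds a single bounded manifold $M$ as an annulus sum of one central piece $Y_s$ (three twisted $I$-bundles over non-orientable surfaces, a $1$-handle, and a Dehn surgery) with two drilled $2$-bridge knot exteriors, glued along annular neighborhoods of meridians.  The rank collapse is a concrete algebraic device: $\pi_1(X_i)$ is generated by two \emph{conjugate} meridians $x_i$ and $h_i^{-1}x_ih_i$ plus one further element, the commutator $b_ix_ib_i^{-1}x_i^{-1}$ bounds a once-punctured torus in $Y_s$ and hence lies in a smaller subgroup, so replacing the generator $b_i$ by the product $b_ih_i$ recovers both $b_i$ and $h_i^{-1}x_ih_i$ and saves one generator per knot piece (Lemma~\ref{LRM}); the matching genus lower bound is a long untelescoping analysis (Sections~\ref{Sincomp} and~\ref{Sgenus}).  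The ``arbitrarily large discrepancy'' is then obtained not by long amalgamations but by boundary connected sum of $n$ copies of $M$ (Grushko for rank, Haken/Casson--Gordon for genus) followed by capping off with a handlebody via a high-distance map (Corollary~\ref{CL5}).  Your genus-growth step for the cyclic amalgamation is plausible given high-distance gluings, but without a concrete rank mechanism the proposal does not prove the theorem.
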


The original question of Waldhausen  \cite{W2, H2} asks whether rank equals genus for both closed manifolds and manifolds with boundary.  The main construction in this paper is an example of manifold with boundary.  In fact, Theorem~\ref{Tclosed} follows from Theorem~\ref{Tmain} and a theorem in \cite{L5}.

\begin{theorem}\label{Tmain}
There is a compact irreducible atoroidal 3-manifold $M$ with connected boundary such that $r(M)<g(M)$. 
\end{theorem}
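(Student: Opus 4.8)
The plan is to build $M$ by gluing simple pieces along an incompressible surface $F$, arranging that $\pi_1(M)$ has an economical generating set coming from the gluing while $F$ forces every Heegaard splitting of $M$ to be large. Concretely, fix a compact orientable surface $F$ with one boundary circle and genus $g\ge 2$, and choose a compact orientable 3-manifold $B$ with three boundary components, two of which are copies $F_1,F_2$ of $F$, such that (i) $\pi_1(B)$ is generated by few elements, and (ii) the genus of any Heegaard splitting of $B$ having $F_1\cup F_2$ on the spine side exceeds $r(\pi_1(B))$ by more than a constant depending only on $g$; such a $B$ is produced as the exterior of a sufficiently complicated knot or tangle, using standard lower bounds for Heegaard genus (via the distance of a splitting, or by counting disjoint non-parallel incompressible surfaces). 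Then form $M$ from $B$ by identifying $F_1$ with $F_2$ along a homeomorphism $\varphi\colon F\to F$ that is a high power of a pseudo-Anosov map. (One may instead amalgamate $B$ with a handlebody along $F$; the argument below is the same.) Because $\varphi$ is generic, the image surface $F\subset M$ is incompressible and $M$ is irreducible and atoroidal -- indeed hyperbolic, by Thurston's hyperbolization theorem for Haken manifolds -- and $\partial M$ is the single remaining surface, hence connected.

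The rank bound is the direct half. Since $M$ is obtained from $B$ by gluing $F_1$ to $F_2$, van Kampen's theorem presents $\pi_1(M)$ as an HNN extension of $\pi_1(B)$ with associated subgroup $\pi_1(F)$, so $r(M)\le r(\pi_1(B))+1$, which by (i) is small compared with the relative Heegaard genus of $B$ from (ii). One writes the generators down explicitly and checks that they generate -- a finite verification.

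The heart of the proof, and the main obstacle, is the lower bound on $g(M)$. Let $M=V\cup_S W$ be an \emph{arbitrary} Heegaard splitting. Using sweepout and thin-position technology -- the Rubinstein--Scharlemann graphic, Scharlemann--Schultens untelescoping, and results controlling Heegaard surfaces in a 3-manifold that contains an incompressible surface -- one isotopes $S$ into a position in which $S\cap F$ consists only of curves essential in both $S$ and $F$. The delicate point is exactly that a small-genus $S$ might a priori be tangled with $F$ or compress in an unexpected way; ruling this out is where the genericity of $\varphi$ and the chosen structure of $B$ are essential, and this is the technically substantial part of the argument. Once $S$ is in standard position, cutting along $F$ exhibits $S\cap B$ as (the thick part of a generalized Heegaard splitting, and hence after amalgamation) a Heegaard splitting of $B$ with $F_1\cup F_2$ on the spine side, so $g(S)$ is at least the corresponding relative Heegaard genus of $B$, up to a bounded correction depending only on $g$. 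By (ii) this exceeds $r(\pi_1(B))+1\ge r(M)$, giving $r(M)<g(M)$.

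Finally, keeping $g$ fixed and letting the complexity of $B$ grow (or iterating the gluing along several surfaces) makes the relative Heegaard genus of $B$, hence $g(M)$, grow while $r(M)$ stays bounded, so $g(M)-r(M)$ can be made as large as desired; fed into the capping-off result cited in the introduction, this is also what yields the closed hyperbolic examples of Theorem~\ref{Tclosed}.
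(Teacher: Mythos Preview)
Your proposal has a genuine gap: you have pushed the entire difficulty of the theorem into the existence of the piece $B$ satisfying (i) and (ii), and then asserted without justification that ``such a $B$ is produced as the exterior of a sufficiently complicated knot or tangle, using standard lower bounds for Heegaard genus.'' This is exactly the step that fails. The standard mechanisms that force Heegaard genus to be large---distance in the curve complex, counts of disjoint incompressible surfaces, complicated monodromy---also tend to force the rank to be large; indeed the results of Namazi--Souto and Souto cited in the paper show precisely that for sufficiently complicated gluings rank and genus \emph{agree}. So ``make $B$ complicated'' is not a route to (ii) without simultaneously destroying (i). If there were an easy off-the-shelf $B$ with relative Heegaard genus much larger than its rank, the Rank Conjecture would have fallen long ago. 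Your final paragraph compounds this: you claim that increasing the complexity of $B$ makes $g(M)$ grow while $r(M)$ ``stays bounded,'' but $r(M)\le r(\pi_1(B))+1$ is only useful if $r(\pi_1(B))$ stays bounded, and you have given no reason it should.

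For comparison, the paper's construction is not a gluing along a higher-genus incompressible surface at all; it is an \emph{annulus sum} $M=X_1\cup_{A_1}Y_s\cup_{A_2}X_2$, and the rank saving comes from a specific algebraic phenomenon: $\pi_1(X_i)$ is generated by three elements $x_i,\,h_i^{-1}x_ih_i,\,s_i$, two of which are conjugate, and by a carefully designed interaction with generators of $\pi_1(Y_s)$ (Lemma~\ref{LRM}) one absorbs each $b_i$ and $h_i$ into a single generator $b_ih_i$, saving two generators overall. The genus lower bound (Lemma~\ref{LHM}) is then obtained by a detailed case analysis of how the thick and thin levels of an untelescoped minimal Heegaard surface meet the gluing annuli and the $I$-bundle pieces of $Y_s$, together with a Dehn-surgery trick to control the surgery curve $\gamma$. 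None of this is captured by a generic ``high-power pseudo-Anosov plus thin position'' template.
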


The examples in this paper can also be easily modified to produce the first such examples that contain hyperbolic JSJ pieces.

\begin{theorem}\label{TJSJ}
Every 2-bridge knot exterior can be a JSJ piece of a close 3-manifold $M$ with $r(M)<g(M)$.
\end{theorem}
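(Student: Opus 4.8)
The plan is to glue a $2$-bridge knot exterior onto the manifold of Theorem~\ref{Tmain} and to check that the rank stays small while the Heegaard genus stays large. Fix a nontrivial $2$-bridge knot $K$ with exterior $X_K$, and let $M$ be a manifold as in Theorem~\ref{Tmain}; we may take $\partial M$ to be a single torus $T$ (such manifolds arise in the construction behind Theorem~\ref{Tclosed}, and if instead one uses $M$ as stated one first tubes $X_K$ along its boundary up to the genus of $\partial M$ and amalgamates along that surface, at a mild cost to the rank estimate below which is absorbed by taking the genus--rank discrepancy of $M$ large). Pick a homeomorphism $\phi\colon\partial X_K\to T$ and set $N=M\cup_\phi X_K$, a closed orientable $3$-manifold.

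The first step is to pin down the JSJ decomposition of $N$. Since $M$ is irreducible and atoroidal with incompressible torus boundary and $X_K$ is irreducible with incompressible boundary ($K$ nontrivial), the torus $T$ is incompressible in $N$ and $N$ is irreducible; $T$ is not boundary-parallel because neither side is $T^2\times I$ (indeed $g(M)\ge 3$, as $r(M)\le 1$ would force $M$ to be a solid torus or a ball, with $r=g$). Being atoroidal with $g(M)\ge 3$, the piece $M$ is hyperbolic---an atoroidal Seifert fibered space with a single torus boundary component has Heegaard genus at most $2$---so $T$ cannot be swallowed by a Seifert piece and is a canonical torus of $N$. Hence $X_K$ is a JSJ piece of $N$; it is hyperbolic unless $K$ is a $(2,n)$ torus knot, in which case it is the Seifert piece.

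For the rank, $\pi_1(N)=\pi_1(M)\ast_{\pi_1(T)}\pi_1(X_K)$. As $K$ is $2$-bridge, $\pi_1(X_K)$ has a presentation on two meridian generators; choosing base points on $T$ so that one of them is the meridian $\mu_K\in\pi_1(T)$, that generator becomes, in $\pi_1(N)$, an element of $\pi_1(M)$. Therefore $\pi_1(N)$ is generated by $\pi_1(M)$ together with the one remaining meridian generator of $\pi_1(X_K)$, so $r(N)\le r(M)+1$.

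The genus bound is the main obstacle. I would replace $\phi$ by $A^n\circ\phi$ with $A$ an Anosov homeomorphism of $T$ and $n$ large, making the amalgamation of $M$ and $X_K$ along the incompressible torus $T$ of large distance, and then invoke additivity of Heegaard genus under high-distance amalgamation to get $g(N)\ge g(M)+g(X_K)-g(T)=g(M)+1$, using $g(X_K)=2$. The delicate point is that this amalgamation is along a torus rather than a surface of genus $\ge 2$, so the curve-complex estimates must be carried out in the Farey graph, in which Anosov maps still act loxodromically; alternatively one reruns the sweepout/thin-position argument underlying the lower bound for $g(M)$ in Theorem~\ref{Tmain} directly on $N$, since $N$ contains $M$ cut along the incompressible torus $T$. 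Granting $g(N)\ge g(M)+1$, Theorem~\ref{Tmain} gives $g(N)\ge g(M)+1\ge r(M)+2>r(M)+1\ge r(N)$, so $r(N)<g(N)$; and if only $g(N)\ge g(M)$ is available from the amalgamation estimate, take $M$ from the family with $g(M)-r(M)$ arbitrarily large that underlies Theorem~\ref{Tclosed}, so that $g(N)\ge g(M)>r(M)+1\ge r(N)$.
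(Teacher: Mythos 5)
The paper's proof does not attach a 2-bridge knot exterior along a torus boundary of $M$. Instead it modifies the construction of $M$ internally: it replaces the pieces $X_1, X_2$ (drilled-out 2-bridge knot exteriors with genus-2 boundary) by genuine 2-bridge knot exteriors $X_1', X_2'$, still glued to $Y_s$ along meridian annuli $A_1, A_2$. The tori $\partial X_i'$ then become essential tori in $M_T$, so the $X_i'$ are JSJ pieces; the rank and genus bounds of Lemmas~\ref{LRM} and~\ref{LHM} go through with minor numerical changes ($r(M_T)\le 3g+1$ and $g(M_T)\ge 3g+2$), after which one caps off $\partial M_T$ by a handlebody via Corollary~\ref{CL5}.

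Your proposal has a gap at the very first step. The manifold $M$ of Theorem~\ref{Tmain} has a connected boundary of genus strictly bigger than one (it is an annulus sum of $X_1$, $Y_s$, $X_2$, and its boundary is a high-genus surface), so the assertion ``we may take $\partial M$ to be a single torus $T$'' has no support: Theorem~\ref{Tclosed} produces \emph{closed} manifolds, and $M_n = M\#_\partial\cdots\#_\partial M$ still has high-genus boundary. Your fallback --- tubing $X_K$ up to the genus of $\partial M$ and gluing along that higher-genus surface --- destroys precisely the feature you need: after attaching 1-handles to $\partial X_K$ the original torus is no longer an embedded separating torus in the resulting manifold, so $X_K$ does not persist as a JSJ piece. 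A further soft spot is the genus estimate: Theorem~\ref{TL5} in this paper is stated and proved for connected boundary surfaces, and the sweepout/curve-complex machinery behind it is carried out for high-distance gluings along genus $\ge 2$ surfaces; invoking ``a Farey-graph version'' for torus amalgamations is asking for a different theorem that the paper does not supply. The paper sidesteps all of this by keeping the annulus-gluing architecture, so that the existing incompressible-surface analysis (Lemmas~\ref{Lstandard}, \ref{LAS}, \ref{LHM}) applies with only the substitution $g(X_i')=2$ for $g(X_i)=3$, and the JSJ tori appear for free.
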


Now we briefly describe our construction.  The main construction is a gluing of three 3-manifolds with boundary along annuli.  The first piece $X$ is obtained by drilling out a tunnel in a 2-bridge knot exterior.  The boundary of $X$ is a genus two surface and $\pi_1(X)$ is generated by three elements, two of which are conjugate.  The second piece $Y_s$ is obtained by first gluing together three twisted $I$-bundles over non-orientable surfaces, then adding a 1-handle and finally performing a Dehn surgery on a curve in the resulting manifold.  Our manifold $M$ in Theorem~\ref{Tmain} is obtained by gluing two copies of $X$ to $Y_s$ along a pair of annuli in $\partial Y_s$.  To get a closed 3-manifold in Theorem~\ref{Tclosed}, we glue a handlebody to $\partial M$ using a sufficiently complicated gluing map.

We organize the paper as follows.  In section~\ref{S2}, we briefly review some basics of Heegaard splitting and some results in \cite{L5}.  We also explain in section~\ref{S2} why Theorem~\ref{Tclosed} follows from Theorem~\ref{Tmain} and some results in \cite{L5}. In section~\ref{Sann}, we prove some useful facts on Heegaard splittings of an annulus sum, i.e., a 3-manifold obtained by gluing a pair of 3-manifolds with boundary along an annulus.  The lemmas in section~\ref{Sann} will be used in calculating the Heegaard genus of our manifold $M$ described above.  In section~\ref{SX}, we construct the first piece $X$.  We construct the second piece $Y_s$ in section~\ref{SY}.   The main technical part of the paper is to compute the Heegaard genus of our manifold $M$, and this is carried out in section~\ref{Sincomp} and section~\ref{Sgenus}.  We finish the proof of both Theorem~\ref{Tmain} and Theorem~\ref{TJSJ} in section~\ref{Sgenus}.  In section~\ref{Sopen}, we discuss some interesting open questions concerning rank and genus.

\begin{acknowledgments}
This research started during the author's visit to Princeton University in 2009.  The author would like to thank the math department of Princeton University for its hospitality. 
\end{acknowledgments}

\section{Heegaard splittings and amalgamation}\label{S2}

\begin{notation}
Throughout this paper, for any topological space $X$, we denote the number of components of $X$ by
$|X|$, the interior of $X$ by $\Int(X)$, the closure of $X$ by $\overline{X}$ and a small open neighborhood of $X$ by $N(X)$.  We denote the disjoint union of $X$ and $Y$ by  $X\coprod Y$, and use $I$ to denote the interval $[0,1]$.  If $X$ is a 3-manifold, $g(X)$ denotes the Heegaard genus of $X$, and if $X$ is a surface, $g(X)$ denotes the genus of the surface $X$.
\end{notation}

 A \emph{compression body} is a connected $3$-manifold $V$ obtained by adding 2-handles to a product $S\times I$ along $S\times\{0\}$, where $S$ is a closed and orientable surface, and then capping off any resulting 2-sphere boundary components by 3-balls.  The surface $S\times\{1\}$ is denoted by $\partial_+V$ and $\partial V - \partial_{+}V$ is denoted by $\partial_{-}V$.  The cases $V = S \times I$ and $\partial_{-}V = \emptyset $ are allowed. In the first case we say $V$ is a trivial compression body and in the second case $V$ is a handlebody.  One can also view a compression body with $\partial_-V\ne\emptyset$ as a manifold obtained by adding 1-handles on the same side of $\partial_-V\times I$.

 A \emph{Heegaard splitting} of a $3$-manifold $M$ is a decomposition  $M = V_1 \cup V_2$ where $V_1$ and $V_2$ are compression bodies and $\partial_{+}V_1 = V_1 \cap V_2 = \partial_{+}V_2$.  The surface $\Sigma = \partial_{+}V_1 =  \partial_{+}V_2$ is called the \emph{Heegaard surface} of the Heegaard splitting.  Every compact orientable $3$-manifold has a Heegaard splitting.  The \emph{Heegaard genus} of a 3-manifold is the minimal genus of all Heegaard surfaces of the 3-manifold.

A Heegaard splitting of $M$ is \emph{stabilized} if $M$ contains a 3-ball whose boundary 2-sphere intersects the Heegaard surface in a single non-trivial circle in the Heegaard surface. 
A Heegaard splitting $M=V_1\cup_\Sigma V_2$ is \emph{weakly reducible} if there is a compressing disk $D_i$ in $V_i$ ($i=1,2$) such that $\partial D_1\cap\partial D_2=\emptyset$.  If a Heegaard splitting is not weakly reducible, then it is said to be \emph{strongly irreducible}.   A stabilized Heegaard splitting is weakly reducible and is not of minimal genus, see \cite{S2} for more details.    
A theorem of Casson and Gordon \cite{CG} says that an unstabilized Heegaard splitting of an irreducible non-Haken 3-manifold is always strongly irreducible.  

For unstabilized and weakly reducible Heegaard splittings, there is an operation called \emph{untelescoping} of the Heegaard splitting, which is an rearrangement of the handles given by the Heegaard splitting.  An untelescoping of a Heegaard splitting gives a decomposition of $M$ into several blocks along incompressible surfaces, each block having a strongly irreducible Heegaard splitting, see \cite{ST, S2}.  We summarize this as the following theorem due to Scharlemann and Thompson \cite{ST} (except part (4) of the theorem is \cite[Lemma 2]{SS}).

\begin{theorem}\label{TST}
Let $M$ be a compact irreducible and orientable 3--manifold with incompressible boundary.  Let $S$ be an unstabilized Heegaard surface of $M$.  Then the untelescoping of the Heegaard splitting gives a decomposition of $M$ as follows.
\begin{enumerate}
\item $M=N_0\cup_{F_1}N_1\cup_{F_2}\dots\cup_{F_m}N_m$, where each $F_i$ is incompressible in $M$.  
\item Each $N_i=A_i\cup_{\Sigma_i}B_i$, where each $A_i$ and $B_i$ is a union of compression bodies with $\partial_+A_i=\Sigma_i=\partial_+B_i$ and $\partial_-A_i=F_i=\partial_-B_{i-1}$.  
\item Each component of $\Sigma_i$ is a strongly irreducible Heegaard surface of a component of $N_i$. 
\item $\chi(S)=\sum\chi(\Sigma_i)-\sum\chi(F_i)$, see \cite[Lemma 2]{SS}.
\end{enumerate}
\end{theorem}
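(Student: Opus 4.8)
The plan is to reprove this classical untelescoping theorem of Scharlemann--Thompson by a thin-position argument on the handles of the Heegaard splitting. First I would rewrite the splitting $M=V_1\cup_S V_2$ as a handle decomposition of $M$: build $V_1$ from $\partial_-V_1\times I$ by attaching the $1$-handles of $V_1$ to $\partial_-V_1\times\{1\}$, and then attach the dual $2$- and $3$-handles to recover $V_2$. Call two handle decompositions of $M$ \emph{equivalent} if they differ by handle slides and by interchanging a $2$-handle with a later $1$-handle whose attaching regions are disjoint in the level surface between them. Grouping consecutive handles of equal index into maximal blocks, each handle decomposition equivalent to the given one determines a decomposition $M=N_0\cup_{F_1}N_1\cup\dots\cup_{F_m}N_m$ with $N_i=A_i\cup_{\Sigma_i}B_i$ of the form (1)--(2): $\Sigma_i$ is the level between a block of $1$-handles and the next block of $2$-handles, and $F_i$ the level between a block of $2$-handles and the next block of $1$-handles. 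The given splitting is the case $m=0$, $N_0=V_1\cup_S V_2$. Define the \emph{width} of such a generalized Heegaard splitting to be the multiset of numbers $-\chi(C)$, with $C$ running over the non-sphere components of all of the $\Sigma_i$, sorted in non-increasing order and compared lexicographically. Since only finitely many orderings of a fixed finite set of handles occur, only finitely many widths occur; let $\mathcal G$ be one of minimal width. I claim $\mathcal G$ satisfies (1)--(3), and (4) will then follow by bookkeeping.

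For (3): if some component $\Sigma$ of some $\Sigma_i$ were weakly reducible as a Heegaard surface of the corresponding component of $N_i$, then compressing $\Sigma$ simultaneously along disjoint essential disks on its two sides splits that component of $N_i$ along a new thin surface into two pieces, each again a union of compression bodies, and replaces $\Sigma$ in the width multiset by two surfaces of strictly larger Euler characteristic. This is realized by handle rearrangements of the allowed type, producing an equivalent generalized Heegaard splitting of strictly smaller width, contradicting minimality of $\mathcal G$. Hence every component of every $\Sigma_i$ is a strongly irreducible Heegaard surface of the corresponding component of $N_i$; in particular no $\Sigma_i$ is stabilized.

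For (1): I claim minimality of the width forces every internal thin surface $F_i$ to be incompressible in $M$. Suppose not, and let $D$ be a compressing disk for some $F_i$; say $D$ lies on the $N_i$-side, where $N_i=A_i\cup_{\Sigma_i}B_i$ and $F_i=\partial_-A_i$. Using the strong irreducibility of $\Sigma_i$ established above, innermost-disk arguments (surgering off trivial circles of $D\cap\Sigma_i$ by irreducibility of $M$) allow $D$ to be isotoped off $\Sigma_i$, making it a properly embedded disk in $A_i$ with essential boundary on $F_i=\partial_-A_i$ --- impossible, since the negative boundary of a compression body is incompressible in it. (Making this rigorous when $D$ meets several thick surfaces requires care; it is carried out in \cite{ST}.) The case of $D$ on the $N_{i-1}$-side is symmetric, so (1) holds.

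Finally, (4) holds because each weak-reduction move preserves the quantity $\sum_i\chi(\Sigma_i)-\sum_i\chi(F_i)$: compressing a thick surface $\Sigma$ along disjoint disks on its two sides yields new thick surfaces $\Sigma',\Sigma''$ and a new thin surface $F'$ with $\chi(\Sigma')+\chi(\Sigma'')-\chi(F')=\chi(\Sigma)$, and for the given splitting ($m=0$) the quantity equals $\chi(S)$; this is precisely \cite[Lemma 2]{SS}. I expect the main obstacle to be (1): pushing a compressing disk of a thin surface across the strongly irreducible thick surfaces requires the full innermost-disk hierarchy, one must in particular rule out that $D\cap\Sigma_i$ forces disjoint compressions of $\Sigma_i$ from its two sides, and one must keep the width multiset strictly decreasing under each reduction.
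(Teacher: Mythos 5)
The paper does not actually prove Theorem~\ref{TST}; it is quoted as a result of Scharlemann--Thompson~\cite{ST} together with the Euler-characteristic formula of Scharlemann--Schultens~\cite{SS}, and the paper's subsequent arguments simply cite it. So the relevant comparison is with the standard thin-position argument, and that is indeed what you are reconstructing: minimize a lexicographic width over handle rearrangements, then show the minimizer has strongly irreducible thick surfaces, incompressible thin surfaces, and the correct Euler characteristic bookkeeping. Your derivation of part (3) (weak reduction of a thick surface strictly decreases width) and of part (4) ($\chi(\Sigma')+\chi(\Sigma'')-\chi(F')=\chi(\Sigma)$ is preserved under the move) are correct.

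The gap is in your argument for part (1), and it is more than a matter of ``care.'' You write that strong irreducibility of $\Sigma_i$ together with surgering off trivial circles of $D\cap\Sigma_i$ lets you isotope a compressing disk $D$ for $F_i$ off $\Sigma_i$. Surgering trivial circles uses only irreducibility of $M$ and leaves you with essential circles of $D\cap\Sigma_i$; nothing you have said explains how strong irreducibility removes those. The actual content here is a Casson--Gordon--type lemma (``if a Heegaard surface of an irreducible manifold has compressible boundary, the splitting is weakly reducible''), which is proved by a more careful analysis of the nested circles of $D\cap\Sigma_i$: one must locate disjoint compressing disks on opposite sides of $\Sigma_i$, and this does not fall out of an innermost-disk surgery alone. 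Moreover, as you note, $D$ need not stay inside $N_i$; it can cross $F_{i+1}$ and further thick surfaces, so the argument must be organized as an induction or as part of the width-minimization itself, not as a consequence of (3) applied to a single block. Finally, your width multiset discards sphere components of $\Sigma_i$, but you never address sphere components of the thin surfaces $F_i$: if a weak reduction produces a $2$-sphere thin surface, the generalized splitting collapses and the amalgamation no longer recovers $S$. Ruling this out is exactly where the hypothesis that $S$ is unstabilized (and $M$ irreducible with incompressible boundary) enters, and your sketch never uses that hypothesis.
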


Note that a decomposition of $M$ as above (without requiring $F_i$ to be incompressible and $\Sigma_i$ to be strongly irreducible) is called a \emph{generalized Heegaard splitting} of $M$.

The converse of untelescoping is amalgamation of Heegaard splittings, see \cite{Sch}. In fact, one can amalgamate the Heegaard surfaces $\Sigma_i$ in Theorem~\ref{TST} along the incompressible surfaces $F_i$ to get back the Heegaard surface $S$.  The genus calculation in the amalgamation follows from the formula in part (4) of Theorem~\ref{TST}.  

Let $M_1$ and $M_2$ be two compact orientable irreducible 3-manifolds with connected boundary and suppose $\partial M_1\cong \partial M_2\cong F$.  We can glue $M_1$ to $M_2$ via a homeomorphism  $\phi\colon\partial M_1\to\partial M_2$ and get a closed 3-manifold $M$.  As in \cite{L5}, we can define a complexity for the gluing map $\phi$ using curve complex, see \cite{L5, H} for the definition of curve complex.  First, we view $M_1$ and $M_2$ as sub-manifolds of $M$ and view $F=\partial M_1=\partial M_2$ as a surface in $M$. 
Let $\mathcal{U}_i$ be the set of vertices in the curve complex $\mathcal{C}(F)$ represented by the boundary curves of properly embedded essential orientable surfaces in $M_i$ of maximal Euler characteristic (among all such essential orientable surfaces in $M_i$).  In particular, if $\partial M_i$ is compressible in $M_i$, then $\mathcal{U}_i$ is the disk complex, i.e., vertices represented by the set of boundary curves of compressing disks for $\partial M_i$.  The complexity $d(M)=d(\mathcal{U}_1,\mathcal{U}_2)$ is the distance from $\mathcal{U}_1$ to $\mathcal{U}_2$ in the curve complex $\mathcal{C}(F)$.  This complexity can be viewed as a generalization of the distance defined by Hempel \cite{H}.  Note that $d(M)$ can be arbitrarily large if the gluing map $\phi$ is a sufficiently high power of a generic pseudo-Anosov map.  The following is a theorem in \cite{L5} which generalizes earlier results in \cite{ST1, La, L4, L3}.

\begin{theorem}[\cite{L5}]\label{TL5}
Let $M_1$ and $M_2$ be two compact orientable irreducible 3-manifolds with connected boundary and $\partial M_1\cong \partial M_2$.  Let $M$ be the closed manifold obtained by gluing $M_1$ to $M_2$ via a homeomorphism  $\phi\colon \partial M_1\to\partial M_2$.  If $d(M)$ is sufficiently large, then $g(M)=g(M_1)+g(M_2)-g(\partial M_i)$.
\end{theorem}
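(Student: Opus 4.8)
\emph{Outline.} The equality breaks into $g(M)\le g(M_1)+g(M_2)-g(\partial M_i)$, which holds for \emph{every} gluing $\phi$, and $g(M)\ge g(M_1)+g(M_2)-g(\partial M_i)$, which is where the size of $d(M)$ is used. Throughout write $F=\partial M_1\cong\partial M_2$, viewed as a separating surface in $M$.

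\emph{Upper bound.} Since $\partial M_i$ is connected, every Heegaard splitting $M_i=A_i'\cup_{\Sigma_i'}B_i'$ has, after relabelling, $B_i'$ a compression body with $\partial_-B_i'=F$; pick a minimal-genus one, so $g(\Sigma_i')=g(M_i)$. Amalgamating the two splittings along $F$ (\cite{Sch}) gives a Heegaard surface $S$ of $M$, and the amalgamation genus formula (cf.\ part (4) of Theorem~\ref{TST}) gives $\chi(S)=\chi(\Sigma_1')+\chi(\Sigma_2')-\chi(F)$. As $S,\Sigma_1',\Sigma_2',F$ are connected this reads $g(S)=g(M_1)+g(M_2)-g(F)$, so $g(M)\le g(M_1)+g(M_2)-g(F)$.

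\emph{Lower bound.} Suppose $g(M)<g(M_1)+g(M_2)-g(F)$; I will deduce a bound on $d(M)$ depending only on $g(M_1)$ and $g(M_2)$, so this becomes impossible once $d(M)$ is large. Assume first that $F$ is incompressible in $M$, i.e.\ that $\partial M_i$ is incompressible in $M_i$ for both $i$ (the other case is handled at the end). Let $S$ be a minimal-genus, hence unstabilized, Heegaard surface of $M$, and untelescope it by Theorem~\ref{TST}:
\[
M=N_0\cup_{F_1}N_1\cup_{F_2}\cdots\cup_{F_m}N_m,
\]
with thin surfaces $F_i$ incompressible, $N_i=A_i\cup_{\Sigma_i}B_i$, and each $\Sigma_i$ a strongly irreducible Heegaard surface of a component of $N_i$. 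All the $F_i$ and $\Sigma_i$ arise from $S$ by compressions, so every Euler characteristic that will appear is bounded in absolute value by some $\kappa=\kappa(g(M))$. Now locate $F$ relative to $\mathcal F=\bigcup F_i$ and $\mathcal S=\bigcup\Sigma_i$: isotope $F$ so that it meets the incompressible $\mathcal F$ in curves essential in both surfaces (standard innermost-disk arguments using incompressibility and irreducibility of $M$), and so that within each $N_i$ it meets the strongly irreducible $\Sigma_i$ essentially or not at all. If $F$ is disjoint from $\mathcal F\cup\mathcal S$, then $F$ lies in the interior of a compression body $A_i$ or $B_i$ and, being incompressible, is parallel to a component of its $\partial_-$, hence isotopic to some thin surface $F_j$; then $F_j$ splits the untelescoping into generalized Heegaard splittings of $M_1$ (with boundary $F$) and of $M_2$ whose amalgamations $S_1,S_2$ are Heegaard surfaces of $M_1,M_2$ with $\chi(S)=\chi(S_1)+\chi(S_2)-\chi(F)$ by part (4) of Theorem~\ref{TST}, so $g(M)=g(S)=g(S_1)+g(S_2)-g(F)\ge g(M_1)+g(M_2)-g(F)$ (as $g(S_i)\ge g(M_i)$), contradicting the assumption. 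Hence this cannot happen, and $F$ genuinely meets $\mathcal F$ or some $\Sigma_i$. In that event one cuts $\mathcal F\cup\mathcal S$ along $F$, compresses and discards inessential and $\partial$-parallel pieces, and obtains essential surfaces $Q_1\subset M_1$, $Q_2\subset M_2$ with $|\chi(Q_i)|\le\kappa$ whose boundary curves are controlled by the single mutually disjoint family $F\cap(\mathcal F\cup\mathcal S)$ of curves on $F$, so that $d(\partial Q_1,\partial Q_2)$ is bounded by a universal constant. Since the boundary of any essential surface of bounded complexity lies within a bounded distance $\kappa'=\kappa'(g(M))$ of the set $\mathcal U_i$ of boundaries of maximal-Euler-characteristic essential surfaces, this yields $d(M)=d(\mathcal U_1,\mathcal U_2)\le 2\kappa'+O(1)$, a bound in terms of $g(M)\le g(M_1)+g(M_2)$. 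Combined with the upper bound, $g(M)=g(M_1)+g(M_2)-g(F)$ once $d(M)$ exceeds this bound.

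\emph{Main obstacle, and the remaining case.} The delicate part is precisely the situation where $F$ meets $\mathcal F$ or some $\Sigma_i$: one must (i) arrange the intersection of $F$ with the strongly irreducible $\Sigma_i$ so that the resulting pieces have controlled topology, (ii) cleanly extract $Q_1,Q_2$ with bounded complexity and nearby boundaries, and, most subtly, (iii) justify that the boundary of an arbitrary bounded-genus essential surface lies a bounded distance from $\mathcal U_i$. Strong irreducibility of the $\Sigma_i$ is what makes (i)--(ii) possible — an arbitrary Heegaard surface would not admit the required isotopies — which is the reason for untelescoping at the outset. Finally, when $F=\partial M_i$ is compressible in $M_i$ for some $i$, so that $\mathcal U_i$ is the disk complex, the same scheme applies with a maximal disk system for $F$ inside $M_i$ playing the role of $Q_i$ and of the incompressibility of $F$.
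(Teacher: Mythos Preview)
This theorem is not proved in the present paper; it is quoted from \cite{L5} and used as a black box (see the sentence immediately preceding the statement). So there is no in-paper proof to compare against.

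Your outline does capture the standard architecture of such results: the upper bound by amalgamation is correct and complete, and for the lower bound the dichotomy ``either $F$ is isotopic to a thin level of the untelescoping, in which case the amalgamation formula gives the inequality directly, or $F$ essentially meets $\mathcal F\cup\mathcal S$ and one extracts bounded-complexity essential surfaces in each $M_i$ with nearby boundaries'' is indeed the shape of the argument in \cite{L5}.

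However, what you label \emph{Main obstacle} is not a residual technicality but essentially the entire content of \cite{L5}, and your sketch does not carry it out. Two specific gaps: First, step (iii) --- that the boundary of an arbitrary essential surface of bounded Euler characteristic lies within bounded curve-complex distance of $\mathcal U_i$ --- is not a standard fact you can invoke; it is sensitive to the precise definition of $\mathcal U_i$ and to the geometry of $M_i$, and in \cite{L5} the surfaces one actually produces are arranged to lie in (or near) $\mathcal U_i$ by construction, not by an after-the-fact distance estimate. Second, the compressible-boundary case is not handled by ``the same scheme applies'': when $F$ is compressible in $M$ the innermost-disk isotopies you use to make $F\cap\mathcal F$ essential are unavailable, and the interaction between compressing disks for $F$ and the strongly irreducible $\Sigma_i$ requires a genuinely different analysis (this is one of the main points of \cite{L5}). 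As written, your proposal is a fair roadmap of what must be proved rather than a proof.
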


Note that if we amalgamate Heegaard surfaces $S_1$ and $S_2$ of $M_1$ and $M_2$ respectively along $\partial M_i$, then the resulting Heegaard surface of $M$ has genus $g(S_1)+g(S_2)-g(\partial M_i)$, same as the formula in Theorem~\ref{TL5}.

A special case of Theorem~\ref{TL5} is that if $M_2$ is a handlebody and the gluing map $\phi$ is sufficiently complicated, then the Heegaard genus of $M$ does not change i.e., $g(M)=g(M_1)$.  This observation and the results in \cite{L5} give the following corollary which says that if the rank conjecture fails for 3-manifolds with connected boundary, then it also fails for closed 3-manifolds.  

Note that the reason we assume our manifold $M$ in Corollary~\ref{CL5} has connected boundary is that its Heegaard splitting is a decomposition of $M$ into a handlebody and a compression body, so the Heegaard splitting also gives a geometric presentation of $\pi_1(M)$.  In particular, we also have $r(M)\le g(M)$.  For a manifold with more than one boundary component, it is possible that a Heegaard surface separates the boundary components and has genus smaller than the rank of the 3-manifold.

\begin{corollary}\label{CL5}
Let $M$ be a compact orientable 3-manifold with connected boundary and suppose $r(M)<g(M)$.  Then there is a closed 3-manifold $\widehat{M}$ obtained by gluing a handlebody to $M$ along the boundary and via a sufficiently complicated gluing map, such that $r(\widehat{M})<g(\widehat{M})$.  Moreover, if $M$ is irreducible and atoroidal, then $\widehat{M}$ is hyperbolic.  
\end{corollary}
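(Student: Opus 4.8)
The plan is to deduce Corollary~\ref{CL5} directly from Theorem~\ref{TL5} together with the elementary behavior of rank under gluing a handlebody. Let $M$ be as in the hypothesis, with $\partial M = F$ a connected surface, and set $g = g(F)$. Let $H$ be a handlebody of genus $g$, so $\partial H \cong F$, and for a homeomorphism $\phi\colon \partial H \to \partial M$ form $\widehat{M} = H \cup_\phi M$. First I would observe that the Heegaard genus behaves well: since $H$ is a handlebody, $g(H) = g$, so applying Theorem~\ref{TL5} (with $M_1 = M$, $M_2 = H$) gives $g(\widehat{M}) = g(M) + g(H) - g(F) = g(M)$, provided $\phi$ is chosen so that $d(\widehat{M})$ is sufficiently large; such $\phi$ exists by taking a high power of a generic pseudo-Anosov map of $F$, as noted in the excerpt just before Theorem~\ref{TL5}. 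A small point to check here is that Theorem~\ref{TL5} requires both pieces to be irreducible; a handlebody is irreducible, and if $M$ is reducible one can first reduce to the irreducible case, or simply note that the interesting examples in this paper are irreducible — I would state the corollary for irreducible $M$ or handle the reducible case by a remark.

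Next I would bound the rank from above: $\pi_1(\widehat{M})$ is a quotient of $\pi_1(M)$, because by van Kampen's theorem $\pi_1(\widehat{M}) = \pi_1(M) *_{\pi_1(F)} \pi_1(H)$, and $\pi_1(H)$ is free, generated by the images of a set of meridian-dual loops whose meridians bound disks in $H$; killing those meridians shows $\pi_1(\widehat{M})$ is obtained from $\pi_1(M)$ by adding the $g$ relations coming from the compressing disks of $H$. Hence $r(\widehat{M}) \le r(M)$. Combining with the genus computation of the previous paragraph, $r(\widehat{M}) \le r(M) < g(M) = g(\widehat{M})$, which is the desired strict inequality.

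For the final sentence, I would invoke geometrization: if $M$ is compact, orientable, irreducible and atoroidal with nonempty boundary, then since we are in the hyperbolic case of the excerpt (the counterexamples produced have $g(M) \ge 3 > r(M)$, so $\pi_1$ is non-elementary and $M$ is not a solid torus or $I$-bundle over a surface with small fundamental group) $M$ admits a complete hyperbolic structure with geodesic boundary, or more to the point $M$ has incompressible boundary and admits a hyperbolic structure. Gluing a handlebody along a sufficiently complicated map keeps the result irreducible and atoroidal: any essential sphere or torus in $\widehat{M}$ could be isotoped to meet $F$ in essential curves, and a distance argument (the curves on $F$ bounding disks in $H$ lie far in $\mathcal{C}(F)$ from the boundary slopes of essential surfaces in $M$, which is exactly the content of $d(\widehat{M})$ being large) rules this out; then Thurston's hyperbolization theorem for Haken manifolds, or Perelman's geometrization, gives that $\widehat{M}$ is hyperbolic.

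The main obstacle I anticipate is the last step — verifying that large $d(\widehat{M})$ forces $\widehat{M}$ to be irreducible and atoroidal (equivalently, that $F$ remains incompressible in $\widehat{M}$ from the $M$-side and that no small essential surface survives), rather than merely controlling Heegaard genus. This is essentially an incompressibility/Haken-number argument of the same flavor as the proofs in \cite{ST1, La, L3, L4, L5}, and I would either cite the relevant statement from \cite{L5} directly or reprove the needed special case (handlebody on one side) by the standard innermost-disk and minimal-position arguments against the curve-complex distance. The rank and genus inequalities themselves are routine given Theorem~\ref{TL5}.
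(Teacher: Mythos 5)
Your proposal follows essentially the same route as the paper's own proof: apply Theorem~\ref{TL5} (with the second piece a handlebody) to get $g(\widehat{M}) = g(M)$, observe that gluing a handlebody only adds relators so $r(\widehat{M}) \le r(M)$, and then invoke results from \cite{L5} to see that a high-distance gluing preserves irreducibility and atoroidality before appealing to geometrization. The one place where the paper is more precise than you are is the last step: rather than ``cite or reprove,'' the paper simply cites \cite[Theorem 1.2]{L5} for irreducibility of $\widehat{M}$ and \cite[Lemma 6.6]{L5} for the atoroidal conclusion (an incompressible torus can be isotoped off $\partial M$, hence lies in $M$ or in the handlebody, contradicting atoroidality of $M$); your worry about whether this step is available is resolved by those two references, so no new argument is needed. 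Your side remark about $M$ needing to be irreducible for Theorem~\ref{TL5} to apply is a fair observation about the hypotheses of the corollary as stated, and in the paper's actual applications $M$ (and the manifolds $M_n$) are indeed irreducible.
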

\begin{proof}
By Theorem~\ref{TL5}, if we glue a handlebody to $M$ using a sufficiently complicated gluing map, then $g(\widehat{M})=g(M)$.  Moreover, in terms of fundamental group, adding a handlebody to $M$ is the same as adding some relators to $\pi_1(M)$.  Hence $r(\widehat{M})\le r(M)$.  Since $r(M)<g(M)$, we have $r(\widehat{M})<g(\widehat{M})$.

Now we suppose $M$ is irreducible and atoroidal.  
Since $M$ is irreducible, by \cite[Theorem 1.2]{L5}, if the gluing map is sufficiently complicated, $\widehat{M}$ is also irreducible.  Moreover, if $\widehat{M}$ contains an incompressible torus and the gluing map is sufficiently complicated, by \cite[Lemma 6.6]{L5}, the torus can be isotoped in $\widehat{M}$ to be disjoint from the surface $\partial M$.   Since $\widehat{M}-\Int(M)$ is a handlebody, this means that the incompressible torus lies in $M$, which contradicts our hypothesis that $M$ is atoroidal.  Thus $\widehat{M}$ must also be atoroidal.  By Perelman \cite{P1, P2, P3}, this means that $\widehat{M}$ is hyperbolic. 
\end{proof}
\begin{remark*}
The 3-manifold $\widehat{M}$ constructed in this paper is Haken, so one only needs Thurston's hyperbolization theorem for Haken manifolds \cite{T} to conclude that $\widehat{M}$ is hyperbolic.
\end{remark*}

Using Corollary~\ref{CL5}, we can immediately see that Theorem~\ref{Tclosed} follows from Theorem~\ref{Tmain}.

\begin{proof}[Proof of Theorem~\ref{Tclosed} using Theorem~\ref{Tmain}]
The first part of Theorem~\ref{Tclosed} follows directly from Theorem~\ref{Tmain} and Corollary~\ref{CL5}. Next we explain why the discrepancy between rank and genus can be arbitrarily large.

Let $M$ be an irreducible and atoroidal 3-manifold as in Theorem~\ref{Tmain}.  Let $M_n=M\#_\partial M\#_\partial\cdots\#_\partial M$ be the boundary connected sum of $n$ copies of $M$ (i.e., connecting $n$ copies of $M$ using 1-handles).  Although $M_n$ has compressible boundary, $M_n$ is still irreducible and atoroidal.

By Grushko's theorem \cite{G}, $r(M_n)=nr(M)$.  By \cite{H1} and \cite[Corollary 1.2]{CG}, Heegaard genus is additive under boundary connected sum.  So $g(M_n)=ng(M)$.  By Corollary~\ref{CL5}, we can glue a handlebody to $M_n$ via a sufficiently complicated gluing map so that the resulting closed 3-manifold $\widehat{M}_n$ satisfies $g(\widehat{M}_n)=g(M_n)$.  Since $r(\widehat{M}_n)\le r(M_n)$, we have $g(\widehat{M}_n)-r(\widehat{M}_n)\ge g(M_n)-r(M_n)=n(g(M)-r(M))\ge n$.  
Since $M_n$ is irreducible and atoroidal, $\widehat{M}_n$ is a closed hyperbolic 3-manifold with  $g(\widehat{M}_n)-r(\widehat{M}_n)\ge n$.  
\end{proof}

\section{Annulus sum}\label{Sann}

Let $M_1$ and $M_2$ be two 3-manifolds with boundary.  Let $A_i$ ($i=1,2$) be an annulus in $\partial M_i$.  We can glue $M_1$ and $M_2$ together via a homeomorphism between $A_1$ and $A_2$, and we call the resulting 3-manifold an \emph{annulus sum} of $M_1$ and $M_2$ along the annuli $A_1$ and $A_2$.

Our main construction in this paper is an annulus sum of three 3-manifolds with boundary.  A key part of the proof is a study of Heegaard surfaces in the annulus sum.  In this section, we prove some basic properties concerning incompressible surfaces and strongly irreducible surfaces in an annulus sum.  These are the basic tools in calculating the Heegaard genus of such manifolds.

\begin{definition}\label{Dbase}
Let $N$ be a compact 3-manifold with boundary and let $F$ be a surface properly embedded in $N$.  Let $D$ be a $\partial$-compressing disk for $F$ with $\partial D=\alpha\cup\beta$, $F\cap D=\beta$ and $D\cap\partial N=\alpha$.  We call $\alpha$ the \emph{base arc} of the $\partial$-compressing disk $D$.
\end{definition}

\begin{definition}\label{Dsmall}
Let $N$ be a compact orientable 3-manifold.  We say $N$ is \emph{small} if $N$ contains no closed orientable non-peripheral incompressible surface.  Let $A$ be a sub-surface of $\partial N$.  We say $N$ is \emph{$A$-small} if every properly embedded orientable incompressible surface in $N$ with boundary in $A$ is $\partial$-parallel in $N$.
\end{definition}

Lemma~\ref{Lmonogon} and Lemma~\ref{LIbundle} are well-known facts about incompressible surfaces.  For completeness, we give a proof.

\begin{lemma}\label{Lmonogon}
Let $N$ be an orientable irreducible 3-manifold and let $A$ be a collection of annuli in $\partial N$.  Let $F$ be a connected orientable incompressible surface properly embedded in $N$ with $\partial F\subset A$.  If $F$ admits a $\partial$-compressing disk with base arc in $A$, then $F$ is an annulus parallel to a sub-annulus of $A$.
\end{lemma}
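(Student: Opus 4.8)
The plan is to analyze the surface obtained by $\partial$-compressing $F$ along the given disk $D$, and bootstrap from there using incompressibility and irreducibility. Let $\alpha \cup \beta = \partial D$ with $\beta \subset F$, $\alpha \subset \partial N$, and $\alpha$ contained in some annulus component $A_0$ of $A$. Since $\alpha$ has both endpoints on $\partial F \subset \partial A_0$ (two boundary circles of $A_0$, or the same circle), I would first observe that $\alpha$ is an essential arc in $A_0$: if $\alpha$ were inessential in $A_0$, it would cut off a disk $D'$ in $A_0$, and then $\beta = \partial D \setminus \alpha$ together with an arc in $\partial F$ would bound a disk showing $\beta$ is inessential in $F$, making $D$ not a genuine $\partial$-compressing disk (or allowing us to isotope it away). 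Hence $\alpha$ is a spanning arc of the annulus $A_0$.

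Next I would perform the $\partial$-compression: let $F'$ be the result of compressing $F$ along $D$. Since $F$ is incompressible and $N$ is irreducible, $F'$ is again incompressible (a standard fact: $\partial$-compression cannot create a compressible surface in an irreducible manifold, up to discarding trivial disk components). The key point is to track what happens to the boundary. Since $\alpha$ is a spanning arc of $A_0$, cutting $\partial F$ along the endpoints of $\alpha$ and resplicing along two parallel copies of $\alpha$ has the effect of merging boundary components and reducing their number, or turning a boundary circle running around $A_0$ into a shorter curve; in any case, after the compression the relevant boundary curves in $A_0$ become curves that bound disks in $A_0$ (the spanning arc surgery on curves of an annulus yields inessential curves). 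So some component of $F'$, call it $F_0$, has a boundary circle lying in $A_0$ and bounding a disk in $A_0 \subset \partial N$; by incompressibility of $F_0$, that circle bounds a disk in $F_0$ as well, and by irreducibility of $N$ the 2-sphere formed by these two disks bounds a ball, so this boundary circle and its disk can be removed — iterating, $F_0$ is $\partial$-parallel or is itself a disk. Reversing the $\partial$-compression (reattaching the tube along $D$), and using that $N$ is irreducible so that the relevant 2-spheres bound balls, forces $F$ to be the boundary of a regular neighborhood of (disk) $\cup$ (band) in $N$, i.e., a disk or an annulus; since $\partial F \subset A$ consists of essential-or-inessential curves in the annuli and $F$ is incompressible, the disk case is excluded unless $\partial F$ is inessential, which we may rule out or absorb. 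This pins $F$ down to an annulus.

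Finally I would upgrade "annulus" to "annulus parallel to a sub-annulus of $A$." Having shown $F$ is an annulus with $\partial F$ two essential circles in $A$ (parallel in $A$, necessarily, since spanning-arc $\partial$-compression data forces them cobounded appropriately), the $\partial$-compressing disk $D$ with spanning base arc $\alpha$ exhibits $F$ and a sub-annulus $A'$ of $A$ (the one cobounded by $\partial F$ and containing $\alpha$) as two annuli with the same boundary such that $F \cup A'$ bounds — after the $\partial$-compression along $D$, $F \cup A'$ becomes a 2-sphere in $N$, which bounds a ball by irreducibility; standard arguments then show $F$ is isotopic to $A'$ rel boundary, i.e., $F$ is $\partial$-parallel to a sub-annulus of $A$.

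The main obstacle I expect is the careful bookkeeping in the middle step: correctly identifying which boundary curves of $F'$ land in $A_0$ and verifying they are inessential there, and then running the reverse $\partial$-compression cleanly so that irreducibility of $N$ can be invoked to collapse the resulting 2-spheres. The arc-surgery combinatorics on curves in an annulus (a spanning arc has its two endpoints on the boundary of $A_0$, and banding $\partial F$ along it) needs to be done with attention to whether $\partial F$ has one or two components in $A_0$ and whether they are essential; handling the essential case is what ultimately delivers the annulus conclusion, while the inessential subcases must be shown to contradict incompressibility of $F$.
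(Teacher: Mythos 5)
Your approach is genuinely different from the paper's, and it has real gaps that would need substantial work to fill. You propose to $\partial$-compress $F$ along $D$, track boundary curves, cap off a trivial boundary circle, and then reverse the $\partial$-compression. The paper instead never performs the $\partial$-compression at all: it doubles $D$ into two parallel copies $D_1,D_2$, joins their base arcs by a rectangle $R\subset A$, and observes that $\Delta=D_1\cup R\cup D_2$ is a disk whose boundary lies entirely in $\operatorname{int}(F)$ with $\operatorname{int}(\Delta)\cap F=\emptyset$. Incompressibility of $F$ then forces $\partial\Delta$ to bound a disk $\Delta'$ in $F$, and since $\partial\Delta$ is a bicollared neighborhood boundary of $\beta$, the surface $F=\Delta'\cup(\text{small rectangle between }\beta_1,\beta_2)$ is an annulus; irreducibility then makes $\Delta\cup\Delta'$ bound a ball, and that ball plus the product region between $D_1$ and $D_2$ is a solid torus exhibiting the parallelism $F\simeq A'$. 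This ``doubled-disk'' construction lets the paper invoke incompressibility exactly once, on an honest interior curve.

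Two of your steps are genuine gaps. First, you assert as a standard fact that $\partial$-compressing an incompressible surface in an irreducible manifold yields an incompressible surface. This is not an off-the-shelf result, and it is not obviously true: a compressing disk for $F'$ need not be isotopable away from the two replacement disks $D_1,D_2$, and you would need an innermost/outermost argument to recover a compressing disk for $F$. You should either prove this or avoid relying on it. Second, and more seriously, you write ``by incompressibility of $F_0$, that circle bounds a disk in $F_0$ as well.'' The circle in question is a \emph{boundary} circle of $F_0$, bounding a disk in $\partial N$. Incompressibility is a statement about essential simple closed curves in the interior of $F_0$; it says nothing directly about boundary circles bounding disks in $\partial N$. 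Indeed, for a boundary circle $c\subset\partial F_0$, ``$c$ bounds a disk in $F_0$'' is equivalent to saying $F_0$ itself is a disk, which is exactly the conclusion you are trying to reach, so this step begs the question. To make something like this work you would need to push $c$ to an interior parallel curve $c'$, verify that $c'$ bounds an embedded disk in $N$ with interior disjoint from $F_0$ (this disjointness is not automatic and requires an argument), and only then apply incompressibility; the paper's doubled-disk trick accomplishes the analogous goal cleanly without any of this bookkeeping. Your closing appeal to ``iterating'' is also unsupported: the $\partial$-compression creates only one new trivial boundary circle, and there is no reason the capped-off surface has further trivial boundary circles to remove.

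One small additional note: your opening argument that $\alpha$ is a spanning arc is close to the paper's but not quite airtight as written. The disk $D\cup D'$ has boundary $\beta\cup\alpha'$, which is a loop in $F$ that meets $\partial F$, so you cannot immediately invoke incompressibility to conclude $\beta$ is $\partial$-parallel in $F$; the paper instead pushes $\alpha$ across $D'$ into $\alpha'\subset\partial F$ so that $D$ becomes a disk with $\partial D\subset\operatorname{int}(F)$, to which incompressibility (or the definition of $\partial$-compressing disk) applies directly.
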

\begin{proof}
First, since $F$ is incompressible in $N$, $\partial F$ is a collection of essential curves in $A$.  Let $D$ be a $\partial$-compressing disk for $F$ with its base arc $\alpha$ in $A$.  If $\partial\alpha$ lies in the same circle of $\partial F$, since $A$ consists of annuli, $\alpha$ is parallel to a subarc of $\partial F$ bounded by $\partial\alpha$.  After pushing $\alpha$ into $\partial F$, $D$ becomes a compressing disk for $F$, contradicting that $F$ is incompressible.  Thus the two endpoints of $\alpha$ must lie in different components of $\partial F$.  

Let $A'$ be the sub-annulus of $A$ bounded by the two curves of $\partial F$ containing $\partial\alpha$.  Let $D_1$ and $D_2$ be two parallel copies of $D$ on opposite sides of $D$ with $\partial D_i=\alpha_i\cup\beta_i$, where $\alpha_i$ is the base arc of $D_i$ and $\beta_i\subset F$.  The arcs $\alpha_1$ and $\alpha_2$ divide $A'$ into two rectangles $R$ and $R'$, where $R'$ contains $\alpha$.  As shown in Figure~\ref{Fmono}(a), $\Delta=D_1\cup R\cup D_2$ is a disk with $\Delta\cap F=\partial\Delta$.  Since $F$ is incompressible, $\partial\Delta$ must bound a disk $\Delta'$ in $F$.  This implies that $F$ is the union of $\Delta'$ and the small rectangle in $F$ between $\beta_1$ and $\beta_2$, and hence $F$ is an annulus.  Moreover, $\Delta\cup\Delta'$ is a 2-sphere. Since $N$ is irreducible, the 2-sphere $\Delta\cup\Delta'$ bounds a 3-ball in $N$.  The union of this 3-ball and the region between $D_1$ and $D_2$ is a solid torus bounded by $F\cup A'$.  Therefore $F$ is an annulus parallel to $A'\subset A$.
\end{proof}

\begin{figure}
\centering
\psfrag{a}{(a)}
\psfrag{b}{(b)}
\psfrag{S}{$S$}
\psfrag{A}{$A$}
\psfrag{+}{$S_+$}
\psfrag{-}{$S_-$}
\psfrag{W}{$W_-$}
\psfrag{V}{$W_+$}
\includegraphics[width=3.5in]{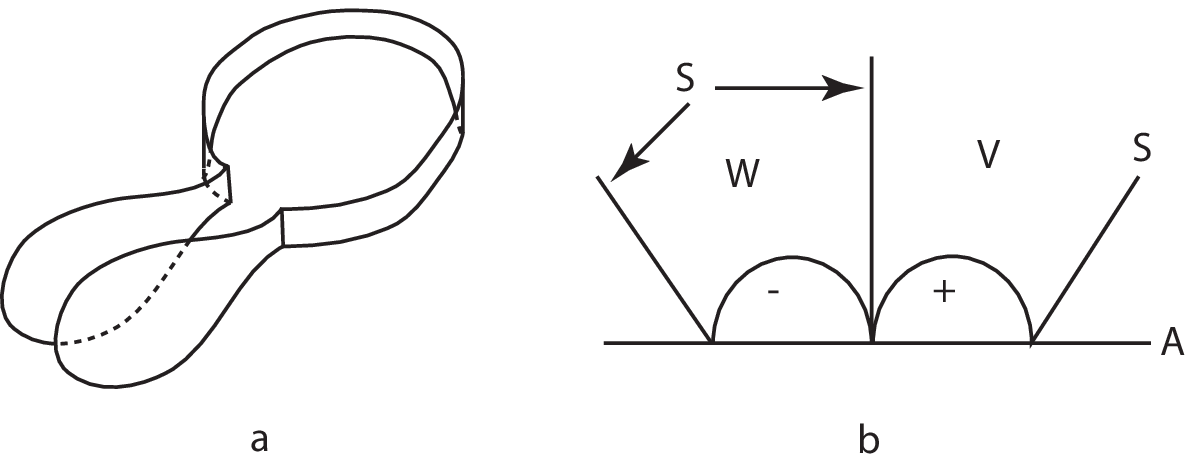}
\caption{}\label{Fmono}
\end{figure}

\begin{definition}\label{Dbundle}
Let $N$ be an $I$-bundle over a compact surface $F$, and let $\pi\colon  N\to F$ be the projection that collapses each $I$-fiber to a point.  We call $\pi^{-1}(\partial F)$ the \emph{vertical boundary} of $N$, denoted by $\partial_vN$, and call $\partial N-\Int(\partial_vN)$ the \emph{horizontal boundary} of $N$, denoted by  $\partial_hN$.  We say a surface $S$ in $N$ is \emph{horizontal} if $S$ is transverse to the $I$-fibers.  We say a surface $G$ in $N$ is \emph{vertical} if $G$ is the union of a collection of subarcs of the $I$-fibers of $N$.
\end{definition}

\begin{lemma}\label{LIbundle}
Let $N$ be a connected orientable 3-manifold and suppose $N$ is an $I$-bundle over a compact surface.  Suppose $F$ is a connected orientable incompressible surface properly embedded in $N$ with $\partial F\subset\partial_vN$.  Then $F$ either is an annulus parallel to a sub-annulus of $\partial_vN$, or can be isotoped to be horizontal in $N$.  Moreover, $N$ is $\partial_vN$-small.
\end{lemma}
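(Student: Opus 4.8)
The plan is to use the standard cut-and-cross-section argument for incompressible surfaces in $I$-bundles, treating the two structurally different cases according to whether $N$ is a product $G\times I$ or a twisted $I$-bundle over a non-orientable base. Since $N$ is orientable, an $I$-bundle over a surface $G$ is either the trivial bundle $G\times I$ (when $G$ is orientable) or a twisted $I$-bundle over a non-orientable $G$; in the latter case the orientation double cover of $G$ gives a product structure on a double cover of $N$. I would first isotope $F$ so that it is transverse to the $I$-fibers except along a collection of "vertical tangency" arcs and circles, and then run the usual innermost-disk / outermost-arc argument to remove the tangencies, using incompressibility of $F$ at each stage. Because $\partial F\subset\partial_vN$ and $\partial_vN$ is a union of annuli (or Klein-bottle-like pieces, but always foliated by $I$-fibers), the boundary behaves well: each component of $\partial F$ is either vertical or can be made so.

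The key steps, in order: (1) Put $F$ in \emph{good position} with respect to the fibration, i.e. transverse to $I$-fibers away from a finite set of tangency curves/arcs, with $\partial F$ transverse to the fibers of $\partial_vN$. (2) If there is a tangency circle, it bounds a disk or is nontrivial; innermost such a circle together with incompressibility of $F$ (and irreducibility of $N$, which holds since $N$ is an $I$-bundle over a surface) lets us isotope it away, possibly after a compression that does not exist — so instead it yields either a reduction of the number of tangencies or the conclusion that $F$ is a horizontal surface. (3) Similarly handle tangency arcs meeting $\partial F$ using a $\partial$-compression; the resulting $\partial$-compressing disk has base arc in $\partial_vN$, and here is where Lemma~\ref{Lmonogon} enters: it forces the case $F$ = annulus parallel to a sub-annulus of $\partial_vN$, which is exactly the first alternative in the statement. (4) Once $F$ is horizontal, it is a covering of the base $G$ via $\pi$, hence its Euler characteristic and boundary are controlled; this gives the "isotoped to be horizontal" conclusion. (5) For the final clause, $\partial_vN$-smallness: apply the dichotomy to an arbitrary properly embedded incompressible orientable $F$ with $\partial F\subset\partial_vN$. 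If $F$ is the parallel annulus, it is $\partial$-parallel, done. If $F$ is horizontal, then $F$ is a covering of $G$; since $\partial F\subset\partial_vN$ forces $F$ to cover $G$ trivially on the boundary side, $F$ is isotopic to a horizontal copy $\partial_hN$-parallel piece, hence $\partial$-parallel — in the product case $F\cong G\times\{1/2\}$ is parallel to $G\times\{0\}$, and in the twisted case $F$ is the orientation double cover sitting as $\partial_hN$, again peripheral.

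The main obstacle I expect is the bookkeeping in step (2)–(3): correctly ruling out that removing a tangency requires a genuine compression of $F$ (impossible by incompressibility) versus showing it can always be traded for an isotopy. One must be careful that after cutting $F$ along tangencies the pieces are sub-surfaces that are still incompressible in the complementary pieces of $N$ cut along fibers, and that the induction on $|F\cap(\text{tangency locus})|$ or on $-\chi(F)$ actually terminates. The twisted-bundle case also requires care: one should either pass to the orientable double cover and descend the conclusion, or argue directly that a horizontal surface in a twisted $I$-bundle over non-orientable $G$ is either connected (the double cover, $=\partial_hN$) or has two components each mapping homeomorphically to $G$ — but the latter contradicts $G$ being non-orientable while $F$ is orientable, so $F$ is forced to be the peripheral double cover. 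Handling the boundary arcs so that Lemma~\ref{Lmonogon} applies cleanly — in particular checking the base arc genuinely lies in the annular part $\partial_vN$ and not straying onto $\partial_hN$ — is the last delicate point.
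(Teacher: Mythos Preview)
Your overall strategy is sound and matches the paper's: invoke Lemma~\ref{Lmonogon} to dispose of the case where $F$ admits a $\partial$-compressing disk with base arc in $\partial_vN$ (forcing $F$ to be a $\partial$-parallel annulus), and otherwise show $F$ can be made horizontal; then argue that a horizontal orientable $F$ is parallel to $\partial_hN$. Your final clause on $\partial_vN$-smallness, including the observation that in the twisted case a connected orientable horizontal surface must be the double cover sitting as $\partial_hN$, is exactly what the paper does.

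Where your proposal diverges from the paper is in the mechanism for making $F$ horizontal. You speak of putting $F$ transverse to the $I$-fibers ``away from a finite set of tangency curves/arcs'' and then eliminating these by innermost-disk/outermost-arc moves. This language is imprecise: for a generic surface in an $I$-bundle, the locus where $F$ is tangent to the fibers is $0$-dimensional, not a union of curves and arcs, so it is unclear what geometric object you are actually inducting on or what ``innermost tangency circle'' means. The paper sidesteps this ambiguity with a more concrete combinatorial argument: choose a collection of vertical rectangles $R$ (with two opposite edges being $I$-fibers $I_1,I_2\subset\partial_vN$) that cut $N$ down to $D\times I$. Incompressibility of $F$ and irreducibility of $N$ remove closed curves from $F\cap R$; the absence of $\partial$-compressing disks with base arc in $\partial_vN$ (already secured via Lemma~\ref{Lmonogon}) forces every arc of $F\cap R$ to run from $I_1$ to $I_2$, hence to be transverse to the $I$-fibers. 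Once $F$ meets each rectangle in horizontal arcs, the pieces of $F$ in $D\times I$ are disks transverse to the fibers, and $F$ is horizontal.

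So the paper trades your analytic ``tangency elimination'' for a direct cut-along-vertical-rectangles argument. This buys precision (no need to specify what the tangency locus is or why the induction terminates) and makes the role of Lemma~\ref{Lmonogon} completely transparent: it is exactly the statement that no arc of $F\cap R$ has both endpoints in the same $I$-fiber edge. If you want to salvage your version, you should replace steps~(2)--(3) with this rectangle argument; the rest of your outline then goes through as written.
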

\begin{proof}
Since $F$ is incompressible, $\partial F$ is essential in $\partial_vN$. So after isotopy, we may assume $\partial F$ is transverse to the $I$-fibers of $N$. 
Suppose $F$ is not an annulus parallel to a sub-annulus of $\partial_vN$.  By Lemma~\ref{Lmonogon}, this means that $F$ admits no $\partial$-compressing disk with base arc in $\partial_vN$.  

Let $R$ be a vertical rectangle properly embedded in $N$, and let $I_1$ and $I_2$ be the pair of opposite edges of $R$ that are $I$-fibers of $N$ in $\partial_vN$.  Since $F$ is incompressible and $N$ is irreducible, after isotopy, we may assume $F\cap R$ contains no closed curve.  As $\partial F\subset\partial_vN$, $F\cap R$ consists of arcs with endpoints in $I_1\cup I_2$.  Since $F$ admits no $\partial$-compressing disk with base arc in $\partial_vN$, after isotopy, $F\cap R$ contains no arc with both endpoints in the same arc $I_j$ ($j=1,2$).  Thus $F\cap R$ consists of arcs connecting $I_1$ to $I_2$.  So after isotopy, $F\cap R$ is transverse to the $I$-fibers.

There are a collection of vertical rectangles in $N$ such that if we cut $N$ open along these vertical rectangles, the resulting manifold is of the form $D\times I$ where $D$ is a disk.  The conclusion above on $F\cap R$ implies that after isotopy, we may assume that the restriction of $F$ to $\partial D\times I$ is a collection of curves transverse to the $I$-fibers.  Since $F$ is incompressible, after isotopy, each component of $F\cap(D\times I)$ is a disk transverse to the $I$-fibers.  Therefore $F$ is transverse to the $I$-fibers in $N$.

The last part of the lemma follows from the conclusion above immediately.  Suppose $F$ is horizontal in $N$ as above.  
We can cut $N$ open along $F$ and obtain a manifold $N'$.  As $F$ is transverse to the $I$-fibers, $N'$ has an $I$-bundle structure induced from that of $N$.  Let $N_G$ be a component of $N'$ that contains a component $G$ of $\partial_hN$.  By our construction of $N'$, $G$ cannot be the whole of $\partial_hN_G$ and hence the $I$-bundle $N_G$ must be a product of the form $G\times I$ with $G=G\times\{0\}$.  As $F$ is connected and orientable, $F$ can be viewed as $G\times\{1\}$ and $F$ is parallel to $G$ in $N$.  Thus, in any case, $F$ is $\partial$-parallel in $N$, which means that $N$ is $\partial_vN$-small.
\end{proof}

In the next lemma, we study how a strongly irreducible Heegaard surface intersects essential annuli.  The result is well-known to experts and it is similar to \cite[Lemma 3.3]{BSS}, where the authors consider the intersection of a strongly irreducible Heegaard surface with a closed incompressible surface.

\begin{definition}\label{D33} 
Let $N$ be a compact orientable 3-manifold and let $P$ be an orientable surface properly embedded in $N$.  
We say $P$ is \emph{strongly irreducible} if $P$ has compressing disks on both sides, and each compressing disk on one side meets every compressing disk on the other side.  
\end{definition}

\begin{lemma}\label{Lann}
Let $N$ be a compact orientable irreducible 3-manifold with incompressible boundary.  Let $A$ be a collection of essential annuli properly embedded in $N$.  Suppose $A$ divides $N$ into sub-manifolds $N_1,\dots, N_k$.  Let $S$ be a strongly irreducible Heegaard surface of $N$.  Then after isotopy, $S$ is transverse to $A$, and either
\begin{enumerate}
  \item $S\cap N_i$ is incompressible in $N_i$ for each $i$, or
  \item Exactly one component of $\coprod_{i=1}^k (S\cap N_i)$ is strongly irreducible and all other components are incompressible in the corresponding sub-manifolds $N_i$.  Moreover, no component of $S\cap N_i$ is an annulus parallel to a sub-annulus of $A$ in $N_i$. 
\end{enumerate}
\end{lemma}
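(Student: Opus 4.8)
The plan is to run the standard Scharlemann-type argument for how a strongly irreducible Heegaard surface meets an incompressible surface, adapted to the case where the incompressible surface is a collection of essential annuli. First I would fix the Heegaard splitting $N=V_1\cup_S V_2$ and isotope $S$ so that $A$ is transverse to $S$ and $|A\cap S|$ is minimal. Since each annulus in $A$ is essential and $\partial N$ is incompressible, standard innermost-disk and outermost-arc arguments (using irreducibility of $N$ and incompressibility of both $\partial N$ and $A$) let me remove all closed curves of $A\cap S$ that bound disks on either side and all $\partial$-parallel arcs, so that after the isotopy every circle of $A\cap S$ is essential in $A$ and each component of $A\cap V_i$ is either an essential annulus in $V_i$ or a $\partial$-parallel (spanning-type) rectangle whose core runs between the two sides. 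This is bookkeeping but must be done carefully because $A$ is properly embedded and meets $\partial N$.

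Next I would recall Scharlemann's ``no nested'' structure: $S$ divides each annulus of $A$ into sub-annuli and rectangles, and the sub-annuli that are innermost (i.e. meet only one side of $S$) give, by the essentiality of $A$ and incompressibility of $S\cap N_i$ pieces we are trying to establish, compressing disks for $S\cap N_i$ on a definite side. Here is where strong irreducibility enters. Suppose toward a contradiction that at least two of the pieces $S\cap N_i$ and $S\cap N_j$ (possibly $i=j$) are compressible in such a way as to not be coverable by case (2). The key point is that a compressing disk for $S\cap N_i$ that lies entirely in some $N_i$ is automatically a compressing disk for $S$ in $N$, and compressing disks coming from different components $N_i$, $N_j$ with $i\neq j$ are disjoint. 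If one such disk lies in $V_1$ and another in $V_2$ we violate strong irreducibility of $S$ immediately; so all ``outermost'' compressions induced by $A$ on the various pieces must be to the same side, say into $V_1$, for all pieces except possibly one. Then I run Scharlemann's labeling argument on a single troublesome component: if some $S\cap N_i$ has compressing disks into both $V_1\cap N_i$ and $V_2\cap N_i$, I use an outermost sub-disk argument along the annuli of $A$ to produce disjoint compressing disks of $S$ on the two sides — contradiction — unless these two disks already certify that $S\cap N_i$ itself is strongly irreducible (compressing disks on both sides, every pair on opposite sides meeting). That isolates exactly the ``at most one strongly irreducible piece, all others incompressible'' dichotomy.

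The final clause — that in case (2) no component of $S\cap N_i$ is an annulus parallel to a sub-annulus of $A$ in $N_i$ — I would get by a minimality-of-intersection argument: if such a parallel annulus existed, I could isotope $S$ across the product region between it and its copy in $A$, reducing $|S\cap A|$, contradicting the minimality chosen at the start; one just has to check this isotopy does not recreate trivial intersections elsewhere, which follows because the product region is disjoint from the rest of $S$. I would also note that in case (1) such parallel annuli are harmless (they simply get absorbed), so the extra conclusion is only asserted in case (2).

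The main obstacle I anticipate is the bookkeeping in the labeling/outermost-arc argument: one must simultaneously track, for each rectangle and sub-annulus of $A\setminus S$, which side of $S$ it lies on, and argue that a genuine strongly irreducible piece in one $N_i$ cannot coexist with an honest (uncoverable) compressible piece in another $N_j$ — the disjointness of the resulting disks across distinct $N_j$'s is what makes strong irreducibility of the ambient $S$ bite, and getting the case $i=j$ right (compressions within a single piece) is the delicate part. This is exactly the point where \cite[Lemma 3.3]{BSS} does the analogous work for closed incompressible surfaces, and the annulus case is parallel but requires handling the arc components of $A\cap S$ in addition to the circle components.
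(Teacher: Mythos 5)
Your proposal takes a genuinely different route from the paper: you try to fix $S$ in a minimal-intersection position with respect to $A$ and then run an outermost-arc labeling argument on the pieces of $A$ cut by $S$, whereas the paper follows the sweepout/Cerf-theoretic approach of \cite[Lemma 3.3]{BSS}. The paper sets up a sweepout $S_t$ between the spines of the two compression bodies, makes the height function on $A$ Morse, and labels regular values $V$ or $W$ according to whether a curve of $S_t$ disjoint from $A$ bounds a compressing disk in $V_t$ or $W_t$. Then either some $t$ has no label (case (1)), some $t$ has both labels (case (2)), or the labels switch across a single saddle, which it analyzes directly.

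The trouble with your approach is that minimizing $|A\cap S|$ does not, by itself, place $S$ in a position where either (1) or (2) holds, and your sketch never closes this gap. Concretely: after you minimize $|A\cap S|$, it is entirely consistent with strong irreducibility of $S$ that every piece $S\cap N_i$ is compressible, but only into $V_1$ (say) for all $i$. Then the disjoint compressing disks you would need to contradict strong irreducibility all live on the same side, so there is no contradiction; yet neither case (1) (all pieces incompressible) nor case (2) (one piece strongly irreducible, the rest incompressible) holds. Your phrase ``all `outermost' compressions induced by $A$ on the various pieces must be to the same side'' describes this situation but does not resolve it: having compressing disks only to the $V_1$ side does not make a piece incompressible, and there is no obvious way to trade such a compression for a reduction in $|A\cap S|$. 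The sweepout is exactly the mechanism that escapes this impasse. As $t$ increases, the $V$-labels disappear and the $W$-labels appear, so by connectedness of $[0,1]$ there is a parameter that is unlabeled, doubly labeled, or a saddle transition; each of these is then analyzed to give (1) or (2). That dynamic is absent from a single static minimal-intersection position, and your outermost-arc argument does not supply a substitute for it. (Your treatment of the ``no parallel annulus'' clause, on the other hand, is fine as a minimality argument.)
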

\begin{proof}
The proof is similar to that of \cite[Lemma 3.3]{BSS}.  
Let $V$ and $W$ be the two compression bodies in the Heegaard splitting of $N$ along $S$. 
Let $G_V$ and $G_W$ be the core graphs of the compression bodies $V$ and $W$ respectively, and let $\Sigma_V=G_V\cup\partial_-V$ and $\Sigma_W=G_W\cup\partial_-W$, such that $N-(\Sigma_V\cup\Sigma_W)\cong S\times(0,1)$.  We may assume the graph $G_V\cup G_W$ is transverse to $A$. Next we consider the sweepout $f\colon S\times I\to N$ such that $f|_{S\times(0,1)}$ is an embedding, $f(S\times\{0\})=\Sigma_V$ and $f(S\times\{1\})=\Sigma_W$.  We denote $f(S\times\{t\})$ by $S_t$.  Each $S_t$ is isotopic to $S$ if $t\in(0,1)$, and we use $V_t$ and $W_t$ to denote the two compression bodies in the Heegaard splitting along $S_t$ that contain $\Sigma_V$ and $\Sigma_W$ respectively.

Since $A$ is a collection of essential annuli properly embedded in $N$,  $A$ cannot be totally inside a compression body $V$ or $W$.  This means that $S$ cannot be isotoped disjoint from $A$.  In particular, $A\cap \Sigma_V\ne\emptyset$ and $A\cap \Sigma_W\ne\emptyset$.

The sweepout $f$ induces a height function $h\colon  A\to I$ as follows. Define $h(x) = t$ if
$x\in S_t$. We can perturb $A$ so that $h$ is a Morse function on $A-(\Sigma_V\cup\Sigma_W)$. Let $t_0<t_1<\cdots<t_n$ denote the critical values of $h$.  Since $A\cap \Sigma_V\ne\emptyset$ and $A\cap \Sigma_W\ne\emptyset$, $t_0 = 0$ and $t_n = 1$.  

For each regular value $t\in I$ of $h$, we label $t$ with the letter $V$ (resp. $W$) if there is a simple closed curve in $S_t$ which is disjoint from $A$ and bounds a compressing disk for $S_t$ in $V_t$ (resp. $W_t$).  A regular value $t$ may have no label and may be labelled both $V$ and $W$.

\begin{claimA}\label{claimA1}
For a sufficiently small $\epsilon>0$, $\epsilon$ is labelled $V$ and $1-\epsilon$ is labelled $W$.
\end{claimA}
\begin{proof}[Proof of Claim~\ref{claimA1}]
This claim follows immediately from the assumption that $G_V\cup G_W$ is transverse to $A$.
\end{proof}

\begin{claimA}\label{claimA2}
If a regular value $t$ has no label, then part (1) of the lemma holds.
\end{claimA}
\begin{proof}[Proof of Claim~\ref{claimA2}]
Suppose $t$ has no label.  We first show that a curve in $S_t\cap A$ is either essential in both $S_t$ and $A$ or trivial in both $S_t$ and $A$.  To see this, let $C$ be a curve in $S_t\cap A$.  Since $A$ is incompressible, $C$ cannot be essential in $A$ but trivial in $S_t$.  If $C$ is essential in $S_t$ but trivial in $A$, then $C$ bounds an embedded disk in $N$.  Since the Heegaard surface $S_t$ is strongly irreducible, by Scharlemann's no-nesting lemma \cite[Lemma 2.2]{S}, $C$ bounds a compressing disk in either $V_t$ of $W_t$.  Moreover, we can move $C$ slightly off $A$, and this means that $t$ is labelled $V$ or $W$, a contradiction to the hypothesis of the claim.  Thus each curve in $S_t\cap A$ is either essential or trivial in both $S_t$ and $A$.

Let $c$ be a curve in $S_t\cap A$ that is an innermost trivial curve in $A$.  So $c$ bounds a disk $d_c$ in $A$ with $d_c\cap S_t=c$.  By our conclusion above, $c$ also bounds a disk $d_c'$ in $S_t$.  Since $c$ is innermost, $d_c\cup d_c'$ is an embedded 2-sphere.  As $N$ is irreducible, $d_c\cup d_c'$ bounds a 3-ball.  Hence we can isotope $S_t$ by pushing $d_c'$ across this 3-ball and eliminate this intersection curve $c$.  After finitely many such isotopies, we get surface $S_t'$ such that $S_t'$ is isotopic to $S_t$, and $S_t'\cap A$ consists of curves essential in both $S_t'$ and $A$.  Next we show that $S_t'\cap N_i$ is incompressible in $N_i$ for each $i$.

Suppose a component of $S_t'\cap N_i$ is compressible in $N_i$ and let $\gamma$ be an essential curve in $S_t'\cap N_i$ bounding an embedded disk in $N_i$.  Since $S_t'\cap A$ consists of essential curves in $S_t'$, $\gamma$ is an essential curve in the Heegaard surface $S_t'$.  Moreover, since the isotopy from $S_t$ to $S_t'$ is simply pushing some disks across $A$, we may view $\gamma$ as an essential curve in $S_t$ disjoint from these disks in the isotopy changing $S_t$ to $S_t'$.  In particular, we may assume $\gamma\subset S_t$ and $\gamma\cap A=\emptyset$.  As $\gamma$ bounds an embedded disk in $N_i$, by the no-nesting lemma \cite[Lemma 2.2]{S}, $\gamma$ bounds a compressing disk in $V_t$ or $W_t$, which contradicts the hypothesis that $t$ has no label.  Therefore, $S_t'\cap N_i$ is incompressible in $N_i$ for each $i$ and part (1) of the lemma holds.
\end{proof}

\begin{claimA}\label{claimA3}
If a regular value $t$ is labelled both $V$ and $W$, then part (2) of the lemma holds.
\end{claimA}
\begin{proof}[Proof of Claim~\ref{claimA3}]
Suppose $t$ is labelled both $V$ and $W$. 
We first show that a curve in $S_t\cap A$ is either essential in both $S_t$ and $A$ or trivial in both $S_t$ and $A$.  To see this, let $C$ be a curve in $S_t\cap A$.  Since $A$ is incompressible in $N$, $C$ cannot be essential in $A$ but trivial in $S_t$.  If $C$ is essential in $S_t$ but trivial in $A$, then $C$ bounds an embedded disk in $N$.  Since the Heegaard surface $S_t$ is strongly irreducible, by Scharlemann's no-nesting lemma, $C$ bounds a compressing disk in either $V_t$ or $W_t$.  Suppose $C$ bounds a compressing disk in $V_t$.  Since $t$ is also labelled $W$, $S_t$ has a compressing disk $\Delta$ in $W_t$ such that $\partial \Delta$ is disjoint from $A$.  As $C\subset A$, $C\cap\partial \Delta=\emptyset$ and this contradicts our hypothesis that $S_t$ is strongly irreducible.  Thus a curve in $S_t\cap A$ is either essential in both $S_t$ and $A$ or trivial in both $S_t$ and $A$. 

Now same as the proof of Claim~\ref{claimA2}, we can isotope $S_t$ to eliminate all the curves in $S_t\cap A$ that are trivial in both $S_t$ and $A$.  We use $S_t'$ to denote the surface after the isotopy.  This isotopy also changes $V_t$ and $W_t$ to  $V_t'$ and $W_t'$ which are the two compression bodies in the splitting of $N$ along $S_t'$.   
Since the isotopy only moves disks in $S_t$ across $A$ and since $t$ has both labels, there must be essential curves $\gamma_V$ and $\gamma_W$ in $S_t'$ that are disjoint from $A$ and bound compressing disks in $V_t'$ and $W_t'$ respectively.

As the Heegaard splitting is strongly irreducible, $\gamma_V\cap\gamma_W\ne\emptyset$ and this means that $\gamma_V$ and $\gamma_W$ lie in the same component $\Sigma$ of $S_t'\cap N_i$ for some $i$.  Moreover, since $A$ is incompressible in $N$, the compressing disks bounded by $\gamma_V$ and $\gamma_W$ can be isotoped disjoint from $A$.  Hence $\Sigma$ has compressing disks on both sides in $N_i$.

If a component $A'$ of $S_t'\cap N_j$ is an annulus in $N_j$ parallel to a sub-annulus of $A$, then we can isotope $S_t'$ by pushing $A'$ to the other side.  Note that the strongly irreducible component $\Sigma$ above cannot be an annulus.  So, even though $\Sigma$ may be changed by this isotopy, it still has compressing disks on both sides after the isotopy.  Since $S_t'\cap A$ consists of essential curves in $A$, every compressing disk for $\Sigma$ is also a compressing disk for $S_t'$.  Since the Heegaard surface $S_t'$ is strongly irreducible, this means that $\Sigma$ must be strongly irreducible in $N_i$.  Thus after isotopy, we may assume that (1) for any $j$, no component of $S_t'\cap N_j$ is an  annulus in $N_j$ parallel to a sub-annulus of $A$, and (2) $\coprod_{i=1}^k (S_t'\cap N_i)$ has a strongly irreducible component $\Sigma$.

Let $P$ ($P\ne\Sigma$) be any other component of $S_t'\cap N_j$.  If $P$ is compressible in $N_j$, then there is a curve $\gamma_P\subset P$ which is essential in $P$ and bounds a compressing disk for $P$ in $N_j$.  As $S_t'\cap A$ consists of curves essential in $S_t'$, $\gamma_P$ is essential in $S_t'$.  So $\gamma_P$ bounds a compressing disk for $S_t'$ in $V_t'$ or $W_t'$.  As $\gamma_P$ and $\gamma_V\cup\gamma_W$ lie in different components of $\coprod_{i=1}^k (S_t'\cap N_i)$, $\gamma_P\cap(\gamma_V\cup\gamma_W)=\emptyset$ and this contradicts our hypothesis that $S_t'$ is strongly irreducible.  Thus $P$ is incompressible and part (2) of the lemma holds.
\end{proof}

By Claim~\ref{claimA1}, as a regular value $t$ changes from 0 to 1, its label changes from $V$ to $W$.  Suppose the lemma is false, then by Claim~\ref{claimA2} and Claim~\ref{claimA3}, each regular value has exactly one label, and this implies that there must be a critical value $t_k$ such that $t_k-\epsilon$ is labelled $V$ and $t_k+\epsilon$ is labelled $W$ for a sufficiently small $\epsilon>0$.  So $S_{t_k}\cap A$ contain a single tangency.  

If this tangency is a center tangency, then the change from $S_{t_k-\epsilon}$ to  $S_{t_k+\epsilon}$ is an isotopy that eliminates or creates an intersection curve with $A$ that is trivial in both surfaces.  This means that $t_k-\epsilon$ and  $t_k+\epsilon$ have the same label, contradicting our assumption above.  Thus $S_{t_k}\cap A$ contains a saddle tangency.  So one component of $A\cap S_{t_k}$, denoted by $\Gamma$, is a figure 8 curve, and all other components are simple closed curves.  Note that since $A$ consists of annuli, at least one component of $A-\Gamma$ is a disk.

Let $Q(A)$ be a small product neighborhood of $A$ in $N$, where $Q(A)=A\times I$ with $A=A\times\{\frac{1}{2}\}$ and $\partial A\times I\subset\partial N$.  Let $N_i'$ be the component of $\overline{N-Q(A)}$ that lies in $N_i$.    

\begin{claimA}\label{claimA4}
For each $i$, $S_{t_k}\cap N_i'$ does not contain a curve that is essential in $S_{t_k}$ but bounds an embedded disk in $N$.  
\end{claimA}
\begin{proof}[Proof of Claim~\ref{claimA4}]
Suppose on the contrary that there is such a curve $\gamma$.  By the no-nesting lemma \cite[Lemma 2.2]{S}, $\gamma$ bounds a compressing disk in either $V_{t_k}$ or $W_{t_k}$.  Note that if $\epsilon$ is sufficiently small, then $S_{t_k\pm\epsilon}\cap N_i'$ is parallel to $S_{t_k}\cap N_i'$ in $N_i'$.  So $S_{t_k\pm\epsilon}\cap N_i'$ also contains such a curve $\gamma$ and this means that $t_k-\epsilon$ and $t_k+\epsilon$ are both labelled $V$ or $W$, contradicting the assumption above that $t_k-\epsilon$ and $t_k+\epsilon$ have different labels.
\end{proof}

Next we consider $S_{t_k}\cap Q(A)$.  Let $P$ be the component of $S_{t_k}\cap Q(A)$ that contains the saddle tangency.  By choosing the product neighborhood $Q(A)$ to be sufficiently small, we may assume $P$ is a pair of pants and any other component of $S_{t_k}\cap Q(A)$ is a vertical annulus in the product $A\times I=Q(A)$.  

If a curve in $S_{t_k}\cap (A\times\partial I)$ is trivial in $A\times\partial I$, then by Claim~\ref{claimA4}, it must be trivial in $S_{t_k}$.  If a curve in $S_{t_k}\cap (A\times\partial I)$ is essential in $A\times\partial I$, since $A$ is incompressible, it must also be essential in $S_{t_k}$. 

Now we isotope $S_{t_k}$ to eliminate the curves in $S_{t_k}\cap (A\times\partial I)$ that are trivial in both $A\times\partial I$ and $S_{t_k}$.  This isotopy is equivalent to the operation that first cuts $S_{t_k}$ open along curves in $S_{t_k}\cap (A\times\partial I)$ that are trivial in $A\times\partial I$, then caps off the boundary curves  using disks parallel to the subdisks of $A\times\partial I$ bounded by these curves and discards any resulting 2-sphere components.  As $N$ is irreducible, after a small perturbation, we obtain a surface $S_{t_k}'$ isotopic to $S_{t_k}$, and $S_{t_k}'\cap (A\times\partial I)$  consists of curves essential in both $A\times\partial I$ and $S_{t_k}'$.

Recall that, since $A$ is an annulus, at least one component of $A-\Gamma$ is a disk, where $\Gamma$ is the figure 8 curve in $A\cap S_{t_k}$ containing the saddle tangency.  So at least one boundary curve of the pair of pants $P$ is trivial in $A\times\partial I$.  Hence the operation above either eliminates $P$, or changes $P$ to a vertical annulus in $A\times I=Q(A)$, or changes $P$ to a $\partial$-parallel annulus in $A\times I$.  Thus $S_{t_k}'\cap Q(A)$ consists of essential vertical annuli in $Q(A)$ and at most one $\partial$-parallel annulus $P'$ in $Q(A)$ which comes from $P$.  Moreover, the possible $\partial$-parallel annulus $P'$ is incompressible in $Q(A)$.

Next we show $S_{t_k}'\cap N_i'$ is incompressible in $N_i'$ for all $i$. 
Suppose on the contrary that $S_{t_k}'\cap N_i'$ is compressible in $N_i'$ and let $\gamma$ is a curve in $S_{t_k}'\cap N_i'$ bounding a compressing disk. As $S_{t_k}'\cap (A\times\partial I)$ consists of curves essential in both $S_{t_k}'$ and $A\times\partial I$, $\gamma$ is essential in $S_{t_k}'$.  Since the operation of getting  $S_{t_k}'$ from $S_{t_k}$ is simply replacing disks in $S_{t_k}$ by other disks, we may view $\gamma$ as an essential curve in $S_{t_k}$ which is disjoint from these disks.  This contradicts Claim~\ref{claimA4}.  So $S_{t_k}'\cap N_i'$ is incompressible in $N_i'$ for all $i$.

If every component of $S_{t_k}'\cap Q(A)$ is a vertical annulus in $Q(A)$, then $S_{t_k}'\cap N_i$ is isotopic to $S_{t_k}'\cap N_i'$ and hence is incompressible in $N_i$ for each $i$, which means that part (1) of the lemma holds.

Suppose $S_{t_k}'\cap Q(A)$ contains a $\partial$-parallel annulus $P'$ as above.  Without loss of generality, we may suppose $\partial P'\subset A\times\{0\}=A_0$.  Clearly $A_0$ is isotopic to $A$ and $A_0$ divides $N$ into a collection of sub-manifolds $N_1'',\dots N_k''$ with each $N_i''$ isotopic to $N_i'$ and $N_i$.  So a component of $S_{t_k}'\cap N_i''$ is either $P'$ (which is a $\partial$-parallel but incompressible annulus), or isotopic to a component of $S_{t_k}'\cap N_i'$ which is also incompressible.  Therefore, by regarding $A_0$ and $N_i''$ as $A$ and $N_i$ respectively, we see that part (1) of the lemma also holds in this case.
\end{proof}

\begin{definition}\label{Dtubing}
Let $N$ be a compact 3-manifold with boundary, and let $A$ be either a torus component of $\partial N$ or an annulus in $\partial N$.  Let $F$ be a properly embedded surface in $N$ and suppose $\partial F\cap A$ is a collection of essential curves in $A$.  For any two adjacent curves $\gamma_1$ and $\gamma_2$ in $\partial F\cap A$, as shown in the schematic picture in Figure~\ref{Ftubing}, we can first glue the sub-annulus of $A$ bounded by $\gamma_1\cup\gamma_2$ to $F$ and then push it into $\Int(N)$ to make the resulting surface $\widehat{F}$ properly embedded in $N$.  We say that $\widehat{F}$ is obtained by \emph{tubing} along $A$.  If $|\partial F\cap A|$ is even, then one can apply the tubing operation on $F$ multiple times to obtain a closed surface in $N$.   Conversely, given a surface $\widehat{F}$, if there is an embedded annulus $\Gamma$ in $N$ with one boundary circle an essential curve in $A$, the other boundary circle in $\widehat{F}$ and $\Int(\Gamma)\cap\widehat{F}=\emptyset$, then we can cut $\widehat{F}$ open along the curve $\Gamma\cap\widehat{F}$ and glue two parallel copies of $\Gamma$ along the resulting pair of boundary curves.  The resulting surface has two more boundary circles in $A$ than $\widehat{F}$.  This is the converse operation of tubing, see Figure~\ref{Ftubing}, and we say the resulting surface is obtained by an \emph{annulus-compression} on $\widehat{F}$ along $A$. 
\end{definition}

\begin{figure}
  \centering
\psfrag{t}{tubing}
\psfrag{a}{annulus-compression}
\psfrag{A}{$A$}
\psfrag{F}{$F$}
\psfrag{h}{$\widehat{F}$}
  \includegraphics[width=4in]{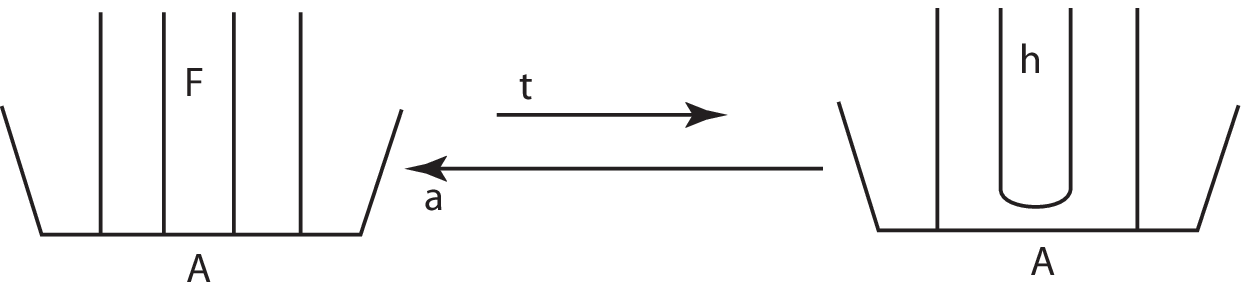}
  \caption{}\label{Ftubing}
\end{figure}

For any $\partial$-parallel surface $R$ in $N$, we use $P(R)$ to denote the region of isotopy between $R$ and $\partial N$, i.e., (1) if $R$ is a closed surface, then $P(R)$ is a product region of the form $R\times I$ in $N$ with $R\times\{0\}=R$ and $R\times\{1\}\subset\partial N$, and (2) if $R$ has boundary, then $P(R)$ is the pinched product region bounded by $R$ and the sub-surface of $\partial N$ which is bounded by $\partial R$ and isotopic to $R$ relative to $\partial R$.  We say a collection of disjoint $\partial$-parallel surfaces $R_1,\dots, R_m$ in $N$ are \emph{non-nested} if $P(R_1),\dots,P(R_m)$ are disjoint in $N$.

In the next lemma, we show that strongly irreducible surfaces in a small manifold have some nice properties.

\begin{lemma}\label{Lstr}
Let $N$ be a compact orientable irreducible 3-manifold.  Let $H$ be a component of $\partial N$ and let $A$ be an annulus in $H$ such that $H-A$ is connected.  Suppose $N$ is both small and $A$-small.  Let $S$ be a connected strongly irreducible surface properly embedded in $N$ with $\partial S\subset A$, and suppose $S$ does not lie in a collar neighborhood of $\partial N$ in $N$.  We use plus and minus signs to denote the two sides of $S$ and suppose $H-A$ is on the plus side of $S$.  Let $S_+$ and $S_-$ be the surfaces obtained by maximally compressing $S$ on the plus and minus sides of $S$ respectively and deleting any resulting 2-sphere components.  Then 
\begin{enumerate}
  \item $S_\pm$ is a collection of non-nested $\partial$-parallel surfaces in $N$
  \item The closure of each component of $\partial N-\partial S$ on the $\pm$-side of $S$ is parallel to a component of $S_\pm$, see Figure~\ref{F2sides}.
\end{enumerate} 
Furthermore, the closed surface obtained by tubing $\partial S$ on the minus-side using sub-annuli of $A$, as illustrated in Figure~\ref{F2sides}, is a Heegaard surface of $N$. In particular, $\chi(S)\le 2-2g(N)$.
\end{lemma}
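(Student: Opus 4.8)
The plan is to analyze how the strongly irreducible surface $S$ sits in the small, $A$-small manifold $N$ by studying the two one-sided maximal compression bodies it determines, then identify the result of tubing on the minus side as a genuine Heegaard splitting.

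\medskip

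\textbf{Setup and the structure of $S_\pm$.} First I would recall the standard fact that maximally compressing a strongly irreducible surface $S$ on one side — say the plus side — yields an incompressible surface $S_+$ (one only has to check that compressions on a single side never conflict, using strong irreducibility). The region between $S$ and $S_+$, together with $S_+$ itself, is a compression body, call it $W_+$, with $\partial_+ W_+$ the component of $\partial N-A$ side; symmetrically $S_-$ bounds a compression body $W_-$ on the minus side. Now apply the hypotheses: $N$ is small, so any \emph{closed} incompressible component of $S_\pm$ is boundary-parallel; $N$ is $A$-small, so any incompressible component \emph{with boundary in $A$} is boundary-parallel. By construction $\partial S_+ = \partial S_- = \partial S \subset A$ (compressions on either side do not touch $\partial S$, which is already essential in $A$), so every component of $S_\pm$ is boundary-parallel in $N$. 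This is exactly conclusion (1), except I must also verify non-nestedness: if two parallelism regions $P(R_i)$, $P(R_j)$ were nested, an innermost/outermost argument would let me isotope $S$ into a collar of $\partial N$, contradicting the hypothesis that $S$ does not lie in a collar neighborhood of $\partial N$. The key point making this work is that $H-A$ is connected and $A$ is a single annulus, so the boundary-parallel pieces of $S_+$ are ``stacked'' in a controlled, linear way.

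\medskip

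\textbf{Identifying the boundary pieces (conclusion (2)).} Since $S_\pm$ consists of non-nested boundary-parallel surfaces, each $P(R)$ is a product (or pinched product) region abutting a sub-surface of $\partial N$. On the plus side, the distinguished component of $\partial N - \partial S$ containing $H - A$ must be matched by the outermost component of $S_+$: if it were not, the compression body $W_+$ between $S$ and $\partial N$ would force a nesting or would mean $S$ was already compressible in a way contradicting strong irreducibility. The same argument works component-by-component and on the minus side. So I would argue that $W_\pm$ is, up to isotopy, the union of the product regions $P(R)$ over components $R$ of $S_\pm$, glued to the 1-handles/2-handles of the compression body, and that each piece of $\partial N$ on a given side is parallel to exactly one component of $S_\pm$ — this is the content of Figure~\ref{F2sides}.

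\medskip

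\textbf{The Heegaard splitting and the Euler characteristic bound.} For the final claim: let $\widehat{S}$ be the closed surface obtained by tubing $\partial S$ on the minus side along the sub-annuli of $A$ (here $\partial S$ has an even number of components on $A$ because $S$ is two-sided and $H-A$ is connected, so the components of $\partial S$ alternate and can be paired by adjacent sub-annuli of $A$). I claim $\widehat{S}$ is a Heegaard surface. On one side, the minus side, the region bounded is $W_-$ together with the tubes and the sub-annuli of $A$ and their product collars — since $S_-$ is boundary-parallel and the closure of $\partial N$ on the minus side is $\partial$-parallel to $S_-$ by (2), this whole region is a compression body. On the plus side, the region is $W_+$ union the complementary product region; again using (1) and (2) this is a compression body whose $\partial_-$ is empty or $\partial_-N$ appropriately — i.e. $\widehat{S}$ splits $N$ into two compression bodies, so it is a Heegaard surface. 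Then $\chi(\widehat{S}) = \chi(S) + |\partial S|$ (tubing along $\partial S$ caps off the boundary, raising $\chi$ by the number of boundary circles... more precisely each tube glued to two boundary circles via an annulus changes $\chi$ by $0$ per tube but removes the two boundary circles; tracking this carefully, $\chi(\widehat S)=\chi(S)$ when we tube and the sub-annuli contribute $0$). Since $\widehat{S}$ is a Heegaard surface of $N$, its genus is at least $g(N)$, hence $\chi(\widehat S)\le 2-2g(N)$, and combined with $\chi(\widehat S) \ge \chi(S)$ (tubing never decreases Euler characteristic) we get $\chi(S)\le 2-2g(N)$.

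\medskip

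\textbf{Main obstacle.} The delicate part is conclusion (1) together with the non-nestedness and the precise matching in (2): one has to rule out that $S$ hides complexity inside nested product regions or that the maximal compression on one side interacts badly with the other side. This requires carefully using strong irreducibility (compressing disks on opposite sides must intersect) to show the plus-side compression body and minus-side compression body are ``disjoint'' in the appropriate sense, plus the smallness hypotheses to kill all incompressible pieces, plus the hypothesis that $S$ is not in a collar to prevent degenerate stacking. Verifying that the tubed surface genuinely gives a compression body on \emph{each} side — rather than something more general — is where I expect to spend the most care; the Euler characteristic bookkeeping at the end is then routine.
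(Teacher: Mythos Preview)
Your approach is essentially the same as the paper's: compress $S$ maximally on each side, use small and $A$-small to make $S_\pm$ boundary-parallel, use the ``not in a collar'' hypothesis to get non-nestedness and the matching in (2), then tube on the minus side and verify this is a Heegaard surface. A few points, however, are stated imprecisely enough that they would need tightening.

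First, your justification that $S_+$ is incompressible only addresses the plus side (``compressions on a single side never conflict''). The nontrivial point is incompressibility on the \emph{minus} side, which is where strong irreducibility is actually used (this is the Casson--Gordon/Scharlemann argument the paper cites). Second, your non-nestedness argument (``an innermost/outermost argument would let me isotope $S$ into a collar'') is not a proof as it stands: nested product regions $P(R_i)\subset P(R_j)$ do not by themselves force $S$ into a collar. The paper instead considers the connected region $W$ between $S_+$ and $S_-$ that contains $S$, observes that for each component $R$ of $S_\pm$ either $W\subset P(R)$ or $W\cap P(R)=R$, rules out the first by hypothesis, and concludes that $W$ together with the disjoint $P(R)$'s fills $N$---which gives non-nestedness and part (2) simultaneously. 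This is the clean way to get both conclusions, and your sketch of (2) via ``outermost component of $S_+$'' is doing less work than it needs to. Finally, your Euler-characteristic paragraph is muddled (you write $\chi(\widehat S)=\chi(S)+|\partial S|$, then correct yourself, then argue via an unnecessary inequality); the correct statement is simply $\chi(\widehat S)=\chi(S)$ since each tube is an annulus, and then $g(\widehat S)\ge g(N)$ gives the bound directly.
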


\begin{figure} 
  \centering
\psfrag{S}{$S$}
\psfrag{A}{$A$}
\psfrag{+}{$S_+$}
\psfrag{-}{$S_-$}
\psfrag{h}{$\widehat{S}$}
\psfrag{c}{\shortstack{compressing on \\ the plus side}}
\psfrag{d}{\shortstack{compressing on \\ the minus side}} 
\psfrag{t}{\shortstack[l]{tubing \\ on the \\ minus side}}
  \includegraphics[width=3.5in]{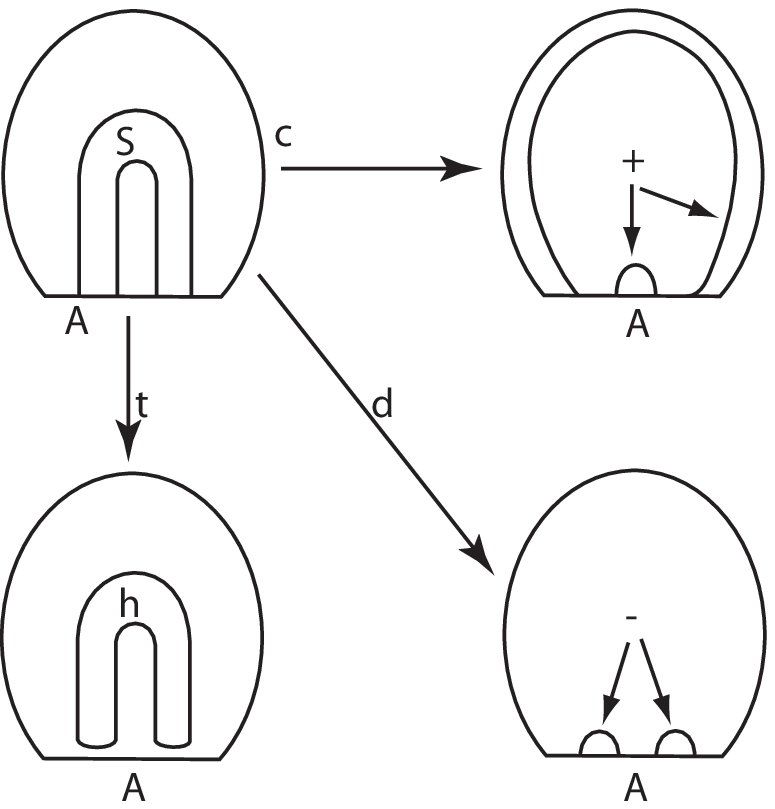}
  \caption{}
  \label{F2sides}
\end{figure}

\begin{proof}
If $S$ is non-separating, after maximally compressing $S$ on both sides, we get an incompressible surface in $N$ which has a non-separating component.  This contradicts that $N$ is both small and $A$-small.  So $S$ must be separating. Since $H-A$ is connected, this means that $|\partial S|$ is an even number.

Now we consider $S_\pm$.  A surface after a compression on $S$ naturally inherits plus and minus sides from $S$. 
Since we perform a maximal number of compressions, the surface $S_+$ (resp. $S_-$) is incompressible on the plus side (resp. minus side).  Moreover, since $S$ is strongly irreducible, by \cite{CG} (also see \cite[Proof of Lemma 5.5]{S1}), $S_+$ (resp. $S_-$) is also incompressible on the minus side (resp. plus side).  Thus $S_\pm$ is incompressible in $N$.
As $N$ is both small and $A$-small, each component of $S_\pm$ is $\partial$-parallel in $N$.  

Next we consider $S$ and $S_\pm$ at the same time.  
As the compressions on $S$ occur on curves in $\Int(S)$, after some isotopy on $S_\pm$, we may suppose $\partial S_\pm=\partial S=S\cap S_\pm$.
  Since $S$ is connected, by our construction, there is a connected region $W_\pm$ between $S$ and $S_\pm$ such that $\partial W_\pm=S\cup S_\pm$, see Figure~\ref{Fmono}(b) for a schematic picture, and $W_\pm$ can be viewed as a region obtained by adding 2-handles and 3-handles on the $\pm$-side of $S$.  Let $W=W_+\cup W_-$.  So $W$ is the component of $\overline{N-(S_+\cup S_-)}$ such that $\partial W=S_+\cup S_-$ and $\partial W\cap\partial N=\partial S=\partial S_\pm$.

Let $R$ be a component of $S_\pm$ and let $P(R)$ be the (pinched) product region between $R$ and $\partial N$ described before Lemma~\ref{Lstr}.  By our construction of $W$, $W$ lies either inside $P(R)$ or outside $P(R)$.  If $W\subset P(R)$, then $S\subset P(R)$ and hence $S$ lies in a collar neighborhood of $\partial N$ in $N$, contradicting our hypothesis on $S$.  Thus $W$ lies outside $P(R)$ and hence $W\cap P(R)=R$.  Since $\partial W=S_+\cup S_-$, this means that the union of $W$ and all the (pinched) product regions $P(R)$ (for all the components $R$ of $S_+$ and $S_-$) is the whole of $N$.  Therefore, both part (1) and part(2) of the lemma hold.

Now we prove the last part of the lemma.  Recall that $H-A$ lies on the plus side of $S$ and $|\partial S|$ is even. 
So, as illustrated in Figure~\ref{F2sides}, we can obtain a closed surface $\widehat{S}$ by tubing $\partial S$ on the minus side using sub-annuli of $A$ bounded by $\partial S$ on the minus side of $S$.   By part (2) of the lemma, each component of $S_+$ is either a closed surface parallel to a component of $\partial N$ on the plus side of $S$ or a surface parallel to $H-\Int(A)$ or an annulus parallel to a sub-annulus of $A$. Thus the same tubing operation on $S_+$ (i.e., tubing $\partial S_+$ on the minus-side of $S_+$ using sub-annuli of $A$ bounded by $\partial S_+$ on the minus side) yields a closed surface parallel to $H$ (plus all the closed-surface components of $S_+$).  Hence the maximal compression on $\widehat{S}$ on the plus side changes $\widehat{S}$ into a collection of non-nested $\partial$-parallel surfaces.

Since $H-A$ lies on the plus side of $S$, the components of $S_-$ with boundary in $A$ are a collection of non-nested $\partial$-parallel annuli, and all other components of $S_-$ are closed surfaces parallel to the components of $\partial N$ on the minus side of $S$.  So, tubing $S_-$ on the minus side of $S_-$ changes the annulus components of $S_-$ into a collection of tori bounding disjoint solid tori.  Thus the maximal compression of $\widehat{S}$ on the minus side of $\widehat{S}$ changes $\widehat{S}$ into a collection of non-nested $\partial$-parallel surfaces parallel to those components of $\partial N$ on the minus side of $S$.  This means that $\widehat{S}$ is a Heegaard surface of $N$.

As $\widehat{S}$ is a Heegaard surface of $N$, we have $\chi(S)=\chi(\widehat{S})=2-2g(\widehat{S})\le 2-2g(N)$.
\end{proof}

We conclude this section with the following technical lemma on the Heegaard genus of an annulus sum of two manifolds.  This lemma will be used in section~\ref{Sgenus} to estimate Heegaard genus.

\begin{lemma}\label{L2}
Let $M$ be a compact orientable irreducible 3-manifold with incompressible boundary, and let $A$ be an essential separating annulus properly embedded in $M$.  Suppose $A$ divides $M$ into two sub-manifolds $M_1$ and $M_2$.  We view $A\subset \partial M_1$ and $A\subset\partial M_2$.  Let $H$ be the component of $\partial M_1$ that contains $A$ and suppose $H-A$ is connected.  Suppose $M_1$ is both small and $A$-small.  Let $S$ be a minimal-genus Heegaard surface of $M$ and let $\Sigma_S$ be the union of all the incompressible surfaces and strongly irreducible Heegaard surfaces in an untelescoping of $S$.  Suppose 
\begin{enumerate}
  \item for each incompressible surface $F_s$ in $\Sigma_S$, $F_s\cap M_i$ ($i=1,2$) is incompressible in $M_i$ and no component of $F_s\cap M_i$ is an annulus parallel in $M_i$ to a sub-annulus of $A$.
  \item each component of $\Sigma_S\cap M_1$ is either incompressible or strongly irreducible in $M_1$, 
  \item $\Sigma_S\cap M_1$ contains a strongly irreducible component $\Sigma$ such that $\partial\Sigma$ consists of two circles in $A$ and $\Sigma$ does not lie in a product neighborhood of $\partial M_1$ in $M_1$. 
\end{enumerate} 
Then $g(M)\ge g(M_1)+g(M_2)-1$.
\end{lemma}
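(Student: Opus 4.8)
The plan is to exploit the untelescoping decomposition $\Sigma_S$ of the minimal-genus Heegaard surface $S$ and then re-amalgamate in a way that respects the annulus $A$, keeping careful track of Euler characteristics via part (4) of Theorem~\ref{TST}. The strategy is to use the hypotheses to arrange (after isotopy) that $\Sigma_S$ is transverse to $A$ and that $\Sigma_S \cap M_1$ and $\Sigma_S \cap M_2$ each assemble into a generalized Heegaard splitting of $M_1$ and $M_2$ respectively, so that $g(M_1) \le g(\Sigma_S \cap M_1)$ and $g(M_2) \le g(\Sigma_S \cap M_2)$ in the appropriate amalgamated sense; the loss of ``$-1$'' will come from the single strongly irreducible piece $\Sigma$ whose boundary consists of two curves on $A$.

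First I would set up the decomposition: write $M = N_0 \cup_{F_1} \cdots \cup_{F_m} N_m$ from Theorem~\ref{TST}, with $\Sigma_S = (\bigcup \Sigma_i) \cup (\bigcup F_j)$. Using hypotheses (1)--(3), together with Lemma~\ref{Lann} applied to each strongly irreducible $\Sigma_i$ and to the incompressible $F_j$, I would isotope everything transverse to $A$ so that: every component of $(\bigcup F_j) \cap M_\ell$ is incompressible in $M_\ell$ and not a sub-annulus of $A$; every component of $\Sigma_S \cap M_1$ is incompressible or strongly irreducible in $M_1$; and $\Sigma_S \cap M_2$ likewise decomposes into incompressible and strongly irreducible pieces. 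The curves $\Sigma_S \cap A$ are then essential in $A$, so $A$ is cut into parallel sub-annuli by these curves, and $\Sigma$ meets $A$ in exactly two curves.

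Next comes the main computation. Cutting $M$ along $A$ and along $\Sigma_S$, I would organize the pieces of $M_1$ into a generalized Heegaard splitting of $M_1$: the incompressible pieces (components of the $F_j$'s lying in $M_1$, plus the $\partial$-parallel annulus sub-pieces of $A$ that bound product regions) serve as the splitting surfaces, and the strongly irreducible / compression-body pieces fill in between. Because $M_1$ is small and $A$-small, Lemma~\ref{Lstr} applies to the key component $\Sigma$: tubing $\partial\Sigma$ along sub-annuli of $A$ produces a closed surface $\widehat{\Sigma}$ which is a Heegaard surface of $M_1$, giving $\chi(\Sigma) \le 2 - 2g(M_1)$, equivalently (since $\widehat{\Sigma}$ has the same Euler characteristic as $\Sigma$) $g(M_1) \le 1 - \tfrac12\chi(\Sigma)$. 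For $M_2$, amalgamating the pieces of $\Sigma_S \cap M_2$ along the incompressible pieces of $\Sigma_S \cap M_2$ (using the converse of untelescoping) produces a Heegaard surface of $M_2$, so $g(M_2) \le 1 - \tfrac12\sum_{\text{pieces in }M_2}\chi(\cdot) + (\text{correction from }F_j\cap M_2)$. Then I would add these up: by the additivity $\chi(S) = \sum_i \chi(\Sigma_i) - \sum_j \chi(F_j)$, and since cutting along $A$ replaces each curve of $\Sigma_S \cap A$ by two boundary circles without changing Euler characteristic, the total $\chi(\Sigma_S \cap M_1) + \chi(\Sigma_S \cap M_2) = \chi(\Sigma_S)$. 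Comparing $g(M) = 1 - \tfrac12\chi(S)$ with $g(M_1) + g(M_2)$, the tubing of $\Sigma$ along $A$ in $M_1$ introduces exactly one extra tube (genus $1$) relative to the naive re-amalgamation, which accounts for the discrepancy of $1$; hence $g(M) \ge g(M_1) + g(M_2) - 1$.

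The main obstacle I anticipate is the bookkeeping in the middle step: correctly matching the pieces of $\Sigma_S$ against an \emph{amalgamation} structure on each of $M_1$ and $M_2$ so that the amalgamated surfaces are genuinely Heegaard surfaces (not just generalized Heegaard splitting surfaces) of $M_1$ and $M_2$. In particular one must verify that the incompressible surfaces $F_j \cap M_i$ are the ``right'' splitting surfaces, that no spurious $\partial$-parallel annulus components inflate the genus (this is where hypothesis (1) and the ``not parallel to a sub-annulus of $A$'' clauses are used), and that the unique strongly irreducible piece $\Sigma$ lands in $M_1$ where Lemma~\ref{Lstr} is available. Handling the product regions $P(R)$ carefully — absorbing them into compression bodies without double-counting — and checking that the Euler-characteristic bookkeeping across $A$ loses at most $1$ (rather than $1$ per intersection curve) is the delicate point; the fact that $\partial\Sigma$ is exactly \emph{two} circles, emphasized in hypothesis (3), is precisely what makes the loss $1$ and not more.
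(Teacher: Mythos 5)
Your high-level scaffolding is the same as the paper's: apply Lemma~\ref{Lstr} to the special piece $\Sigma$ to obtain $\chi(\Sigma)\le 2-2g(M_1)$, turn the $M_2$-side pieces of $\Sigma_S$ into a generalized Heegaard splitting of $M_2$, and then compare Euler characteristics, with the ``$-1$'' coming from $\Sigma$. But the step you flag as ``the delicate point'' is precisely where the proof lives, and you do not resolve it; as written the proposal has genuine gaps.

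Concretely, three things are missing. First, you cannot simply ``amalgamate the pieces of $\Sigma_S\cap M_2$'': these pieces have boundary on $A$ and do not assemble into a generalized Heegaard splitting of $M_2$ on their own. The paper forms such a splitting by replacing every component of $(\cup F_i)\cap M_1$ and $(\cup \Sigma_i)\cap M_1$ by an annulus parallel to the sub-annulus of $A$ it bounds, pushed into $M_2$; it then verifies the compression-body structure by showing that maximal families of compressing disks can be chosen to meet the relevant product regions in vertical rectangles, which survive the annulus swap. None of this appears in your sketch, and without it the claim ``produces a Heegaard surface of $M_2$'' is unjustified. Second, before that replacement can be made, the paper must first normalize the non-$\Sigma$ strongly irreducible components of $\Sigma_S\cap M_1$: they live in a product region $Q\cong F'\times I$, and each such $S'$ is surgered to a surface $S''$ with $g(S'')\le g(S')$ whose intersection with $M_1$ consists only of horizontal copies of $F'$. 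You skip this entirely, but without it the boundary circles on $A$ are not nested in the way the annulus replacement requires. Third, your Euler-characteristic accounting is heuristic: ``one extra tube (genus $1$)'' is not the actual mechanism. The paper derives the bound from the inequality
$$\Bigl(\sum\chi(P_i)-\sum\chi(P_i')\Bigr)-\Bigl(\sum\chi(F_i)-\sum\chi(F_i')\Bigr)\le\chi(\Sigma),$$
which rests on $|(\cup F_i)\cap Q|\le|(\cup P_i)\cap Q|$ (each gap between parallel incompressible sheets in $Q$ must contain a thick piece) and on the fact that only $\Sigma$ is replaced by an annulus of strictly larger Euler characteristic; combining this with $\chi(S)=\sum\chi(P_i)-\sum\chi(F_i)$ and $2-2g(M_2)\ge\sum\chi(P_i')-\sum\chi(F_i')$ gives $2g(M_2)-2g(S)\le\chi(\Sigma)\le 2-2g(M_1)$. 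Your version does not contain the inequalities needed to reach this conclusion.
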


The basic idea of the proof of Lemma~\ref{L2} is to use $S$ to construct a Heegaard surface for $M_2$ and then use Lemma~\ref{Lstr} to compute the genus.  For example, if $S\cap M_1=\Sigma$, then by tubing the two boundary circles of $S\cap M_i$ ($i=1,2$) along $A$, we obtain a Heegaard surface for $M_i$ (see below for detailed proof), and the inequality in the lemma follows from a simple calculation of the Euler characteristic.  However, in a more general situation, we will need to modify the splitting.

\begin{proof}[Proof of Lemma~\ref{L2}]
Suppose $M=N_0\cup_{F_1}\cup\cdots\cup_{F_{m}} N_m$ is the decomposition in the untelescoping of $S$ as in Theorem~\ref{TST}, where each $F_i$ is incompressible and each component of $N_i$ has a strongly irreducible Heegaard surface which is a component of $\Sigma_i$.  For simplicity, we assume each block $N_i$ in the untelescoping is connected and hence each $\Sigma_i$ is a strongly irreducible Heegaard surface of $N_i$.  If $N_i$ is not connected, then we can simply use the components of the $N_i$'s and $\Sigma_i$'s and the proof is the same.  We may also suppose no (component of) $N_i$ is a product $F_i\times I$ with $\Sigma_i=F_i\times\{\frac{1}{2}\}$.

Let $\Sigma_S$ be the union of the $F_i$'s and $\Sigma_i$'s in the hypotheses of the lemma.  If some $F_i$ lies totally in $M_1$, then since $M_1$ is small, $F_i$ is parallel to a component of $\partial M_1$.  If $F_i$ is parallel to $H$ ($A\subset H$), then the special strongly irreducible component $\Sigma$ lies in the product neighborhood of $H$ bounded by $H\cup F_i$, contradicting our hypotheses.  If $F_i$ is parallel to a component of $\partial M_1-H$, then by our assumption on the untelescoping above, some $N_i$ must be a product $F_i\times I$ with a non-trivial Heegaard splitting of $F_i\times I$, and this contradicts our assumption that the Heegaard surface $S$ is of minimal-genus.  Thus no $F_i$ lies totally in $M_1$.  Since $M_1$ is $A$-small, each component of $F_i\cap M_1$ must be parallel to $H-\Int(A)$.  

If a component of $\Sigma_i\cap M_1$ lies in a product neighborhood of $A$ in $M_1$, then we push this component across $A$ and into $M_2$.  So after this isotopy, we may assume no component of $\Sigma_i\cap M_1$ lies in a product neighborhood of $A$ in $M_1$.   Note that the special strongly irreducible component $\Sigma$ of $(\cup \Sigma_i)\cap M_1$ in the hypotheses is unchanged by the isotopy, since $\Sigma$ does not lie in a product neighborhood of $\partial M_1$ in $M_1$.

By Lemma~\ref{Lstr}, $\Sigma$ is separating in $M_1$.  We use plus and minus sides to denote the two sides of $\Sigma$ and suppose $H-A$ is on the plus side of $\Sigma$. 
Let $\Sigma_\pm$ be the surface obtained by maximally compressing $\Sigma$ in $M_1$ on the $\pm$-side and deleting any resulting 2-sphere component.  
Since $\partial \Sigma$ has only two boundary circles, by Lemma~\ref{Lstr}, a component of $\Sigma_-$, denoted by $\Sigma_-'$, is an annulus parallel to the sub-annulus $A_\Sigma$ of $A$ bounded by $\partial\Sigma$, and a component of $\Sigma_+$, denoted by $\Sigma_+'$, is parallel to $H-\Int(A)$.  By Lemma~\ref{Lstr}, all other components of $\Sigma_\pm$ are closed surfaces parallel to the surfaces in $\partial M_1-H$.
By our assumption on $(\cup \Sigma_i)\cap M_1$ above, no component of $(\cup \Sigma_i)\cap M_1$ lies in the solid torus bounded by $\Sigma_-'\cup A_\Sigma$, and hence each component of $(\cup \Sigma_i)\cap M_1$ (except for $\Sigma$) must lie in the product region $Q$ between $\Sigma_+'$ and $H-\Int(A)$.  We view $Q=F'\times I$ where $F'\times\{0\}=\Sigma_+'$ and $F'\times\{1\}=H-\Int(A)$.  Since $H-A$ is connected, $F'$ is connected and has two boundary circles.  By our conclusion on the $F_i$'s above, we may suppose each component of $(\cup F_i)\cap M_1$ is of the form $F'\times\{t\}$. 

Next we will show that if $(\cup \Sigma_i)\cap Q\ne\emptyset$, then we can replace a component of  $(\cup \Sigma_i)\cap Q$ by a nicer surface without increasing the genus.

Let $S'$ be a component of $(\cup\Sigma_i)\cap Q$.  By our assumption on $(\cup \Sigma_i)\cap M_1$ at the beginning, $S'$ is either incompressible or strongly irreducible, and $S'$ does not lie in a product neighborhood of $A$ in $M_1$.  So, if $S'$ is incompressible, then $S'$ is parallel to a surface of the form $F'\times\{t\}$ in $Q=F'\times I$.  Suppose $S'$ is strongly irreducible.  Similar to the proof of Lemma~\ref{Lstr} and by Lemma~\ref{LIbundle}, $S'$ must be separating in $Q$.  We can assign plus and minus sides for $S'$.  Let $S_\pm'$ be the surface obtained by maximally compressing $S'$ in $Q$ on the $\pm$-side of $S'$ and deleting any resulting 2-sphere component.  Similar to the proof of Lemma~\ref{Lstr}, $S_\pm'$ consists of incompressible surfaces in $Q=F'\times I$.  By Lemma~\ref{LIbundle}, each component of $S_\pm'$ is either an annulus parallel to a sub-annulus of $A$ or a horizontal surface isotopic to $F'\times\{t\}$.  
In particular, each component of $S_\pm'$ is $\partial$-parallel in $Q$.  

Similar to the proof of Lemma~\ref{Lstr}, after isotopy, we may assume $\partial S_\pm'=\partial S'=S'\cap S_\pm'$ and there is a component $W$ of $\overline{Q-(S_+'\cup S_-')}$ containing $S'$, such that $S_+'\cup S_-'=\partial W$, as illustrated in Figure~\ref{Fmono}(b).  By the construction, $W\cap\partial Q=\partial S'=\partial S_\pm'\subset A$.  In particular, $W$ is disjoint from $F'\times\partial I$.

Since we have assumed at the beginning of the proof that no component of $\Sigma_i\cap M_1$ lies in a product neighborhood of $A$ in $M_1$, $W$ does not lie in a product neighborhood of $A$.  This means that at least one component of $S_+'\coprod S_-'$ is horizontal in $Q=F'\times I$ (i.e., it can be isotoped into the form $F'\times\{t\}$). 
 Since $W\cap(F'\times\partial I)=\emptyset$, $S_+'\coprod S_-'$ contains at least two  horizontal components in $Q=F'\times I$.  
As each surface $F'\times\{t\}$ separates the two components of $F'\times\partial I$ and since $W$ is connected, $S_+'\coprod S_-'$ cannot have three or more horizontal components in $Q$.  Thus $S_+'\coprod S_-'$ contains exactly two horizontal components in $Q=F'\times I$ and all other components of $S_+'\coprod S_-'$ are $\partial$-parallel annuli lying between the two horizontal components of $S_+'\coprod S_-'$.  Similar to the proof of Lemma~\ref{Lstr}, since $W$ is connected, these $\partial$-parallel annuli in  $S_\pm'$  are non-nested, and as in part (2) of Lemma~\ref{Lstr}, the closure of each component of $\partial Q-\partial S'$ on the $\pm$-side of $S'$ is parallel to a component of $S_\pm'$.

Next we construct a new surface $S''$ to replace $S'$.

Without loss of generality, we may suppose $S_+'$ contains a horizontal component of the form $F'\times\{t\}$.  
Let $k$ be the number of components of $S_+'$. 
As shown in Figure~\ref{Fmove}(a, b), we can add $k-1$ tubes to $S_+'$ along $k-1$ unknotted arcs which can be isotoped into $\partial F'\times I$, and the resulting surface, which we denote by $S''$, is a connected sum of all the components of $S_+'$.  It follows from our construction of $S''$ and the properties of $S_\pm'$ that if we maximally compress $S''$ on the $\pm$-side of $S''$, we get a surface isotopic to $S_\pm'$.  Moreover, since $S'$ is connected, one has to compress $S'$ at least $k-1$ times to get a surface with $k$ components.  This implies that $g(S'')\le g(S')$.  Now we replace the component $S'$ of $(\cup \Sigma_i)\cap Q$ by $S''$.  Since we get the same surface $S_\pm'$ after maximally compressing $S'$ and $S''$ on the $\pm$-side, the resulting surface (from replacing $S'$ by $S''$) is also a Heegaard surface of the corresponding block $N_i$.  As $g(S'')\le g(S')$, the genus of the new Heegaard surface is no larger than the genus $g(\Sigma_i)$ of the corresponding $\Sigma_i$.

\begin{figure}
  \centering
\psfrag{a}{(a)}
\psfrag{b}{(b)}
\psfrag{c}{(c)}
\psfrag{A}{$A$}
\psfrag{S}{$S_+'$}
\psfrag{2}{$S''$}
  \includegraphics[width=5in]{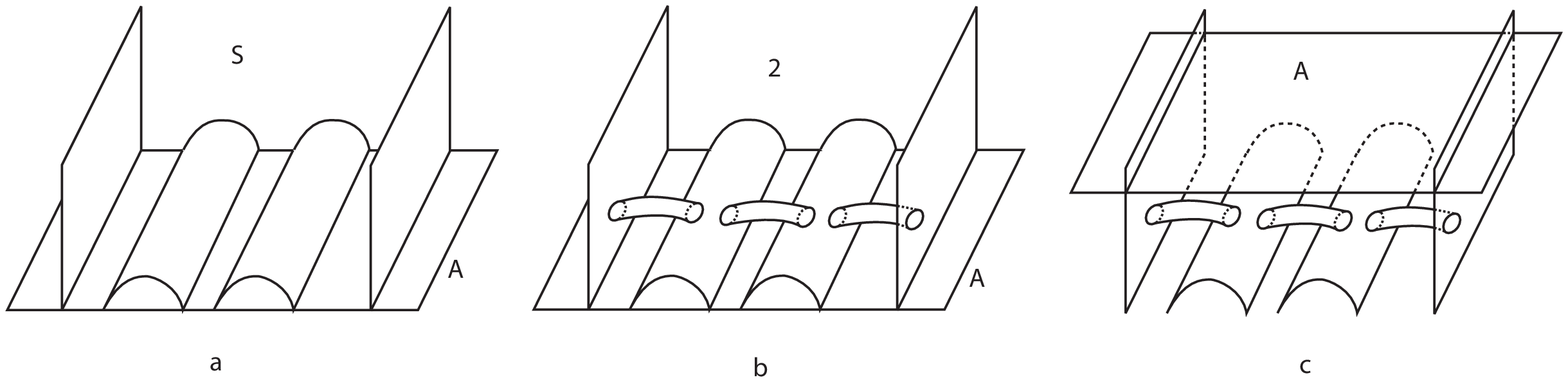}
  \caption{}
  \label{Fmove}
\end{figure}

As shown in Figure~\ref{Fmove}(c), we can isotope $S''$ by pushing the $\partial$-parallel annuli in $S_+'$ and the added tubes across $A$ and into $M_2$, and after this isotopy, $S''\cap M_1$ consists of either one horizontal surface in $Q=F'\times I$ (in the case that $S_+'$ has one horizontal component) or two horizontal surfaces (in the case that $S_+'$ has two horizontal components).  

After we perform such operations on all strongly irreducible components of $(\cup \Sigma_i)\cap Q$ as above, we obtain a new set of Heegaard surfaces $\{P_i\}$ of the blocks $N_i$'s with $g(P_i)\le g(\Sigma_i)$.  Moreover, the special surface $\Sigma$ in the hypotheses of the lemma remains a component of $(\cup P_i)\cap M_1$ and all the components of $(\cup P_i)\cap Q$ are of the form $F'\times\{t\}$ in $Q=F'\times I$.

As in part (4) of Theorem~\ref{TST}, if we amalgamate the new Heegaard surfaces $P_i$'s of the blocks $N_i$'s along the $F_i$'s, we get a new Heegaard surface of $M$ whose Euler characteristic is $\sum\chi(P_i)-\sum\chi(F_i)$.  Since $S$ can be obtained by amalgamating the $\Sigma_i$'s along the $F_i$'s, $\chi(S)=\sum\chi(\Sigma_i)-\sum\chi(F_i)$.    Since $g(P_i)\le g(\Sigma_i)$ and since $S$ is a minimal-genus Heegaard surface of $M$, we have $\chi(S)=\sum\chi(\Sigma_i)-\sum\chi(F_i)=\sum\chi(P_i)-\sum\chi(F_i)$.

Recall that $\partial\Sigma$ consists of two circles in $A$ bounding a sub-annulus $A_\Sigma$ of $A$.  Moreover, $\Sigma_-'$ is an annulus parallel to $A_\Sigma$.  So if $\Int(A_\Sigma)$ intersects some $F_i$ or $\Sigma_i$, then the solid torus bounded by $\Sigma_-'\cup A_\Sigma$ must contain a component of $F_i\cap M_1$ or $\Sigma_i\cap M_1$, which contradicts our assumption at the beginning of the proof that no component of $(\cup F_i)\cap M_1$ and $(\cup \Sigma_i)\cap M_1$ lies in a product neighborhood of $A$.  Thus $\Int(A_\Sigma)$ is disjoint from all the $F_i$'s, $\Sigma_i$'s and $P_i$'s.  Furthermore, by our assumptions on the $P_i$'s above and the conclusion on $(\cup F_i)\cap M_1$ at the beginning of the proof, every component of $(\cup F_i)\cap M_1$ and $(\cup P_i)\cap M_1$, except for $\Sigma$, is parallel to $H-\Int(A)$.   This implies that the boundary of each component of $(\cup F_i)\cap M_1$ and $(\cup P_i)\cap M_1$ is a pair of circles in $A$, and the sub-annuli of $A$ bounded by these pairs of circles are pairwise nested, with $A_\Sigma$ being the innermost annulus.

Next we use $(\cup F_i)\cap M_2$ and $(\cup P_i)\cap M_2$ to construct a generalized Heegaard splitting for $M_2$. 
As shown in Figure~\ref{Fdisk}(a), for each component $\Gamma$ of $(\cup F_i)\cap M_1$ and $(\cup P_i)\cap M_1$, we replace $\Gamma$ by an annulus that is parallel to the sub-annulus of $A$ bounded by $\partial\Gamma$.  In particular, we replace $\Sigma$ by the $\partial$-parallel annulus $\Sigma_-'$.  After pushing these annuli into $M_2$, the resulting surfaces are closed orientable surfaces in $M_2$.  This operation changes each $F_i$ and each $P_i$ into a surface in $M_2$ which we denote by $F_i'$ and $P_i'$ respectively.  By the discussion on the boundary curves of $(\cup F_i)\cap M_1$ and $(\cup P_i)\cap M_1$ above and as shown in Figure~\ref{Fdisk}(a), these $F_i'$'s and $P_i'$'s are disjoint in $M_2$.

\begin{figure}
  \centering
\psfrag{1}{$M_1$}
\psfrag{2}{$M_2$}
\psfrag{a}{(a)}
\psfrag{b}{(b)}
\psfrag{c}{(c)}
\psfrag{A}{$A$}
\psfrag{S}{$\Sigma$}
  \includegraphics[width=4in]{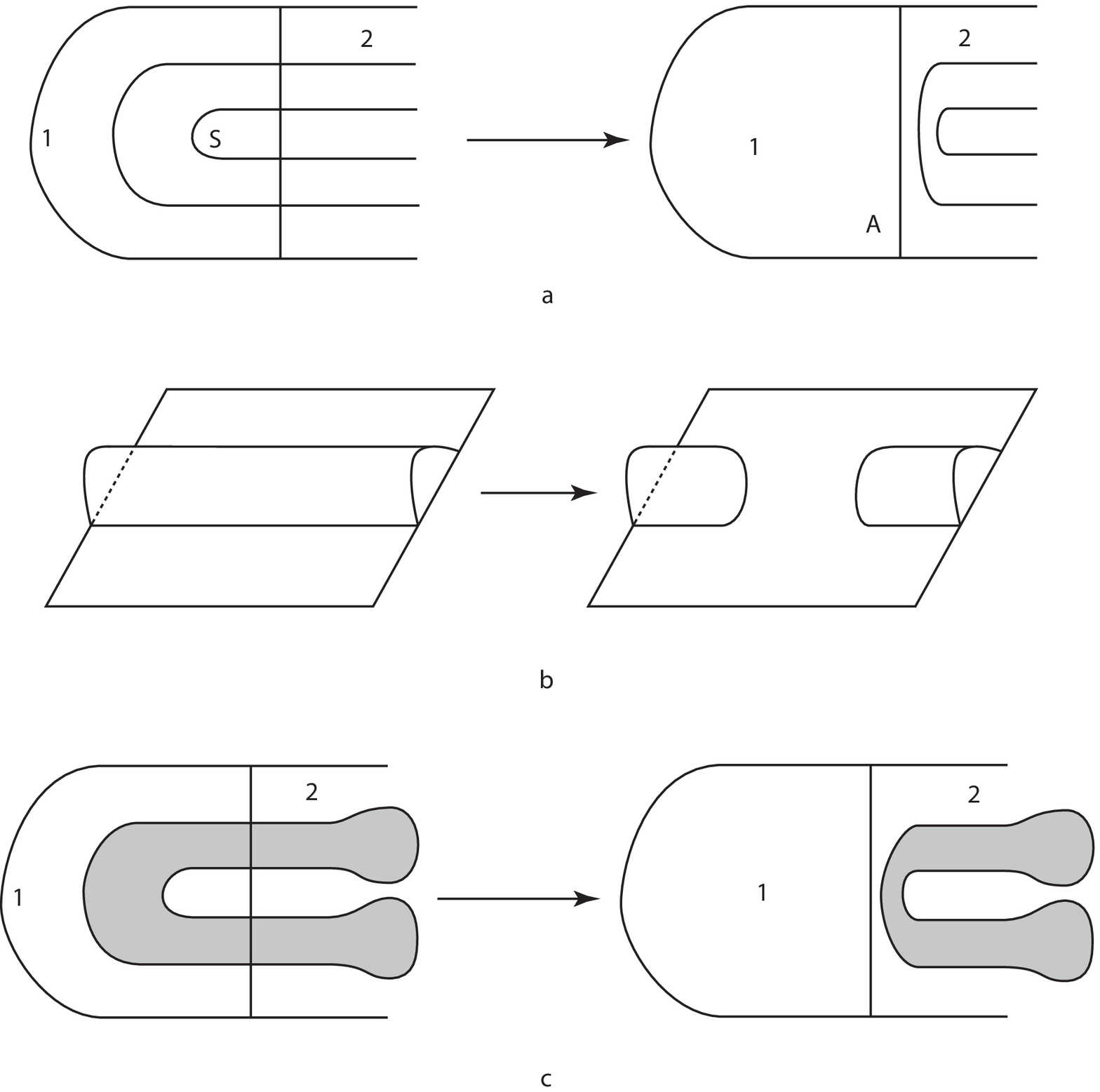}
  \caption{}
  \label{Fdisk}
\end{figure}

We claim that these $F_i'$'s and $P_i'$'s give a generalized Heegaard splitting for $M_2$.  Recall that the region $Q$ between $\Sigma_+'$ and $H-\Int(A)$ is a product $F'\times I$, and the intersection of the $F_i$'s and $P_i$'s with $Q$ are surfaces of the form $F'\times\{t\}$ which cut $Q$ into a collection of product regions of the form $F'\times J$ ($J\subset I$).  These product regions can be viewed as product regions in the compression bodies in the splittings of the $N_i$'s along the Heegaard surfaces $P_i$'s.  The reason why these $F_i'$'s and $P_i'$'s give a generalized Heegaard splitting for $M_2$ is that if one replaces a product region $F'\times J$ as above in a compression body by another product region $annulus\times I$, the resulting manifold is still a compression body.  

To prove this claim, we first consider how a compressing disk of $P_i$ intersects these product regions $F'\times J\subset Q$.  Given a compressing disk $D$ of $P_i$, if $D\cap (\partial F'\times J)$ contains a trivial arc in the annuli $\partial F'\times J$, then as shown in Figure~\ref{Fdisk}(b), we can perform a $\partial$-compression on $D$ which changes $D$ into a pair of new compressing disks for $P_i$.  This implies that, after some $\partial$-compressions as above and some isotopy, we can choose a maximal set of compressing disks for the $P_i$'s on both sides such that the intersection of each compressing disk in this set with the product regions $Q$ (if not empty) is a collection of vertical rectangles in products $F'\times J$ ($J\subset I$) above. 

The operations changing $F_i$ and $P_i$ to $F_i'$ and $P_i'$ above basically change each product region in $Q$ between the components of $F_i\cap Q$ and $P_i\cap Q$ into a new product region $annulus\times I$ in $M_2$.  As illustrated in Figure~\ref{Fdisk}(c) (the shaded region in Figure~\ref{Fdisk}(c) denotes a compressing disk), by switching vertical rectangles in $Q$ to vertical rectangles in the $annulus\times I$ regions, we can use the compressing disks for $P_i$ above to construct a maximal set of compressing disks for $P_i'$.  
This plus our construction of $\Sigma_\pm'$ implies that after compressing the $P_i'$'s on either side in $M_2$, we obtain surfaces parallel to the corresponding $F_i'$'s.  Therefore, the $P_i'$'s are Heegaard surfaces of the regions between the $F_i'$'s.  This means that these $P_i'$'s and $F_i'$'s give a generalized Heegaard splitting of $M_2$.  Note that because of replacing $\Sigma$ by $\Sigma_-'$, a compression body in this generalized Heegaard splitting may be a trivial compression body.

Similar to the calculation in \cite[Lemma 2]{SS} (see part (4) of Theorem~\ref{TST}),  the Euler characteristic of the Heegaard surface of $M_2$ obtained by amalgamating the $P_i'$'s along the $F_i'$'s above is $\sum\chi(P_i')-\sum\chi(F_i')$.  So we have $2-2g(M_2)\ge \sum\chi(P_i')-\sum\chi(F_i')$.     

Note that since each block $N_i$ in the untelescoping has incompressible boundary, $A\cap N_i$ consists of essential annuli in $N_i$.  Since an essential annulus in $N_i$ intersects every Heegaard surface of $N_i$, this implies that there is at least one component of $(\cup P_i)\cap Q$ lying between each pair of components of $((\cup F_i)\cap Q)\cup (H-\Int(A))$.  Thus $|(\cup F_i)\cap Q|\le |(\cup P_i)\cap Q|$.  

Let $P_j$ be the surface that contains $\Sigma$.  In the operation of getting the surface $P_j'$ from $P_j$ above, we replace $\Sigma$ by the annulus $\Sigma_-'$.  So $\chi(P_j)-\chi(P_j')\le\chi(\Sigma)$.  In our construction of $F_i'$ and $P_i'$, we replace each component of $(\cup P_i)\cap Q$ and $(\cup F_i)\cap Q$ by an annulus.  Since $|(\cup F_i)\cap Q|\le |(\cup P_i)\cap Q|$ and the by the operation on $\Sigma$, we have 
$$\left(\sum\chi(P_i)-\sum\chi(P_i')\right)-\left(\sum\chi(F_i)-\sum\chi(F_i')\right)\le\chi(\Sigma).$$   
Thus we have 
$(\sum\chi(P_i)-\sum\chi(F_i))-(\sum\chi(P_i')-\sum\chi(F_i'))=(\sum\chi(P_i)-\sum\chi(P_i'))-(\sum\chi(F_i)-\sum\chi(F_i'))\le\chi(\Sigma)$.

Recall we have shown earlier that $\chi(S)=\sum\chi(\Sigma_i)-\sum\chi(F_i)=\sum\chi(P_i)-\sum\chi(F_i)$ and $2-2g(M_2)\ge \sum\chi(P_i')-\sum\chi(F_i')$.   
So we have $\chi(S)-(2-2g(M_2))\le(\sum\chi(P_i)-\sum\chi(F_i))-(\sum\chi(P_i')-\sum\chi(F_i'))\le\chi(\Sigma)$ and hence $\chi(S)-(2-2g(M_2))=2g(M_2)-2g(S)\le\chi(\Sigma)$.  By Lemma~\ref{Lstr}, $\chi(\Sigma)\le 2-2g(M_1)$.  This means that $2g(M_2)-2g(S)\le 2-2g(M_1)$ and hence $g(S)\ge g(M_1)+g(M_2)-1$.
\end{proof}

\section{The construction of $X$}\label{SX}

Our first manifold $X$ is the exterior of a graph $G$ in $S^3$ constructed as follows.

We first take a 2-bridge knot $K$ in $S^3$ and let $S$ be a 2-bridge sphere with respect to $K$.  Let $B_+$ and $B_-$ be the two 3-balls in $S^3$ bounded by $S$.  So $K\cap B_\pm$ is a pair of trivial arcs in the 3-ball $B_\pm$.   Let $\alpha$ be a component of $K\cap B_+$.  Let $\beta$ be an arc in the bridge sphere $S$ with $\partial\beta=\partial\alpha=\beta\cap K$.  Note that if we slightly push $\beta$ into $B_+$, we get a 2-bridge presentation for the knot $(K-\alpha)\cup\beta$.  We may choose the slope of $\beta$ in the 4-punctured sphere $S-K$ so that 
\begin{enumerate}
  \item  $(K-\alpha)\cup\beta$ is a different 2-bridge knot from $K$, and  
  \item $\beta$ is not an unknotting tunnel of the 2-bridge knot $K$ (see \cite{K} for the classification of unknotting tunnels of 2-bridge knots).  
\end{enumerate} 
Let $G=K\cup\beta$ and $X=S^3-N(G)$, where $N(G)$ is an open neighborhood of $G$ in $S^3$.

Note that, although $\alpha\cup\beta$ is a trivial knot in $S^3$, $\alpha\cup\beta$ does not bound an embedded disk $D$ with $\Int(D)\cap K=\emptyset$, because if such a disk $D$ exists, then $\beta$ is isotopic to $\alpha$ (fixing $\partial\beta$), which implies that $(K-\alpha)\cup\beta$ and $K$ are the same 2-bridge knot, contradicting our assumptions on $\beta$.

Let $c\subset\partial X$ be a meridional curve for the arc $K-\alpha$, i.e., $c$ bounds a disk $D_c$ in $S^3$ such that $D_c\cap X=\partial D_c=c$ and $D_c\cap G$ is a single point in $K-\alpha$.  Let $A\subset\partial X$ be an annular neighborhood of $c$ in $\partial X$. 

By a theorem of Hatcher and Thurston \cite{HT}, a 2-bridge knot complement does not contain non-peripheral closed incompressible surfaces.  The next lemma shows that the complement of $G$ has similar properties.  

\begin{lemma}\label{LXsmall}
$X$ is small and $A$-small.
\end{lemma}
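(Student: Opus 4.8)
The plan is to reduce everything to known structural results about 2-bridge knot complements and to the fact that $X$ differs from the 2-bridge knot exterior by drilling out the arc $\beta$. Write $Y = S^3 - N(K)$ for the exterior of the 2-bridge knot $K$; then $X$ is obtained from $Y$ by removing a neighborhood of the arc $\beta$, which lies in the 2-bridge sphere $S$ with endpoints on $K$. Equivalently, $\partial X$ is a genus-two surface, $Y = X \cup N(\beta)$, and the annulus $A \subset \partial X$ is a meridional annulus for the arc $K-\alpha$. I will first prove smallness (no non-peripheral closed incompressible surface), then $A$-smallness (every properly embedded incompressible surface with boundary in $A$ is $\partial$-parallel).

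First I would establish smallness. Suppose $T$ is a closed orientable incompressible non-peripheral surface in $X \subset Y$. Since $Y$ is a 2-bridge knot exterior, by Hatcher–Thurston \cite{HT} the only closed incompressible surface in $Y$ (up to isotopy) is $\partial Y$, so $T$ must be isotopic in $Y$ to $\partial N(K)$; but $T \subset X = Y - N(\beta)$, so after isotoping $T$ to be parallel to $\partial N(K)$ in $Y$ we may take $T$ to bound a collar of $\partial N(K)$ disjoint from $\beta$ only if $\beta$ is isotopic off that collar — which it is, since $\beta$ can be isotoped into $S$. Hence $T$ is isotopic to a component of $\partial X$ after all, a contradiction. (One must be slightly careful: $T$ could in principle be compressible in $Y$ yet incompressible in $X$; but a compressing disk for $T$ in $Y$ meets $N(\beta)$, and one uses an innermost/outermost argument on $T \cap N(\beta)$ to either find a compressing disk for $T$ in $X$ or reduce intersections, so incompressibility in $X$ does force incompressibility in $Y$.)

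Next, $A$-smallness. Let $F$ be a properly embedded orientable incompressible surface in $X$ with $\partial F \subset A$; the boundary curves are all meridians of the arc $K-\alpha$ (if some boundary curve were trivial in $A$, incompressibility of $\partial X$ together with irreducibility would let us remove it). Capping off $\partial F$ with meridian disks $D_c$ of $K-\alpha$ in $S^3$ — these disks meet $G$ once each, in $K - \alpha$ — produces a closed surface $\widehat F$ in $S^3$ that meets $G$ transversely and meets $K-\alpha$ in a collection of points and is disjoint from $\beta$ and from $\alpha$. Then $\widehat F$ is a closed surface in the complement of $\alpha \cup \beta$; since $\alpha \cup \beta$ is an unknot, and more relevantly since $F$ was incompressible in $X$, $\widehat F$ is incompressible in the complement of the subgraph $(K-\alpha)\cup\beta$ away from the capping disks. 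The key point is that $F$ incompressible with meridional boundary in $A$ forces $\widehat F$, viewed in $Y' = S^3 - N((K-\alpha)\cup\beta)$, to be incompressible; but $(K-\alpha)\cup\beta$ is a 2-bridge knot (by our choice of $\beta$, condition (1)), so again by Hatcher–Thurston its exterior contains no non-peripheral closed incompressible surface. Therefore $\widehat F$ is either compressible or boundary-parallel in $Y'$, and tracing this back through the tubing/untubing along $A$ shows $F$ is boundary-parallel in $X$.

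The main obstacle is the bookkeeping in the $A$-small case: controlling how the meridian disks $D_c$ intersect $F$, ensuring the capped surface $\widehat F$ is genuinely incompressible in the relevant 2-bridge exterior (rather than just in $S^3$ minus a link), and then showing that boundary-parallelism of $\widehat F$ in that exterior pulls back to boundary-parallelism of $F$ in $X$ rather than merely to $F$ being an annulus parallel into $A$ (though by Lemma~\ref{Lmonogon} that case is exactly what "boundary-parallel with boundary in $A$" should mean here). One must also use condition (2) on $\beta$ — that $\beta$ is not an unknotting tunnel of $K$ — to rule out the degenerate possibility that $\widehat F$ becomes compressible precisely because $\beta$ is isotopic into a Heegaard surface; this is where the careful choice of the slope of $\beta$ earns its keep. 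I expect the bulk of the argument to be this innermost-disk/outermost-arc analysis of $F \cap (\cup D_c)$ combined with the rigidity of 2-bridge exteriors.
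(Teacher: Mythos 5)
Your plan is to reduce both assertions to Hatcher--Thurston's rigidity of 2-bridge knot exteriors by ``filling back in'' the drilled-out piece: for smallness you push a closed surface $T\subset X$ into $Y=S^3-N(K)$, and for $A$-smallness you cap off $\partial F$ and push into $Y'=S^3-N((K-\alpha)\cup\beta)$. The paper does not do this; it re-runs the Hatcher--Thurston Morse-theory/sweepout argument directly inside $X$ (tracking the slope of $F\cap S_r$, using the hypothesis that $K$ and $(K-\alpha)\cup\beta$ are distinct 2-bridge knots to force the slope to become undefined on both sides of $S_t$, and then analyzing the region $W$ between the two resulting ``trivial'' level spheres using the vertical annuli $A_1$, $A_2$). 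This difference in strategy is not cosmetic: the reduction you propose has a genuine gap.

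The problem is the step where you claim that incompressibility in $X$ forces incompressibility in the larger manifold ($Y$, resp. $Y'$). Passing from $X$ to $Y$ amounts to gluing back the 1-handle $N(\beta)$, and passing from $X$ to $Y'$ amounts to gluing back the handle around $\alpha$; in each case this is a handle attachment/Dehn filling along an annulus in $\partial X$, and such operations can and do create compressions. A compressing disk $D$ for $T$ in $Y$ with $\partial D\subset T\subset\Int(X)$ meets the lateral annulus $\partial N(\beta)\cap\partial X$ in circles; an innermost such circle may bound a cocore disk of the 1-handle on the $N(\beta)$ side and a disk with essential boundary on the $D$ side, and there is no isotopy that removes it. Your parenthetical innermost/outermost sketch does not resolve this --- it is simply false in general that incompressibility survives the handle attachment, and one needs an argument specific to the geometry of $G=K\cup\beta$. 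Indeed the paper's whole point in redoing the sweepout is precisely to avoid this formal reduction. There is also a secondary confusion in the $A$-small case: the capped surface $\widehat F$ meets $K-\alpha$ transversely, so it is \emph{not} a closed surface in $Y'$; you presumably mean the punctured (meridional) surface, for which one would invoke meridional smallness of the 2-bridge exterior $Y'$, but then the same incompressibility-transfer gap reappears. Finally, condition (2) on $\beta$ (not an unknotting tunnel) is used in the paper to prove Lemma~\ref{LXb} and Lemma~\ref{LXgenus}, not in the proof of Lemma~\ref{LXsmall}; invoking it to ``rule out $\widehat F$ becoming compressible'' is not how it functions.
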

\begin{proof}
Similar to \cite[Proof of Theorem 1]{HT}, we picture the 2-bridge knot $K$ with respect to the natural height function $h\colon  S^3\to\mathbb{R}$.  We denote each level 2-sphere $h^{-1}(r)$ by $S_r$.  We may assume $h(K)=[0,1]$ and each $S_r$ $r\in (0,1)$ is a bridge 2-sphere for $K$.  Moreover, suppose the arc $\beta$ in the construction of $G$ lies in the level sphere $S_t$ ($t\in (0,1)$), and suppose $h(\alpha)=[t,1]$.

Let $F$ be a compact orientable incompressible surface properly embedded in $X$.  Suppose either $F$ is a closed surface or $\partial F\subset A$.  Our goal is to show that $F$ is $\partial$-parallel in $X$.  After shrinking $N(G)$ to $G$, we may view $F$ as a closed surface in $S^3$ possibly with some punctures at the arc $K-\alpha$.  

We first use an argument in \cite{HT}. 
As in \cite[Proof of Theorem 1]{HT}, we may suppose the height function $h$ on $F$ is a Morse function.  We may also suppose each puncture of $F\cap G$ is a center tangency of $F$ with a level 2-sphere and suppose the level sphere $S_t$ containing $\beta$ is not a critical level.  We may view $h^{-1}((0,1))$ as a product $S^2\times(0,1)$ with each arc $K\cap(S^2\times(0,1))$ being a vertical arc $\{x\}\times(0,1)$.  The isotopy class of each essential and non-peripheral simple closed curve in a 4-punctured sphere is referred as a slope which is a number in $\mathbb{Q}\cup\{\infty\}$.  By projecting $S^2\times(0,1)$ to the same $S^2$, we may use the slopes of a fixed 4-punctured sphere to denote the isotopy classes of essential non-peripheral loops at every level $S_r-K$ ($r\in (0,1)$).

For each non-critical level $S_r$ ($r\in (0,1)$), we define the slope of this level to be the slope of any essential and non-peripheral circle of $S_r\cap F$ in the 4-punctured sphere $S_r-K$, if there is such a circle (if there are several such circles, they must have the same slope).  The slope (if defined) at a level can change only at a level of saddle of $F$.  When passing a saddle, either one level curve splits into two level curves or two level curves are joined into one level curve.  In either case, the three level curves can be projected to be disjoint curves in a common level 2-sphere.  As there cannot be two disjoint circles of different slopes in a 4-punctured sphere, the slope cannot change at a saddle, except to become undefined.

Our difference from \cite{HT} is the special level $S_t$ which contains the arc $\beta$.  Let $r_\beta$ be the slope of an essential curve around $\beta$ in $S_t-K$.  As $F\cap\beta=\emptyset$, if $F\cap S_t$ contains a circle that is essential and non-peripheral in $S_t-K$, then the slope at $S_t$ must be $r_\beta$.

The first possibility is that the slope at every non-critical level $S_r$ with $t<r<1$ is defined.  Since slope does not change at saddles of $F$, this means that the slope at $S_{1-\epsilon}$ is $r_\beta$ for a small $\epsilon>0$.  

Let $B_+$ be the 3-ball bounded by $S_t$ containing $\alpha$.  Then the two arcs $K\cap B_+$ can be isotoped into a pair of disjoint arcs in $S_t$. 
Let $\pi(\alpha)$ be the arc in $S_t$ isotopic to $\alpha$ in $B_+$ as above and let $r_\alpha$ be the slope of an essential curve around $\pi(\alpha)$ in $S_t-K$.  If $\epsilon$ is sufficiently small ($\epsilon>0$), then any essential non-peripheral curve of $F\cap S_{1-\epsilon}$ must have slope $r_\alpha$ in the 4-puncture sphere $S_{1-\epsilon}-K$.  
This implies that, if the slope at every non-critical level $S_r$ with $t<r<1$ is defined, then the slope $r_\beta$ must be the same as the slope $r_\alpha$ and hence $K$ and $(K-\alpha)\cup\beta$ are the same 2-bridge knot, contradicting our assumptions on $\beta$.  Thus there must be a non-critical level $S_a$ with $t<a<1$ such that $F\cap S_a$ consists of trivial and peripheral curves in $S_a-K$.

Similarly, since $(K-\alpha)\cup\beta$ is a 2-bridge knot, as in \cite[Proof of Theorem 1]{HT} and the argument above, there is also a non-critical level $S_b$ with $0<b<t$ such that $F\cap S_b$ consists of trivial and peripheral curves in $S_b-K$.

Since $F-G$ is incompressible, if a circle in $F\cap S_a$ is trivial in $S_a-K$, then it must be trivial in $F-G$ and hence we can perform an isotopy on $F-G$ to remove this circle.  Thus after some isotopies, we may assume $F\cap S_a$ and $F\cap S_b$ consist of peripheral curves in $S_a-K$ and $S_b-K$ respectively around the punctures.  

Let $B_a$ and $B_b$ be the 3-balls in $S^3$ bounded by $S_a$ and $S_b$ respectively and that do not contain the level sphere $S_t$.  Since $K$ is a 2-bridge knot and $\beta$ lies outside $B_a$, $B_a\cap G$ is a pair of trivial arcs in $B_a$.  There is a disk $D$ properly embedded in $B_a-G$ that separates the two arcs in $B_a\cap G$.  Since $F\cap S_a$ consists of peripheral curves in $S_a-K$, we may assume $\partial D\cap F=\emptyset$ in $S_a$.  Since $F-G$ is incompressible and $\partial D\cap F=\emptyset$, after some isotopy, we may assume $D\cap F=\emptyset$.  The disk $D$ divides $B_a$ into a pair of 3-balls and $B_a\cap G$ is a pair of unknotted arcs in the pair of 3-balls.  So each component of $B_a-D$ may be viewed as a tubular neighborhood of a component of $B_a\cap G$ in $S^3$.  Since $F\cap S_a$ consists of peripheral circles in $S_a-K$ and since $F\cap D=\emptyset$, we can isotope $F\cap B_a$ into a small neighborhood of $G\cap B_a$ in $B_a$. 
Similarly, there is also such a disk in $B_b$ disjoint from $F$ and separating the pair of arcs in $G\cap B_b$.  So after isotopy, $F\cap B_b$ lies in a small neighborhood of $G\cap B_b$ in $B_b$.

Next we consider the region $W$ between the two spheres $S_a$ and $S_b$, and this is our main difference from \cite[Proof of Theorem 1]{HT}.  

By our construction, $W\cong S^2\times I$ and $G\cap W$ consists of two vertical arcs of the form $\{x\}\times I\subset S^2\times I$ and an H-shaped graph which is the union of two vertical arcs in $W$ and the horizontal arc $\beta$.  

There are a pair of essential non-peripheral simple closed curves $\gamma_1$ and $\gamma_2$ in the 4-punctured sphere $S_t-K$ such that $\gamma_1$ is disjoint from $\beta$ and $\gamma_2\cap\beta$ is a single point.  Note that this means that $\gamma_1\cap\gamma_2$ consists of two points, and $\gamma_1$ and $\gamma_2$ cut $S_t$ into 4 disks, each containing a puncture of $S_t\cap K$.  Let $A_1$ and $A_2$ be the two vertical annuli in $W$ containing $\gamma_1$ and $\gamma_2$ respectively.  By the construction, $A_1\cap G=\emptyset$ and $A_2\cap G=\gamma_2\cap\beta$ is a single point in $\beta$.  As $\gamma_1\cap\gamma_2$ has two points, $A_1\cap A_2$ in a pair of $I$-fibers of $W=S^2\times I$.  Moreover, $A_1$ and $A_2$ cut $W$ into four 3-balls, each containing an arc of $K\cap W$.

Since $F\cap \partial W$ consists of peripheral curves around the punctures in $K\cap \partial W$, after isotopy, we may assume $F\cap\partial A_1=\emptyset$ and $F\cap\partial A_2=\emptyset$. 
We may also assume $F$ is transverse to $A_1$ and $A_2$ and assume the number of intersection points of $F$ with the pair of arcs $A_1\cap A_2$ is minimal up to isotopy.  
This implies that any curve of $F\cap A_i$ that is essential in $A_i$ must intersect each component of $A_1\cap A_2$ in a single point, and any curve of $F\cap A_i$ that is trivial in $A_i$ must be disjoint from $A_1\cap A_2$.  If $F\cap A_i$ contains a trivial curve $c$ which bounds a disk in $A_i-G$, then since $F-G$ is incompressible, $c$ also bounds a disk in $F-G$.  So we can isotope $F$ to eliminate all such curves.  Thus, after isotopy, we may assume that 
\begin{enumerate}
  \item  $A_1\cap F$ consists of curves essential in $A_1$, 
  \item a curve in $A_2\cap F$ is either essential in $A_2$ or a curve around the puncture $A_2\cap\beta$ and disjoint from $A_1\cap A_2$, 
  \item  each curve of $A_i\cap F$ that is essential in $A_i$ intersects each component of $A_1\cap A_2$ in a single point.  
\end{enumerate} 
By our construction of $A_1$ and $A_2$, this implies that if a curve of $A_1\cap F$ meets a curve of $A_2\cap F$, then they intersect in two points, one in each component of $A_1\cap A_2$. 

We have the following two cases to consider.

\vspace{8pt}
\noindent
Case (1). $A_1\cap F\ne\emptyset$.  
\vspace{8pt}

Let $\gamma$ be a curve in $F\cap A_1$ and let $x$ be a point in $\gamma\cap(A_1\cap A_2)$.  Let $\gamma'$ be the curve in $F\cap A_2$ containing $x$.  By our assumptions on $F\cap A_i$ above, $\gamma$ and $\gamma'$ are essential curves in $A_1$ and $A_2$ respectively.  Moreover, as in the conclusion before Case (1), $\gamma\cap\gamma'$ has two intersection points, one in each component of $A_1\cap A_2$.  As $A_1\cap\beta=\emptyset$ and $A_2\cap\beta$ is a single point, we may isotope $F$ so that $\gamma$ and $\gamma'$ lie in the same level sphere $S_r$ ($r\ne t$).  Let $N_x$ be a small neighborhood of $\gamma\cup\gamma'$ in $F$.  So $N_x$ is a 4-hole sphere.  Since $F$ is transverse to $A_1$ and $A_2$, we may isotope $F$ so that $N_x\subset S_r$.  By our construction of $A_1$ and $A_2$, $S_r-N_x$ consists of 4 disks, each containing a puncture of $S_r\cap K$.  
   Let $B_r$ be the 3-ball that is bounded by $S_r$ and does not contain $\beta$.  So $B_r\cap G$ is a pair of trivial arcs in $B_r$.  This implies that there is a simple closed curve $C$ in $N_x$ bounding a disk in $B_r$ that separates the two arcs of $B_r\cap G$.   Since $F-G$ is incompressible and $N_x\subset F-G$, $C$ must bound a disk $\Delta$ in $F-G$.  The curve $C$ cuts $N_x$ into two pairs of pants $P_x$ and $P_x'$, one on each side of $C$.  Hence either $P_x$ or $P_x'$ lies in $\Delta$.  However, this implies that a component of $\partial N_x$ lies in $\Delta$ and hence bounds a subdisk $\delta$ of $\Delta$.  On the other hand, this component of $\partial N_x$ bounds a disk $d_r$ in $S_r$ which contains exactly one puncture of $K\cap S_r$.  As $S_r$ is disjoint from $\beta$, $\delta\cup d_r$ is a (possibly immersed) 2-sphere disjoint from $\beta$ and intersecting $K$ in a single point.  This means that a meridional curve of $K$ is homotopically trivial in $S^3-K$, which is impossible.

\vspace{8pt}
\noindent
Case (2). $A_1\cap F=\emptyset$.  
\vspace{8pt}

We cut $W$ open along the vertical annulus $A_1$.  The resulting manifold consists of two 3-balls $W_1$ and $W_2$.  Suppose $W_1$ is the 3-ball that contains the H-shaped graph in $G\cap W$.   By the construction of $A_1$, $W_2\cap G$ consists of two unknotted arcs that are $\partial$-parallel in $W_2$. 

We first consider $W_2$.  As $F\cap \partial W$ ($\partial W=S_a\cup S_b$) consists of peripheral circles around the punctures $G\cap\partial W$, there is a disk $D_2$ properly embedded in $W_2$ such that $\partial D_2\cap F=\emptyset$ and $D_2$ separates the two components of $G\cap W_2$.  Since $F-G$ is incompressible and $\partial D_2\cap F=\emptyset$, after isotopy, we may assume $F\cap D_2=\emptyset$.  Note that each component of $W_2-D_2$ can be viewed as a tubular neighborhood of a component of $G\cap W_2$ in $W_2$.  So, similar to the argument above on $F\cap B_a$ and $F\cap B_b$, we can isotope $F\cap W_2$ into a small neighborhood of $G\cap W_2$ in $W_2$. 

Next we consider $W_1$.  As $\beta$ lies in a level sphere $S_t$, the H-shaped graph $G\cap W_1$ is $\partial$-parallel in $W_1$, which means that we may view $W_1$ as a tubular neighborhood of $G\cap W_1$ in $S^3$.  Since $F\cap \partial W$ consists of peripheral curves around the punctures, we can isotope $F\cap W_1$ into a small neighborhood of the H-shaped graph $G\cap W_1$. 

Now we glue $W_1$, $W_2$, $B_a$ and $B_b$ together to get back $S^3-G$.  The conclusions on $F\cap B_a$, $F\cap B_b$, $F\cap W_1$ and $F\cap W_2$ above imply that $F$ can be isotoped into a small neighborhood of $G$.

Finally, we come back to view $F$ as properly embedded in $X=S^3-N(G)$.  The conclusion above implies that $F$ can be isotoped into a product neighborhood of $\partial X$ in $X$.  Since $F$ is incompressible, by a theorem of Waldhausen \cite[Proposition 3.1 and Corollary 3.2]{W}, $F$ must be parallel (in the product neighborhood of $\partial X$) to a sub-surface of $\partial X$.  Hence $F$ is $\partial$-parallel in $X$.
\end{proof}

\begin{lemma}\label{LXb}
$X$ has incompressible boundary.
\end{lemma}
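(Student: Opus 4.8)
The plan is to run an indirect argument: if $\partial X$ were compressible in $X$, then $X$ would be forced to be a genus-two handlebody, and this would exactly say that $\beta$ is an unknotting tunnel of the $2$-bridge knot $K$, contradicting condition (2) in the construction of $G$.

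First I would record that $X$ is irreducible: it is the exterior of the connected graph $G$ in $S^{3}$, so any $2$-sphere in $X$ is disjoint from the connected set $G$, lies in one of the two balls it bounds in $S^{3}$, and hence bounds a ball in $X$. Now suppose, for contradiction, that $\partial X$ is compressible in $X$, and let $W\subseteq X$ be the characteristic compression body of $\partial X$ (a maximal collar of $\partial X$ to which $2$- and $3$-handles have been added), so that $\partial_{+}W=\partial X$, $W$ is nontrivial, and $X_{0}:=\overline{X-W}$ has incompressible boundary. The surface $\partial_{-}W$ is incompressible in $W$ and in $X_{0}$, and since a compressing disk for $\partial_{-}W$ in $X$ must lie entirely in one of these two pieces, $\partial_{-}W$ is incompressible in $X$. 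By Lemma~\ref{LXsmall}, $X$ is small, so every component of $\partial_{-}W$ is $\partial$-parallel in $X$; as $\partial X$ is the only boundary component, each such component is parallel to $\partial X$, a surface of genus $2$. An Euler characteristic count now rules this out unless $\partial_{-}W=\emptyset$: if $\partial_{-}W$ had $k\ge 1$ components and $W$ were built from $n\ge 0$ one-handles, then $\chi(\partial_{+}W)\le\chi(\partial_{-}W)-2n$ gives $-2\le -2k-2n$, forcing $k=1$ and $n=0$, i.e.\ $W=\partial X\times I$ is a trivial product, whence $\partial X$ is incompressible in $X$ — a contradiction. Hence $\partial_{-}W=\emptyset$ and $X=W$ is a genus-two handlebody.

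Finally, $X=S^{3}-N(K\cup\beta)$ being a genus-two handlebody is, by definition, the statement that the arc $\beta$ (whose endpoints lie on $K$) is an unknotting tunnel of the $2$-bridge knot $K$; this contradicts our choice of $\beta$ in condition (2) of the construction of $G$ (compare the classification in \cite{K}). Therefore $\partial X$ must be incompressible.

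The step I expect to need the most care is the use of the characteristic compression body of $\partial X$ together with the observation that its negative boundary is incompressible in the whole of $X$ (and not merely in the two pieces $W$ and $X_{0}$ separately); this is the point at which smallness, via Lemma~\ref{LXsmall}, can be brought to bear. It is a standard fact about compression bodies in the spirit of the untelescoping discussion of Section~\ref{S2}, and once it is in place the remaining bookkeeping — the Euler characteristic count and the identification of ``$X$ is a handlebody'' with ``$\beta$ is an unknotting tunnel'' — is routine.
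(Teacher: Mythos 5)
Your proof is correct and runs on the same three ingredients as the paper's: irreducibility of $X$, smallness of $X$ (Lemma~\ref{LXsmall}), and the fact that $X$ is not a handlebody because $\beta$ is not an unknotting tunnel. The surface $\partial_{-}W$ of the characteristic compression body is exactly the surface $P$ that the paper produces by maximally compressing $\partial X$; the paper asserts without elaboration that $P$ is non-peripheral and reads off the contradiction from smallness, whereas you supply the small Euler-characteristic computation that shows a peripheral $\partial_{-}W$ would force $W$ to be trivial, then close the argument via the handlebody contradiction. (One cosmetic point: the relation $\chi(\partial_{+}W)=\chi(\partial_{-}W)-2n$ is an equality, not merely an inequality, though this does not affect the conclusion.) So this is essentially the same proof, with the bookkeeping spelled out.
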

\begin{proof}
First, as $X$ is a subspace of $S^3$ and $G$ is connected, $X$ is irreducible. 
Since we assumed $\beta$ is not an unknotting tunnel for the 2-bridge knot $K$, $X$ is not a handlebody.    

Suppose $\partial X$ is compressible in $X$ and let $P$ be the surface obtained by maximally compressing $\partial X$ in $X$ and discarding any resulting 2-sphere components.  Since $X$ is irreducible and $X$ is not a handlebody, $P\ne\emptyset$.  This means that if $\partial X$ is compressible, then $X$ contains a non-peripheral incompressible surface $P$, contradicting Lemma~\ref{LXsmall}.
\end{proof}

\begin{lemma}\label{LXgenus}
The Heegaard genus of $X$ is 3.  In other words, the tunnel number of the graph $G$ is one.
\end{lemma}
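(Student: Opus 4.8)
The statement to prove is $g(X)=3$. Since $G=K\cup\beta$ is a knot together with one arc joining two of its points, $b_1(G)=2$, so $N(G)$ is a genus-two handlebody and $\partial X$ is a closed surface of genus $2$. The plan is to prove $g(X)\ge 3$ and $g(X)\le 3$ separately; the first is short and rests on Lemma~\ref{LXb}, while the second — exhibiting an unknotting tunnel for $G$ — is the main content. For the lower bound, note that any Heegaard surface of $X$ is $\partial_+W$ for a compression body $W$ with connected $\partial_-W=\partial X$ of genus $2$, and hence has genus $\ge 2$; if $g(X)=2$ then $W$ has both $\partial_\pm W$ of genus $2$, which forces $W=\partial X\times I$ and therefore $X$ to be a handlebody, contradicting the fact (Lemma~\ref{LXb}) that $\partial X$ is incompressible. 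Thus $g(X)\ge 3$.

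For the upper bound it suffices to produce a single \emph{unknotting tunnel} $\tau$ for $G$, i.e.\ an arc with $\partial\tau\subset G$ such that $S^3-N(G\cup\tau)$ is a handlebody: then $N(G\cup\tau)-N(G)$ is a compression body ($\partial X\times I$ together with one $1$-handle), and $X=\bigl(S^3-N(G\cup\tau)\bigr)\cup\bigl(N(G\cup\tau)-N(G)\bigr)$ is a genus-three Heegaard splitting. I will take $\tau$ from the $2$-bridge picture: with $S$ the bridge sphere, $B_\pm$ the balls it bounds, $K\cap B_\pm$ trivial $2$-string tangles, $\alpha$ one string of $K\cap B_+$ and $\alpha'$ the other, let $\tau\subset B_+$ be a standard unknotting tunnel of the $2$-bridge knot $K$ joining $\alpha$ to $\alpha'$, chosen to lie in $B_+$ disjoint from $S$ (and hence from $\beta$).

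To verify that $\tau$ unknots $G$, cut $X':=S^3-N(G\cup\tau)$ along $S$. Since $\tau$ and the interior of $\beta$ miss $S$, the sphere meets $X'$ in the four-holed sphere $P=S-N(K)$, cutting $X'$ into $X'_-=B_--N(K\cap B_-)$ and $X'_+=B_+-N\bigl((K\cap B_+)\cup\beta\cup\tau\bigr)$. Triviality of $K\cap B_-$ makes $X'_-$ a handlebody (of genus $2$); and in $B_+$ the arcs $\alpha,\alpha',\beta$ are each trivially embedded — so that $\alpha\cup\beta$ is an unknotted loop and $\alpha\cup\alpha'\cup\tau$ an unknotted tree — which makes $X'_+$ a handlebody as well. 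Finally, exactly as for the standard genus-two splitting of a $2$-bridge knot exterior, one checks that the union of $X'_+$ and $X'_-$ along $P$ is again a handlebody: $P$ has a compressing disk in $X'_-$ (the disk separating the two strings of $K\cap B_-$) and a matching compressing disk in $X'_+$, and compressing along these makes $P$ boundary-parallel, which assembles a complete system of meridian disks for $X'$. An Euler-characteristic count (or simply the fact that $N(G\cup\tau)$ is a genus-three handlebody) shows $X'$ has genus $3$. Hence $g(X)\le 3$, and therefore $g(X)=3$, i.e.\ the tunnel number of $G$ is one.

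I expect the main difficulty to be the last step: knowing that $X'_+$ and $X'_-$ are handlebodies is not by itself enough, and one must genuinely exhibit compatible compressing disks for $P$ on the two sides and check that together they form a full meridian system for $X'$ — equivalently, verify that adjoining the trivial arc $\beta$ on the bridge sphere does not destroy the standard handlebody structure of the complement of a $2$-bridge knot together with its unknotting tunnel. This is where the hypotheses that $K\cap B_\pm$ are trivial tangles and that $\beta$ is an honest arc on the bridge sphere (with $\alpha\cup\beta$ an unknotted loop in $B_+$) are used.
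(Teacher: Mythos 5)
Your lower bound argument is correct and essentially identical to the paper's (genus-$2$ boundary plus not-a-handlebody forces $g(X)\ge 3$), and you have chosen the same tunnel $\tau$ as the paper does. But the verification that $\tau$ unknots $G$ has a genuine gap, and you have in fact located it yourself. Knowing that $X'_-$ and $X'_+$ are handlebodies says nothing about their union along $P$: two $\partial$-parallel graphs in $B_+$ and $B_-$ whose union is a nontrivial knot already shows that the local structure on each side of the bridge sphere does not control the global handlebody structure. Your proposed fix --- ``a compressing disk in $X'_-$ and a matching compressing disk in $X'_+$, and compressing along these makes $P$ boundary-parallel, which assembles a complete system of meridian disks'' --- is not an argument; it does not say what ``matching'' means, and in fact in a four-holed sphere any two disjoint essential separating curves are parallel, so one would either get a sphere (whose innerness one cannot assume, since irreducibility of $X'$ is part of what you are trying to establish) or a weakly reducible configuration that still requires work. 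The parenthetical ``(or simply the fact that $N(G\cup\tau)$ is a genus-three handlebody)'' is circular, since the content of the upper bound is precisely to show $N(G\cup\tau)$ is a \emph{standardly embedded} genus-three handlebody.

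The paper closes this gap by a different, and cleaner, route: set $H=\tau\cup(K\cap B_+)$ (note $H$ omits $\beta$), observe that $H$ is $\partial$-parallel in $B_+$ so that the four-holed sphere $S-N(H)$ isotopes into $\partial\overline{N(H)}$; in particular the arc $\beta-N(H)\subset S$ is parallel to an arc in $\partial\overline{N(H)}$. One then slides the $1$-handle $N(\beta)$ over $\partial N(K\cup\tau)$ until it is trivially attached. Since $N(K\cup\tau)$ is already a standard genus-$2$ handlebody (as $\tau$ is an unknotting tunnel for the $2$-bridge knot $K$), this exhibits $N(G\cup\tau)$ as a standard genus-$3$ handlebody, i.e.\ $\tau$ is an unknotting tunnel for $G$. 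That handle slide is exactly the step you flag as ``the main difficulty'' without resolving; the cut-along-$S$ decomposition is not needed once one has the slide. (A minor point: as written, ``the interior of $\beta$ miss[es] $S$'' is false since $\beta$ is constructed \emph{in} $S$; one must first push $\beta$ slightly into $B_+$, which you do implicitly but should state.)
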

\begin{proof}
This lemma is similar to the fact that the tunnel number of a 2-bridge knot is one. First,  since $\partial X$ has genus 2 and since $X$ is not a handlebody, the Heegaard genus of $X$ is at least 3.  Next we construct a genus 3 Heegaard splitting of $X$.

Let $S$ be the bridge sphere of $K$ containing $\beta$ and let $B_\pm$ be the 3-balls bounded by $S$, as in the construction of $G$ at the beginning of this section.  The 2-bridge knot has an unknotting tunnel $\tau$ in $B_+$ connecting the two arcs of $K\cap B_+$, as shown in Figure~\ref{Fknot}(a).  Let $H=\tau\cup(K\cap B_+)$ (note that $H$ does not contain $\beta$).  The H-shaped graph $H$ is $\partial$-parallel in $B_+$ and the 4-hole sphere $S-N(H)$ can be isotoped into $\partial \overline{N(H)}$.  In particular, the arc $\beta-N(H)$ in $S$ is parallel to an arc in $\partial \overline{N(H)}$.  This implies that we can fix $H$ and slide the arc $\beta$ into a trivial unknotted circle, see Figure~\ref{Fknot}(a,b) for a picture.  Since $\tau$ is an unknotting tunnel for the 2-bridge knot $K$, $N(K\cup\tau)$ is a standard genus 2 handlebody in $S^3$.  After sliding $\beta$ into a trivial circle as above, we see that $N(G\cup \tau)$ is a standard genus 3 handlebody in $S^3$.  Hence $\tau$ is an unknotting tunnel for $G$ and $X$ has a genus 3 Heegaard surface.  
\end{proof}

\begin{figure}
  \centering
\psfrag{a}{(a)}
\psfrag{b}{(b)}
\psfrag{c}{(c)}
\psfrag{t}{$\tau$}
\psfrag{r}{$\alpha$}
\psfrag{d}{$\beta$}
\psfrag{h}{sliding $\beta$}
\psfrag{x}{$x$}
\psfrag{y}{$y$}
\psfrag{s}{$s$}
\psfrag{F}{$S$}
  \includegraphics[width=5in]{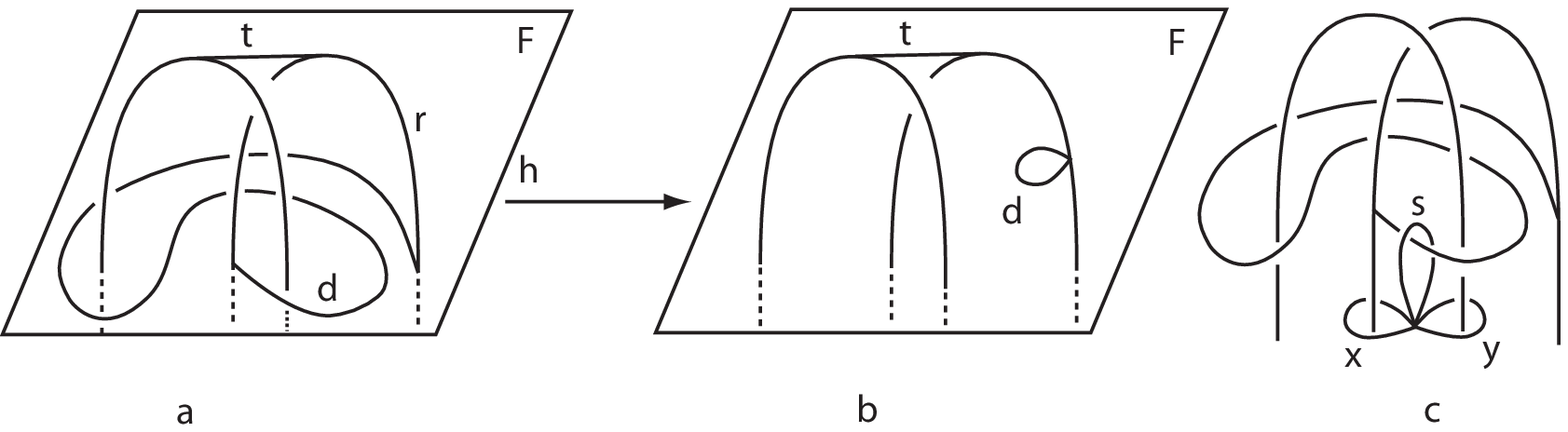}
  \caption{}
  \label{Fknot}
\end{figure}

\begin{lemma}\label{LXrank}
The rank of $\pi_1(X)$ is 3.  Moreover, $\pi_1(X)$ is generated by $x$, $h^{-1}xh$ and $s$, where $h\in\pi_1(X)$ and $x$ is represented by the core curve of the annulus $A\subset\partial X$ described  before Lemma~\ref{LXsmall}.
\end{lemma}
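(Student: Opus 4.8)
The plan is to read off the three generators directly from the genus~$3$ Heegaard splitting constructed in the proof of Lemma~\ref{LXgenus}, which gives simultaneously the bound $r(\pi_1(X))\le 3$ and the ``moreover'' clause, and then to prove separately that $\pi_1(X)$ is not $2$--generated.

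\textbf{The generators and the upper bound.} In the proof of Lemma~\ref{LXgenus} the unknotting tunnel $\tau$ of $G$ is chosen so that $N(G\cup\tau)$ is a \emph{standardly embedded} genus~$3$ handlebody in $S^3$; hence $W:=S^3-\Int N(G\cup\tau)$ is also a genus~$3$ handlebody, and $X=W\cup h$, where $h=\overline{N(G\cup\tau)-N(G)}=\overline{N(\tau)-N(G)}$ is a single $2$--handle attached to $\partial W$. (In the language of Section~\ref{S2} this is the Heegaard splitting of $X$ whose compression-body side has $\partial_-$ equal to $\partial X$; thus $\pi_1$ of the Heegaard surface already surjects onto $\pi_1$ of the compression body, so $\pi_1(W)\twoheadrightarrow\pi_1(X)$, and $r(\pi_1(X))\le 3$.) To name a free basis of $\pi_1(W)\cong F_3$ geometrically, note that the graph $G\cup\tau$ has four vertices (the two branch points $P,Q$ of the theta-graph $G=K\cup\beta$, the point $R=\tau\cap\alpha$, and the point $T=\tau\cap(K-\alpha)$), and six edges; take the spanning tree consisting of $\tau$ together with the two sub-arcs of $\alpha$ cut off by $R$. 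The three non-tree edges are then the arc $\beta$ and the two sub-arcs $\alpha_1',\alpha_2'$ of $K-\alpha$ cut off by $T$, and since $N(G\cup\tau)$ is standard the meridians $m_1,m_2,m_3$ of these three edges (the belt circles of the three $1$--handles of $N(G\cup\tau)$) form a free basis of $\pi_1(W)$. Pushing into $X$: $\overline{m}_1,\overline{m}_2$ are both meridians of the arc $K-\alpha$, taken at the two sides of $T$, hence freely homotopic in $X$ (slide one to the other along $\partial N(K-\alpha)\subset\partial X$). So after choosing the basepoint of $X$ on a meridian disk of $K-\alpha$ we have $\overline{m}_1=x$ (the core of $A$), $\overline{m}_2=h^{-1}xh$ for the element $h\in\pi_1(X)$ carried by a loop that runs once along $\partial N(K-\alpha)$, and setting $s=\overline{m}_3$ we get $\pi_1(X)=\langle x,\,h^{-1}xh,\,s\rangle$.

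\textbf{The lower bound.} Proving $r(\pi_1(X))\ge 3$ is the hard part. I would argue by contradiction: suppose $\pi_1(X)=\langle u,v\rangle$. First, the easy observation: filling in the arc $K-\alpha$ (killing the normal closure of $x$) produces the complement of the trivial knot $\alpha\cup\beta$, so $\pi_1(X)/\langle\langle x\rangle\rangle\cong\mathbb Z$; since $\pi_1(X)$ contains the incompressible genus~$2$ surface group $\pi_1(\partial X)$ it is non-cyclic, so no finite set of conjugates of $x$ can generate $\pi_1(X)$. Next, filling in $\beta$ (killing the meridian $s$) gives an epimorphism $p\colon\pi_1(X)\twoheadrightarrow G_K:=\pi_1(S^3-N(K))$, and filling in $\alpha$ gives $q\colon\pi_1(X)\twoheadrightarrow G_{K''}:=\pi_1(S^3-N(K''))$ with $K''=(K-\alpha)\cup\beta$; by our choice of $\beta$, $K$ and $K''$ are distinct $2$--bridge knots and $\beta$ is not an unknotting tunnel of $K$. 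The pairs $(p(u),p(v))$ and $(q(u),q(v))$ generate the $2$--bridge knot groups $G_K$ and $G_{K''}$; the plan is to use the rigidity of such groups (every $2$--element generating system of a $2$--bridge knot group is Nielsen equivalent to a meridian pair, the algebraic shadow of the classification of their genus~$2$ Heegaard splittings and unknotting tunnels, cf.\ \cite{K}) to arrange, after lifting Nielsen transformations to $\pi_1(X)$, that $u,v$ map to meridional elements in both quotients. Since $K-\alpha$ is the \emph{common} edge of the theta-graphs presenting $K$ and $K''$, and $p$ kills the $\alpha$-meridian while $q$ kills the $\beta$-meridian, an element that is meridional under both $p$ and $q$ must be conjugate in $\pi_1(X)$ to $x^{\pm1}$; so $u$ and $v$ would both be conjugates of $x$, contradicting the first observation.

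\textbf{Expected main obstacle.} The delicate point in the lower bound is making the ``$u,v$ meridional in both quotients'' step rigorous: a generating pair Nielsen equivalent to a meridian pair need not consist of meridians, and Nielsen moves fixing the $p$-image as a meridian pair may disturb the $q$-image. Resolving this requires either showing $\pi_1(X)$ itself has a single Nielsen class of generating pairs, or replacing the double-quotient argument by a more careful analysis of which conjugacy classes in $\pi_1(X)$ can have meridional image under both fillings; this is where I expect essentially all of the work to lie.
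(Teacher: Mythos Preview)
For the generators, your approach via a spanning tree of $G\cup\tau$ is workable in spirit, though the assertion that the non-tree-edge meridians form a free basis of $\pi_1(W)$ is not automatic and needs justification. The paper instead reads the three generators directly from the $2$--bridge structure: the standard meridional generating pair $x,\,y=h^{-1}xh$ of $\pi_1(S^3-K)$ lives in the ball $B_-$ (where the conjugacy is visible), and since $\beta$ lies in a bridge sphere, adjoining the meridian $s$ of $\beta$ gives generators of $\pi_1(B_+-G)$ and hence, by van Kampen across the bridge sphere, of $\pi_1(X)$.

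The real gap is in your lower bound, and the paper's argument is dramatically simpler than the route you propose. Because $g(X)=3$ while $g(\partial X)=2$, the compression body in the genus~$3$ Heegaard splitting contributes exactly one $2$--handle, so $\pi_1(X)\cong F_3/\langle\langle r\rangle\rangle$ is a one-relator group. By a theorem of Whitehead \cite{Wh} (see \cite[Proposition~II.5.11]{LS}), a one-relator group on $n$ generators has rank $<n$ only when the relator is primitive, which would force $\pi_1(X)\cong F_2$. But $\pi_1(X)$ contains the genus~$2$ surface group $\pi_1(\partial X)$ by Lemma~\ref{LXb}, so it is not free; hence $r(\pi_1(X))=3$. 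This one-line algebraic argument completely bypasses the obstacle you flag: you never need to lift Nielsen transformations compatibly through the two fillings $p$ and $q$, nor to prove that elements meridional under both fillings are conjugate to $x^{\pm1}$ in $\pi_1(X)$ (a step you assert but do not justify, and which also relies on the rigidity of generating pairs of $2$--bridge knot groups holding in a form strong enough to be lifted).
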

\begin{proof}
By Lemma~\ref{LXgenus}, $X$ has a genus 3 Heegaard splitting.  As $g(\partial X)=2$, the genus 3 Heegaard splitting of $X$ gives a presentation of $\pi_1(X)$ with three generators and one relator.  By a theorem of Whitehead \cite{Wh} (see \cite[Proposition II.5.11]{LS}), $\pi_1(X)$ cannot be generated by two elements unless the relator is an element of some basis of $\pi_1(X)$ and $\pi_1(X)$ is a free group.  Since $X$ is not a handlebody and $\pi_1(X)$ is not a free group, $\pi_1(X)$ cannot be generated by two elements and $rank(\pi_1(X))=3$.

Let $S$ be the bridge sphere containing $\beta$ and let $B_\pm$ be the 3-balls bounded by $S$ as in the proof of Lemma~\ref{LXgenus}.  We first push the arc $\beta$ slightly into $\Int(B_+)$.  
The fundamental group of the complement of the 2-bridge knot $K$ is generated by two elements $x$ and $y$ represented by two meridional loops in the 3-ball $B_-$, as shown in Figure~\ref{Fknot}(c).  We may choose $x$ and $y$ to be conjugate in $\pi_1(B_--K)$.  So $y=h^{-1}xh$ where $h$ is an element of $\pi_1(B_--K)$.  Let $s$ be an element represented by a loop around the arc $\beta$, as shown in Figure~\ref{Fknot}(c).  Since $x$ and $y$ generate $\pi_1(B_+-K)$ and since $\beta$ lies in a level 2-sphere, $\pi_1(B_+-G)$ can be generated by $x$, $y$ and $s$.  Notice that $\pi_1(B_--K)$ is a subgroup of $\pi_1(S-K)$.  So $\pi_1(S^3-G)$ is generated by $x$, $y$ and $s$.  By our construction, we may view $x$ as the element represented by the core curve of the annulus $A\subset\partial X$, and $y=h^{-1}xh$ where $h\in \pi_1(S^3-G)$.
\end{proof}

\begin{lemma}\label{Llive}
The dimension of $H_1(X;\mathbb{Z}_2)$ is 2.
\end{lemma}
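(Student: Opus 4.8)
The plan is to reduce this to a homology computation for the complement of the spatial graph $G=K\cup\beta$. The first point to observe is the combinatorial type of $G$: its two endpoints $\partial\beta=\partial\alpha$ are valence-three vertices, and its three edges are $\beta$ together with the two arcs into which the pair of points $\partial\alpha$ divides the knot $K$. Thus $G$ is a theta-graph; it is homotopy equivalent to a wedge of two circles, so $b_1(G)=2$ and, since $\mathbb{Z}_2$ is a field, $H^1(G;\mathbb{Z}_2)\cong H_1(G;\mathbb{Z}_2)\cong\mathbb{Z}_2^{\,2}$. Correspondingly the closed regular neighborhood $\overline{N(G)}$ is a genus-two handlebody, which is exactly why $\partial X$ has genus two.

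With this in hand I would compute $H_1(X;\mathbb{Z}_2)$ by Alexander duality. Since $N(G)$ is an open regular neighborhood, $X$ is a deformation retract of $S^3-G$, so $H_1(X;\mathbb{Z}_2)\cong H_1(S^3-G;\mathbb{Z}_2)$. Alexander duality applied to the compact, locally contractible set $G\subset S^3$ gives $\widetilde H_1(S^3-G;\mathbb{Z}_2)\cong\widetilde H^{1}(G;\mathbb{Z}_2)\cong\mathbb{Z}_2^{\,2}$, whence $\dim H_1(X;\mathbb{Z}_2)=2$.

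Alternatively, and with no appeal to duality, one can run Mayer--Vietoris for $S^3=X\cup\overline{N(G)}$ with $X\cap\overline{N(G)}=\partial X$ a closed orientable genus-two surface. Using that $H_*(S^3;\mathbb{Z}_2)$ is concentrated in degrees $0$ and $3$, that $H_3(X;\mathbb{Z}_2)=H_3(\overline{N(G)};\mathbb{Z}_2)=0$, and that $\overline{N(G)}$ is a genus-two handlebody (so $H_2(\overline{N(G)};\mathbb{Z}_2)=0$ and $H_1(\overline{N(G)};\mathbb{Z}_2)\cong\mathbb{Z}_2^{\,2}$), a short diagram chase shows the connecting map $H_3(S^3;\mathbb{Z}_2)\to H_2(\partial X;\mathbb{Z}_2)$ is an isomorphism, forcing $H_2(X;\mathbb{Z}_2)=0$, and then yields the exact sequence $0\to H_1(\partial X;\mathbb{Z}_2)\to H_1(X;\mathbb{Z}_2)\oplus H_1(\overline{N(G)};\mathbb{Z}_2)\to 0$. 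Hence $\dim H_1(X;\mathbb{Z}_2)=\dim H_1(\partial X;\mathbb{Z}_2)-\dim H_1(\overline{N(G)};\mathbb{Z}_2)=4-2=2$.

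There is essentially no obstacle here: the whole content is the initial identification of $G$ as a theta-graph (equivalently, of $\overline{N(G)}$ as a genus-two handlebody), after which either argument is routine. As a consistency check, this is compatible with Lemma~\ref{LXrank}, which gives $\dim H_1(X;\mathbb{Z}_2)\le r(\pi_1 X)=3$.
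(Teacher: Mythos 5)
Your proof is correct, and it takes a genuinely different route from the paper's. You compute $H_1(X;\mathbb{Z}_2)$ directly from the topology of the graph $G$: you observe that $G$ is a theta-graph (two valence-three vertices $\partial\alpha$, three edges $\alpha$, $K-\alpha$, $\beta$), so $b_1(G)=2$ and $\overline{N(G)}$ is a genus-two handlebody, and then either Alexander duality or a Mayer--Vietoris chase for $S^3=X\cup\overline{N(G)}$ yields $\dim H_1(X;\mathbb{Z}_2)=2$. The paper instead sandwiches the dimension between two bounds of independent interest: the upper bound $\dim H_1(X;\mathbb{Z}_2)\le 2$ comes from Lemma~\ref{LXrank}, which exhibits a generating set $\{x,\,h^{-1}xh,\,s\}$ of $\pi_1(X)$ with two conjugate generators (so at most two survive in $H_1(\cdot;\mathbb{Z}_2)$); the lower bound $\ge 2$ comes from the ``half lives, half dies'' lemma applied to $i_*\colon H_1(\partial X;\mathbb{Z}_2)\to H_1(X;\mathbb{Z}_2)$, whose image has dimension $g(\partial X)=2$. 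Your argument is more elementary and self-contained (it never invokes Lemma~\ref{LXrank} or half-lives-half-dies), and it makes the answer transparent from the combinatorics of $G$. The paper's argument is less direct but thematically tied to the surrounding development: the fact that $\pi_1(X)$ has a rank-three generating set two of whose members are conjugate is exactly what is exploited in Lemma~\ref{LRM} to cut down the number of generators of $\pi_1(M)$, so the paper's proof of this lemma doubles as a sanity check on that structural feature. Either proof is acceptable; yours is the cleaner route if one only wants the homology computation.
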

\begin{proof}
First, by Lemma~\ref{LXrank}, $\pi_1(X)$ is generated by 3 elements $x$, $h^{-1}xh$ and $s$, two of which are conjugate.  This means that the dimension of $H_1(X;\mathbb{Z}_2)$ is at most 2.

By the ``half lives, half dies" Lemma (see \cite[Lemma 3.5]{Ha2}), the dimension of the kernel of $i_*\colon H_1(\partial X;\mathbb{Z}_2)\to H_1(X;\mathbb{Z}_2)$ is 2, since $g(\partial X)=2$.  As the dimension of $H_1(\partial X;\mathbb{Z}_2)$ is 4, the image of $i_*$ has dimension 2 and hence the dimension of $H_1(X;\mathbb{Z}_2)$ is at least 2.  So $dim(H_1(X;\mathbb{Z}_2))=2$.
\end{proof}

\section{The construction of $Y_s$}\label{SY}
In this section, we construct the second piece $Y_s$.  Our final manifold $M$ is an annulus sum of $Y_s$ and two copies of $X$ constructed in section~\ref{SX}.

Let $F_i$ ($i=1,2$) be a compact once-punctured non-orientable surface of genus $g$ ($g\ge 3$).  Note that the genus of a non-orientable surface is the cross-cap number.  Let $\alpha_i$ be an orientation-preserving non-separating simple closed curve in $F_i$.  As shown in Figure~\ref{Fsurface}, suppose $F_i$ is the surface obtained by gluing a M\"{o}bius band $\mu_i$ to a twice-punctured orientable surface $\Gamma_i$ ($\alpha_i\subset\Gamma_i$).  We view $\Gamma_i$ and $\mu_i$ as sub-surfaces of $F_i$.  As shown in Figure~\ref{Fsurface}, let $\gamma_i$ be an arc properly embedded in $F_i$ that winds around the core of the M\"{o}bius band $\mu_i$ once and intersects $\alpha_i$ in a single point.  In particular, the arc $\gamma_i$ is constructed to be orientation-reversing in $F_i$ in the sense that if we pinch $\partial\gamma_i$ to one point along $\partial F_i$ then the resulting closed curve is orientation-reversing.  Let $O_i=\alpha_i\cap\gamma_i$.  As shown in Figure~\ref{Fsurface}, let $p_i$ and $q_i$ be the two endpoints of $\gamma_i$ such that the subarc of $\gamma_i$ bounded by $O_i\cup p_i$ lies in $\Gamma_i$ and the subarc of $\gamma_i$ bounded by $O_i\cup q_i$ intersects $\mu_i$ ($i=1,2$).  

Now we consider the twisted $I$-bundle $N_i$ over $F_i$ ($i=1,2$).  As $F_i=\mu_i\cup\Gamma_i$, we may view $N_i$ as the union of $\Gamma_i\times I$ and a twisted $I$-bundle over $\mu_i$.  We may view $F_i$ as a section of the $I$-bundle $N_i$ and view the sub-surface $\Gamma_i$ of $F_i$ as $\Gamma_i\times\{\frac{1}{2}\}\subset\Gamma_i\times I\subset N_i$.

Let $\Gamma$ be an annulus.  We can form a closed non-orientable surface $F$ of genus $2g$ by gluing $F_1$ and $F_2$ to $\Gamma$ along boundary circles.

\begin{figure}
  \centering
\psfrag{g}{glue}
\psfrag{a}{$\alpha_1$}
\psfrag{b}{$\beta_1$}
\psfrag{c}{$\gamma_1$}
\psfrag{p}{$q_1$}
\psfrag{q}{$p_1$}
\psfrag{m}{$\mu_1$}
\psfrag{x}{$\alpha_2$}
\psfrag{y}{$\beta_2$}
\psfrag{z}{$\gamma_2$}
\psfrag{r}{$q_2$}
\psfrag{s}{$p_2$}
\psfrag{n}{$\mu_2$}
\psfrag{F}{$F_1$}
\psfrag{G}{$F_2$}
\psfrag{I}{$\Gamma_1$}
\psfrag{J}{$\Gamma_2$}
  \includegraphics[width=5in]{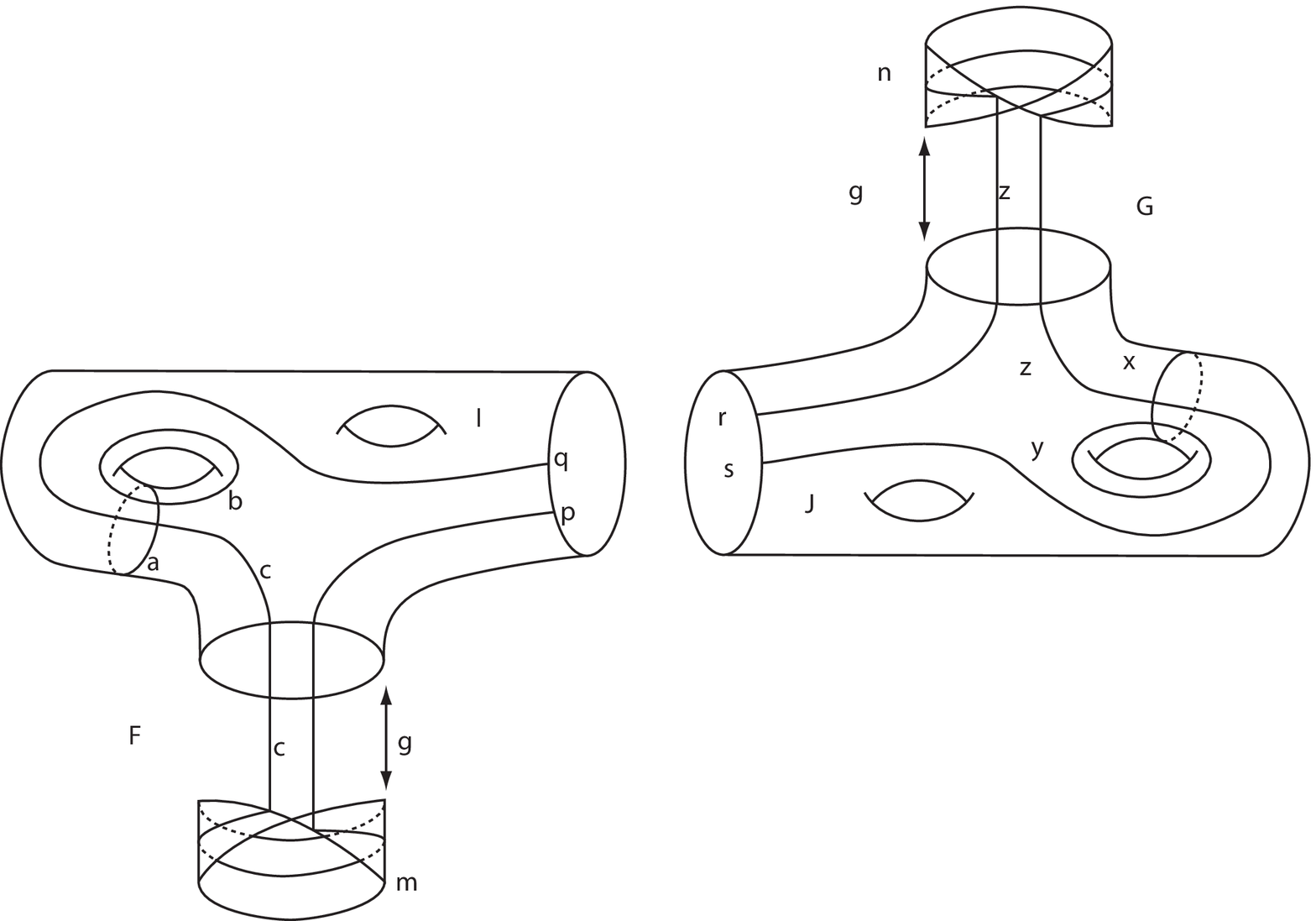}
  \caption{}
  \label{Fsurface}
\end{figure}

Let $N_F$ be the twisted $I$-bundle over $F=F_1\cup\Gamma\cup F_2$ and we may view $N_F$ as the manifold obtained by gluing the $I$-bundles $N_1$ and $N_2$ above to $\Gamma\times I$.  We view $N_1$, $N_2$ and $\Gamma\times I$ as sub-manifolds of $N_F$.  In particular, $\Gamma\times\{0\}$ and $\Gamma\times\{1\}$ are a pair of annuli in $\partial N_F$.  Let $F_K$ be another  once-punctured non-orientable surface of genus $g$ and let $N_K$ be a twisted $I$-bundle over $F_K$.  We denote the vertical boundary of $N_K$ by $\partial_vN_K$.  As $F_K$ has one boundary circle, $\partial_vN_K$ is an annulus.  Now we glue $N_K$ to $N_F$ by identifying $\partial_vN_K$ to $\Gamma\times\{1\}$.  We denote the resulting manifold by $Y_N$.  

Note that $Y_N$ is homotopy equivalent to the 2-complex obtained by gluing $F_1$, $F_2$ and $F_K$ together along their boundary circles.  Since $F_1$, $F_2$ and $F_K$ are all once-punctured non-orientable surfaces of genus $g$, it is easy to see that the dimension of $H_1(Y_N;\mathbb{Z}_2)$ is $3g$.  Since $rank(\pi_1(\mathcal{M}))\ge dim(H_1(\mathcal{M};\mathbb{Z}_2))$ for any manifold $\mathcal{M}$, $rank(\pi_1(Y_N))\ge 3g$.  On the other hand, the standard generators for $\pi_1(F_1)$, $\pi_1(F_2)$ and $\pi_1(F_K)$ form a generating set of $3g$ elements for $\pi_1(Y_N)$.  Hence $rank(\pi_1(Y_N))=3g$.

Now we add a 1-handle to $Y_N$ with the two ends of the 1-handle in the annulus $\Gamma\times\{0\}$.  More specifically, let $D_0$ and $D_1$ be a pair of disks in $\Gamma\times\{0\}$.  We glue a 1-handle $D^2\times I$ by identifying $D^2\times\{0\}$ and $D^2\times\{1\}$ to $D_0$ and $D_1$ respectively.  Let $Y$ be the resulting manifold.

Clearly $rank(\pi_1(Y))=3g+1$.  Next we describe a preferred generating set for $\pi_1(Y)$.  For any closed curve $x$ in $Y$, we use $[x]$ to denote the element in $\pi_1(Y)$ represented by $x$ (after pulling $x$ to pass through the basepoint for $\pi_1(Y)$).  To simplify notation and pictures, we do not specify a basepoint for $\pi_1(Y)$.

We may assume that the two endpoints $p_i$ and $q_i$ of $\gamma_i$ are close to each other in $\partial F_i$ and we use $[\gamma_i]$ to denote the element of $\pi_1(F_i)$ represented by the loop obtained by pinching $p_i$ to $q_i$ in a small neighborhood of $q_i$.  Let $m_i$ be the core curve of the M\"{o}bius band $\mu_i$.  As shown in Figure~\ref{Fsurface}, we may suppose there is a simple closed curve $\beta_i$ in $\Gamma_i\subset F_i$ such that $\beta_i\cap\alpha_i$ is a single point, $\beta_i\cap\gamma_i=\emptyset$ and $[\gamma_i]=[\beta_i]\cdot[m_i]$ in $\pi_1(F_i)$.    We may extend $[\alpha_i]$, $[\beta_i]$ and $[m_i]$ to a standard set of $g$ generators $\{[m_i], [\alpha_i], [\beta_i],[\theta_{i4}],\dots, [\theta_{ig}]\}$ for $\pi_1(F_i)$ ($i=1,2$), where each $\theta_{ik}$ is a simple closed curve in $\Gamma_i\subset F_i$ disjoint from $\alpha_i\cup\beta_i\cup\gamma_i$.  For an argument latter in the proof, we choose a slightly different set of curves representing this set of generators.   Recall that the twisted $I$-bundle $N_i$ ($i=1,2$) is the union of $\Gamma_i\times I$ and the twisted $I$-bundle over the M\"{o}bius band $\mu_i$.  Let $\alpha_i'=\alpha_i\times\{1\}\subset \Gamma_i\times\{1\}$, $\beta_i'=\beta_i\times\{1\}\subset \Gamma_i\times\{1\}$, and $\theta_{ik}'=\theta_{ik}\times\{1\}\subset\Gamma_i\times\{1\}$.  So the set of $g$ simple closed curves $m_i, \alpha_i', \beta_i', \theta_{i4}',\dots,\theta_{ig}'$  represents a standard set of $g$ generators for $\pi_1(N_i)=\pi_1(F_i)$.  We would like to emphasize that, except for $m_i$, every curve in this set lies in $\Gamma_i\times\{1\}$, and $\Gamma_i\times\{1\}$ is glued to $\Gamma\times\{1\}=\partial_vN_K$ along $\partial F_i\times\{1\}$.  We denote this set of generators of $\pi_1(N_i)$ by $\mathcal{F}_i$.

Let $\mathcal{F}_K$ be a set of $g$ generators for $\pi_1(F_K)$ and let $\gamma_0$ be a curve representing the core of the added 1-handle.  So $\mathcal{F}_1\cup\mathcal{F}_2\cup\mathcal{F}_K\cup\{[\gamma_0]\}$ is a set of $3g+1$ generators for $\pi_1(Y)$.  

Note that by connecting $\gamma_0$, the circles $\{m_i, \alpha_i, \beta_i,\theta_{i4},\dots,\theta_{ig}\}$ in $F_i$ and the circles in $F_K$ representing $\mathcal{F}_K$ together, we can form a graph $G$ such that $\overline{N(G)}$ is a handlebody of genus $3g+1$.  In this construction, the $g$ circles $\{m_i, \alpha_i, \beta_i,\theta_{i4},\dots,\theta_{ig}\}$ are connected to form a core graph of the handlebody $N_i$, and the $g$ circles in $F_K$ representing $\mathcal{F}_K$ form a core graph of the handlebody $N_K$.  So $N_1\cup N_2\cup N_K$ can be obtained by attaching two 2-handles to a neighborhood of the 3 core graphs (pinched together) of $N_1$, $N_2$ and $N_K$.  This means that $Y-N(G)$ is a compression body.  Thus $\overline{N(G)}\cup (Y-N(G))$ gives a (standard) Heegaard splitting of $Y$ corresponding to our generators of $\pi_1(Y)$ above.

Next we replace $\gamma_0$ by another curve and form a slightly different generating set of $\pi_1(Y)$.    
As shown in Figure~\ref{Fgamma}(a), we first connect $p_1$ to $p_2$ by an arc $\gamma_p$ in the annulus $\Gamma\times \{\frac{1}{2}\}$.   Then we connect $q_1$ to $q_2$ by an arc $\gamma_q$ that goes through the added 1-handle exactly once, see Figure~\ref{Fgamma}(a).  More specifically, as shown in Figure~\ref{Fgamma}(a), we first take a core curve $\gamma_h$ of the 1-handle (by our construction, $\partial\gamma_h$ is a pair of points in the two disks $D_0$ and $D_1$ in $\Gamma\times\{0\}$), and $\gamma_q$ is obtained by connecting the two endpoints of $\gamma_h$ to the two points $q_1$ and $q_2$ using a pair of unknotted trivial arcs $\delta_1$ and $\delta_2$ in $\Gamma\times I$ respectively.  So $\gamma_q=\delta_1\cup\gamma_h\cup\delta_2$.  The simple closed curve $\gamma=\gamma_1\cup\gamma_2\cup\gamma_p\cup\gamma_q$ represents the element $[\gamma_1]\cdot[\gamma_0]\cdot[\gamma_2]^{-1}$ in $\pi_1(Y)$.  Thus, after replacing $[\gamma_0]$ by $[\gamma]$, we get a new set of generators $\mathcal{F}_1\cup\mathcal{F}_2\cup\mathcal{F}_K\cup\{[\gamma]\}$ of $\pi_1(Y)$ consisting of $3g+1$ elements.

\begin{figure}
  \centering
\psfrag{X}{$\Gamma\times\{1\}$}
\psfrag{p}{$p_1$}
\psfrag{q}{$q_1$}
\psfrag{r}{$p_2$}
\psfrag{s}{$q_2$}
\psfrag{0}{\small{$D_0$}}
\psfrag{1}{\small{$D_1$}}
\psfrag{h}{$\gamma_h$}
\psfrag{g}{$\gamma_p$}
\psfrag{d}{$\delta_1$}
\psfrag{e}{$\delta_2$}
\psfrag{f}{$\underline{\gamma_q=\delta_1\cup\gamma_h\cup\delta_2}$}
\psfrag{R}{$R$}
\psfrag{D}{$\Delta_1$}
\psfrag{2}{$\Delta_2$}
\psfrag{a}{(a)}
\psfrag{b}{(b)}
\psfrag{K}{$K_1$}
\psfrag{j}{$K_2$}
  \includegraphics[width=5in]{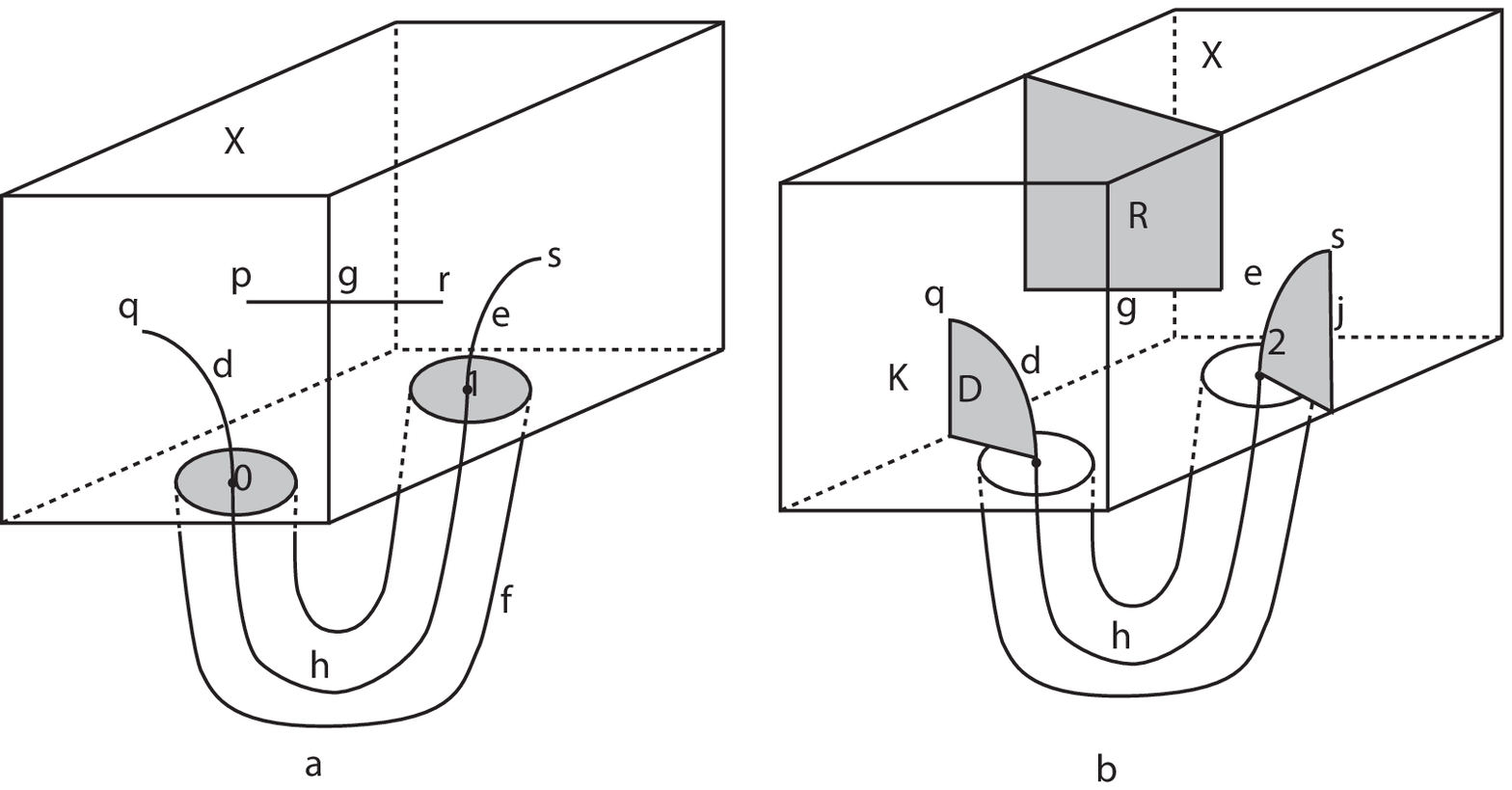}
  \caption{}
  \label{Fgamma}
\end{figure}

From the viewpoint of Heegaard splitting,  we can do a handle-slide on the handlebody $\overline{N(G)}$ described above, dragging the two ends of the 1-handle corresponding to $\gamma_0$ along $\gamma_1$ and $\gamma_2$.  The resulting 1-handle can be viewed as a small neighborhood of $\gamma$.  In particular, this means that $\gamma$ is a core curve of the handlebody $\overline{N(G)}$ in the standard Heegaard splitting of $Y$ described above (in other words, a compressing disk of $\overline{N(G)}$ divides $\overline{N(G)}$ into two components one of which is a solid torus containing $\gamma$ as its core curve).  This means that after any Dehn surgery on $\gamma$, $\overline{N(G)}$ remains a handlebody.  Let $Y_s$ be the manifold obtained from $Y$ by performing a Dehn surgery on $\gamma$ with surgery slope $s$ (with respect to a certain fixed framing in which the meridional slope is $\infty$).  The discussion above says that $\overline{N(G)}$ remains a handlebody after the Dehn surgery and hence $\partial \overline{N(G)}$ remains a Heegaard surface of $Y_s$.  Hence the Heegaard genus $g(Y_s)\le 3g+1$.

Let $\gamma'$ be the core of the surgery solid torus in $Y_s$.
Now we replace $[\gamma]$ in the generating set of $\pi_1(Y)$ above by $[\gamma']$ in $\pi_1(Y_s)$.  It is easy to see from our construction that $\mathcal{F}_1\cup\mathcal{F}_2\cup\mathcal{F}_K\cup\{[\gamma']\}$ is a generating set for $\pi_1(Y_s)$ containing $3g+1$ elements, where the elements in $\mathcal{F}_i$ are now viewed as elements in $\pi_1(Y_s)$ represented by the curves $m_i$, $\alpha_i'$, $\beta_i'$, $\theta_{i4}',\dots, \theta_{ig}'$.  Note that since the annulus in $Y$ bounded by $\alpha_i\cup\alpha_i'$ intersects $\gamma$ once, we may view $[\alpha_i']=[\alpha_i]\cdot [\gamma']^n$ in $\pi_1(Y_s)$, where $s=\frac{m}{n}$, as the meridional loop around $\gamma$ is a boundary curve of the surgery solid torus winding around the core $\gamma'$ $n$ times.

Now we are in position to construct our manifold $M$.  

Let $A_i\subset\Gamma_i\times\{1\}\subset\partial Y_s$ ($i=1,2$) be an annular neighborhood of the curve $\alpha_i'=\alpha_i\times\{1\}$ which represents a generator in $\mathcal{F}_i$.  Next we consider the pair $(X, A)$ where $X$ is the manifold constructed in section~\ref{SX} and $A\subset\partial X$ is the annulus in Lemma~\ref{LXsmall} and Lemma~\ref{LXrank}.  We take two copies of the pair, which we denote by $(X_1,A_1)$ and $(X_2,A_2)$, and then we glue $X_1$ and $X_2$ to $Y_s$ by identifying the annulus $A_i$ ($i=1,2$) in $\partial X_i$ to the annulus $A_i$ in $\partial Y_s$.  Denote resulting manifold by $M$.  We may view $A_1$ and $A_2$ as annuli properly embedded in $M$ that divide $M$ into $X_1$, $Y_s$ and $X_2$.

\begin{lemma}\label{LRi}
Let $\gamma'$ be the core of the surgery solid torus in $Y_s\subset M$.  
Let $W=M-N(\gamma')$, in other words, $W$ is obtained by gluing $X_1$ and $X_2$ to $Y-N(\gamma)$ along the annuli $A_1$ and $A_2$ respectively.  Let $T=\partial\overline{N(\gamma')}$ be the torus component of $\partial W$. Then,
\begin{enumerate}
  \item $W$ is irreducible and $\partial$-irreducible,
  \item  $W$ does not contain any non-peripheral incompressible torus, and 
  \item $W$ does not contain any essential annulus with both boundary curves in $T$.
\end{enumerate}
\end{lemma}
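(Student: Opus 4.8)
The plan is to cut $W$ along the two annuli $A_1$ and $A_2$, which divide it into the three pieces $X_1$, $X_2$ and $Y-N(\gamma)$, and to deduce all three statements from the irreducibility, $\partial$-irreducibility, smallness and acylindricity of these pieces together with standard surgery arguments for incompressible surfaces meeting the $A_i$. So the first task is to check that $A_1$ and $A_2$ are essential (incompressible and not $\partial$-parallel) annuli in $W$. On the $X_i$ side the core of $A_i$ is the meridian $x$ of the arc $K-\alpha$, which is non-trivial in $\pi_1(X_i)$, and $A_i$ is not $\partial$-parallel since $X_i$ has incompressible boundary and is not a solid torus (Lemmas~\ref{LXb} and~\ref{LXsmall}); on the $Y_s$ side the core of $A_i$ is $\alpha_i'=\alpha_i\times\{1\}$, which lies on $\partial Y$, is disjoint from $\gamma$, and represents a non-trivial primitive element of $\pi_1(N_i)\subset\pi_1(Y-N(\gamma))$, so again $A_i$ is incompressible and, since $\alpha_i'$ is non-separating on the genus-two surface containing it, not $\partial$-parallel.

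Next I would show that $Y-N(\gamma)$ is irreducible and $\partial$-irreducible. For irreducibility, $Y_N$ is aspherical, being built from twisted $I$-bundles over non-orientable surfaces (which are aspherical) glued along $\pi_1$-injective annuli, and attaching the $1$-handle keeps it aspherical with $\pi_1(Y)=\pi_1(Y_N)*\langle[\gamma_0]\rangle$; hence $Y$ is irreducible. Since $[\gamma]=[\gamma_1][\gamma_0][\gamma_2]^{-1}$ has $[\gamma_0]$-exponent sum $1$, it is non-trivial, not a proper power, and not conjugate into $\pi_1(Y_N)$; in particular $\gamma$ lies in no $3$-ball, so $T$ is incompressible in $Y-N(\gamma)$ and $Y-N(\gamma)$ is irreducible. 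For the remaining boundary, $\partial Y_N$ is incompressible in $Y_N$, and a standard innermost-disk/outermost-arc argument shows that every essential compressing disk for $\partial Y$ in $Y$ is isotopic to the co-core $E$ of the $1$-handle; but $\gamma_q$ runs through that $1$-handle exactly once, so $E$ meets $\gamma$ and no compressing disk for $\partial Y$ survives in $Y-N(\gamma)$. Hence $\partial(Y-N(\gamma))$ is incompressible. Part~(1) now follows from the standard fact that an annulus sum of irreducible, $\partial$-irreducible $3$-manifolds along essential annuli is irreducible and $\partial$-irreducible: an essential sphere or a compressing disk for $\partial W$ can be isotoped off $A_1\cup A_2$ by innermost-disk and outermost-arc moves (using incompressibility and $\partial$-incompressibility of the $A_i$), landing in a single irreducible, $\partial$-irreducible piece, which is impossible.

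For parts~(2) and~(3), let $P$ be either a non-peripheral incompressible torus or an essential annulus with $\partial P\subset T$, isotoped to meet $A_1\cup A_2$ minimally. Then $P\cap(A_1\cup A_2)$ consists of curves essential in the annuli, and a cut-and-paste argument shows each component of $P\cap X_i$ and of $P\cap(Y-N(\gamma))$ is incompressible in the relevant piece. Each component of $P\cap X_i$ is an annulus with boundary in $A_i$, hence $\partial$-parallel in $X_i$ because $X_i$ is $A$-small (Lemma~\ref{LXsmall}), and (as its boundary is parallel copies of the non-separating curve $\alpha_i$) it is parallel to a sub-annulus of $A_i$, so it can be pushed across $A_i$; this reduces $|P\cap(A_1\cup A_2)|$, so in fact $P\cap(A_1\cup A_2)=\emptyset$. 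Thus $P$ lies in a single piece. It cannot lie in $X_i$, which has genus-two boundary and, being small, contains no non-peripheral closed incompressible surface, and does not contain $T$. Therefore $P\subset Y-N(\gamma)$, where it is incompressible and not $\partial$-parallel, and everything reduces to showing that $Y-N(\gamma)$ contains no non-peripheral incompressible torus and no essential annulus with both boundary curves on $T$.

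This last reduction is the crux, and it is exactly where the specific choice of $\gamma$ is used. I would analyze such a $P$ in $Y-N(\gamma)$ through its intersection with the incompressible surfaces supplied by the construction: the surfaces $F_1$, $F_2$, $F_K$ of the twisted $I$-bundles, the vertical annulus $\partial_vN_K=\Gamma\times\{1\}$, and the co-core disk of the $1$-handle. Because $\gamma$ crosses each of $F_1$, $F_2$ in a single point (through $\gamma_1$, $\gamma_2$) and crosses the co-core once, while it misses $N_K$ entirely, cutting $P$ along these surfaces forces its pieces into the twisted $I$-bundles $N_1$, $N_2$, $N_K$ and into punctured product regions, where Lemma~\ref{LIbundle} shows each piece is horizontal or $\partial$-parallel; reassembling and tracking the behaviour of $\partial P$ on $T$ (a meridian of $\gamma$) then contradicts either the fact that $[\gamma]$ is non-trivial, not a proper power, and not conjugate into $\pi_1(Y_N)$, or the hypothesis $g\ge 3$ on the genera of the non-orientable surfaces. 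I expect this case analysis — enumerating the ways $P$ can sit relative to the $I$-bundle decomposition of $Y-N(\gamma)$ and the surgery curve $\gamma$ — to be the technical heart of the proof; the rest is bookkeeping with the annulus sum.
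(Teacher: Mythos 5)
There is a genuine gap, and it is exactly where you flag it: the final paragraph, which is supposed to rule out non-peripheral incompressible tori and essential annuli in $Y-N(\gamma)$ with boundary on $T$, does not give a proof but a forecast of one.  The reduction from $W$ to the middle piece is fine and essentially parallels the paper's Claim C1, but that reduction is the easy part.  The paper's argument for the remaining step is several paragraphs of real work: it cuts $N_i$ along the vertical annulus $\partial_vW_i$ bounding the induced $I$-bundle $W_i$ over the complement of a neighborhood of $\gamma_i$ in $F_i$, shows $P$ can be pushed off $W_i$ using Lemma~\ref{LIbundle}, then isolates the three $\partial$-parallel arcs of $\gamma\cap W_\Gamma$ inside $3$-balls $B_p,B_q$ cut off by disks $D_p,D_q,\Delta_p,\Delta_q$, and concludes $P$ is trapped either in a tubular neighborhood of $\gamma$ or in the handlebody $W_\Gamma-(B_p\cup B_q)$, both impossible.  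Declaring this to be ``bookkeeping with the annulus sum'' inverts the weight of the two halves of the lemma.  Moreover, the sketched route has a concrete obstacle: you propose cutting $P$ along $F_1$ and $F_2$ and tracking where $\gamma$ crosses them, but $\gamma_i$ is a subarc \emph{of} $F_i$, so $\gamma$ is not transverse to $F_i$; the cuts that actually isolate $\gamma$ are the vertical rectangles $\pi^{-1}(\gamma_i)$ and the annuli $\partial_vW_i$, not the sections $F_i$.

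A second, smaller error: the claim that ``every essential compressing disk for $\partial Y$ in $Y$ is isotopic to the co-core $E$ of the $1$-handle'' is false.  Because the attaching disks $D_0,D_1$ lie in the annulus $\Gamma\times\{0\}$, a curve in $\Gamma\times\{0\}$ separating $D_0\cup D_1$ from $\partial(\Gamma\times\{0\})$ bounds a disk pushed into $\Gamma\times I$ which is disjoint from $E$ yet essential in $\partial Y$ (both sides of this curve in $\partial Y$ have positive genus).  That disk happens to meet $\gamma$ (along the arcs $\delta_1,\delta_2$), so the conclusion you want is still true, but your route to it — uniqueness of the compressing disk up to isotopy — does not hold, and the paper's Claim~C2 handles exactly this by a direct innermost-disk/outermost-arc analysis against $A_1\cup A_2\cup\partial_vN_K\cup D_0\cup D_1$ rather than a uniqueness claim.
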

\begin{proof}
Let $M'=X_1\cup_{A_1} Y\cup_{A_2} X_2$ be the manifold obtained by gluing $X_1$ and $X_2$ to $Y$ along the annuli $A_1$ and $A_2$.  So $M$ is obtained from $M'$ by a Dehn surgery on $\gamma$ with surgery slope $s$.  We may view $W=M'-N(\gamma)$ and $T=\partial\overline{N(\gamma)}$.

Let $H=\partial W-T$ be the other boundary component of $W$. 
We prove most parts of the lemma at the same time and prove the last case in Claim~\ref{claimC2} below.  Suppose the lemma is false and suppose there is a surface $P\subset W$ that is either (1) an essential 2-sphere, or (2) a compressing disk with $\partial P\subset T$, or (3) a non-peripheral incompressible torus, or (4) an essential annulus with $\partial P\subset T$.  In particular $P\cap H=\emptyset$.  Next we show that such $P$ does not exist.

\begin{claimC}\label{claimC0}
Let $Q$ be an incompressible surface in $W$ with $\partial Q\subset H$.  Suppose $|P\cap Q|$ is minimal among all such surfaces $P$.  Then $P\cap Q$ (if not empty) consists of essential curves in both $P$ and $Q$.
\end{claimC}
\begin{proof}[Proof of Claim~\ref{claimC0}]
Since $P\cap H=\emptyset$ and $\partial Q\subset H$, $P\cap Q$ consists of simple closed curves.  Let $\delta$ be a component of $P\cap Q$ that is an innermost trivial curve in $Q$. Since $P$ is incompressible, $\delta$ also bounds a disk in $P$.  After compressing $P$ along the disk in $Q$ bounded by $\delta$, we obtain two surfaces $P'$ and $P''$ where $P''$ is a 2-sphere.  Since $P$ is essential, in all the cases for $P$, at least one of $P'$ and $P''$ is essential in $W$.  As both $P'$ and $P''$ intersect $Q$ in fewer curves than $|P\cap Q|$, this contradicts that $|P\cap Q|$ is minimal.  Thus each component of $P\cap Q$ must be essential in $Q$.  Furthermore, since $Q$ is incompressible in $W$ and $P\cap Q$ is essential in $Q$, each component of $P\cap Q$ must also be essential in $P$. 
\end{proof}

Recall that $Y_N$ is obtained by gluing the twisted $I$-bundles $N_1$, $N_2$ and $N_K$ to $\Gamma\times I$, and $Y$ is obtained by adding a 1-handle to $Y_N$.  Next we analyze how $P$ intersects the 3 twisted $I$-bundles. 

Suppose $|P\cap(\partial_vN_K\cup A_1\cup A_2)|$ is minimal among all such surfaces $P$.  

\begin{claimC}\label{claimC1}
$P\cap(\partial_vN_K\cup A_1\cup A_2)=\emptyset$.
\end{claimC}
\begin{proof}[Proof of Claim~\ref{claimC1}]
By our construction, the core curves of $A_1$, $A_2$ and $\partial_vN_K$ are essential in $Y_N$ and hence essential in $Y$.   Since the core curves of $A_1$ and $A_2$ are also  essential in $X_1$ and $X_2$ respectively, $\partial_vN_K$, $A_1$ and $A_2$ are all incompressible in $W$.

By Claim~\ref{claimC0}, $P\cap(\partial_vN_K\cup A_1\cup A_2)$ consists of simple closed curves that are essential in both $P$ and $\partial_vN_K\cup A_1\cup A_2$.  This immediately implies that $P\cap(\partial_vN_K\cup A_1\cup A_2)=\emptyset$ if $P$ is a sphere or disk.

Next we consider the cases that $P$ is a torus or an annulus.  If $P\cap(\partial_vN_K\cup A_1\cup A_2)\ne\emptyset$, then the conclusion above implies that $P\cap X_i$ and $P\cap N_K$ consist of incompressible annuli in $X_i$ and $N_K$ respectively.  By Lemma~\ref{LXsmall}, $X_i$ is $A_i$-small, so each component of $P\cap X_i$ is a $\partial$-parallel annulus in $X_i$.  By Lemma~\ref{LIbundle}, $N_K$ is $\partial_vN_K$-small, so each component of $P\cap N_K$ is a $\partial$-parallel annulus in $N_K$. 
Hence we can isotope the annuli in $P\cap X_i$ and $P\cap N_K$ across $A_i$ and $\partial_vN_K$ respectively and into $Y-N_K$.  This contradicts that $|P\cap(\partial_vN_K\cup A_1\cup A_2)|$ is minimal.  Therefore, $P\cap(\partial_vN_K\cup A_1\cup A_2)=\emptyset$ in all cases for $P$.
\end{proof}

By our construction of $X_i$ and $N_K$, Claim~\ref{claimC1} implies that $P\subset Y-N_K$. 
Now we consider the twisted $I$-bundle $N_i$ ($i=1,2$).  Recall that $\gamma\cap N_i=\gamma_i$ is an arc properly embedded in $F_i$.  Let $\pi\colon N_i\to F_i$ be the projection that collapses each $I$-fiber to a point.  Let $L_i$ be a small neighborhood of $\gamma_i$ in $F_i$ and let $R_i=\pi^{-1}(L_i)$. So $R_i$ can be viewed as an induced $I$-bundle over $L_i$.  We may assume $N(\gamma)\cap N_i$ lies in $R_i$. Hence $T\cap N_i$ is an annulus properly embedded in $R_i$.  Let $W_i$ the closure of $N_i-R_i$.  So $W_i$ is an $I$-bundle over a compact surface.  Moreover, by the construction of $\gamma_i$, $\partial_vW_i$ is a vertical essential annulus in $N_i$. 

We may also suppose $|P\cap\partial_vW_i|$ is minimal among all such surfaces $P$.  Similar to Claim~\ref{claimC0}, $P\cap\partial_vW_i$ consists of curves essential in both $P$ and $\partial_vW_i$.   This immediately implies that $P\cap\partial_vW_i=\emptyset$ if $P$ is a sphere or disk. Moreover,
if $P$ is a torus or an annulus, $P\cap W_i$ (if not empty) consists of annuli incompressible in $W_i$.  However, by Lemma~\ref{LIbundle} and since $g\ge 3$, an incompressible annulus in $W_i$ with boundary in $\partial_vW_i$ is parallel to a sub-annulus of $\partial_vW_i$.  Hence we can isotope $P\cap W_i$ out of $W_i$.  Since $|P\cap\partial_vW_i|$ is assumed to be minimal, we have $P\cap W_i=\emptyset$.  As $R_i$ can be viewed as a tubular neighborhood of $\gamma_i$, after isotopy, $P\cap N_i$ lies in a tubular neighborhood of $\gamma_i$ in $N_i$ ($i=1,2$).    

Let $W_\Gamma$ be the closure of $Y-(N_1\cup N_2\cup N_K)$.   By our construction, $W_\Gamma$ is the genus 2 handlebody obtained by adding a 1-handle to $\Gamma\times I$, and $\gamma\cap W_\Gamma$ consists of two arcs $\gamma_p$ and $\gamma_q$.  Moreover, $\gamma_p$ and $\gamma_q$ are $\partial$-parallel in $W_\Gamma$.  So there are a pair of embedded and disjoint disks $D_p$ and $D_q$ in $W_\Gamma$ such that $\partial D_p=\gamma_p\cup d_p$ and $\partial D_q=\gamma_q\cup d_q$, where $d_p=D_p\cap\partial W_\Gamma$ and $d_q=D_q\cap\partial W_\Gamma$.  Let $B_p$ and $B_q$ be small neighborhoods of $D_p$ and $D_q$ in $W_\Gamma$ respectively, so $B_p$ and $B_q$ are a pair of 3-balls containing $\gamma_p$ and $\gamma_q$ respectively.  In our construction of $W=M'-N(\gamma)$, where $M'=X_1\cup Y\cup X_2$, we may assume $N(\gamma)$ is so small that $N(\gamma)\cap W_\Gamma\subset B_p\cup B_q$ and hence $T\cap W_\Gamma$ is a pair of annuli lying in $B_p\cup B_q$. 

Since $P\cap N_i$ ($i=1, 2$) lies in a tubular neighborhood of $\gamma_i$ in $N_i$ and $P\cap\partial_vN_K=\emptyset$, after isotopy, we may assume $P\cap\partial W_\Gamma$ lies in the interior of the pair of disks $\partial B_P\cap\partial W_\Gamma$ and $\partial B_q\cap\partial W_\Gamma$.  
Let $\Delta_p=\overline{\partial B_p-\partial W_\Gamma}$ and $\Delta_q=\overline{\partial B_q-\partial W_\Gamma}$.  So $\Delta_p$ and $\Delta_q$ are disks properly embedded in $W_\Gamma$, cutting off the 3-balls $B_p$ and $B_q$ from $W_\Gamma$.  By our assumption on $P\cap\partial W_\Gamma$ above, $P\cap\partial\Delta_p=\emptyset$ and $P\cap\partial\Delta_q=\emptyset$.  This means that $P\cap\Delta_p$ and $P\cap\Delta_q$ (if not empty) consist of simple closed curves in $\Delta_p$ and $\Delta_q$ respectively.  As in the proof of Claim~\ref{claimC0}, after compressing $P$ along subdisks of $\Delta_p\cup\Delta_q$ bounded by curves in $P\cap\Delta_p$ and $P\cap\Delta_q$, we may assume $P\cap\Delta_p=\emptyset$ and $P\cap\Delta_q=\emptyset$.

By our assumptions on $P\cap N_i$,  $P\cap\Delta_p$ and $P\cap\Delta_q$, we can conclude that $P$ lies in either $R_1\cup R_2\cup B_p\cup B_q$ or in $W_\Gamma-(B_p\cup B_q)$.  If $P\subset W_\Gamma-(B_p\cup B_q)$, since $T\cap W_\Gamma$ lies inside $B_p\cup B_q$, $P$ can only be an essential 2-sphere or torus.  However, since $W_\Gamma-(B_p\cup B_q)$ is a handlebody, this cannot happen.  So $P\subset R_1\cup R_2\cup B_p\cup B_q$.  By our construction, we may choose $B_p$ and $B_q$ so that $R_1\cup R_2\cup B_p\cup B_q$ can be viewed as a tubular neighborhood of $\gamma$ in $M'=X_1\cup Y\cup X_2$ that contains $N(\gamma)$ and $T$.  So $(R_1\cup R_2\cup B_p\cup B_q)-N(\gamma)$ is a product neighborhood of $T$ in $W$.  However, there are no such surface $P$ in a product neighborhood of $T$ in $W$.  Thus such $P$ does not exist.

The argument on $P$ above proves most cases in the lemma.  It remains to show that the component $H$ of $\partial W$ is incompressible in $W$.

\begin{claimC}\label{claimC2}
$H$ is incompressible in $W$.
\end{claimC}
\begin{proof}[Proof of Claim~\ref{claimC2}]
We may view $H$ as the boundary of $M'$, where $M'=X_1\cup_{A_1} Y\cup_{A_2} X_2$.
Claim~\ref{claimC2} is equivalent to the statement that $H$ is incompressible in $M'-\gamma$.

Recall that $Y$ is obtained from $Y_N$ by adding a 1-handle along a pair of disks $D_0$ and $D_1$ in $\Gamma\times\{0\}$, where $Y_N$ is the manifold obtained by gluing $N_K$ to $N_F$ by identifying $\partial_vN_K$ to $\Gamma\times\{1\}$. 

Suppose the claim is false and let $\Delta$ be a compressing disk for $H$ in $M'-\gamma$.  We may view $D_0$ and $D_1$ as a pair of once-punctured disks in $Y-\gamma$.   We may assume $\Delta$ is transverse to $D_0\cup D_1$ and assume $|\Delta\cap(D_0\cup D_1)|$ is minimal among all the compressing disks for $H$.

We first show that $\Delta\cap D_i=\emptyset$ for both $i$. 
If $\Delta\cap D_i$ contains a closed curve $\delta$, then $\delta$ bounds a disk $\Delta_\delta$ in $\Delta$ and a disk $D_\delta$ in $D_i$.  Suppose $\delta$ is innermost in $D_i$, which means that $D_\delta\cup\Delta_\delta$ is an embedded 2-sphere.  If $D_\delta$ contains the puncture $\gamma\cap D_i$, then the 2-sphere $D_\delta\cup\Delta_\delta$ intersects $\gamma$ in one point, which implies that the boundary torus $T$ of $W$ is compressible in $W$, and this contradicts our conclusion above that $T$ is incompressible.  Thus $D_\delta$ does not contain the puncture $\gamma\cap D_i$.  So we can compress $\Delta$ along $D_\delta$ and get a new compressing disk for $H$ with fewer intersection curves with $D_i$.  Since $|\Delta\cap(D_0\cup D_1)|$ is minimal, this means that $\Delta\cap D_i$ contains no closed curves.  

If  $\Delta\cap D_i$ contains an arc $\delta'$, then $\delta'$ divides $D_i$ into two subdisks and let $D_\delta'$ be the subdisk that does not contain the puncture $\gamma\cap D_i$.  We may suppose $\delta'$ is outermost in the sense that $D_\delta'\cap \Delta=\delta'$.  Then we perform a $\partial$-compression on $\Delta$ along $D_\delta'$, see Figure~\ref{Fdisk}(b).  The $\partial$-compression changes $\Delta$ into two disks, at least one of which is a compressing disk for $H$ with fewer intersection curves with $D_i$ than $|\Delta\cap D_i|$.  Since $|\Delta\cap(D_0\cup D_1)|$ is minimal, no such arc $\delta'$ exists.  Thus $\Delta\cap D_i=\emptyset$ for both $i$. 

Similarly, after some compressions and $\partial$-compressions, we may assume that $\Delta\cap (A_1\cup A_2\cup \partial_vN_K)$ contains no arcs or closed curves that are trivial in the annuli $A_1\cup A_2\cup \partial_vN_K$.  As $\Delta$ is a disk and since $A_1\cup A_2\cup \partial_vN_K$ is incompressible, this means that, after isotopy, $\Delta\cap (A_1\cup A_2\cup \partial_vN_K)$ contains no closed curve, and each component of $\Delta\cap (A_1\cup A_2\cup \partial_vN_K)$ is an essential arc in $A_1\cup A_2\cup \partial_vN_K$.

If $\Delta\cap (A_1\cup A_2\cup \partial_vN_K)\ne\emptyset$, then there is an arc $\eta$ in $\Delta\cap (A_1\cup A_2\cup \partial_vN_K)$ that is outermost in $\Delta$, i.e. $\eta$ and a subarc of $\partial\Delta$ bound a bigon subdisk $E$ of $\Delta$ such that $E\cap (A_1\cup A_2\cup \partial_vN_K)=\eta$.  

The annuli $A_1$, $A_2$, $\partial_vN_K$ and the two disks $D_0$ and $D_1$ above divide $M'$ into several pieces: $X_1$, $X_2$, $N_K$, $N_F$ and the added 1-handle $D^2\times I$.  By our construction, $E$ lies in one of these pieces.   
As $\partial X_i$ is incompressible in $X_i$, $E\not\subset X_i$.  Similarly, since $\partial_hN_K$ is incompressible in the $I$-bundle $N_K$ and since $\partial_hN_K$ is not parallel to $\partial_vN_K$, by Lemma~\ref{Lmonogon}, $E\not\subset N_K$.  Moreover, since $\Delta\cap (D_0\cup D_1)=\emptyset$, these conclusions imply that $E\subset N_F$.  However, since $N_F$ is a twisted $I$-bundle over a closed surface, $\partial N_F$ is incompressible in $N_F$.  Hence $E\not\subset N_F$, a contradiction.  

If $\Delta\cap (A_1\cup A_2\cup \partial_vN_K)=\emptyset$, then by applying the argument on $E$ above to $\Delta$, we see that $\Delta$ cannot be in $X_1$, $X_2$, $N_K$, or $N_F$.  Moreover, since the intersection of $\gamma$ with the 1-handle $D^2\times I$ is a core curve of the 1-handle, $\Delta$ cannot lie inside $(D^2\times I)-\gamma$.   So $\Delta$ does not exist and the claim holds.
\end{proof}

Claim~\ref{claimC2} plus the discussion on $P$ before Claim~\ref{claimC2} proves Lemma~\ref{LRi}. 
\end{proof}

\begin{lemma}\label{LRM}
The rank of $\pi_1(M)$ is at most $3g+3$.
\end{lemma}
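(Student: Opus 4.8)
The plan is to construct an explicit generating set of $\pi_1(M)$ with $3g+3$ elements, assembling it from the generating sets of the three pieces supplied by Lemma~\ref{LXrank} and the construction of $Y_s$, and then removing the two generators that the annular gluings render superfluous.

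First I would record the van Kampen picture. The gluing annulus $A_i$ is incompressible in $X_i$ (its core is essential in $\partial X_i$, which is incompressible by Lemma~\ref{LXb}) and incompressible in $Y_s$ (its core $\alpha_i'$ represents a generator of $\pi_1(Y_s)$), so both maps $\pi_1(A_i)\hookrightarrow\pi_1(X_i)$ and $\pi_1(A_i)\hookrightarrow\pi_1(Y_s)$ are injective and van Kampen gives $\pi_1(M)\cong\pi_1(X_1)\ast_{\langle x_1\rangle}\pi_1(Y_s)\ast_{\langle x_2\rangle}\pi_1(X_2)$, the amalgamated cyclic subgroups being $\langle x_i\rangle=\langle[\alpha_i']\rangle$. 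Taking the generating set $\mathcal F_1\cup\mathcal F_2\cup\mathcal F_K\cup\{[\gamma']\}$ of $\pi_1(Y_s)$ (which already contains $x_i=[\alpha_i']\in\mathcal F_i$) together with the generators $\{x_i,\ h_i^{-1}x_ih_i,\ s_i\}$ of $\pi_1(X_i)$ from Lemma~\ref{LXrank}, and noting that $x_i$ is common to the two factors, one gets that
$$
\mathcal F_1\cup\mathcal F_2\cup\mathcal F_K\cup\{[\gamma']\}\cup\{h_1^{-1}x_1h_1,\ s_1,\ h_2^{-1}x_2h_2,\ s_2\}
$$
generates $\pi_1(M)$; this is a set of $3g+5$ elements, so at this stage $r(M)\le 3g+5$.

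The remaining, and crucial, point is to trim two of these generators, and this is where the special structure of $X$ enters — namely that two of the three generators of $\pi_1(X_i)$ are conjugate ($y_i:=h_i^{-1}x_ih_i$, with $h_i$ lying in the two-generator subgroup $\pi_1(B_-\setminus K_i)\subset\pi_1(X_i)$) — combined with the relations built into $Y_s$: $[\alpha_i']=[\alpha_i]\,[\gamma']^{n}$ (from the surgery framing across the annulus between $\alpha_i$ and $\alpha_i'$), and $[\gamma_i]=[\beta_i][m_i]$, $[\gamma]=[\gamma_1][\gamma_0][\gamma_2]^{-1}$ (from the M\"obius bands and the $1$-handle). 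Using these identities one reorganizes the generating set so that a single copy of $X$, once its shared generator $x_i=[\alpha_i']$ is identified with a generator of $\pi_1(Y_s)$, contributes only one genuinely new element rather than two, because the conjugator $h_i$ can be carried across the gluing; doing this for both $X_1$ and $X_2$ eliminates two generators and produces a generating set of size $3g+3$. I expect the main obstacle to be exactly this last piece of bookkeeping: one must verify carefully that after the substitutions the reduced set still generates each of $\pi_1(X_1)$, $\pi_1(X_2)$ and $\pi_1(Y_s)$ inside the amalgam $\pi_1(M)$, i.e. that the conjugacy and the $Y_s$-relations really do let the two $X$-generators and the handle/M\"obius generators share two elements without anything being lost.
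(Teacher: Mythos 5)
Your outline correctly sets up the easy half: the amalgamated‑product picture and the naive generating set of size $3g+5$ (which is just $\mathcal F_1\cup\mathcal F_2\cup\mathcal F_K\cup\{[\gamma']\}$ together with $h_i^{-1}x_ih_i$ and $s_i$) are exactly what the paper starts from, and you correctly identify that the whole content of the lemma is the trimming of two generators. But at that point there is a genuine gap: you never produce the mechanism that makes the trimming work, and the relations you cite — $[\alpha_i']=[\alpha_i][\gamma']^n$, $[\gamma_i]=[\beta_i][m_i]$, $[\gamma]=[\gamma_1][\gamma_0][\gamma_2]^{-1}$ — are not the ones that do it. In particular, ``the conjugator $h_i$ can be carried across the gluing'' is not a step one can take: $h_i$ lives in $\pi_1(X_i)$, not in $\pi_1(Y_s)$, and there is no map carrying it across the annulus $A_i$.

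The missing idea is geometric and specific. In the construction of $Y$ the curves $\alpha_i'$ and $\beta_i'$ lie in $\Gamma_i\times\{1\}$ and intersect once, so a neighborhood of $\alpha_i'\cup\beta_i'$ is a once‑punctured torus $T_i$, and $[\partial T_i]=b_ix_ib_i^{-1}x_i^{-1}$ with $b_i=[\beta_i']$, $x_i=[\alpha_i']$. The complement $T_i^c=(\Gamma_i\times\{1\})-\Int(T_i)$ is a $3$-holed surface whose other two boundary circles are $f_i\subset\partial_vN_K$ (hence $[f_i]\in\pi_1(F_K)$) and $c_i\subset\partial\Omega_i$ (hence, using $\pi_1(\Omega_i-\gamma)=\langle[m_i],[\gamma']\rangle$, $[c_i]$ lies in $\langle[m_i],[\gamma']\rangle$). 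Since the remaining curves $\theta_{i4}',\dots,\theta_{ig}'$ live in $T_i^c$, this shows $[\partial T_i]\in G_i:=\langle x_i,\mathcal F_i^-,\mathcal F_K,[\gamma']\rangle$, and together with $x_i\in G_i$ one gets $y_i:=b_ix_ib_i^{-1}\in G_i$. Now set $G=\langle x_1,b_1h_1,\mathcal F_1^-,\mathcal F_K,x_2,b_2h_2,\mathcal F_2^-,[\gamma'],s_1,s_2\rangle$ (a set of $3g+3$ elements). Then $y_i\in G_i\subset G$ and $h_i^{-1}x_ih_i=(b_ih_i)^{-1}y_i(b_ih_i)\in G$, so $\pi_1(X_i)=\langle x_i,h_i^{-1}x_ih_i,s_i\rangle\subset G$; in particular $h_i\in G$, hence $b_i=(b_ih_i)h_i^{-1}\in G$, so $\mathcal F_i\subset G$ and $\pi_1(Y_s)\subset G$, giving $G=\pi_1(M)$. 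Without this commutator/once‑punctured‑torus observation, replacing $b_i$ by $b_ih_i$ and dropping $h_i^{-1}x_ih_i$ has no reason to remain a generating set; that is precisely the verification you defer, and it is the heart of the lemma, not ``bookkeeping.''
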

\begin{proof}
Let $\Omega_i\subset N_i$ be the induced $I$-bundle over the M\"{o}bius band $\mu_i$.  So we may view $N_i$ as the union of $\Omega_i$ and $\Gamma_i\times I$.   Recall that the curve $\gamma\subset Y$ before the Dehn surgery winds around the core curve $m_i$ of $\mu_i$ exactly once.  So we may assume $\Omega_i-N(\gamma)$ is a handlebody of genus 2 and $\pi_1(\Omega_i-\gamma)$ is generated by $[m_i]$ and the element represented by a meridional loop around $\gamma$.  Thus $\pi_1(\Omega_i-\gamma)$ can be generated by $[m_i]$ and $[\gamma']$ in $\pi_1(Y_s)$, where $\gamma'$ is the core of the surgery solid torus in $M$ as in Lemma~\ref{LRi}.  

In the discussion above, we have a set of $3g+1$ generators $\mathcal{F}_1\cup\mathcal{F}_2\cup\mathcal{F}_K\cup[\gamma']$ for $\pi_1(Y_s)$.  Let $x_i=[\alpha_i']$ and $b_i=[\beta_i']$ be the two special generators in $\mathcal{F}_i$ ($i=1,2$).  Since $\alpha_i'\cap\beta_i'$ is a single point, a neighborhood of $\alpha_i'\cup\beta_i'$ in $\Gamma_i\times\{1\}$ is a once-punctured torus $T_i$ and $\partial T_i$ represents the element $b_ix_ib_i^{-1}x_i^{-1}$.  

Let $\mathcal{F}_i^-=\mathcal{F}_i-\{x_i, b_i\}$ be the remaining set of $g-2$ generators in $\mathcal{F}_i$.  So $\mathcal{F}_i^-=\{[m_i], [\theta_{i4}'],\dots,[\theta_{ig}']\}$. 
The complement of $T_i$ in $\Gamma_i\times\{1\}$ is a surface $T_i^c$ with 3 boundary circles: one boundary circle is $\partial T_i$, the second boundary circle is $f_i=\partial F_i\times\{1\}$ and the third boundary circle $c_i$ is glued to $\Omega_i$.  
In our construction above, the elements in $\mathcal{F}_i^- -[m_i]$ are represented by the curves $\theta_{i4}',\dots,\theta_{ig}'$ which lie in the surface $T_i^c=(\Gamma_i\times\{1\})-\Int(T_i)$.  
Hence $[\partial T_i]$ can be generated by $\mathcal{F}_i^- -[m_i]$ plus the two elements $[f_i]$ and $[c_i]$ represented by the other two boundary circles of $T_i^c$ in $\pi_1(T_i^c)$. 
As the curve $f_i=\partial F_i\times\{1\}$ is also a boundary curve of the annulus $\Gamma\times\{1\}$ and $\partial_v N_K=\Gamma\times\{1\}$, $[f_i]$ lies in $\pi_1(F_K)$.  Since $c_i$ is a curve in $\partial \Omega_i-\gamma$ and since $\pi_1(\Omega_i-\gamma)$ can be generated by $[m_i]$ and $[\gamma']$, $[\partial T_i]$ lies in the subgroup of $\pi_1(M)$ generated by $\mathcal{F}_i^-\cup\mathcal{F}_K\cup[\gamma']$.  

Let $G_i$ be the subgroup of $\pi_1(M)$ generated by $x_i\cup \mathcal{F}_i^-\cup \mathcal{F}_K\cup [\gamma']$.  So $[\partial T_i]\in G_i$.  For simplicity, we do not include relators in any group presentation of a subgroup of $\pi_1(M)$ and write $G_i=\langle x_i, \mathcal{F}_i^-, \mathcal{F}_K, [\gamma']\rangle$.  Since $[\partial T_i]=b_ix_ib_i^{-1}x_i^{-1}\in G_i$ and $x_i\in G_i$, we have $b_ix_ib_i^{-1}\in G_i$. 

By Lemma~\ref{LXrank}, $\pi_1(X_i)$ is generated by 3 elements $x_i, h_i^{-1}x_ih_i, s_i$, where $h_i\in\pi_1(X_i)$ and $x_i$ is the element represented by $\alpha_i'$ which is the core of the annulus $A_i$ as above.  We consider the following subgroup $G$ of $\pi_1(M)$ generated by $3g+3$ elements. 
$$G=\langle x_1, b_1h_1, \mathcal{F}_1^-, \mathcal{F}_K, x_2, b_2h_2, \mathcal{F}_2^-, [\gamma'], s_1, s_2\rangle$$
Note that the generators for $G$ are simply obtained by first replacing $b_i$ by $b_ih_i$ in the $3g+1$ generators of $\pi_1(Y_s)$ and then adding in $s_1$ and $s_2$.  Our goal next is to show that $G=\pi_1(M)$.

By our construction, $G_i\subset G$ for both $i$.  
Let $y_i=b_ix_ib_i^{-1}$.  By our discussion above, $y_i\in G_i\subset G$.  Note that $x_i=b_i^{-1}y_ib_i$ and $h_i^{-1}x_ih_i=h_i^{-1}b_i^{-1}y_ib_ih_i=(b_ih_i)^{-1}y_i(b_ih_i)$.  Since $y_i\in G$ and $b_ih_i\in G$, we have $h_i^{-1}x_ih_i\in G$.  As $\pi_1(X_i)=\langle x_i, h_i^{-1}x_ih_i, s_i\rangle$ and $h_i^{-1}x_ih_i\in G$, we have $\pi_1(X_i)\subset G$ for both $i=1,2$.  

Since $h_i\in \pi_1(X_i)\subset G$ and $b_ih_i\in G$, we have $b_i\in G$.  This means that $\mathcal{F}_i\subset G$ and hence $\pi_1(Y_s)\subset G$.  Thus $\pi_1(M)=G$ and $\pi_1(M)$ can be generated by $3g+3$ elements.
\end{proof}

Although the inequality $r(M)\le 3g+3$ in Lemma~\ref{LRM} is all we need in proving Theorem~\ref{Tmain}, for certain slopes $s$, we can determine the exact rank of $\pi_1(M)$.  For completeness, we include the following proposition.

\begin{proposition}\label{Prank}
If $s=\frac{m}{2n}$ where $m$ is odd, then the rank of $\pi_1(M)$ and the dimension of $H_1(M;\mathbb{Z}_2)$ are both equal to $3g+3$.
\end{proposition}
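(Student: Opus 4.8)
The plan is to bound $r(M)$ below by $\dim H_1(M;\mathbb{Z}_2)$ and then to compute the latter. By Lemma~\ref{LRM} we already know $r(M)\le 3g+3$, and since $r(M)\ge\dim H_1(M;\mathbb{Z}_2)$ for any manifold, it suffices to prove $\dim H_1(M;\mathbb{Z}_2)\ge 3g+3$; this then forces $r(M)=\dim H_1(M;\mathbb{Z}_2)=3g+3$. I would obtain the homological lower bound from the decomposition $M=X_1\cup_{A_1}Y_s\cup_{A_2}X_2$, exploiting the fact that each gluing is along a single connected annulus, so that the error terms in Mayer--Vietoris are small.

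The heart of the argument is to show $\dim H_1(Y_s;\mathbb{Z}_2)=3g+1$, and this is the only place where the hypothesis $s=\frac{m}{2n}$ with $m$ odd is used. Write $Y_s=(Y-N(\gamma))\cup_T V$, where $V$ is the surgery solid torus and $T=\partial N(\gamma)$, and let $\mu,\lambda$ be the meridian of $\gamma$ and the chosen framing curve on $T$, so that the filling slope is the curve $m\mu+2n\lambda$. A Mayer--Vietoris computation for this splitting, in which the core of $V$ gets identified (via a curve on $T$ dual to the filling slope) with a class already present in $H_1(Y-N(\gamma);\mathbb{Z}_2)$, shows that $H_1(Y_s;\mathbb{Z}_2)$ is the quotient of $H_1(Y-N(\gamma);\mathbb{Z}_2)$ by the mod-$2$ reduction of the filling slope. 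Since $m$ is odd and $2n$ is even, that mod-$2$ reduction is the class $[\mu]$. Hence $H_1(Y_s;\mathbb{Z}_2)\cong H_1(Y-N(\gamma);\mathbb{Z}_2)/\langle[\mu]\rangle$, and the right-hand side is exactly $H_1(Y;\mathbb{Z}_2)$, the effect of the trivial Dehn filling $Y=(Y-N(\gamma))\cup N(\gamma)$. Since $Y$ is obtained from $Y_N$ by attaching one $1$--handle and $\dim H_1(Y_N;\mathbb{Z}_2)=3g$ (section~\ref{SY}), we get $\dim H_1(Y;\mathbb{Z}_2)=3g+1$, and therefore $\dim H_1(Y_s;\mathbb{Z}_2)=3g+1$.

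Finally, I would apply Mayer--Vietoris twice to $M=X_1\cup_{A_1}Y_s\cup_{A_2}X_2$. Each $A_i$ is a connected annulus with $H_1(A_i;\mathbb{Z}_2)\cong\mathbb{Z}_2$, and all of $X_1,Y_s,X_2$ (and the intermediate $X_1\cup_{A_1}Y_s$) are connected, so for any gluing of connected $3$--manifolds $P_1,P_2$ along a connected annulus one has $\dim H_1(P_1\cup P_2;\mathbb{Z}_2)\ge\dim H_1(P_1;\mathbb{Z}_2)+\dim H_1(P_2;\mathbb{Z}_2)-1$, because the kernel of $H_1(P_1;\mathbb{Z}_2)\oplus H_1(P_2;\mathbb{Z}_2)\to H_1(P_1\cup P_2;\mathbb{Z}_2)$ is a quotient of $H_1(A;\mathbb{Z}_2)\cong\mathbb{Z}_2$ and the sequence is exact on the right (as $A$ is connected). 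With $\dim H_1(X_i;\mathbb{Z}_2)=2$ (Lemma~\ref{Llive}) and $\dim H_1(Y_s;\mathbb{Z}_2)=3g+1$, two applications give $\dim H_1(M;\mathbb{Z}_2)\ge 2+(3g+1)+2-1-1=3g+3$, which completes the proof by the first paragraph. (The bound is in fact sharp: the core of $A_i$ represents in $H_1(X_i;\mathbb{Z})\cong\mathbb{Z}^2$ the meridian of the edge $K-\alpha$ of the graph $G$, and this is a primitive—hence mod-$2$ nonzero—class since no edge of the connected graph $G$ is a bridge; so each connecting homomorphism has rank $1$ and the inequalities above are equalities.) The step I expect to be the main obstacle is the $\mathbb{Z}_2$--homology of the Dehn surgery in the second paragraph; once one recognizes that the slope $\frac{m}{2n}$ with $m$ odd makes the filling mod-$2$ trivial, everything else is Mayer--Vietoris bookkeeping.
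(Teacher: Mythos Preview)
Your proof is correct and follows essentially the same approach as the paper: bound $r(M)$ above by Lemma~\ref{LRM}, below by $\dim H_1(M;\mathbb{Z}_2)$, and compute the latter via Mayer--Vietoris, using that the slope $\tfrac{m}{2n}$ with $m$ odd makes the Dehn filling mod-$2$ indistinguishable from the trivial one. The only cosmetic difference is the order of operations: the paper first glues $X_1,X_2$ to $Y$ along $A_1,A_2$ to form $M'$, computes $\dim H_1(M';\mathbb{Z}_2)=3g+3$ directly (using that the core of each $A_i$ is nonzero in both pieces, so each Mayer--Vietoris step loses exactly one dimension), and then argues $H_1(M;\mathbb{Z}_2)\cong H_1(M';\mathbb{Z}_2)$; you instead perform the surgery first to get $\dim H_1(Y_s;\mathbb{Z}_2)=3g+1$, then glue, obtaining the inequality $\dim H_1(M;\mathbb{Z}_2)\ge 3g+3$ and squeezing with the rank upper bound. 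Both routes work and rest on the same observation.
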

\begin{proof}
Let $M'=X_1\cup_{A_1} Y\cup_{A_2} X_2$ as above.  So $M$ is obtained from $M'$ by Dehn surgery on $\gamma$ with slope $s$.

By Lemma~\ref{Llive}, the dimension of $H_1(X_i;\mathbb{Z}_2)$ is $2$.  Moreover, as $\pi_1(X_i)$ is generated by $x_i, h_i^{-1}x_ih_i, s_i$ and since $dim(H_1(X_i;\mathbb{Z}_2))=2$, we see that $x_i$ and $s_i$ represent two generators for $H_1(X_i;\mathbb{Z}_2)$.

By our construction, the dimension of $H_1(Y;\mathbb{Z}_2)$ is $3g+1$, and the core curves of $A_1$ and $A_2$ represent two elements in a basis of $H_1(Y;\mathbb{Z}_2)$.  Using Mayer-Vietoris sequence, it is easy to see that the dimension of $H_1(M';\mathbb{Z}_2)$ is $(3g+1)+2-1+2-1=3g+3$.  

Recall that in our framing, the $\infty$-slope is the meridional slope, so from Mayer-Vietoris sequence, we see that if the surgery slope $s=\frac{m}{2n}$, then $H_1(M;\mathbb{Z}_2)\cong H_1(M';\mathbb{Z}_2)$.  So $dim(H_1(M;\mathbb{Z}_2))$ is also $3g+3$.

As $rank(\pi_1(M))\ge dim(H_1(M;\mathbb{Z}_2))$, this means that the rank of $\pi_1(M)$ is at least $3g+3$.  By Lemma~\ref{LRM}, the rank of $\pi_1(M)$ must be equal to $3g+3$ if $s=\frac{m}{2n}$.
\end{proof}

\section{Incompressible surfaces in $Y-\gamma$}\label{Sincomp}

Our main task in the remainder of this paper is to show that the Heegaard genus of $M$ is at least $3g+4$ and by Lemma~\ref{LRM} this proves Theorem~\ref{Tmain}.  In this section, we study incompressible surfaces in $Y-\gamma$ with boundary in $A_1\cup A_2$.  This gives a major tool in calculating the Heegaard genus of $M$.

As before, we view $N_1$, $N_2$ and $N_K$ as sub-manifolds of $Y$.  We first describe a set of standard incompressible surfaces in $N_i-\gamma$ and $N_K$.

A standard incompressible surface in $N_K$ is simply a surface with boundary in $\partial_vN_K$ and that is parallel to $\partial_hN_K$.

There are a few different types of standard incompressible surfaces for $N_i-\gamma$.  

\vspace{8pt}
\noindent
\emph{Surfaces of type $A$ and type $A'$}: 
A surface of type $A$ in $N_i-\gamma$ is a surface with boundary in $\partial_vN_i$ and that is parallel in $N_i-\gamma$ to $\partial_hN_i$.  Moreover, starting from a surface of type $A$, we can perform an annulus-compression along the annulus $A_i$ (see Definition~\ref{Dtubing}) and obtain a surface with 4 boundary circles, two circles in $A_i$ and two circles in $\partial_vN_i$.  We call the surface after the annulus-compression a surface of type $A'$.  Furthermore, the boundary curves of a type $A$ (or type $A'$) surface in $\partial_vN_i$ is as shown in Figure~\ref{Ftype}(a).

\begin{figure}
  \centering
\psfrag{a}{(a)}
\psfrag{b}{(b)}
\psfrag{c}{(c)}
\psfrag{d}{(d)}
\psfrag{p}{$q_i$}
\psfrag{q}{$p_i$}
\psfrag{0}{\Small{$\partial F_i\times\{0\}$}}
\psfrag{1}{\Small{$\partial F_i\times\{1\}$}}
  \includegraphics[width=4in]{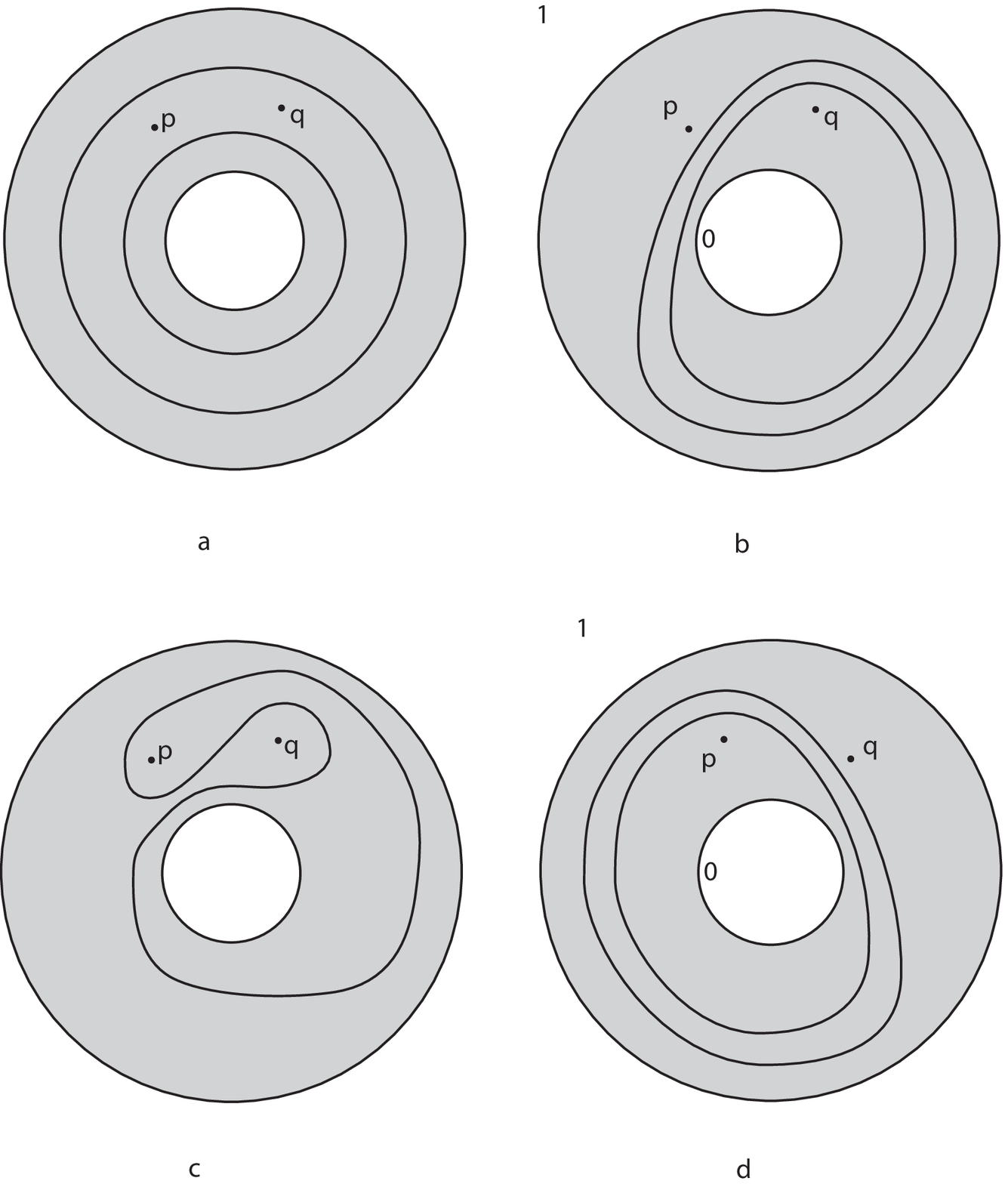}
  \caption{}
  \label{Ftype}
\end{figure}

\vspace{8pt}
\noindent
\emph{Surfaces of type $B$}: 
We first consider the arc $\gamma_i\subset F_i\subset N_i$.  Let $\pi\colon N_i\to F_i$ be the map collapsing each $I$-fiber to a point and let $V_i=\pi^{-1}(\gamma_i)$ be the vertical rectangle in $N_i$ containing $\gamma_i$.  The arc $\gamma_i$ divides $V_i$ into a pair of sub-rectangles $V_i^+$ and $V_i^-$ and we suppose $V_i^+$ is the one that intersects $A_i$.  The intersection of $V_i$ and a type $A$ surface in $N_i$ is a pair of arcs parallel to $\gamma_i$, one in each $V_i^\pm$.  Starting from a type $A$ surface $S$, we can push the arc $S\cap V_i^\pm$ across $\gamma_i$ and into $V_i^\mp$, and extend this operation to an isotopy of $S$ in $N_i$ pushing a neighborhood of $S\cap V_i^\pm$ across $\gamma_i$.  Let $S_B$ be the resulting surface.  $S_B$ also has two boundary circles in $\partial_vN_i$, but its configuration with respect to $\partial\gamma_i$ is different from $S$.  Since the arc $\gamma_i$ is orientation-reversing in $F_i$, $S_B\cap\partial_vN_i$ is as shown in Figure~\ref{Ftype}(b) or (d).  We call the resulting surface $S_B$ a type $B$ surface.

\vspace{8pt}
\noindent
\emph{Surfaces of type $B'$ and type $B''$}: 
Note that a type $B$ surface has two different possibilities.  In the isotopy above, if we push the arc $S\cap V_i^+$ into $V_i^-$, then $V_i^+$ is disjoint from the resulting type $B$ surface while $V_i^-$ intersects the surface in two arcs.  Similarly, if we push the arc $S\cap V_i^-$ into $V_i^+$, then $V_i^-$ is disjoint from the resulting type $B$ surface while $V_i^+$ intersects the surface in two arcs.  Recall that $A_i\cap V_i\subset\partial V_i^+$.  So, given a type $B$ surface $S_B$ with $V_i^-\cap S_B=\emptyset$ and $V_i^+\cap S_B$ consisting of two arcs, we can perform either a single annulus-compression along $A_i$ or two consecutive annulus-compressions along $A_i$, see Figure~\ref{Ftubing} and Figure~\ref{Fcap} for a schematic picture.  We call the surface after a single annulus-compression a type $B'$ surface and call the surface after two consecutive annulus-compressions a type $B''$ surface.  In particular, a surface of type $B'$ has 2 boundary circles in $A_i$ and a surface of type $B''$ has 4 boundary circles in $A_i$.  

Note that such an annulus-compression along $A_i$ can happen on a type $B$ surface $S_B$ only if $V_i^+\cap S_B\ne\emptyset$, in which case $V_i^+\cap S_B$ consists of two arcs and $V_i^-\cap S_B=\emptyset$. 
Recall that the two endpoints $p_i$ and $q_i$ of $\gamma_i$ in our construction are very specific.  In the construction in section~\ref{SY} and as shown in Figure~\ref{Fsurface}, the subarc of $\gamma_i$ between the endpoint $q_i$ and the point $\gamma_i\cap\alpha_i$ winds around the M\"{o}bius band $\mu_i$ once while the subarc of $\gamma_i$ bounded by $p_i$ and the point $\gamma_i\cap\alpha_i$ lies in the orientable sub-surface $\Gamma_i$ of $F_i$ ($i=1,2$). 
Let $K_i$ be the subarc of an $I$-fiber of $\partial_vN_i$ connecting $q_i$  to $\partial(\Gamma\times\{0\})$, see Figure~\ref{Fgamma}(b).  Since $A_i\subset\Gamma_i\times\{1\}$, by our construction of $\gamma_i$, $p_i$ and $q_i$, the arc $K_i$ is an edge of $V_i^+$.  So both boundary curves of a type $B''$ (or type $B'$) surface in $\partial_vN_i$ intersect this arc $K_i$.  In other words, the configuration of the pair of boundary curves of a type $B''$ (or type $B'$) surface in $\partial_vN_i$ is Figure~\ref{Ftype}(b), not Figure~\ref{Ftype}(d)

\vspace{8pt}
\noindent
\emph{Tubes around $\gamma_i$}:  
The last type of standard surfaces in $N_i-\gamma$ is a tube around $\gamma_i$, i.e. an annulus parallel to $T\cap N_i$, where $T=\partial \overline{N(\gamma)}$.

\begin{lemma}\label{Lstandard}
Let $P$ be a compact orientable incompressible surface properly embedded in $Y-\gamma$ and with $\partial P\subset A_1\cup A_2$.  Suppose $P$ is not a $\partial$-parallel annulus.  Then, after isotopy, each component of $P\cap N_K$, $P\cap (N_1-\gamma)$, and $P\cap (N_2-\gamma)$ is one of the standard surfaces described above.
\end{lemma}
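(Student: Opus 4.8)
The overall plan is a cut-and-paste argument: decompose $Y-\gamma$ along the incompressible annuli $\partial_vN_1$, $\partial_vN_2$, $\partial_vN_K$ into the pieces $N_1-\gamma$, $N_2-\gamma$, $N_K$ and the handlebody $W_\Gamma-\gamma$ (where $W_\Gamma$ is $\Gamma\times I$ together with the attached $1$-handle, so that $\gamma\cap W_\Gamma=\gamma_p\cup\gamma_q$ is a pair of $\partial$-parallel arcs), put $P$ in good position with respect to this decomposition, and then classify $P$ on each of the first three pieces. So I would first isotope $P$ to be transverse to $\partial_vN_1\cup\partial_vN_2\cup\partial_vN_K$ and choose it in its isotopy class so that $|P\cap(\partial_vN_1\cup\partial_vN_2\cup\partial_vN_K)|$ is minimal. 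The core curve of each of these annuli is essential in $Y$ (here $g\ge 3$ is used), so the annuli are incompressible in $Y-\gamma$; and $Y-\gamma$ is irreducible, being a component of $M-N(\gamma)$ cut along the incompressible annuli $A_1,A_2$, while $W=M-N(\gamma)$ is irreducible by Lemma~\ref{LRi}. Standard innermost-disk and outermost-arc arguments then make every curve of $P\cap(\partial_vN_1\cup\partial_vN_2\cup\partial_vN_K)$ essential in both $P$ and the annulus containing it, whence $P\cap N_K$, $P\cap(N_1-\gamma)$, $P\cap(N_2-\gamma)$ and $P\cap(W_\Gamma-\gamma)$ are incompressible in their respective pieces.

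The piece $P\cap N_K$ is settled by Lemma~\ref{LIbundle}: $N_K$ is a twisted $I$-bundle over a once-punctured non-orientable surface, hence $\partial_vN_K$-small, and every incompressible surface in it with boundary in $\partial_vN_K$ is horizontal or an annulus parallel to a sub-annulus of $\partial_vN_K$. Since $\partial_hN_K\subset\partial Y$ is disjoint from $A_1\cup A_2$, the boundary of $P\cap N_K$ lies in $\partial_vN_K$, and $P\cap N_K$ has no closed component. An annulus component parallel into $\partial_vN_K$ may be pushed across $\partial_vN_K$, lowering $|P\cap\partial_vN_K|$ against minimality; so every component of $P\cap N_K$ is horizontal, i.e.\ one of the standard surfaces in $N_K$.

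The main work is the analysis of $Q:=P\cap(N_i-\gamma)$ for $i=1,2$; it is incompressible with $\partial Q\subset\partial_vN_i\cup A_i$, each boundary curve essential in the annulus containing it. Let $V_i=\pi^{-1}(\gamma_i)$ be the vertical rectangle over $\gamma_i$, divided by $\gamma_i$ into $V_i^+$ (the side meeting $A_i$) and $V_i^-$. Since $Q\cap\gamma=\emptyset$, after a further minimization of $|Q\cap V_i|$ the intersection $Q\cap V_i$ is a union of arcs and circles in $V_i^+\cup V_i^-$. An innermost circle bounds a $\gamma$-disjoint disk in $V_i^\pm$ and is removed using incompressibility of $Q$ and irreducibility; an outermost arc with both endpoints on a single edge of $V_i^\pm$ (which lies in $\partial_vN_i$ or in $A_i$) cuts off a $\gamma$-disjoint $\partial$-compressing disk for $Q$ whose base arc lies in an annulus, so by Lemma~\ref{Lmonogon} the component of $Q$ it lies on is a $\partial$-parallel annulus; such a component either contradicts minimality (if parallel into $\partial_vN_i$), or is itself the ``tube around $\gamma_i$'' type (if isotopic to $T\cap N_i$), or is excluded by the hypothesis that $P$ is not a $\partial$-parallel annulus (if parallel into $A_i$). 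After this cleanup $Q\cap V_i$ consists only of arcs transverse to the $I$-fibers and running between two distinct edges of $V_i$, so away from a neighborhood of $\gamma_i$ the surface $Q$ is horizontal in the $I$-bundle $N_i$; near $\gamma_i$, applying Lemma~\ref{LIbundle} to $N_i$ cut along $V_i$ and recording which of $V_i^+,V_i^-$ the spanning arcs lie in and how many times $Q$ has been annulus-compressed along $A_i$ (Definition~\ref{Dtubing}) forces $Q$ to be a surface of type $A$ or $A'$ (no arc crosses $\gamma_i$), of type $B$, $B'$ or $B''$ ($V_i^-\cap Q=\emptyset$, with $0$, $1$ or $2$ annulus-compressions), or a tube around $\gamma_i$.

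The step I expect to be the main obstacle is this final classification near $\gamma_i$. Once each local piece is horizontal or a $\partial$-parallel annulus, one still has to rule out configurations not on the list --- spanning arcs occurring simultaneously in $V_i^+$ and in $V_i^-$, or $\partial$-parallel annuli whose boundary curves have slopes on $\partial_vN_i$ or $A_i$ incompatible with the construction --- and to pin down the precise boundary patterns of the primed types. This is exactly where the carefully arranged geometry is indispensable: $\gamma_i$ is orientation-reversing, meets $\alpha_i$ in one point, and has its endpoints $p_i,q_i$ placed as in Figure~\ref{Fsurface}, so that, for instance, both boundary curves of a type $B'$ or $B''$ surface must cross the fiber arc $K_i$ and so appear as in Figure~\ref{Ftype}(b) rather than Figure~\ref{Ftype}(d); and $A_i$ is a neighborhood of $\alpha_i'$. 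Making this bookkeeping airtight is the crux of the lemma.
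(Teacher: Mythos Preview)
Your overall strategy matches the paper's, and the treatment of $P\cap N_K$ is fine. The gap is in the analysis of $Q=P\cap(N_i-\gamma)$: the vertical annulus $\partial_vN_i$ is \emph{not} disjoint from $\gamma$ --- it meets $\gamma$ in the two points $p_i,q_i$ --- so what is properly embedded in $Y-\gamma$ is the twice-punctured annulus $\partial_vN_i-\gamma$, and $\partial Q\cap\partial_vN_i$ consists of curves essential in that punctured surface, not in the annulus. Such curves come in four topological types (horizontal in $\partial_vN_i$; a circle around one puncture; a circle around both punctures; a curve essential in $\partial_vN_i$ that cannot be made horizontal in $\partial_vN_i-\gamma$), and the heart of the proof is showing only the first two occur and that the second bounds a tube around $\gamma_i$.

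Your appeal to Lemma~\ref{Lmonogon} for an outermost arc in $V_i^\pm$ with both endpoints on $I_p$ or $I_q$ does not go through: that lemma needs $\partial Q$ to lie in a collection of genuine annuli and to consist of essential curves there, but $\partial Q$ lies in the punctured annulus $\partial_vN_i-\gamma$ and may well contain curves trivial in $\partial_vN_i$ (the type~II curves). In particular the trichotomy you state --- parallel into $\partial_vN_i$, a tube around $\gamma_i$, or parallel into $A_i$ --- is not what Lemma~\ref{Lmonogon} delivers; tubes around $\gamma_i$ are not sub-annuli of $\partial_vN_i\cup A_i$. The paper replaces this step by a different mechanism: it maximizes $\chi(P_i)$ subject to the two normalization conditions and shows directly (Claim~\ref{Claim0}) that no $\partial$-compressing disk with base arc in $\partial_vN_i$ can exist, since performing the $\partial$-compression would raise $\chi$ while preserving the conditions. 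Ruling out the ``type~III/IV'' boundary curves then requires a \emph{second} vertical rectangle $R_I$ disjoint from both $\gamma_i$ and $A_i$ (this is where $g\ge 3$ is genuinely used, not just for essentiality of core curves), and showing that type~II curves bound tubes (Claim~\ref{claimII}) is a separate local argument along $V_i$. Once you have only type~I curves on $\partial_vN_i$, the horizontality away from $\gamma_i$ still needs the auxiliary region $\Lambda_i=\pi^{-1}(\pi(A_i))$ and its vertical boundary $E_1,E_2$, not just $V_i$; cutting $N_i$ along $V_i$ alone leaves $A_i$ in the interior of the horizontal boundary, so Lemma~\ref{LIbundle} does not apply directly.
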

\begin{proof}
First, since $\partial_vN_K$ is an incompressible annulus in $Y-\gamma$, after isotopy, we may assume that $P\cap \partial_vN_K$ consists of curves essential in both $P$ and $\partial_vN_K$.  Moreover, by assuming $|P\cap\partial_vN_K|$ is minimal, we may assume no component of $P\cap N_K$ is an annulus parallel to a sub-annulus of $\partial_vN_K$.  Thus, by Lemma~\ref{LIbundle}, $P\cap N_K$ is standard.

As the core curve of the annulus  $\partial_vN_i$ is essential in $Y$, $\partial_vN_i$ is incompressible in $Y$.  By our construction, $\gamma$ intersects $\partial_vN_i$ in two points $p_i$ and $q_i$.  Since $\gamma$ is essential in $Y$ and $\gamma$ minimally intersects $\partial_vN_i$, $\partial_vN_i-\gamma$ is incompressible in $Y-\gamma$.

We may assume our incompressible surface $P$ is transverse to $\partial_vN_i$.  Let $P_i=P\cap N_i$.  As both $P$ and $\partial_vN_i-\gamma$ are incompressible in $Y-\gamma$, after isotopy, we may assume
\begin{enumerate}
  \item no component of $P\cap \partial_vN_i$ is trivial in $\partial_vN_i-\gamma$, and
  \item $P_i$ contains no $\partial$-parallel annulus in $N_i-\gamma$ that is parallel to a sub-annulus of $\partial_vN_i-\gamma$ (if there is such a component, then one can push it across $\partial_vN_i-\gamma$ into $\Gamma\times I$, which reduces $|P\cap \partial_vN_i|$).
\end{enumerate} 

We may assume $\chi(P_i)$ is maximal up to isotopy on $P$ and under the two conditions above.  Since $P$ is incompressible in $Y-\gamma$, this implies that $P_i$ is incompressible in $N_i-\gamma$.  Note that in Condition (2), pushing a $\partial$-parallel annulus from one side of $\partial_vN_i$ to the other side does not change $\chi(P_i)$.

\begin{claim}\label{Claim0}
$P_i$ admits no $\partial$-compressing disk in $N_i-\gamma$ with base arc in $\partial_vN_i$, see Definition~\ref{Dbase} for the definition of base arc.
\end{claim}
\begin{proof}[Proof of Claim~\ref{Claim0}]
Suppose the claim is false and let $D$ be a $\partial$-compressing disk of $P_i$ with base arc $\alpha$ in $\partial_vN_i$.  Let $P_i^D$ be the surface obtained by  a $\partial$-compression on $P_i$ along $D$.  We will show next that no curve in $\partial P_i^D$ is trivial in $\partial_vN_i-\gamma$.

Suppose a curve in $\partial P_i^D$ is trivial in $\partial_vN_i-\gamma$.  Then this means that $\partial P_i\cap\partial_vN_i$ and the base arc $\alpha$ of $D$ have two possible configurations: 
\begin{enumerate}[\upshape (i)]
  \item  $\partial\alpha$ lies in the same circle of $\partial P_i$, and $\alpha$ is parallel in $\partial_vN_i-\gamma$ to a subarc of $\partial P_i$ that is bounded by $\partial\alpha$; 
  \item  the two endpoints of $\alpha$ lie in different circles of $\partial P_i$, and the two circles of $\partial P_i$ containing $\partial\alpha$ are parallel in $\partial_vN_i-\gamma$.  
\end{enumerate}
In case (i), one can push $\alpha$ into $\partial P_i$ and change $D$ into a compressing disk for $P_i$, which contradicts that $P_i$ is incompressible in $N_i-\gamma$.  In case (ii), similar to the proof of Lemma~\ref{Lmonogon}, the component of $P_i$ containing $\partial\alpha$ must be an annulus parallel to a sub-annulus of $\partial_vN_i-\gamma$, and this contradicts the Condition (2) above.  Thus no curve in $\partial P_i^D$ is trivial in $\partial_vN_i-\gamma$, i.e, $P_i^D$ satisfies Condition (1) above.

After pushing any possible $\partial$-parallel annulus from $N_i-\gamma$ into $\Gamma\times I$ as described in Condition (2) above, we may assume $P_i^D$ also satisfies Condition (2).  However, $\chi(P_i^D)>\chi(P_i)$ and this contradicts our assumption that $\chi(P_i)$ is maximal.
\end{proof}

Next we consider the curves in $\partial P_i\cap\partial_vN_i$.  Since $\gamma\cap\partial_vN_i$ contains two points, we have the following 4 types of curves:  a type I curve is a horizontal curve transverse to the $I$-fibers of $\partial_vN_i$; a type II curve is a curve bounding a disk in $\partial_vN_i$ and the disk contains exactly one endpoint of $\gamma_i$; a type III curve is a curve bounding a disk in $\partial_vN_i$ and the disk contains both endpoints of $\gamma_i$; a type IV curve is a curve that is essential in the annulus $\partial_vN_i$ but cannot be isotoped in $\partial_vN_i-\gamma$ to be transverse to the $I$-fibers of $\partial_vN_i$, see Figure~\ref{Ftype}(c) for a picture. 

\begin{claim}\label{Claimtype}
Type III and type IV curves cannot occur.  
\end{claim}
\begin{proof}[Proof of Claim~\ref{Claimtype}]
Recall that in our construction, the two endpoints of $\gamma_i$ are $p_i$ and $q_i$ in $\partial_vN_i$.  Let $I_1$ and $I_2$ be a pair of $I$-fibers of $N_i$ lying in $\partial_vN_i-\gamma$ such that  $I_1$ and $I_2$ divide $\partial_vN_i$ into a pair of rectangles $R_p$ and $R_q$ with $p_i\in R_p$ and $q_i\in R_q$.  

We may assume $|\partial P_i\cap I_1|$ and $|\partial P_i\cap I_2|$ are minimal up to isotopy on $\partial P_i$ in $\partial_vN_i-\gamma$.  If a component $\xi$ of $\partial P_i\cap R_p$ is an arc with both endpoints in $I_1$, then by the minimality assumption on $|\partial P_i\cap I_1|$, the subdisk of $R_p$ bounded by $\xi$ and a subarc of $I_1$  must contain the puncture $p_i$.  Moreover, if there is such an arc $\xi$, then every other component of $\partial P_i\cap R_p$ is either an arc transverse to the $I$-fibers or an arc parallel to $\xi$ in $R_p-p_i$ or a circle around $p_i$.  Therefore, if such an arc $\xi$ exists, then $|P\cap I_1|\ne |P\cap I_2|$.

After applying the argument above also to $P\cap R_q$, we can conclude that either (1) $|P\cap I_1|\ne |P\cap I_2|$, or (2) each component of $P\cap R_p$ and $P\cap R_q$ is either transverse to the $I$-fibers or a circle around an endpoint of $\gamma_i$.  In case (2), each component of $P\cap\partial_vN_i$ is of either type I or type II and the claim holds.  

Suppose the claim is false, then we have $|P\cap I_1|\ne |P\cap I_2|$. 
Since $g\ge 3$, there is a vertical rectangle $R_I$ properly embedded in $N_i$ such that $R_I\cap\gamma_i=\emptyset$, $R_I\cap A_i=\emptyset$, and $I_1\cup I_2$ is a pair of opposite edges of $R_I$.  Since $P_i$ is incompressible in $N_i-\gamma$, after isotopy, we may assume $P\cap R_I$ contains no closed curves.  As $R_I\cap A_i=\emptyset$, the endpoints of all the arcs in $P\cap R_I$ lie in $I_1\cup I_2$.  Since $|P\cap I_1|\ne|P\cap I_2|$, there must be an arc in $P\cap R_I$ having both endpoints in the same $I$-fiber $I_1$ or $I_2$.  As $R_I\cap\gamma=\emptyset$, this means that a subdisk of $R_I$ is a $\partial$-compressing disk for $P_i$ in $N_i-\gamma$ with its base arc in $I_1$ or $I_2$, contradicting Claim~\ref{Claim0}.  
\end{proof}

By Claim~\ref{Claimtype}, we see that $P\cap\partial_vN_i$ consists of type I and type II curves.

\begin{claim}\label{claimII}
Each type II curve in $\partial_vN_i$ is a boundary circle of a tube around $\gamma_i$.
\end{claim}
\begin{proof}[Proof of Claim~\ref{claimII}]
As in the construction in section~\ref{SY}, the two endpoints of $\gamma_i$ are $p_i$ and $q_i$.  So a type II curves in $\partial_vN_i$ is a circle around either $p_i$ or $q_i$. 
Let $I_p$ and $I_q$ be the pair of $I$-fibers of $N_i$ containing $p_i$ and $q_i$ respectively.  Each type I curve of $P\cap\partial_vN_i$ is transverse to the $I$-fibers and hence intersects $I_p$ (and $I_q$) in one point.  After isotopy, we may assume each type II curve around $p_i$ (resp. $q_i$) intersects $I_p$ (resp. $I_q$) in two points and is disjoint from $I_q$ (resp. $I_p$).

Let $V_i$ and $V_i^\pm$ be the vertical rectangles as in the discussions of type $B$ surfaces before the lemma.  So the $I$-fibers $I_p$ and $I_q$ above are a pair of opposite edges of $V_i$.  Since $P$ is incompressible, after isotopy, we may assume $P$ is transverse to $V_i$ and $P\cap V_i$ contains no trivial circle.   

By Claim~\ref{Claim0}, we may assume that there is no arc in $P\cap V_i$ having both endpoints in the same $I$-fiber $I_p$ or $I_q$.  By the definition of $V_i^\pm$, we know that $V_i^-\cap A_i=\emptyset$.  Hence each component of $P\cap V_i^-$ is an arc connecting $I_p$ to $I_q$.  

Note that if $P\cap V_i^-=\emptyset$, then $P\cap\partial_vN_i$ contains no type II curve and the claim holds vacuously. 
Suppose $P\cap V_i^-\ne\emptyset$ and let $c$ be the arc in $P\cap V_i^-$ that is the closest to $\gamma_i$, i.e., the sub-rectangle of $V_i^-$ between $c$ and $\gamma_i$ contains no other component of $P\cap V_i^-$.  

Let $c_1$ and $c_2$ be the components of $P\cap \partial_vN_i$ containing the two endpoints of $c$.  
The first case is that both $c_1$ and $c_2$ are of type I (note that this case includes the possibility $c_1=c_2$).  Since $c$ is the closest to $\gamma_i$ in $V_i^-$, this means that $P\cap\partial_vN_i$ contains no type II circles and the claim holds vacuously in this case.

The second case is that one circle, say $c_2$, is a type II circle and the other circle $c_1$ is of type I.  Without loss of generality, we may suppose $c_2$ is a circle around $q_i$.  Let $P_c$ the closure of a small neighborhood of $c_2\cup c$ in $P_i$. 
As shown in Figure~\ref{Ftube}(a), the arc $l_c=\overline{\partial P_c-\partial_vN_i}$ is properly embedded in $N_i$ with $\partial l_c\subset c_1$. Moreover, $l_c$ is parallel in $N_i-\gamma$ to an arc in $\partial_vN_i$ that goes around $p_i$.  This means that $l_c$ is a boundary arc of a $\partial$-compressing disk for $P_i$ in $N_i-\gamma$, contradicting Claim~\ref{Claim0}.

\begin{figure}
  \centering
\psfrag{a}{(a)}
\psfrag{b}{(b)}
\psfrag{p}{$p_i$}
\psfrag{q}{$q_i$}
\psfrag{1}{$c_1$}
\psfrag{2}{$c_2$}
\psfrag{c}{$c_q$}
\psfrag{g}{$\gamma_i$}
\psfrag{P}{$P_i$}
\psfrag{l}{$l_c$}
\psfrag{Q}{$Q_c$}
  \includegraphics[width=4in]{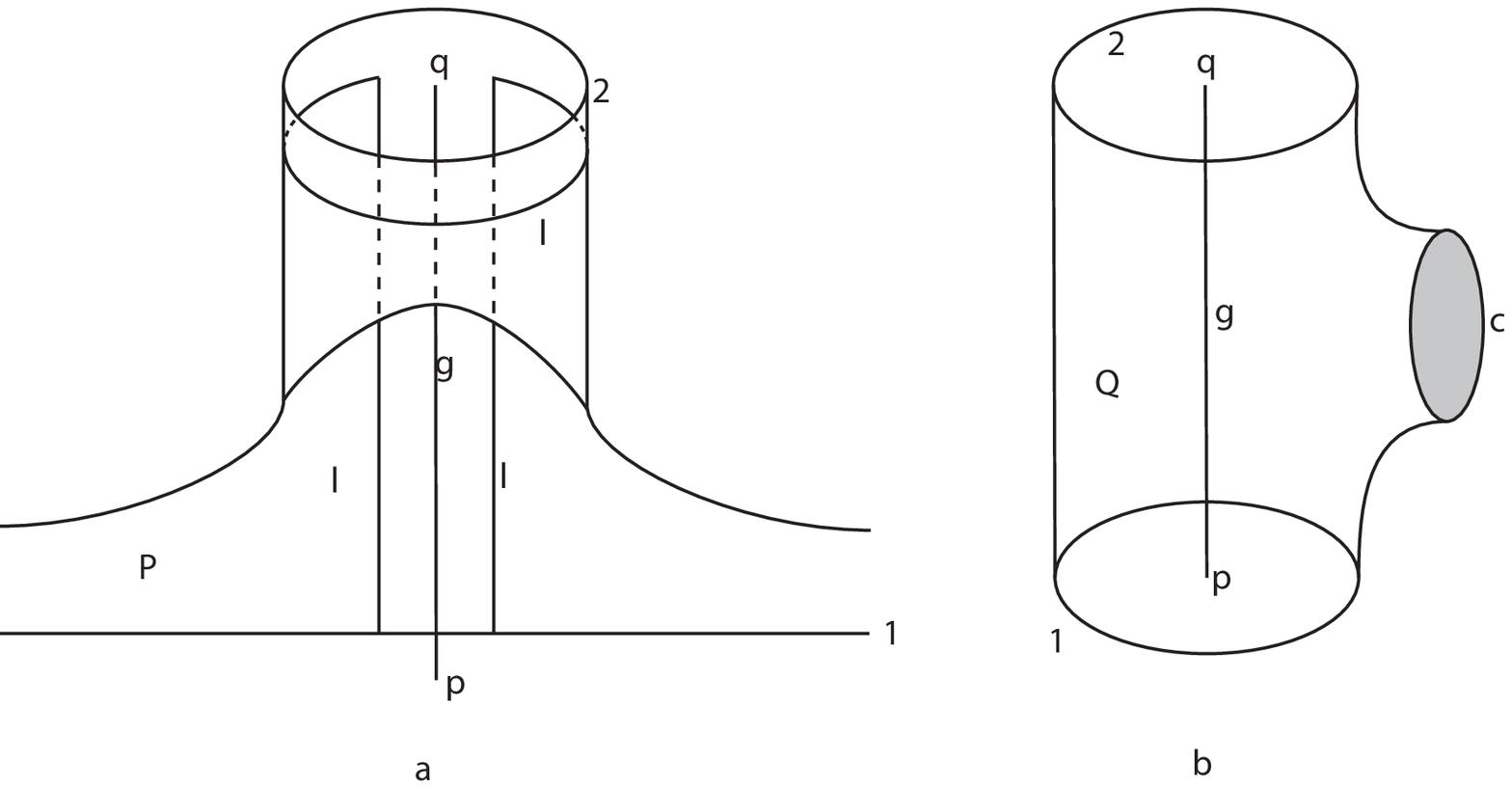}
  \caption{}
  \label{Ftube}
\end{figure}

The remaining case is that both $c_1$ and $c_2$ are type II curves.  Let $Q_c$ be the closure of a small neighborhood of $c\cup c_1\cup c_2$ in $P_i$.  So $Q_c$ is a pair of pants.  Let $c_q$ be the component of $\partial Q_c$ that lies in $\Int(N_i)$.
  As shown in Figure~\ref{Ftube}(b), $c_q$ bounds a disk in $N_i-\gamma$.  Since $P_i$ is incompressible in $N_i-\gamma$, $c_q$ also bounds a disk in $P_i$.  Hence the component of $P_i$ containing $c$ is a tube around $\gamma_i$.

After removing the innermost tube around $\gamma_i$ and repeating the argument above, we can inductively conclude that each type II curve in $P\cap\partial_vN_i$ is a boundary circle of a tube around $\gamma_i$.  Hence the claim holds.
\end{proof}

Now we delete all the components of $P_i$ that are tubes around $\gamma_i$ and denote the resulting surface by $P_i'$.  By Claim~\ref{claimII}, $\partial P_i'\cap\partial_vN_i$ consists of type I curves.  Hence $|I_p\cap\partial P_i'|=|I_q\cap\partial P_i'|$, where $I_p$ and $I_q$ are the $I$-fibers containing $p_i$ and $q_i$ as in the proof of Claim~\ref{claimII}.  Let $\lambda_i=A_i\cap\partial V_i$.  So $\lambda_i$ lies in $\partial V_i^+$ and is an essential arc in $A_i$.  

By Lemma~\ref{Lmonogon}, if there is an arc in $P_i'\cap V_i$ with both endpoints in $\lambda_i$, then $P_i'$ must be a $\partial$-parallel annulus, contradicting our hypotheses on $P$.  So each component of $P_i'\cap V_i$ is an arc either connecting $I_p$ to $I_q$ or connecting $\lambda_i$ to $I_p\cup I_q$.  So after isotopy, we may suppose the arcs in $P_i'\cap V_i$ are transverse to the $I$-fibers of $V_i$.  Moreover, since $|I_p\cap\partial P_i'|=|I_q\cap\partial P_i'|$, the number of arcs in $P_i'\cap V_i$ connecting $\lambda_i$ to $I_p$ is equal to the number of arcs in $P_i'\cap V_i$ connecting $\lambda_i$ to $I_q$.

Let $\Lambda_i=\pi^{-1}(\pi(A_i))$ be the union of the $I$-fibers of $N_i$ that meet $A_i$.  So $\Lambda_i$ is of the form $annulus\times I$ and its vertical boundary $\partial_v\Lambda_i$ is a pair of vertical annuli in $N_i$ which we denote by $E_1$ and $E_2$.  Let $J_1=E_1\cap V_i$, $J_2=E_2\cap V_i$ and $\Delta=V_i\cap\Lambda_i$.  So $\Delta$ is a vertical rectangle and the pair of $I$-fibers $J_1$ and $J_2$ are a pair of opposite edges of $\Delta$.  Moreover, $E_j\cap\gamma$ ($j=1,2$) is a single point lying in $J_j$.  

We may suppose $P_i'$ is transverse to $E_1$ and $E_2$.  Since $P_i'$ is incompressible, after isotopy, we may assume no component of $P_i'\cap (E_1-J_1)$ and $P_i'\cap (E_2-J_2)$ is a closed curve in the disks $E_1-J_1$ and $E_2-J_2$ respectively.  If there is a subarc $e$ of $P_i'\cap E_1$ such that $e\cap J_1=\partial e$, then as shown in Figure~\ref{Fcube}(a), the union of $e$ and the two arcs of $P_i'\cap \overline{V_i-\Delta}$ that are incident to $\partial e$ is an arc $l_e$ properly embedded in $N_i$ and parallel to a vertical arc in $\partial_vN_i$.  Since no component of $P_i'$ is a tube around $\gamma_i$, the two endpoints of $l_e$ lie in different curves in $\partial P_i'\cap \partial_vN_i$ and hence $l_e$ is a boundary arc of a $\partial$-compressing disk for $P_i'$ in $N_i-\gamma$, which contradicts Claim~\ref{Claim0}.  Thus $P_i'\cap E_1$ contains no such arc $e$ and this implies that each component of $P_i'\cap E_1$ intersects $J_1$ in a single point.  So after isotopy, each component of $P_i'\cap E_1$ is transverse to the $I$-fibers.  Similarly, after isotopy, each component of $P_i'\cap E_2$ is also transverse to the $I$-fibers.

\begin{figure}
  \centering
\psfrag{a}{(a)}
\psfrag{b}{(b)}
\psfrag{A}{$A_i$}
\psfrag{E}{$E_1$}
\psfrag{J}{$J_1$}
\psfrag{e}{$e$}
\psfrag{l}{$l_e$}
\psfrag{L}{$\Lambda_i$}
\psfrag{g}{$\gamma_i$}
\psfrag{V}{$V_i$}
\psfrag{+}{$\Delta^+$}
\psfrag{-}{$\Delta^-$}
  \includegraphics[width=4in]{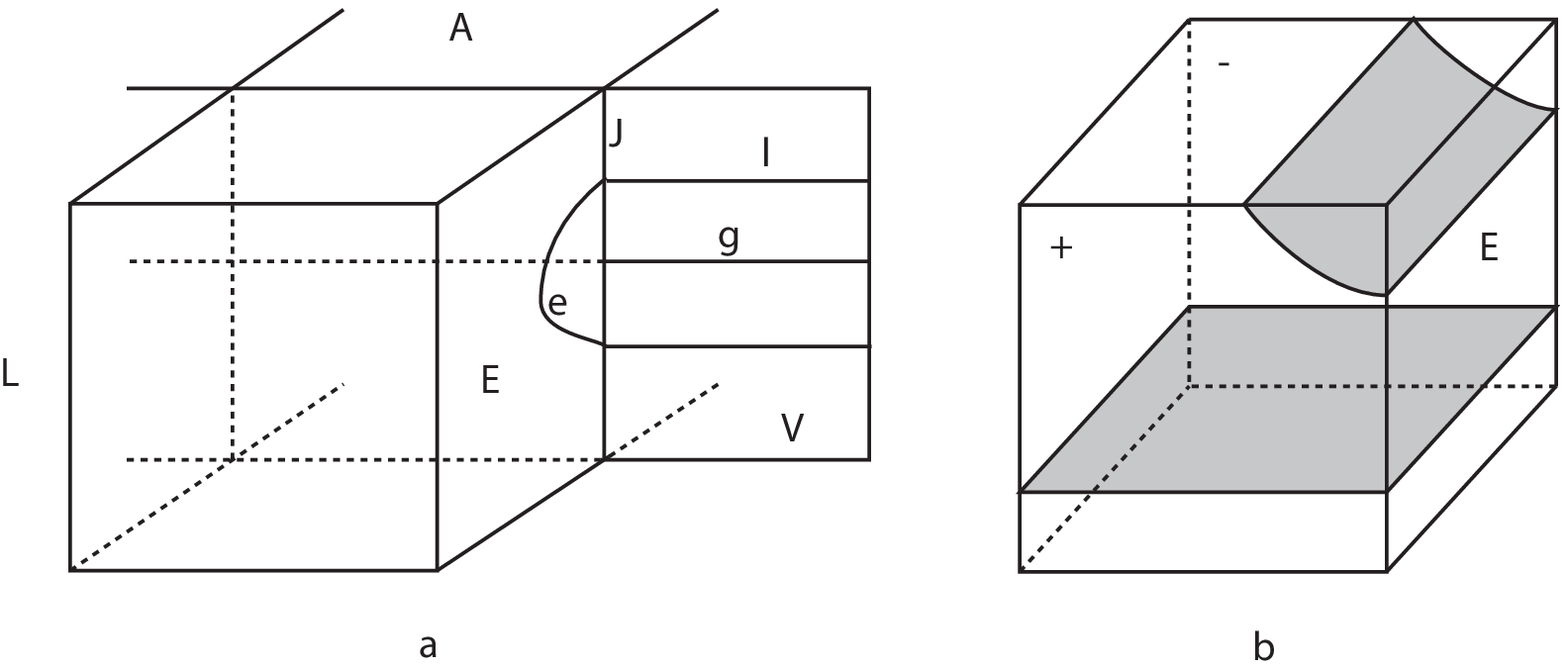}
  \caption{}
  \label{Fcube}
\end{figure}

Let $W_i$ be the closure (under path metric) of $N_i-(V_i\cup\Lambda_i)$.  So $W_i$ is an $I$-bundle over a compact surface.  By our assumptions above on $P_i'\cap V_i$, $P_i'\cap E_1$ and $P_i'\cap E_2$, we see that $P_i'\cap \partial_vW_i$ is a collection of curves transverse to the $I$-fibers of $W_i$.  By Claim~\ref{Claim0}, we may assume no component of $P_i'\cap W_i$ is an annulus parallel to a sub-annulus of $\partial_vW_i$.  So, by Lemma~\ref{LIbundle},  $P_i'\cap W_i$ is horizontal (i.e., transverse to the $I$-fibers). 

Next we consider $P_i'\cap\Lambda_i$.  By our assumption on $P_i'\cap V_i$ above, $P_i'\cap\Delta$ consists of arcs that either connect $J_1$ to $J_2$ or connect the edge $\lambda_i=A_i\cap\partial\Delta$ to $J_1\cup J_2$.  Now we cut $\Lambda_i$ open along $\Delta$, and the resulting manifold $C$ can be viewed as a cube with two opposite faces $\Delta^+$ and $\Delta^-$ corresponding to the two sides of $\Delta$.  Recall that $P_i'\cap A_i$, $P_i'\cap E_1$ and $P_i'\cap E_2$ all consist of essential simple closed curves in the annuli $A_i$, $E_1$, $E_2$ respectively.  Hence, as shown in Figure~\ref{Fcube}(b), each closed curve in $P_i'\cap\partial C$ consists of 4 edges with a pair of opposite edges lying in $\Delta^+$ and $\Delta^-$.  Since $\gamma\cap\Lambda_i\subset\Delta$, $\gamma\cap\Int(C)=\emptyset$.
  Since $P_i'$ is incompressible in $N_i-\gamma$, as shown in Figure~\ref{Fcube}(b), each curve of $P_i'\cap\partial C$ must bound a disk in $P_i'$, and by our assumptions on $P_i'\cap\partial\Lambda_i$, this disk bounded by $P_i'\cap\partial C$ must lie in $C$.  So $P_i'\cap C$ is a collection of quadrilaterals.  After gluing $\Delta^+$ to $\Delta^-$ and changing $C$ back to $\Lambda_i$, the pair of opposite edges in $\Delta^+\cup\Delta^-$ of each quadrilateral are identified, which yields an annulus.  Thus $P_i'\cap\Lambda_i$ consists of annuli, and after isotopy, these annuli are transverse to the $I$-fibers of $\Lambda_i$.  By our conclusion above on $P_i'\cap W_i$, this means that $P_i'$ is transverse to the $I$-fibers of $N_i$.

Recall that we have concluded earlier that the number of components of $P_i'\cap V_i$ connecting $\lambda_i=A_i\cap V_i$ to $I_p$ is equal to the number of components of $P_i'\cap V_i$ connecting $\lambda_i$ to $I_q$.  This implies that, as shown in Figure~\ref{Fcap} (also see Figure~\ref{Ftubing}), we can perform tubing on $P_i'$ along the annulus $A_i$ (see Definition~\ref{Dtubing}) to get an embedded surface $P_i''$ disjoint from $A_i$.  Moreover, by our conclusions on $P_i'\cap V_i$ above and as shown in Figure~\ref{Fcap}, we can perform the tubing in a nested way so that, after isotopy, each component of $P_i''$ is still transverse to the $I$-fibers of $N_i$.  Conversely,  $P_i'$ can be obtained from $P_i''$ by annulus-compressions along $A_i$. Since $P_i'$ is orientable, by our construction of $N_i$ and $A_i$, $P_i''$ must also be orientable.  So by the properties of $P_i'\cap V_i$ above, it is easy to see that each component of $P_i''$ is of either type $A$ or type $B$ which we described at the beginning of this section.   Thus each component of $P_i'$ is standard is this sense and Lemma~\ref{Lstandard} holds.
\end{proof}

\begin{figure}
  \centering
\psfrag{A}{$A_i$}
\psfrag{g}{tubing along $A_i$}
\psfrag{c}{annulus-compression}
  \includegraphics[width=4.5in]{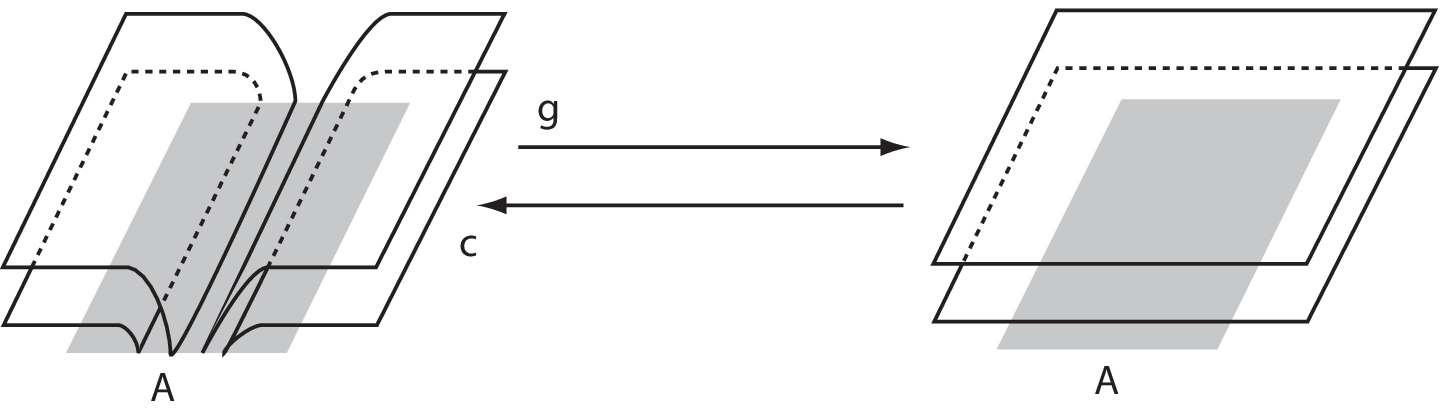}
  \caption{}
  \label{Fcap}
\end{figure}

Let $P$ be an incompressible surface in $Y-\gamma$ with $\partial P\subset A_1\cup A_2$.  By Lemma~\ref{Lstandard}, we may assume $P\cap N_1$, $P\cap N_2$ and $P\cap N_K$ are all standard.    Our next goal is to show that $P$ is also standard in the complement of $N_1\cup N_2\cup N_K$.

Recall that the complement of $N_1\cup N_2\cup N_K$ in $Y$ is the handlebody obtained by attaching the 1-handle $D^2\times I$ to the solid torus $\Gamma\times I$ at $\Gamma\times\{0\}$.  

We first describe a collection of standard surfaces properly embedded in $\Gamma\times I $.  Let $c_1$ and $c_2$ be a pair of disjoint essential simple closed curves in the annuli $\partial_vN_1$, $\partial_vN_2$ and $\partial_vN_K$, transverse to the $I$-fibers of these annuli.  Suppose $c_1\cup c_2$ is disjoint from $\gamma$.  Then $c_1\cup c_2$ bounds an annulus $\Gamma_c$ properly embedded in $\Gamma\times I$.   We call such an annulus $\Gamma_c$ a \emph{standard (punctured) annulus} in $\Gamma\times I$ if 
\begin{enumerate}
  \item $c_1$ and $c_2$ lie in two different annuli of $\partial_vN_1$, $\partial_vN_2$ and $\partial_vN_K$, and
  \item $|\Gamma_c\cap\gamma|$ is minimal up to isotopy on $\Gamma_c$ in $\Gamma\times I$ while fixing $c_1\cup c_2$.
\end{enumerate}    
Next we consider a collection of disjoint standard punctured annuli in $\Gamma\times I$.  As shown in Figure~\ref{Fpuncture}(a), we can add tubes to these annuli along $\gamma$ to get a surface properly embedded in $(\Gamma\times I)-\gamma$.  There are certainly more than one way to add such tubes.   As in Figure~\ref{Fpuncture}(a), at each puncture, there are two possible directions to locally add a tube, and these tubes may be nested.  We call the surface in $(\Gamma\times I)-\gamma$ after such tubing on a collection of standard punctured annuli a \emph{standard surface} in $(\Gamma\times I)-\gamma$.

\begin{figure}
  \centering
\psfrag{a}{(a)}
\psfrag{b}{(b)}
\psfrag{t}{$\gamma_t$}
\psfrag{g}{$\gamma$}
\psfrag{d}{$\widehat{\Delta}_t$}
  \includegraphics[width=4in]{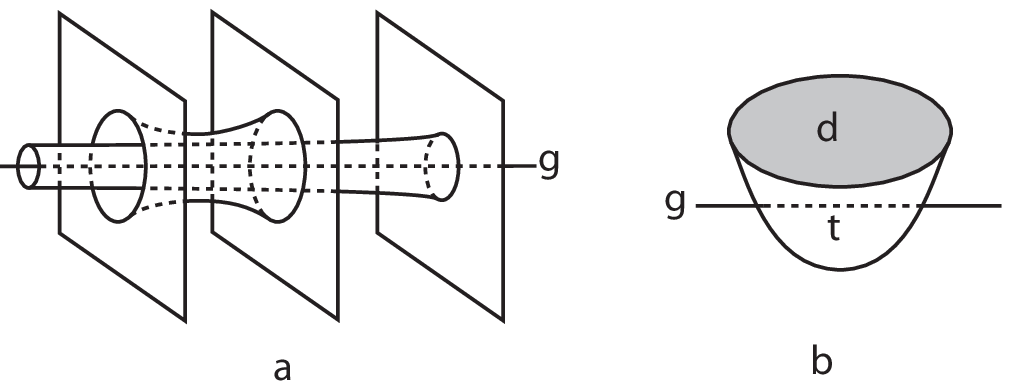}
  \caption{}
  \label{Fpuncture}
\end{figure}

\begin{lemma}\label{LAS}
Let $P$ be a compact orientable incompressible surface properly embedded in $Y-\gamma$ with $\partial P\subset A_1\cup A_2$.  Suppose $P$ is not a $\partial$-parallel annulus.  Then after isotopy,
\begin{enumerate}
  \item $P\cap N_1$, $P\cap N_2$, $P\cap N_K$ are standard as in Lemma~\ref{Lstandard}, and
  \item the intersection of $P$ with the added 1-handle $D^2\times I$ consists of tubes around the arc $\gamma_h=\gamma\cap (D^2\times I)$, and
  \item $P\cap (\Gamma\times I)$ is a standard surface in $(\Gamma\times I)-\gamma$.
\end{enumerate}  
\end{lemma}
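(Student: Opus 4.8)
The plan is to use Lemma~\ref{Lstandard} as the starting point --- it already gives part~(1) --- and then to study how $P$ meets the complementary genus two handlebody $W_\Gamma=\overline{Y-(N_1\cup N_2\cup N_K)}=(\Gamma\times I)\cup_{D_0\cup D_1}(D^2\times I)$, inside which $\gamma$ restricts to the spanning arc $\gamma_p\subset\Gamma\times\{\frac12\}$, the two trivial arcs $\delta_1,\delta_2\subset\Gamma\times I$, and the core arc $\gamma_h$ of the $1$-handle (so that $\gamma_q=\delta_1\cup\gamma_h\cup\delta_2$). First I would put $P$ in a position realizing the conclusion of Lemma~\ref{Lstandard}, and then, among all such positions, minimize $|P\cap(D_0\cup D_1)|$ together with $|P\cap\Delta_m|$, where $\Delta_m$ is a fixed meridian disk of the solid torus $\Gamma\times I$ chosen disjoint from $\gamma_p\cup\delta_1\cup\delta_2$ (possible since these three arcs are unknotted spanning arcs of $\Gamma\times I$). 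Standardness of the pieces $P\cap N_j$ ($j=1,2,K$) identifies the curves of $P\cap(\partial_vN_1\cup\partial_vN_2\cup\partial_vN_K)$: on $\partial_vN_K$ they are essential curves transverse to the $I$-fibers; on $\partial_vN_i$ ($i=1,2$) they are either such curves (the boundary curves of the type $A,A',B,B',B''$ pieces) or curves around $p_i$ or $q_i$, each of the latter being, as in the proof of Lemma~\ref{Lstandard}, a boundary circle of a tube around $\gamma_i$ in $N_i$.

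Next I would dispose of the $1$-handle. Since $\partial P\subset A_1\cup A_2$ misses $D^2\times I$, the surface $P\cap(D^2\times I)$ is properly embedded in the $3$-ball $D^2\times I$, disjoint from $\gamma_h$, with boundary in $D_0\cup D_1$, and each $D_i$ meets $\gamma$ exactly once. An innermost-disk argument using incompressibility of $P$ in $Y-\gamma$ and irreducibility of the relevant ball-minus-arc pieces shows that after the minimizing isotopy every curve of $P\cap D_i$ is essential in the once-punctured disk $D_i-\gamma$, hence bounds a once-punctured subdisk of $D_i$; a disk component of $P\cap(D^2\times I)$ is then impossible, its boundary being a meridian of $\gamma_h$ and hence nontrivial in $\pi_1((D^2\times I)-\gamma_h)\cong\mathbb{Z}$. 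So every component of $P\cap(D^2\times I)$ is an annulus whose two boundary circles are meridians of $\gamma_h$, i.e.\ a tube around $\gamma_h$; this is part~(2).

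For part~(3) I would analyze $P\cap(\Gamma\times I)$. First, using incompressibility and the minimality of $|P\cap\Delta_m|$, I would arrange $P\cap\Delta_m$ to consist of essential arcs and no closed curves, and then cut $\Gamma\times I$ open along $\Delta_m$; the result is a $3$-ball containing the three unknotted, unlinked, $\partial$-parallel arcs $\gamma_p,\delta_1,\delta_2$. The components of $P\cap(\Gamma\times I)$ that are tubes around these arcs survive the cut intact, while any other component is cut into pieces that also have boundary on the two copies of $\Delta_m$. Feeding in the description of the curves of $\partial P$ on $\partial_vN_1,\partial_vN_2,\partial_vN_K$ and on $D_0,D_1$ from the previous paragraphs, and using Lemma~\ref{Lmonogon} and Lemma~\ref{LIbundle} inside the $I$-bundle and handlebody pieces to exclude $\partial$-compressions with base arcs in the vertical annuli and to rule out $\partial$-parallel annular components, I would show that each other component, reglued along $\Delta_m$, is isotopic in $(\Gamma\times I)-\gamma$ to a standard punctured annulus with tubes attached along $\gamma$, and that all the tubes match up correctly with those already produced in the $N_i$'s and in the $1$-handle. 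Hence $P\cap(\Gamma\times I)$ is obtained from a disjoint collection of standard punctured annuli by tubing along $\gamma$, which is exactly the definition of a standard surface in $(\Gamma\times I)-\gamma$; this gives part~(3).

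The step I expect to be the main obstacle is this last one, and more precisely the interface bookkeeping. The reductions inside $D^2\times I$ and $\Gamma\times I$ must be carried out so as not to disturb the standard form of $P\cap N_1,P\cap N_2,P\cap N_K$ furnished by Lemma~\ref{Lstandard} --- so the minimization above must be performed subject to that constraint, and one must check that the relevant isotopies can be taken supported away from the $N_j$'s. In addition one must keep track of the two possible directions of tubing at each puncture and of the nesting of tubes, exactly as in Figure~\ref{Fpuncture}(a), so that the recombined surface genuinely matches the prescribed notion of a standard surface.
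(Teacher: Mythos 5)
Your plan for parts (1) and (2) is essentially the paper's. For part (3), however, you propose a genuinely different decomposition --- cutting $\Gamma\times I$ along a meridian disk $\Delta_m$ chosen disjoint from $\gamma\cap(\Gamma\times I)$, to obtain a $3$-ball containing the three arcs --- whereas the paper instead cuts off $\partial$-parallel neighborhoods $B_1,B_2,B_R$ of the arcs $\delta_1,\delta_2,\gamma_p$ (built from the $\partial$-parallel triangles $\Delta_1,\Delta_2$ and rectangle $R$), leaving a solid torus $V$ disjoint from $\gamma$. Either decomposition could in principle serve, but I think there is a real gap in the way you use yours.

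The paper's crucial preliminary step for part (3), which your outline omits, is the reduction of $P$ to $P'$ by maximally compressing along \emph{once-punctured compressing disks} in $\Gamma\times I$ --- disks $D_c$ meeting $\gamma$ in a single point with $D_c\cap P=\partial D_c$ an essential curve of $P$ --- and discarding trivial twice-punctured spheres. This produces a surface $P'$ still meeting $\gamma$, with $P'-\gamma$ incompressible in $Y-\gamma$, and with the property that $P$ is recovered from $P'$ by tubing along $\gamma$; the tubing structure in the statement of the lemma is thereby built in at the outset rather than extracted at the end. After this reduction the paper proves that $P'$ has no ``trivial'' intersections with $\gamma$ (Claim D1) and no vertical $\partial$-compressions (Claim D2), which together force the pieces of $P'$ in the decomposition to be precisely the standard punctured annuli. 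Without the once-punctured-compressing-disk reduction, a single component of $P\cap(\Gamma\times I)$ can carry an arbitrarily complicated nesting of tubes along $\gamma$; incompressibility of $P$ in $Y-\gamma$ does not by itself prevent this, and the pieces of $P\cap B$ in your $3$-ball $B$ need not be incompressible in $B-\gamma$, so cutting along $\Delta_m$ does not by itself tame them. Your sentence ``I would show that each other component, reglued along $\Delta_m$, is isotopic in $(\Gamma\times I)-\gamma$ to a standard punctured annulus with tubes attached along $\gamma$'' is essentially the conclusion you need to establish, asserted rather than argued, and you have no analogue of Claim D1 to control the tubing. I would suggest keeping your meridian-disk decomposition if you prefer it, but first importing the once-punctured-compressing-disk reduction to $P'$ and a version of Claims D1 and D2; as written, the proposal for part (3) does not yet amount to a proof.
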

\begin{proof}
By Lemma~\ref{Lstandard}, we may assume $P\cap N_1$, $P\cap N_2$, $P\cap N_K$ are standard.  Moreover, as $P$ is incompressible in $Y-\gamma$ and since $\gamma\cap (D^2\times I)$ is the core of the 1-handle, after isotopy, we may assume part (2) of the lemma also holds.

Next we consider a curve $c$ in $P\cap (\Gamma\times I)$ such that $c$ bounds a disk $D_c$ in $\Gamma\times I$ with the properties that $D_c\cap\gamma$ is a single point and $D_c\cap P=c$.  Note that $c$ must be an essential curve in $P$ because otherwise, the union of $D_c$ and the subdisk of $P$ bounded by $c$ is a 2-sphere intersecting $\gamma$ in a single point, which contradicts Lemma~\ref{LRi}.  We call the disk $D_c$ a once-punctured compressing disk.  We can compress $P$ along the disk $D_c$ to get a surface which intersects $\gamma$.  Note that we can add a tube as in Figure~\ref{Fpuncture}(a) on the surface after this compression to get back our surface $P$.  Now we maximally compress $P$ along once-punctured compressing disks in $\Gamma\times I$, and delete any resulting 2-sphere component which bounds a 3-ball $B$ with $B\cap\gamma$ an unknotted arc in $B$.  Let $P'$ be the surface after this operation.  It follows from our construction that $P$ can be obtained from $P'$ by tubing along $\gamma$.  Moreover, since $P$ is incompressible in $Y-\gamma$ and since the curve $c$ above is essential in $P$, we can inductively conclude that, after these compressions along once-punctured compressing disks, $P'-\gamma$ is incompressible in $Y-\gamma$.

Since the intersection of $P$ with the 1-handle $D^2\times I$ consists of tubes around the arc $\gamma\cap(D^2\times I)$, after the compressions above and some isotopy, we may assume $P'$ is disjoint from the 1-handle.  Similarly, since $P\cap N_1$, $P\cap N_2$, $P\cap N_K$ are standard, after the compressions above and some isotopy, we may assume $P'\cap N_1$, $P'\cap N_2$, $P'\cap N_K$ are standard but contain no tubes around $\gamma_i$.  In particular, $P'\cap \partial(\Gamma\times I)$ consists of essential curves in the 3 annuli $\partial_vN_1$, $\partial_vN_2$ and $\partial_vN_K$ transverse to the $I$-fibers of these annuli, and all the punctures of $P'\cap\gamma$ lie in $\Gamma\times I$.  

\begin{claimD}\label{claimD1}
$P'$ has no trivial intersection with $\gamma$.  More precisely, $\gamma$ has no subarc $\gamma_t$ such that $\gamma_t\cap P'=\partial\gamma_t$ and $\gamma_t$ can be isotoped into $P'$ (fixing $\partial\gamma_t$), see Figure~\ref{Fpuncture}(b) for a picture.
\end{claimD}
\begin{proof}[Proof of Claim~\ref{claimD1}]
Suppose the claim is false and there is such a subarc $\gamma_t$ of $\gamma$.  If this happens, then there is an embedded disk $\Delta_t$ with $\partial\Delta_t=\gamma_t\cup p_t$ where $p_t\subset P'$, $\partial\gamma_t=\partial p_t$, $\Delta_t\cap\gamma=\gamma_t$ and $\Delta_t\cap P'=p_t$.    As shown in Figure~\ref{Fpuncture}(b), we can take two copies of $\Delta_t$ and perturb them into a disk $\widehat{\Delta}_t$ in $Y-\gamma$ such that $\partial\widehat{\Delta}_t=\widehat{\Delta}_t\cap P'$ and $\partial\widehat{\Delta}_t$ bounds a disk in $P'$ containing the two punctures $\partial\gamma_t$.  Since $P'-\gamma$ is incompressible in $Y-\gamma$, $\partial\widehat{\Delta}_t$ must bound a disk in $P'-\gamma$ and hence the component of $P'$ containing $p_t$ must be a twice-punctured 2-sphere bounding a 3-ball in which $\gamma_t$ is an unknotted arc.  This contradicts our assumption that $P'$ has no such component.  
\end{proof}

\begin{claimD}\label{claimD2}
There is no $\partial$-compressing disk $\Delta'\subset (\Gamma\times I)-\gamma$ for $P'\cap (\Gamma\times I)$ such that its base arc (see Definition~\ref{Dbase}) $\delta'$ is a vertical arc in $\partial_vN_i$ (or $\partial_vN_K$)
\end{claimD}
\begin{proof}[Proof of Claim~\ref{claimD2}]
Suppose there is such a $\partial$-compressing disk $\Delta'$ whose base arc $\delta'$ is a vertical arc in $\partial_vN_i$ (or $\partial_vN_K$).
  Since $P'\cap N_i$ and $P'\cap N_K$ are assumed to be standard and since $P'\cap N_i$ contains no tube around the arc $\gamma_i$, $P'\cap N_i$ and $P'\cap N_K$ are transverse to the $I$-fibers of $N_i$ and $N_K$ respectively.  So $P'\cap N_i$ and $P'\cap N_K$ divide $N_i$ and $N_K$ respectively into a collection of sub-$I$-bundles.  As shown in Figure~\ref{Fmono}(a) and since $g\ge 3$, one can form a compressing disk $D$ for $P'-\gamma$ in $Y-\gamma$ by connecting two parallel copies of $\Delta'$ using an essential vertical rectangle (in these sub-$I$-bundles) that is disjoint from $\gamma_i$ and $A_i$ (note that in this construction, $\partial D$ must be essential in $P'-\gamma$ since the curves of $P'\cap\partial_vN_i$ and $P'\cap\partial_vN_K$ are essential in $Y-\gamma$ and hence essential in $P'$).  This contradicts our conclusion that $P'-\gamma$ is incompressible in $Y-\gamma$.
\end{proof}

Recall that $\gamma\cap(\Gamma\times I)$ consists of 3 arcs: the arc $\gamma_p$ connecting $p_1$ to $p_2$ and a pair of arcs $\delta_i$ ($i=1,2$) connecting $q_i$ to the center of the disk $D_j$ ($j=0,1$), see Figure~\ref{Fgamma}(a, b).  By our construction and as shown in Figure~\ref{Fgamma}(b), there is an embedded triangular disk $\Delta_i$ ($i=1,2$) such that the 3 edges in $\partial\Delta_i$ are 
$\delta_i$, $K_i=\Delta_i\cap\partial_vN_i$ and the arc $\Delta_i\cap(\Gamma\times\{0\})$, where $K_i$ is the subarc of an $I$-fiber of $\partial_vN_i$ connecting $q_i$ to $\partial(\Gamma\times\{0\})$.  Since $P'-\gamma$ is incompressible in $Y-\gamma$, after isotopy, we may assume that $P'\cap\Delta_i$ contains no closed curve. 
As $P'\cap (\Gamma\times\{0\})=\emptyset$, by Claim~\ref{claimD1} and Claim~\ref{claimD2}, each component of  $P'\cap\Delta_i$ must be an arc connecting $K_i$ to $\delta_i$. 

Similarly, the arc $\gamma_p$ is also $\partial$-parallel in $\Gamma\times I$.  As shown in Figure~\ref{Fgamma}(b), there is a rectangle $R$ embedded in $\Gamma\times I$ and the 4 edges of $\partial R$ are: $\gamma_p$, $\zeta_K=R\cap\partial_vN_K$, $\zeta_i=R\cap\partial_vN_i$ ($i=1,2$), where $\zeta_K$ is an essential arc properly embedded in $\partial_vN_K$ and $\zeta_i$ a vertical arc in $\partial_vN_i$.  We may assume $R$ is transverse to $P'$.  Since $P'-\gamma$ is incompressible in $Y-\gamma$, after isotopy, we may suppose $R\cap P'$ contains no closed curve.  By  Claim~\ref{claimD1} and Claim~\ref{claimD2}, no arc in $R\cap P'$ has both endpoints in the same edge of $\partial R$.

Let $B_1$, $B_2$ and $B_R$ be small neighborhoods of $\Delta_1$, $\Delta_2$ and $R$ respectively in $\Gamma\times I$. So $B_1$, $B_2$ and $B_R$ are 3-balls and we may assume each component of $P'\cap B_1$, $P'\cap B_2$ and $P'\cap B_R$ is a disk neighborhood in $P'\cap(\Gamma\times I)$ of an arc in $P'\cap\Delta_1$, $P\cap\Delta_2$ and $P\cap R$ respectively. 
Let $V=\overline{(\Gamma\times I)-(B_1\cup B_2\cup B_R)}$.  Clearly $V$ is a solid torus and $V\cap\gamma=\emptyset$.  

Recall that $P'\cap\partial(\Gamma\times I)$ is a collection curves parallel in $\Gamma\times I$ to the core of the solid torus $\Gamma\times I$.   By our conclusions on $P'$, each curve in $P'\cap\partial(\Gamma\times I)$ intersects $\partial R$ at most once and intersects $\Delta_1\cup\Delta_2$ at most once.  

Let $\xi$ be a component of $P'\cap (\Delta_1\cup\Delta_2\cup R)$.  By our construction, $\xi$ is either (1) an arc connecting $\partial(\Gamma\times I)$ to $\gamma$, or (2) an arc in $P'\cap R$ with $\partial\xi\subset\partial(\Gamma\times I)$.  In the first possibility, the operation removing $B_1\cup B_2\cup B_R$ from $\Gamma\times I$ changes the curve in $P'\cap\partial(\Gamma\times I)$ that is incident to $\partial\xi$ into a curve in $\partial V$ which is still parallel to the core of the solid torus $V$.  Recall that each curve in $P'\cap\partial(\Gamma\times I)$ intersects $\partial R$ at most once and intersects $\Delta_1\cup\Delta_2$ at most once, so in the possibility (2) above, the operation removing $B_1\cup B_2\cup B_R$ from $\Gamma\times I$ changes the two curves in $P'\cap\partial(\Gamma\times I)$ containing $\partial\xi$ into one trivial curve in $\partial V$.

Since $P'-\gamma$ is incompressible and $V\cap\gamma=\emptyset$, in possibility (2) above, each trivial curve in $P'\cap\partial V$ bounds a disk in $V$.  Moreover, the union of this disk (in possibility (2) above) and the component of $P'\cap B_R$ containing the arc $\xi$ above is an annulus.  
By our construction, all the essential curves in $P'\cap\partial V$ are parallel to the core of the solid torus $V$.  Since $P'-\gamma$ is incompressible in $Y-\gamma$ and since $V\cap\gamma=\emptyset$, the essential curves in $P'\cap\partial V$ must bound a collection of annuli in $V$.  The union of these annuli in $V$ and the disks in $P'\cap (B_1\cup B_2\cup B_R)$ is a collection of (punctured) annuli properly embedded in $\Gamma\times I$.  By Claim~\ref{claimD1}, Claim~\ref{claimD2} and our analysis on $P'\cap(\Delta_1\cup\Delta_2\cup R)$ above, these (punctured) annuli are standard (punctured) annuli.  Therefore, $P'\cap(\Gamma\times I)$ consists of standard (punctured) annuli in $\Gamma\times I$.  
By our construction of $P'$, $P$ can be obtained from $P'$ by tubing along $\gamma$.  So part (3) of the lemma also holds.
\end{proof}

\section{Computing Heegaard genus}\label{Sgenus}
The goal of this section is to prove the following lemma which together with Lemma~\ref{LRM} proves the main theorem.

\begin{lemma}\label{LHM}
Suppose $g\ge 3$.  Then there is an infinite set of slopes $\mathcal{S}$ such that if $s\in\mathcal{S}$, the Heegaard genus of $M=X_1\cup Y_s\cup X_2$ is at least $3g+4$.
\end{lemma}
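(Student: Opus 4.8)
The plan is to bound $g(M)$ below by peeling off the two copies of $X$ one at a time with Lemma~\ref{L2}, and then to feed in the estimate $g(Y_s)\ge 3g$. First I would fix the set of slopes. By Lemma~\ref{LRi}, the manifold $W=M'-N(\gamma)$, with $M'=X_1\cup_{A_1}Y\cup_{A_2}X_2$, is irreducible and $\partial$-irreducible, contains no non-peripheral incompressible torus, and contains no essential annulus with both boundary circles on $T=\partial\overline{N(\gamma)}$. Since $M$ is a Dehn filling of $W$ along $T$, the standard facts on exceptional surgeries (reducible or $\partial$-reducible fillings, compressible filling tori, and essential annuli or tori arising from $T$ occur for only finitely many slopes) show that for all but finitely many slopes $s$ both $M$ and the intermediate manifold $M^\ast:=Y_s\cup_{A_2}X_2$ are irreducible with incompressible boundary; moreover $A_1$ is an essential separating annulus in $M$ and $A_2$ is an essential separating annulus in both $M$ and $M^\ast$ (the $A_i$ are incompressible since their cores are essential on both sides, and not $\partial$-parallel since $X_1$ and $X_2$ are not product regions, by Lemma~\ref{LXb}), and $\partial X_i-A_i$ is connected by construction. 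Let $\mathcal S$ be the (cofinite, hence infinite) set of such slopes.

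Next, for $s\in\mathcal S$, I would run the proof of Lemma~\ref{L2} twice. Apply it first to the splitting $M=X_1\cup_{A_1}M^\ast$ with $M_1=X_1$, which is small and $A_1$-small by Lemma~\ref{LXsmall}. Take a minimal-genus (hence unstabilized) Heegaard surface $S$ of $M$, untelescope it by Theorem~\ref{TST}, and let $\Sigma_S$ be the union of the incompressible surfaces $F_j$ and the strongly irreducible block Heegaard surfaces $\Sigma_j$ so obtained. After isotopy the $F_j$ meet $A_1\cup A_2$ in essential curves with no piece in $X_1$, $X_2$ or $Y_s$ an annulus parallel into $A_1\cup A_2$ (hypothesis (1) of Lemma~\ref{L2}); Lemma~\ref{Lann}, applied to each block $N_j$ with the essential annuli $(A_1\cup A_2)\cap N_j$, puts every piece of each $\Sigma_j$ into incompressible-or-strongly-irreducible form (hypothesis (2)); and Lemmas~\ref{Lstandard} and~\ref{LAS} show every incompressible piece of $\Sigma_S$ lying in $Y_s$ is a standard surface, which is what makes the amalgamation bookkeeping of Lemma~\ref{L2} go through with $Y_s$ playing the role of the complementary side $M_2$. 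It remains to obtain hypothesis (3): a strongly irreducible piece $\Sigma\subset\Sigma_S\cap X_1$ with $\partial\Sigma$ two essential circles in $A_1$, not contained in a product neighbourhood of $\partial X_1$. If $\Sigma_S\cap X_1$ contained no strongly irreducible piece, then, $X_1$ being $A_1$-small, every piece would be $\partial$-parallel; pushing them all off $X_1$ would exhibit $X_1$ inside a single compression body of the generalized splitting with $\partial X_1-A_1$ on its negative boundary, which, as in the proofs of Lemmas~\ref{Lstr} and~\ref{L2}, forces $X_1\cong(\partial X_1-A_1)\times I$, a genus-$3$ handlebody, contradicting Lemma~\ref{LXb}. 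So a strongly irreducible piece exists in $X_1$; that it may be taken with exactly two boundary circles and outside a collar I would deduce from the minimality of $g(S)$ (a piece with more boundary circles, or one isotopic into a collar, can be replaced without raising $g(S)$) together with the description in Section~\ref{Sincomp} of which essential curves on $A_i$ can bound incompressible surfaces in $Y_s$, which constrains how $\Sigma_S$ can cross $A_1$ from the $Y_s$ side. With hypothesis (3) in hand, Lemma~\ref{L2} gives $g(M)\ge g(X_1)+g(M^\ast)-1=g(M^\ast)+2$; the identical argument applied to $M^\ast=X_2\cup_{A_2}Y_s$ with $M_1=X_2$ and $M_2=Y_s$ gives $g(M^\ast)\ge g(X_2)+g(Y_s)-1=g(Y_s)+2$, whence $g(M)\ge g(Y_s)+4$.

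Finally I would supply $g(Y_s)\ge 3g$. Since $\dim H_1(Y;\mathbb{Z}_2)=3g+1$ and $Y_s$ is obtained from $Y$ by surgery on the single curve $\gamma$, the Mayer-Vietoris sequence gives $\dim H_1(Y_s;\mathbb{Z}_2)\ge 3g$; since $\partial Y_s$ is connected, a Heegaard splitting of $Y_s$ is a handlebody glued to a compression body and so gives a presentation of $\pi_1(Y_s)$, hence $g(Y_s)\ge r(\pi_1(Y_s))\ge\dim H_1(Y_s;\mathbb{Z}_2)\ge 3g$. Therefore $g(M)\ge g(Y_s)+4\ge 3g+4$, as claimed (for slopes $s=m/(2n)$ with $m$ odd one in fact has $g(Y_s)=3g+1$ by the argument of Proposition~\ref{Prank} together with the bound $g(Y_s)\le 3g+1$ of Section~\ref{SY}, giving $g(M)\ge 3g+5$, but $3g+4$ already beats the bound $r(\pi_1(M))\le 3g+3$ of Lemma~\ref{LRM}). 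The step I expect to be the main obstacle is the verification of hypothesis (3) of Lemma~\ref{L2} inside each $X_i$ — establishing that the strongly irreducible piece of $\Sigma_S$ there has exactly two boundary curves on $A_i$ and does not hide in a collar — and it is precisely for this that the detailed analysis of incompressible surfaces in $Y-\gamma$ in Section~\ref{Sincomp} (Lemmas~\ref{Lstandard} and~\ref{LAS}) is needed, to pin down the only ways $\Sigma_S$ can meet $A_1$ and $A_2$ from the $Y_s$ side and to exclude simplifications that would drop $g(S)$ below the minimum.
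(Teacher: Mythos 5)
Your overall strategy --- peel off $X_1$ and $X_2$ with Lemma~\ref{L2} and finish with $g(Y_s)\ge 3g$ --- is not the paper's argument, and the step you yourself flag as the main obstacle is a genuine gap that cannot be closed the way you suggest. Hypothesis (3) of Lemma~\ref{L2} demands that the strongly irreducible piece $\Sigma\subset X_1$ have \emph{exactly two} boundary circles on $A_1$, and you propose to arrange this by appealing to minimality of $g(S)$ (``a piece with more boundary circles can be replaced without raising $g(S)$''). This is false: the paper exhibits, right after the proof of Lemma~\ref{LHM}, a Heegaard surface of $M$ of genus exactly $3g+4$ whose intersection with $X_1$ is a strongly irreducible $4$-holed surface (the $4$-punctured bridge sphere with a tube), so the minimal-genus surface need not meet $X_1$ in a two-holed piece. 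In the paper, Lemma~\ref{L2} is invoked only in the sub-case $|\partial P_X|=2$ (Claim~2 of the proof), and there the complementary side is taken to be $M_2'-N(\gamma)$ with $g(M_2'-N(\gamma))\ge 3g+2$ obtained from $H_1(\,\cdot\,;\mathbb{Z}_2)$; the case $|\partial P_X|\ge 4$, which is the generic and realized one, is handled by an entirely different argument: one shows there are two distinct blocks $\Sigma_k,\Sigma_j$ contributing strongly irreducible pieces to $X_1$ and $X_2$ respectively, classifies their traces in $N_1,N_2,N_K,\Gamma\times I$ as standard surfaces of types $A',B',B''$, counts intersections with $\gamma$ and Euler characteristics (Claims 3--4 and Cases (b)--(d)), and concludes $g(\Sigma_k)\ge 3g+3$, hence $g(S_{min})\ge 3g+4$. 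None of this is recoverable from a double application of Lemma~\ref{L2}.

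There is a second, independent gap in your setup of $\mathcal{S}$. You apply Lemmas~\ref{Lstandard} and~\ref{LAS} to the pieces of $\Sigma_S$ lying in $Y_s$, but those lemmas classify incompressible surfaces in $Y-\gamma$, not in the surgered manifold $Y_s$; Dehn surgery can create incompressible surfaces invisible to that classification. The paper's choice of $\mathcal{S}$ is dictated precisely by this: using Hatcher's finiteness of boundary slopes and the Rieck and Rieck--Sedgwick results (which apply because $M-N(\gamma')$ is a-cylindrical by Lemma~\ref{LRi}(3)), one excludes finitely many slopes and all slopes of small intersection number with boundary slopes, so that the minimal-genus Heegaard surface of $M$ can be isotoped off the surgery solid torus and becomes a Heegaard surface of $M-N(\gamma')$. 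Only then does the untelescoping live in $X_1\cup(Y-\gamma)\cup X_2$ and the Section~\ref{Sincomp} machinery apply. Your cofinite set defined by avoiding exceptional surgeries guarantees irreducibility and boundary-incompressibility of $M$, but not that $S_{min}$ descends to the drilled manifold, which is what the argument actually needs.
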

\begin{proof}
We view $X_1$, $X_2$ and $Y_s$ as sub-manifolds of $M$.  Recall that $Y_s$ is obtained from $Y$ by Dehn surgery on $\gamma$.  Let $\gamma'$ be the core curve of the surgery solid torus (as in Lemma~\ref{LRi}) and we consider $M-N(\gamma')$.  Let $T$ be the torus boundary component of $M-N(\gamma')$.  So $M$ is the manifold after Dehn filling on $M-N(\gamma')$ along slope $s$.   Let $S_{min}$ be a minimal-genus Heegaard surface of $M$. 

By a theorem of Hatcher \cite{Ha}, embedded essential surfaces in $M-N(\gamma')$ with boundary in $T$ can realize only finitely many slopes.  Let $\mathcal{S}_F$ be this finite set of slopes.  
 By Lemma~\ref{LRi}, $M-N(\gamma')$ contains no essential annulus with boundary in $T$.  This means that the pair $(M-N(\gamma'), T)$ satisfies the hypotheses of the results in \cite{R, RSed} (this property is called a-cylindrical in \cite{R}), and by \cite{R, RSed} (also see \cite{MR}), there is a finite set of slopes $\mathcal{S}_L$ which depends on $M-N(\gamma')$ and $g(S_{min})$ such that if $s\notin\mathcal{S}_L$ and if the intersection number $\Delta(s,t)>1$ for every $t\in\mathcal{S}_F$, then after isotopy, (1) $S_{min}$ lies in $M-N(\gamma')$ and (2) $S_{min}$ is a Heegaard surface of $M-N(\gamma')$.

Let $\mathcal{S}$ be the set of slopes $s$ that satisfy (1) $s\notin\mathcal{S}_L$, (2) $\Delta(s,t)>1$ for every $t\in\mathcal{S}_F$.  Clearly $\mathcal{S}$ is an infinite set.  We assume our surgery slope $s\in\mathcal{S}$.  Hence $S_{min}$ is a Heegaard surface of both $M$ and $M-N(\gamma')$.

We consider the untelescoping of the Heegaard splitting of $M-N(\gamma')$ along $S_{min}$, i.e., a decomposition $M-N(\gamma')=\mathcal{N}_0\cup_{S_1}\mathcal{N}_1\cup_{S_2}\cdots\cup_{S_{n}}\mathcal{N}_n$, see Theorem~\ref{TST}, where each $S_i$ is incompressible in $M-N(\gamma')$ and each $\mathcal{N}_i$ has a strongly irreducible Heegaard surface $\Sigma_i\subset \mathcal{N}_i$.  If $\mathcal{N}_i$ is disconnected, all but one of its components are product regions, see \cite{S2}. The possible product regions do not affect our proof.   So, for simplicity, we will ignore the thick surfaces in the product regions and assume that each $\mathcal{N}_i$ is connected.  



Next we consider how these surfaces $S_i$ and $\Sigma_i$ intersect $A_1\cup A_2$.  Since each $S_i$ is incompressible, we may assume $S_i\cap (A_1\cup A_2)$ is a collection of essential curves in both $S_i$ and $A_1\cup A_2$ and assume $S_i\cap X_1$, $S_i\cap X_2$ and $S_i\cap Y_s$ contain no annulus parallel to a sub-annulus of $A_1\cup A_2$.   Let $\Sigma_i^{X_1}=\Sigma_i\cap X_1$, $\Sigma_i^{X_2}=\Sigma_i\cap X_2$ and $\Sigma_i^Y=\Sigma_i\cap (Y-\gamma)$. 

By Lemma~\ref{Lann}, for each $\Sigma_i$, we may assume that at most one component of $\Sigma_i^{X_1}\coprod \Sigma_i^{X_2}\coprod \Sigma_i^Y$ is strongly irreducible while all other components are incompressible in the respective manifolds $X_1$, $X_2$ and $Y-\gamma$.

\vspace{8pt}
\noindent
\textbf{Case (a)}.  $\partial X_1$ (or $\partial X_2$) is parallel to a component of $S_i$ for some $i$. 
\vspace{8pt}

Suppose $\partial X_1$ is parallel to a component of some $S_i$ (the case for $\partial X_2$ is the same).  

Let $H_{X}$ be the peripheral surface in the interior of $X_1$ and parallel to $\partial X_1$.  So we may view $H_X$ as a component of some $S_i$. 
We cut $M$ open along $H_X$ and get two components $X'$ and $M_X$, where $X'$ is the component bounded by $H_X$ and isotopic to $X_1$.  Let $M_2$ be the manifold obtained by gluing $Y_s$ to $X_2$ along $A_2$.  So  $M_X$ is the manifold obtained by gluing $M_2$ to a product neighborhood $\partial X_1\times I$ of $\partial X_1$ in $X_1$ along the annulus $A_1$.  Since $H_X$ is a component of some $S_i$, we can amalgamate the $\Sigma_j$'s that lie in $M_X$ (resp.~$X'$) along the $S_j$'s in $M_X$ (resp.~$X'$) to get a Heegaard surface $S_M$ (resp.~$S_X)$ of $M_X$ (resp.~$X'$). 
So our Heegaard surface $S_{min}$ is an amalgamation of the Heegaard surface $S_M$ of $M_X$ and the Heegaard surface $S_X$ of $X'$ along $H_X$.  Since $S_{min}$ is of minimal-genus, both $S_M$ and $S_X$ must be of minimal-genus in $M_X$ and $X'$ respectively.  As in section~\ref{S2}, this means that $g(S_{min})=g(M)=g(M_X)+g(X')-g(H_X)$.  Recall that $g(\partial X_1)=2$ and by Lemma~\ref{LXgenus}, $g(X_1)=3$.  So we have $g(H_X)=2$, $g(X')=3$.  Thus $g(M)=g(M_X)+1$.

To compute the Heegaard genus of our manifolds, it is useful to consider the corresponding manifolds before the Dehn surgery.  Let $M_2'$ be the manifold obtained by gluing $Y$ to $X_2$ along the annulus $A_2$, and let $M_X'$ be the manifold obtained by gluing $M_2'$ to a product neighborhood $\partial X_1\times I$ of $\partial X_1$ in $X_1$ along the annulus $A_1$.  So we may view $M_2$ and $M_X$ above as the manifolds obtained from $M_2'$ and $M_X'$ respectively by Dehn surgery on $\gamma$ with surgery slope $s$.

\begin{claimB}\label{claimB1}
The Heegaard genus of $M_X'$ is at least $3g+3$.
\end{claimB}
\begin{proof}[Proof of Claim~\ref{claimB1}]
We first consider $M_2'=Y\cup_{A_2}X_2$. 
Recall that the dimension of $H_1(Y;\mathbb{Z}_2)$ is $3g+1$.  Moreover, the core of $A_2$ represents a generator of $H_1(Y;\mathbb{Z}_2)$.  By Lemma~\ref{Llive}, the dimension of $H_1(X_2;\mathbb{Z}_2)$ is $2$.  Moreover, as $\pi_1(X_2)$ is generated by $\{x_2, h_2^{-1}x_2h_2, s_2\}$ and since $dim(H_1(X_2;\mathbb{Z}_2))=2$, we see that $x_2$ (i.e., the core of $A_2$) represents a generator of $H_1(X_2;\mathbb{Z}_2)$.  So, using Mayer-Vietoris sequence, it is easy to see that the dimension of $H_1(M_2';\mathbb{Z}_2)$ is $dim(H_1(Y;\mathbb{Z}_2))+dim(H_1(X_2;\mathbb{Z}_2))-1=3g+2$. 

Similarly, $M_X'=(\partial X_1\times I)\cup_{A_1} M_2'$, and the core of the annulus $A_1$ represents a generator for both $H_1(M_2';\mathbb{Z}_2)$ and $H_1(\partial X_1\times I;\mathbb{Z}_2)$.  So using Mayer-Vietoris sequence, it is easy to see that the dimension of $H_1(M_X';\mathbb{Z}_2)$ is $(3g+2)+2g(\partial X_1)-1=3g+5$.  Thus the rank of $\pi_1(M_X')$ is at least $3g+5$. 

Let $S$ be a minimal-genus Heegaard surface of $M_X'$ and let $M_X'=V\cup_S W$ be the Heegaard splitting along $S$.  Note that $\partial M_X'$ has two components, one of which is $H_X$  (parallel to $\partial X_1$) and has genus 2.  

If either $V$ or $W$ is a handlebody, then the splitting $M_X'=V\cup_S W$ gives a presentation of $\pi_1(M_X')$ with $g(S)$ generators.  Since the rank of $\pi_1(M_X'))$ is at least $3g+5$, we have $g(S)\ge 3g+5$ and the claim holds in this case.  

Now we consider the case that neither $V$ nor $W$ is a handlebody.  As $\partial M_X'$ has two components, this means that $\partial_-V$ and $\partial_-W$ are the two components of $\partial M_X'$.  Without loss of generality, we may suppose $\partial_-V=H_X$.  We may view $V$ as the manifold obtained by adding $h$ 1-handles to a product neighborhood of $\partial_-V$ ($\partial_-V=H_X$).  As $g(H_X)=2$, the number of 1-handles $h=g(S)-2$.  Note that $\pi_1(V)$ can be generated by $\pi_1(\partial_-V)$ plus the $h$ generators corresponding to the $h$ 1-handles.  Hence $rank(\pi_1(V))\le rank(\pi_1(H_X))+h=4+g(S)-2=g(S)+2$.  Moreover, since $M_X'$ can be obtained by adding 2-handles to $V$, $rank(\pi_1(M_X'))\le rank(\pi_1(V))\le g(S)+2$.  As $rank(\pi_1(M_X'))\ge 3g+5$, we have $g(S)+2\ge 3g+5$ and hence $g(S)\ge 3g+3$.  So the claim also holds in this case.
\end{proof}

Recall that our Heegaard surface $S_{min}$ of $M$ can be obtained by an amalgamation of $S_M$ and $S_X$ along $H_X$.  Moreover, since $S_{min}$ is a Heegaard surface for both $M$ and $M-N(\gamma')$ and since the untelescoping is with respect to the splitting of $M-N(\gamma')$, $S_M$ must be a Heegaard surface of both $M_X$ and $M_X -N(\gamma')$.

Note that $M_X'$ is the manifold obtained by a trivial Dehn filling on $M_X-N(\gamma')$, so $S_M$ is also a Heegaard surface of $M_X'$.  By Claim~\ref{claimB1}, $g(M_X')\ge 3g+3$. So we have $g(M_X)=g(S_M)\ge g(M_X')\ge 3g+3$.  Now by our earlier conclusion $g(M)=g(M_X)+1$, we have $g(M)\ge 3g+4$ and Lemma~\ref{LHM} holds in Case (a).

Next we suppose that we are not in Case (a).  As $X_1$ and $X_2$ are small, this implies that no $\Sigma_i$ lies totally in $X_1$ or $X_2$. 

\begin{claimB}\label{claimB15}
Either Lemma~\ref{LHM} holds, or there are $k$ and $j$ ($k\ne j$) such that a component of $\Sigma_k^{X_1}$ (resp.~$\Sigma_j^{X_2}$) is strongly irreducible in $X_1$ (resp.~$X_2$) and this component does not lie in a collar neighborhood of $\partial X_1$ (resp.~$\partial X_2$) in $X_1$ (resp.~$X_2$).  Moreover, every other component of $\Sigma_k^{X_1}$, $\Sigma_k^{X_2}$, $\Sigma_k^Y$,  $\Sigma_j^{X_1}$, $\Sigma_j^{X_2}$ and $\Sigma_j^Y$ is incompressible and is not a $\partial$-parallel annulus in the corresponding manifolds $X_1$, $X_2$ or $Y-\gamma$.
\end{claimB}
\begin{proof}[Proof of Claim~\ref{claimB15}]
We have assumed that for each $\Sigma_i$, at most one component of $\Sigma_i^{X_1}\coprod \Sigma_i^{X_2}\coprod \Sigma_i^Y$ is strongly irreducible while all other components are incompressible in the respective manifolds $X_1$, $X_2$ and $Y-\gamma$. 

By Lemma~\ref{LXsmall}, $X_1$ is both small and $A_1$-small.  As $S_i\cap X_1$ is incompressible in $X_1$ for all $i$, this means that $S_i\cap X_1$ and the incompressible components of $\Sigma_i^{X_1}$ consist of $\partial$-parallel surfaces which lie in a collar neighborhood of $\partial X_1$ in $X_1$.  If, for each $\Sigma_i$, the possible strongly irreducible component of $\Sigma_i^{X_1}$ always lies in a collar neighborhood of $\partial X_1$ in $X_1$, then the peripheral surface $H_X$ above can be isotoped disjoint from all the $S_i$'s and $\Sigma_i$'s.  This means that $H_X$ lies in a compression body in the Heegaard splitting of some block $\mathcal{N}_i$ in the untelescoping.  Since any closed incompressible surface in a compression body is parallel to its minus boundary, $H_X$ must be parallel to a component of some $S_i$.  By case (a), Lemma~\ref{LHM} holds.  

Suppose Lemma~\ref{LHM} is false, then by the argument above, there must be some $k$ such that a component of $\Sigma_k^{X_1}$ is strongly irreducible in $X_1$ and this component does not lie in a collar neighborhood of $\partial X_1$ in $X_1$.  As in part (2) of Lemma~\ref{Lann}, every other component of $\Sigma_k^{X_1}$, $\Sigma_k^{X_2}$ and $\Sigma_k^Y$ is incompressible and is not a $\partial$-parallel annulus in the respective manifolds $X_1$, $X_2$ and $Y-\gamma$.  Symmetrically, for $X_2$, there is some $j$ such that $\Sigma_j^{X_2}$ satisfies the claim.  

Recall that, for simplicity, we have assumed that each $\mathcal{N}_i$ and hence each $\Sigma_i$ in the untelescoping is connected. 
Since $\Sigma_k^{X_2}$ is incompressible in $X_2$ but $\Sigma_j^{X_2}$ has a strongly irreducible component, $\Sigma_k$ and $\Sigma_j$ are different surfaces.  Hence $k\ne j$.
\end{proof}

Let $P_X$ and $Q_X$ be the strongly irreducible components of $\Sigma_k^{X_1}$ and $\Sigma_j^{X_2}$ in $X_1$ and $X_2$ respectively as in Claim~\ref{claimB15}.

\begin{claimB}\label{claimB16}
Either Lemma~\ref{LHM} holds, or $|\partial P_X|\ge 4$ and $|\partial Q_X|\ge 4$.
\end{claimB}
\begin{proof}[Proof of Claim~\ref{claimB16}]
We prove $|\partial P_X|\ge 4$ and the case for $Q_X$ in $X_2$ is symmetric.

Recall that $S_{min}$ is a Heegaard surface for both $M$ and $M-N(\gamma')$.  Using our notation before Claim~\ref{claimB1}, $M-N(\gamma')$ is the annulus sum of $X_1$ and $M_2'-N(\gamma)$ along $A_1$, where $M_2'=Y\cup_{A_2}X_2$.  

By Lemma~\ref{Lstr}, $P_X$ is separating in $X_1$ and hence $|\partial P_X|$ is even.  So either the claim holds, or $\partial P_X$ has two circles.  Suppose $\partial P_X$ has two circles, then by Lemma~\ref{L2}, we have $g(M)=g(S_{min})\ge g(X_1)+g(M_2'-N(\gamma))-1$.  

Next we estimate $g(M_2'-N(\gamma))$.  Let $S'$ be a minimal-genus Heegaard surface of $M_2'-N(\gamma)$, then after a trivial Dehn surgery, $S'$ remains a Heegaard surface of $M_2'$.  This means that $g(M_2'-N(\gamma))\ge g(M_2')$.  At the beginning of the proof of Claim~\ref{claimB1}, we have shown that the dimension of $H_1(M_2';\mathbb{Z}_2)$ is $3g+2$.  So the rank of $\pi_1(M_2')$ is at least $3g+2$.  Since $\partial M_2'$ is connected, $r(M_2')\le g(M_2')$.  So the Heegaard genus of $M_2'$ is at least $3g+2$.  As $g(M_2'-N(\gamma))\ge g(M_2')$, we have $g(M_2'-N(\gamma))\ge g(M_2')\ge 3g+2$.  

Since $g(X_1)=3$, we have $g(M)\ge g(X_1)+g(M_2'-N(\gamma))-1\ge 3+(3g+2)-1=3g+4$ and Lemma~\ref{LHM} holds.  
\end{proof}

Let $P=\Sigma_k\cap (Y-\gamma)$, $P_1=P\cap N_1$, $P_2=P\cap N_2$ and $P_K=P\cap N_K$.  
As $|\partial P_X|\ge 4$, $P_1\cap A_1$ contains at least 4 circles.  Similarly, let $Q=\Sigma_j\cap (Y-\gamma)$, $Q_1=Q\cap N_1$, $Q_2=Q\cap N_2$ and $Q_K=Q\cap N_K$.  As $|\partial Q_X|\ge 4$, $Q_2\cap A_2$ contains at least 4 circles

By Claim~\ref{claimB15}, $P$ and $Q$ are incompressible surfaces in $Y-\gamma$ with boundary in $A_1\cup A_2$. 
By Lemma~\ref{Lstandard}, after isotopy, we may assume $P_1$, $P_2$, $P_K$, $Q_1$, $Q_2$, $Q_K$ are standard in the corresponding manifolds $N_1$, $N_2$ and $N_K$, which means a component of these surfaces is either (1) horizontal in the corresponding twisted $I$-bundles $N_1$, $N_2$ and $N_K$, or (2) a tube around $\gamma_1$ or $\gamma_2$.   

\begin{claimB}\label{claimB2}
Either Lemma~\ref{LHM} holds, or $P_1$ (resp.~$Q_2$) contains exactly one horizontal component in the $I$-bundle $N_1$ (resp.~$N_2$).
\end{claimB}
\begin{proof}[Proof of Claim~\ref{claimB2}]
We prove that $P_1$ contains one horizontal component in $N_1$, and the argument for $Q_2$ and $N_2$ is the same.

Since $\partial P_X\ne\emptyset$, $P_1$ contains at least one horizontal component.  Suppose the claim is false and the $P_1$ has more than one horizontal component.  As each connected horizontal surface in $N_1$ has two boundary curves in $\partial_vN_1$, $\partial P_1\cap\partial_vN_1$ contains at least 4 horizontal curves in $\partial_vN_1$.  

By Lemma~\ref{LAS}, $P$ can be obtained by first connecting the horizontal components of $P_1$, $P_2$ and $P_K$ using standard (punctured) annuli in $\Gamma\times I$ and then tubing along $\gamma$.  We use $P_0$ to denote the punctured surface in $Y-\gamma$ before tubing along $\gamma$, i.e., $P_0\cap(\Gamma\times I)$ is a collection of standard (punctured) annuli and $P$ is obtained from $P_0$ by tubing along $\gamma$.  

By the definition of standard (punctured) annuli in $\Gamma\times I$, no annulus of $P_0\cap(\Gamma\times I)$ connects two curves in the same annulus $\partial_vN_1$, $\partial_vN_2$ or $\partial_vN_K$. 
Since $\partial P_1\cap\partial_vN_1$ contains at least 4 horizontal curves in $\partial_vN_1$, there are at least 4 annuli in $P_0\cap(\Gamma\times I)$ connecting the horizontal curves in $\partial P_1\cap\partial_vN_1$ to the horizontal curves in $\partial P_K$ and $\partial P_2\cap \partial_vN_2$.  This means that $P_K\cup P_2$ has totally at least 2 horizontal components in $N_K\cup N_2$.

Next we show that $P_0\cap\gamma$ has at least 2 punctures.   

By Lemma~\ref{Lstandard}, a horizontal component of $P_1$ with a boundary curve in $A_1$ is of type $A'$ or  type $B'$ or type $B''$. A surface of type $A'$ or type $B'$ has two boundary circles in $A_1$ and a surface of type $B''$ has 4 boundary circles in $A_1$. By Claim~\ref{claimB16}, $\partial P_1\cap A_1$ has at least 4 curves.  So we have the following two possible cases for $P_1$: 
\begin{enumerate}[\upshape (i)]
  \item $P_1$ contains a horizontal component of type $B'$ or $B''$,
  \item  $P_1$ contains two horizontal components both of type $A'$.
\end{enumerate}
Let $K_i$ be the subarc of an $I$-fiber in $\partial_vN_i$ connecting $q_i$ to $\partial(\Gamma\times\{0\})$, see Figure~\ref{Fgamma}(b).  As we pointed out in the description of surfaces of type $B'$ and $B''$ before Lemma~\ref{Lstandard}, for any component $P_1'$ of type $B'$ or $B''$ in $N_1$, both curves of $\partial P_1'\cap \partial_vN_1$ intersect $K_1$.  
So in case (i), $\partial P_1$ intersects $K_1$ at least twice.  Similarly, each horizontal component of $P_1$ of type $A'$ must intersect $K_1$ once (see Figure~\ref{Ftype}(a)), and hence in case (ii), $\partial P_1$ also intersects $K_1$ at least twice.  Recall that $\gamma\cap(\Gamma\times I)$ consists of 3 arcs: $\gamma_p$, $\delta_1$ and $\delta_2$.  By our construction of $\delta_i$, see Figure~\ref{Fgamma}(b), the conclusion on $\partial P_1\cap K_1$ implies that, in both case (i) and case (ii), the annuli in $P_0\cap(\Gamma\times I)$ connecting $\partial P_1\cap\partial_vN_1$ to $\partial_vN_2\cup\partial_vN_K$ must intersect the arc $\delta_1$ at least twice (see Figure~\ref{Fgamma}(b) and Figure~\ref{Fstandard}).   Hence $|P_0\cap\gamma|\ge |P_0\cap\delta_1|\ge 2$.

\begin{figure}
  \centering
\psfrag{X}{$\Gamma\times I$}
\psfrag{g}{$\gamma_p$}
\psfrag{d}{$\delta_1$}
\psfrag{e}{$\delta_2$}
\psfrag{p}{punctures}
\psfrag{0}{$P_0$}
  \includegraphics[width=3in]{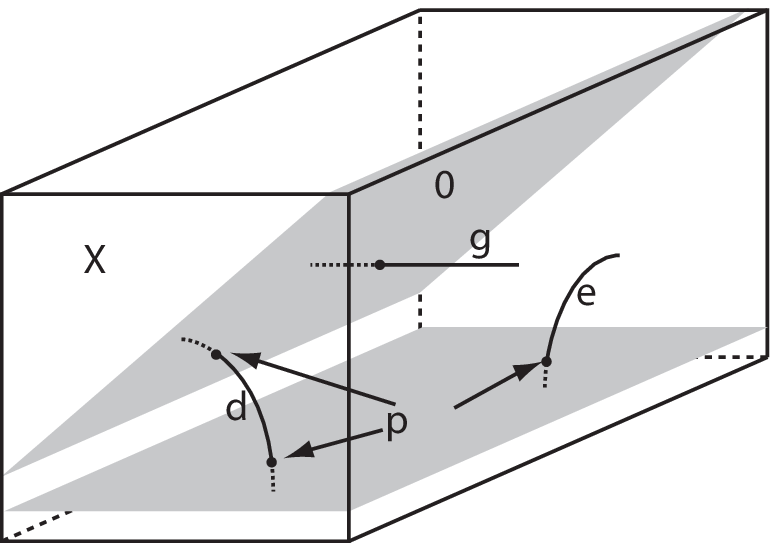}
  \caption{}
  \label{Fstandard}
\end{figure}

Now we calculate the Euler characteristic of $\Sigma_k$.  By Lemma~\ref{Lstr} and since $g(X_1)=3$, we have $\chi(P_X)\le 2-2g(X_1)=-4$.  Each orientable horizontal surface in the twisted $I$-bundles $N_1$, $N_2$ and $N_K$ has Euler characteristic $2(1-g)=2-2g$.  As $P_1$ has at least two horizontal components and $P_K\cup P_2$ has at least 2 horizontal components, $P_1$, $P_2$ and $P_K$ have totally at least 4 horizontal components.  Therefore, we have $\chi(\Sigma_k)\le \chi(P_X)+\chi(P_1)+\chi(P_2)+\chi(P_K)-|P_0\cap\gamma|\le -4+4(2-2g)-2=2-8g$.  This means that $g(\Sigma_k)\ge 4g$.  Since $g\ge 3$, $g(\Sigma_k)\ge 4g\ge 3g+3$.  

In the argument above, we have $\Sigma_j\ne\Sigma_k$.  So there are at least two blocks $\mathcal{N}_j$ and $\mathcal{N}_k$ in the untelescoping for the Heegaard surface $S_{min}$ of $M-N(\gamma')$.  The untelescoping construction can be viewed as a rearrangement of the handles in the Heegaard splitting.  As the Heegaard splitting of $\mathcal{N}_j$ in the untelescoping is assumed to be non-trivial, there is at least one 1-handle in $\mathcal{N}_j$, and this implies that $S_{min}\ne\Sigma_k$ and $g(S_{min})\ge g(\Sigma_k)+1$.  Since $g(\Sigma_k)\ge 3g+3$, we have $g(S_{min}) \ge 3g+4$ and Lemma~\ref{LHM} holds.
\end{proof}

By Claim~\ref{claimB2}, we may assume $P_1$ (resp. $Q_2$) contains exactly one horizontal component, which we denote by $P_1'$ (resp. $Q_2'$).   Since $|\partial P_1'\cap A_1|\ge |\partial P_X|\ge 4$ and $|\partial Q_2'\cap A_2|\ge |\partial Q_X|\ge 4$,  Lemma~\ref{Lstandard} implies that $P_1'$ and $Q_2'$ must be of type $B''$ and $|\partial P_1'\cap A_1|=|\partial Q_2'\cap A_2|=4$.

Next we consider $Q_1$ and $P_2$.  Let $\Pi_1$ and $\Pi_2$ be horizontal components of $Q_1$ and $P_2$ in $N_1$ and $N_2$ respectively.  
By Lemma~\ref{Lstandard},  $\partial P_1'\cap\partial_vN_1$ and $\partial \Pi_1\cap\partial_vN_1$ are pairs of essential curves in $\partial_vN_1$. 
Let $V_P$ and $V_{\Pi_1}$ be the sub-annuli of $\partial_vN_1$ bounded by $\partial P_1'\cap\partial_vN_1$ and $\partial \Pi_1\cap\partial_vN_1$ respectively.  Similarly, $\partial Q_2'\cap\partial_vN_2$ and $\partial \Pi_2\cap\partial_vN_2$ are pairs of essential curves in $\partial_vN_2$. 
Let $V_Q$ and $V_{\Pi_2}$ be the sub-annuli of $\partial_vN_2$ bounded by $\partial Q_2'\cap\partial_vN_2$ and $\partial \Pi_2\cap\partial_vN_2$ respectively.

\begin{claimB}\label{claimB3}
Let  $P_1'$, $Q_2'$, $\Pi_1$, $\Pi_2$, $V_P$, $V_Q$, $V_{\Pi_1}$ and $V_{\Pi_2}$ be as above.  Then 
\begin{enumerate}
  \item $V_P\subset V_{\Pi_1}$ and symmetrically $V_Q\subset V_{\Pi_2}$,
  \item $\partial \Pi_1\cap A_1\ne\emptyset$ and symmetrically $\partial \Pi_2\cap A_2\ne\emptyset$.
\end{enumerate}  
\end{claimB}
\begin{proof}[Proof of Claim~\ref{claimB3}]
We prove $V_P\subset V_{\Pi_1}$ and $\partial \Pi_1\cap A_1\ne\emptyset$ for $P_1$ and $\Pi_1$.  The argument for $Q_2$ and $\Pi_2$ is the same and symmetric.

In the argument above, we have concluded that $|\partial P_1'\cap A_1|=4$.  
Let $c_1$, $c_2$, $c_3$ and $c_4$ be the 4 circles of $\partial P_1'\cap A_1$ in consecutive order in $A_1$ (i.e. the sub-annulus of $A_1$ bounded by each $c_i\cup c_{i+1}$ contains no other circle $c_l$).  
Recall that in our construction, $c_1,\dots c_4$ are also the boundary circles of $P_X$.  By Lemma~\ref{Lstr}, if we maximally compress $P_X$ in $X_1$ on one side, we get a pair of $\partial$-parallel annuli bounded by $c_1\cup c_2$ and $c_3\cup c_4$ respectively, and if we maximally compress $P_X$ in $X_1$ on the other side, we get a $\partial$-parallel annulus bounded by $c_2\cup c_3$ (plus a surface parallel to $\partial X_1-\Int(A_1)$ and bounded by $c_1\cup c_4$).  Now we consider $\Sigma_j\supset Q_1\supset\Pi_1$.  Since any compressions on $P_X$ can be disjoint from other surfaces in the untelescoping, the conclusion above on $P_X$ means that if a curve $\alpha_H$ of $\Sigma_j\cap A_1$ lies in any of the sub-annuli bounded by $c_i\cup c_{i+1}$, then (since $\Sigma_j\cap X_1$ is incompressible in $X_1$) the component of $\Sigma_j\cap X_1$ containing $\alpha_H$ must be a $\partial$-parallel annulus in $X_1$, which contradicts our earlier assumption on $\Sigma_j$.  Thus $\Sigma_j\cap A_1$ and in particular $\partial\Pi_1\cap A_1$ (if not empty) are disjoint from the sub-annulus of $A_1$ bounded by $c_1\cup c_4$.

By our description of standard surfaces before Lemma~\ref{Lstandard}, we can perform tubing first on $P_1'$ and then on $\Pi_1$ along $A_1$ and get horizontal surfaces $\widehat{P}_1'$ and $\widehat{\Pi}_1$ in $N_1$ disjoint from $A_1$ (see Figure~\ref{Fcap}), such that 
(1) $P_1'$ can be obtained by two annulus-compressions on $\widehat{P}_1'$, and (2) either $\Pi_1=\widehat{\Pi}_1$ (i.e., $\Pi_1\cap A_1=\emptyset$) or $\Pi_1$ is obtained by one or two annulus-compressions on $\widehat{\Pi}_1$.  
Since $\partial\Pi_1\cap A_1$ is disjoint from the sub-annulus of $A_1$ bounded by $c_1\cup c_4$, by our tubing operation on $P_1'$, we may assume $\widehat{P}_1'\cap\Pi_1=\emptyset$.  Since $\widehat{P}_1'\cap A_1=\emptyset$ and $\widehat{P}_1'\cap\Pi_1=\emptyset$, the surface $\widehat{\Pi}_1$ after tubing $\Pi_1$ along $A_1$ remains disjoint from 
$\widehat{P}_1'$, i.e.  $\widehat{P}_1'\cap\widehat{\Pi}_1=\emptyset$.

$\widehat{P}_1'$ and $\widehat{\Pi}_1$ can be viewed as orientable sections of the twisted $I$-bundle $N_1$, so each $\widehat{P}_1'$ and $\widehat{\Pi}_1$ bounds a sub-twisted $I$-bundle of $N_1$, which we denoted by $W_P$ and $W_\Pi$ respectively.  We may view $\partial\widehat{P}_1'=\partial P_1'\cap\partial_vN_1$, $\partial\widehat{\Pi}_1=\partial \Pi_1\cap\partial_vN_1$, $V_P=\partial_vW_P$ and $V_{\Pi_1}=\partial_vW_\Pi$.   Since $\widehat{P}_1'\cap\widehat{\Pi}_1=\emptyset$, any pair of such sub-twisted $I$-bundles of $N_1$ are nested and in particular, $V_P$ and $V_{\Pi_1}$ are nested.

The surface $P_1'$ is obtained by two annulus-compressions on $\widehat{P}_1'$.  As shown in Figure~\ref{Fcap}, we may view the two annulus-compressions on $\widehat{P}_1'$ to be the operation that pushes a neighborhood of a vertical annulus in $W_P$ into $A_1$.  This means that, as illustrated in Figure~\ref{Fintersect}(a), the region $W_P'$ bounded by $P_1'\cup V_P$ (and the two sub-annuli of $A_1$ bounded by $c_1\cup c_2$ and $c_3\cup c_4$) is also an $I$-bundle (though the fiber structure of $W_P'$ is different from that of $W_P$ near $A_1$).  In particular, the two sub-annuli of $A_1$ bounded by $c_1\cup c_2$ and $c_3\cup c_4$ are two components of the vertical boundary $\partial_vW_P'$.

\begin{figure}
  \centering
\psfrag{W}{$W_P'$}
\psfrag{L}{$W_\Lambda$}
\psfrag{P}{$P_1$}
\psfrag{Q}{$Q_2$}
\psfrag{K}{$P_K$}
\psfrag{S}{$Q_K$}
\psfrag{v}{$\partial_vN_K$}
\psfrag{a}{(a)}
\psfrag{b}{(b)}
\psfrag{c}{(c)}
  \includegraphics[width=5in]{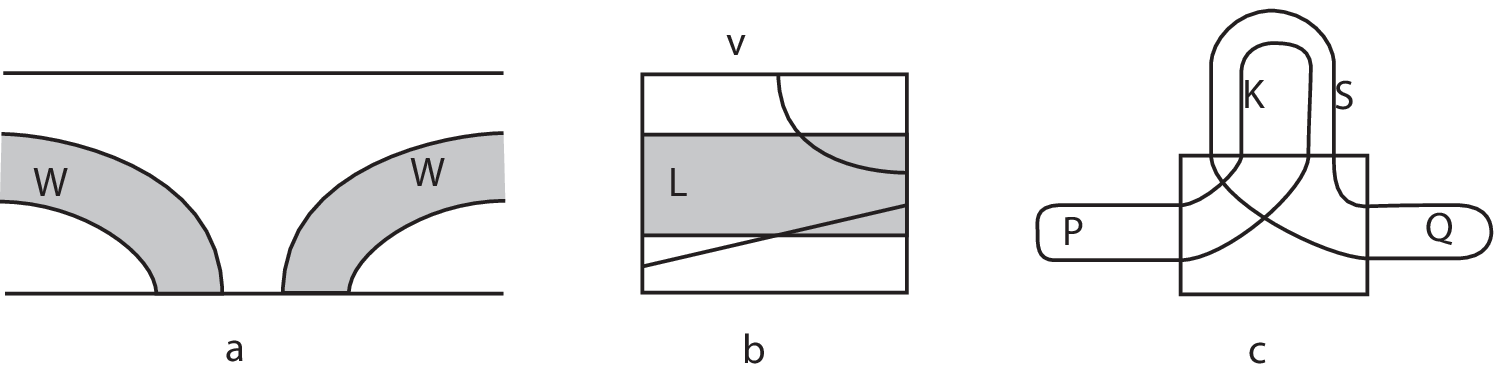}
  \caption{}
  \label{Fintersect}
\end{figure}

Since $\Pi_1$ is incompressible in $N_1$ and is not an annulus, if $\Pi_1$ lies in $W_P'$, it must be incompressible in $W_P'$ and hence can be isotoped to be horizontal in the $I$-bundle $W_P'$.  This implies that if $\Pi_1$ lies in $W_P'$, then it must have boundary curves in each component of $\partial_vW_P'$ and hence 
have boundary curves in the two sub-annuli of $A_1$ bounded by $c_1\cup c_2$ and $c_3\cup c_4$.  This contradicts our conclusion at the beginning of the proof that $\partial\Pi_1\cap A_1$ is disjoint from the sub-annulus of $A_1$ bounded by $c_1\cup c_4$.  Thus $\Pi_1$ cannot lie inside $W_P'$.  Since $P_1'\cap\Pi_1=\emptyset$, $\Pi_1$ must lie outside $W_P'$.  Furthermore, since $\Pi_1$ has no boundary curve in the annulus bounded by $c_1\cup c_4$, after tubing $P_1'$ to get $\widehat{P}_1'$, we see that $\Pi_1$ also lies outside $W_P$.  So after we perform tubing on $\Pi_1$ to get $\widehat{\Pi}_1$, $\widehat{\Pi}_1$ also lies outside $W_P$.  As $W_P$ and $W_\Pi$ are nested, this means that $W_P\subset W_\Pi$. As $V_P=\partial_vW_P$ and $V_{\Pi_1}=\partial_vW_\Pi$, we have $V_P\subset V_{\Pi_1}$ and part (1) of the claim holds.

Suppose part (2) of the claim is false.  By the conclusion above,  this happens only if $\Pi_1=\widehat{\Pi}_1$.  So we may view $\partial_h W_\Pi=\widehat{\Pi}_1=\Pi_1$.  Since $P_1'$ is disjoint from $\Pi_1$ and since $\partial P_1'\cap A_1\ne\emptyset$, if $\partial_h W_\Pi=\Pi_1$, $P_1'$ must lie outside the $I$-bundle $W_\Pi$.  Hence after tubing $P_1'$, we get our surface $\widehat{P}_1'$ outside $W_\Pi$.  As $W_P$ and $W_\Pi$ are nested, this means that $W_\Pi\subset W_P$, contradicting out conclusion above $W_P\subset W_\Pi$.  So $\Pi_1\ne\widehat{\Pi}_1$ and part (2) of the claim also holds.
\end{proof}

By Lemma~\ref{LAS}, $\Sigma_k\cap (Y-\gamma)$ (resp. $\Sigma_j\cap(Y-\gamma)$) can be obtained by first connecting the horizontal components of $P_1$, $P_2$ and $P_K$ (resp. $Q_1$, $Q_2$ and $Q_K$)  by standard (punctured) annuli in $\Gamma\times I$ and then tubing the resulting (punctured) surface along $\gamma$.  Let $\Lambda_P$ and $\Lambda_Q$ be the collections of standard (punctured) annuli in $\Gamma\times I$ connecting the horizontal components of $P_1\cup P_2\cup P_K$ and $Q_1\cup Q_2\cup Q_K$ respectively.  As $\Sigma_k\ne\Sigma_j$, $\Lambda_P$ and $\Lambda_Q$ are 2 disjoint sets of punctured annuli.

\vspace{8pt}
\noindent
\textbf{Case (b)}.  Both $P_2$ and $P_K$ contain a horizontal component in the respective twisted $I$-bundles $N_2$ and $N_K$, or symmetrically both $Q_1$ and $Q_K$ contain a horizontal component in the respective twisted $I$-bundles $N_1$ and $N_K$.
\vspace{8pt}

Suppose both $P_2$ and $P_K$ contain a horizontal component (the case that  both $Q_1$ and $Q_K$ contain a horizontal component is symmetric).  We will show that Lemma~\ref{LHM} holds in Case (b).

By Claim~\ref{claimB2}, we may assume $P_1$ has exactly one horizontal component.  So it follows from Claim~\ref{claimB16} and Lemma~\ref{Lstandard} that the horizontal component of $P_1$ must be of type $B''$.  So the pair of horizontal curves in $\partial P_1\cap\partial_vN_1$ is as shown in Figure~\ref{Ftype}(b).  Recall that the two endpoints of $\gamma_1$ are $p_1$ and $q_1$, and $q_1$ is also the endpoint of the arc $\delta_1$ in $\gamma\cap(\Gamma\times I)$.  Let $K_i$ ($i=1,2$) be the subarc of an $I$-fiber of $\partial_vN_i$ connecting $q_i$ to $\partial(\Gamma\times\{0\})$, see Figure~\ref{Fgamma}(b).  By our description of type $B''$ surfaces before Lemma~\ref{Lstandard}, both horizontal curves in $\partial P_1\cap\partial_vN_1$ must intersect $K_1$.  As shown in Figure~\ref{Fstandard}, this means that the two annuli in $\Lambda_P$ containing the two horizontal curves in $\partial P_1\cap\partial_vN_1$ must both intersect the arc $\delta_1$.  So $|\Lambda_P\cap\delta_1|\ge 2$.

Similarly, by Claim~\ref{claimB2}, we may assume $Q_2$ has exactly one horizontal component in $N_2$ which is of type $B''$ in $N_2$.  The argument above implies that both horizontal curves in $\partial Q_2\cap\partial_vN_2$ must intersect $K_2$.  Now we consider a horizontal component $\Pi_2$ of $P_2$ in $N_2$.  Let $V_{\Pi_2}$ be the sub-annulus of $\partial_vN_2$ bounded by the pair of curves $\partial\Pi_2\cap\partial_vN_2$.  By part(1) of Claim~\ref{claimB3}, $V_{\Pi_2}$ contains the two horizontal curves of $\partial Q_2\cap\partial_vN_2$.  Since both horizontal curves in $\partial Q_2\cap\partial_vN_2$ intersect $K_2$, this implies that $\partial\Pi_2\cap K_2\ne\emptyset$.  Similar to the argument above, this means that the annuli in $\Lambda_P$ connecting $\Pi_2$ must intersect the arc $\delta_2$ at least once.  

As $|\Lambda_P\cap\delta_1|\ge 2$ and $|\Lambda_P\cap\delta_2|\ge 1$, we have $|\Lambda_P\cap\gamma|\ge 3$.  Since $P=\Sigma_k\cap (Y-\gamma)$ can be obtained by adding tubes to a punctured surface along $\gamma$, $|\Lambda_P\cap\gamma|$ is an even number.  So we have $|\Lambda_P\cap\gamma|\ge 4$.

Since $P_2$ contains a horizontal component $\Pi_2$, by part (2) of Claim~\ref{claimB3}, $\Pi_2\cap A_2\ne\emptyset$.  As $\Pi_2\subset P_2\subset\Sigma_k$, this means $\Sigma_k\cap X_2\ne\emptyset$.  By our earlier conclusions, $\Sigma_k\cap X_2$ consists of incompressible surfaces and no component of $\Sigma_k\cap X_2$ is a $\partial$-parallel annulus.  Since $X_2$ is $A_2$-small, each component of $\Sigma_k\cap X_2$ must be parallel to $\partial X_2-\Int(A_2)$.   Since $g(\partial X_2)=2$, we have $\chi(\Sigma_k\cap X_2)\le -2$.

Now we estimate $\chi(\Sigma_k)$.  We have $\chi(\Sigma_k)\le\chi(P_X)+\chi(P_1)+\chi(P_K)+\chi(P_2)+\chi(\Sigma_k\cap X_2)-|\Lambda_P\cap\gamma|$.  By Lemma~\ref{LXgenus}, $g(X_1)=3$.  So by Lemma~\ref{Lstr}, $\chi(P_X)\le 2-2g(X_1)=-4$.  
Each horizontal surface in $N_1$, $N_2$ and $N_k$ has Euler characteristic $2(1-g)$.  By our hypothesis of case (b), each $P_1$, $P_2$ and $P_K$ has a horizontal component in $N_1$, $N_2$ and $N_k$ respectively, so we have $\chi(P_1)+\chi(P_K)+\chi(P_2)\le 3(2-2g)$.  Moreover, we have concluded above that $|\Lambda_P\cap\gamma|\ge 4$ and $\chi(\Sigma_k\cap X_2)\le -2$.  So we have $\chi(\Sigma_k)\le\chi(P_X)+\chi(P_1)+\chi(P_K)+\chi(P_2)+\chi(\Sigma_k\cap X_2)-|\Lambda_P\cap\gamma|\le -4+ 3(2-2g)-2-4=-4-6g$.  Thus $g(\Sigma_k)\ge 3g+3$.  

Now the argument is the same as the last part of the proof of Claim~\ref{claimB2}.
Since $\Sigma_j\ne\Sigma_k$, there are at least two blocks $\mathcal{N}_j$ and $\mathcal{N}_k$ in the untelescoping for the Heegaard surface $S_{min}$ of $M-N(\gamma')$.  The untelescoping construction can be viewed as a rearrangement of the handles in the Heegaard splitting along $S_{min}$.  As the Heegaard splitting of $\mathcal{N}_j$ in the untelescoping is assumed to be non-trivial, there is at least one 1-handle in $\mathcal{N}_j$, and this implies that $S_{min}\ne\Sigma_k$ and $g(S_{min})\ge g(\Sigma_k)+1$.  Since $g(\Sigma_k)\ge 3g+3$, we have $g(S_{min}) \ge 3g+4$ and Lemma~\ref{LHM} holds.

\vspace{8pt}
\noindent
\textbf{Case (c)}.   $P_K=\emptyset$ or $Q_K=\emptyset$.
\vspace{8pt}

We will show that Case (c) cannot happen. 
Suppose $P_K=\emptyset$ (the case  $Q_K=\emptyset$ is the same).  Then $\Lambda_P$ consists of annuli connecting $\partial P_1\cap\partial_vN_1$ to $\partial P_2\cap\partial_vN_2$.  Since $P_1$ has only one horizontal component $P_1'$, $\partial P_1\cap\partial_vN_1$ has exactly two horizontal curves in $\partial_vN_1$.  Recall that the two boundary curves of a standard (punctured) annulus lie in two different annuli of $\partial_vN_1$, $\partial_vN_2$ and $\partial_vN_K$.  As $P_K=\emptyset$, this means that $\partial P_2\cap\partial_vN_2$ must also have two horizontal curves and $\Lambda_P$ consists of two standard (punctured) annuli connecting $\partial_vN_1$ to $\partial_vN_2$.  We use $W_\Lambda$ to denote the sub-manifold of $\Gamma\times I$ between the pair of annuli $\Lambda_P$.  So $W_\Lambda$ is a product region of the form $annulus\times I$.  Moreover, the sub-annulus $V_P$ of $\partial_vN_1$ bounded by $\partial P_1'\cap\partial_vN_1$ is an annulus in $\partial W_\Lambda$.  By part (1) of Claim~\ref{claimB3}, any horizontal curves of $\partial Q_1\cap \partial_vN_1$ must lie outside the annulus $V_P$ and hence outside $W_\Lambda$.

Since $\partial P_2\cap\partial_vN_2$ contains two horizontal curves, $P_2$ has only one horizontal component in $N_2$.  Let $\Pi_2$ be the horizontal component of $P_2$  and let $V_{\Pi_2}$ be the sub-annulus of $\partial_vN_2$ bounded by $\partial\Pi_2\cap\partial_vN_2$.  So $V_{\Pi_2}\subset\partial W_\Lambda$.  
By part(1) of Claim~\ref{claimB3}, $V_Q\subset V_{\Pi_2}$ and in particular, $V_{\Pi_2}$ (and hence $\partial W_\Lambda$) contains the pair of horizontal curves in $\partial Q_2\cap\partial_vN_2$.  However, since both $\partial_vN_K$ and the horizontal curves of $\partial Q_1\cap \partial_vN_1$ lie outside $W_\Lambda$, as illustrated in the 1-dimensional schematic picture Figure~\ref{Fintersect}(b), any standard (punctured) annulus in $\Lambda_Q$ connecting 
$\partial Q_2\cap\partial_vN_2$ to $\partial Q_K$ or to $\partial Q_1\cap \partial_vN_1$ must intersect the pair of annuli $\Lambda_P$, which contradicts that $\Sigma_k\cap\Sigma_j=\emptyset$.  Thus Case (c) cannot happen.

By Case (b) and Case (c) above, the remaining case to consider is:

\vspace{8pt}
\noindent
\textbf{Case (d)}.  $P_K\ne\emptyset$, $Q_K\ne\emptyset$, $P_2$ contains no horizontal component in $N_2$, and $Q_1$ contains no horizontal component in $N_1$.
\vspace{8pt}

We will show that Case (d) cannot happen either.  Suppose on the contrary that Case (d) occurs.

As above, we consider the two sets of standard (punctured) annuli $\Lambda_P$ and $\Lambda_Q$ in $\Gamma\times I$ connecting horizontal curves in $\partial P_1\cap\partial_vN_1$ and $\partial Q_2\cap \partial_vN_2$ to $P_K$ and $Q_K$ respectively.

As $\Sigma_j\ne\Sigma_k$, we have $P_K\cap Q_K=\emptyset$.  By Claim~\ref{claimB2}, there are only two horizontal curves in $\partial P_1\cap\partial_vN_1$ and only two horizontal curves in $\partial Q_2\cap \partial_vN_2$.  Similar to Case (c), since $P_2$ (resp. $Q_1$) has no horizontal component, $\Lambda_P$ (resp. $\Lambda_Q$) consists of 2 annuli.  Hence $|\partial P_K|=2$ and $|\partial Q_K|=2$, and this means that $P_K$ and $Q_K$ are connected horizontal surfaces in $N_K$.  

We may view $P_K$ and $Q_K$ as orientable sections of the twisted $I$-bundle $N_K$.  Each orientable section of $N_K$ bounds a sub-twisted $I$-bundle of $N_K$, and these sub-twisted $I$-bundles are nested.  So the two sub-annuli of $\partial_vN_K$ bounded by $\partial P_K$ and $\partial Q_K$ are nested.  However, as illustrated in the 1-dimension schematic picture Figure~\ref{Fintersect}(c), this implies that $\Lambda_P$ must intersect $\Lambda_Q$ in $\Gamma\times I$, which is impossible as $\Sigma_k\cap\Sigma_j=\emptyset$.  Thus this case cannot happen either.

Therefore, in all possible cases, we have $g(M)\ge 3g+4$ and Lemma~\ref{LHM} holds.
\end{proof}

\begin{proof}[Proof of Theorem~\ref{Tmain}]
By Lemma~\ref{LRi}, $M-N(\gamma')$ is irreducible and atoroidal.  So if $M$ is reducible or toroidal, then an essential 2-sphere or torus in $M$ must non-trivially intersect the curve $\gamma'$, which means that our Dehn surgery slope $s$ is a boundary slope of an essential punctured 2-sphere or torus.  
 Recall that our slope $s$ is assumed not to be a boundary slope of an essential surface with boundary in the boundary torus $T$ of $M-N(\gamma')$.  So this cannot happen and $M$ is irreducible and atoroidal.

By  Lemma~\ref{LRM}, the rank $r(M)\le 3g+3$, and by Lemma~\ref{LHM}, $g(M)\ge 3g+4$.  Hence Theorem~\ref{Tmain} holds.  
\end{proof}

Although we only need $g(M)\ge 3g+4$ to prove the main theorem, it is not hard to find a Heegaard surface of $M$ with genus equal to $3g+4$.  For completeness, we briefly describe a weakly reducible Heegaard splitting of $M$ with genus $3g+4$.  This Heegaard surface corresponds to Case (b) in the proof of Lemma~\ref{LHM}.  By Lemma~\ref{LHM}, this means that the Heegaard genus of $M$ is in fact equal to $3g+4$.  

First, we construct a properly embedded surface in $X_1$ with 4 boundary circles in $A_1$.  Recall that $X_1$ is the exterior of a graph $G=K\cup \beta$, where $K$ is a 2-bridge knot, see section~\ref{SX}.  By our construction, the arc $\beta$ lies in a bridge sphere.  We take a bridge 2-sphere $S_b$ of $K$ disjoint from $\beta$ which corresponds to a 4-hole sphere $S_1'$ properly embedded in $X_1$ with all 4 boundary circles in $A_1$.  We can add a tube to $S_1'$ along an arc (see the arc $s$ in Figure~\ref{Fknot}(c)) that goes around the arc $\beta$ and get a 4-hole torus $S_1$.  Note that, as $\beta$ is parallel to an arc in the bridge sphere, if one performs tubing on $S_1$ (see Definition~\ref{Dtubing}) along the two arcs of $K$ in the 3-ball that is bounded by the bridge sphere $S_b$ and that does not contain $\beta$, then the resulting closed surface is a genus 3 Heegaard surface of $X_1$.  In particular, $S_1$ is strongly irreducible in $X_1$ and has 4 boundary circles in $A_1$. 

Let $S_2$ be the surface obtained from the genus 3 Heegaard surface of $X_2$ (described in Lemma~\ref{LXgenus}) by one annulus-compression along $A_2$.  In fact, $S_2$ is a strongly irreducible surface in $X_2$ with 2 boundary circles in $A_2$.

Let $P_1$ be a horizontal surface in $N_1$ of type $B''$, let $P_2$ be a horizontal surface in $N_2$ of type $A'$, and let $P_K$ be a horizontal surface in $N_K$.  We can connect $P_1$ to $S_1$ in $A_1$ and connect $P_2$ to $S_2$ in $A_2$.  Then we use 3 standard (punctured) annuli in $\Gamma\times I$ to connect $P_K$ to $P_1$, $P_K$ to $P_2$ and $P_1$ to $P_2$.  Let $S$ be the resulting (punctured) surface.  As in Case (b) in the proof of Lemma~\ref{LHM} and illustrated in Figure~\ref{Fstandard}, it is easy to see that $S$ has exactly 4 intersection points with $\gamma$.  Then we add two (nested) tubes to $S$, both tubes going through the 1-handle.  Let $\widehat{S}$ be the resulting closed surface.  Similar to the calculation in Case (b), we have $\chi(\widehat{S})=\chi(S_1)+\chi(P_1)+\chi(P_K)+\chi(P_2)+\chi(S_2)-|S\cap\gamma|=-4+3(2-2g)-4-4=-6-6g$.  So $g(\widehat{S})=3g+4$.  Although not obvious, it is not hard to see that $\widehat{S}$ is a Heegaard surface of $M$.

We conclude this section by explaining why Theorem~\ref{TJSJ} follows from the same proof.  

\begin{proof}[Proof of Theorem~\ref{TJSJ}]
We first modify the construction of $M$ a little. 
In the construction of $M$, instead of using $X_1$ and $X_2$, we glue a pair of 2-bridge knot exteriors $X_1'$ and $X_2'$ to $Y_s$ along $A_1$ and $A_2$ respectively, where the core curve of $A_i$ in $\partial X_i'$ is a meridional loop of the 2-bridge knot.  We denote the resulting manifold by $M_T$.

By \cite{HT}, a 2-bridge knot exterior is both small and meridionally small.  So $X_i'$ has the same topological properties that we need as $X_i$.  Moreover, $\pi_1(X_i')$ is generated by a pair of conjugate elements $x_i$ and $h_i^{-1}x_ih_i$, where $x_i$ is represented by a meridional loop of the knot and $h_i\in\pi_1(X_i')$.  Now we repeat the argument in Lemma~\ref{LRM} and see that the rank of $\pi_1(M_T)$ is at most $3g+1$ (the only difference here is that $\pi_1(X_1')$ and $\pi_1(X_2')$ do not have the generators $s_1$ and $s_2$ in $\pi_1(X_1)$ and $\pi_1(X_2)$ respectively).

Our arguments for $M$ above can all be applied to $M_T$, except that since $M_T$ is toroidal, part (2) of Lemma~\ref{LRi} is not true for $M_T$.  Nonetheless, parts (1) and (3) of Lemma~\ref{LRi} still hold for $M_T$ and we can still apply the results in \cite{R, RSed} on $M_T$.

By applying the argument in Lemma~\ref{LHM} on $M_T$, we see that the Heegaard genus of $M_T$ is at least $3g+2$ (the only difference here is that $g(\partial X_i)=2$ and $g(X_i)=3$ but $\partial X_i'$ is a torus and $g(X_i')=2$).  Note that the reason that the same calculation also works on $X_i'$ is that both $X_i$ and $X_i'$ have tunnel number one.  In fact, if we compute the tunnel number instead of Heegaard genus, then both $M$ and $M_T$ have tunnel number 3.  Thus $r(M_T)<g(M_T)$.

The pair of 2-bridge knot exteriors are JSJ pieces of $M_T$.  After capping off $\partial M_T$ by a handlebody and using a complicated gluing map, $X_1'$ and $X_2'$ remain JSJ pieces of the resulting closed 3-manifold.  Now Theorem~\ref{TJSJ} follows from Corollary~\ref{CL5}.
\end{proof}

\section{Some open questions}\label{Sopen}

In this section, we discuss some interesting open questions related to rank and genus.  Some of these questions were asked earlier by other mathematicians but the author could not find in the literature who are the first to raise the questions.

\begin{question}\label{Q0}
Is there a non-Haken 3-manifold $M$ with $r(M)<g(M)$?
\end{question}

The manifolds constructed in this paper are Haken manifolds. It is not clear whether one can modify the methods in this paper to produce a non-Haken example.  Some obvious changes on the construction may not work, since one would need each piece in the annulus sum to be a handlebody to get a non-Haken manifold.  On the other hand, non-Haken manifolds are more rigid than Haken manifolds, e.g., see \cite{L1, L2}.  So there may be a chance that rank equals genus for non-Haken manifolds.

Another related question is:

\begin{question}\label{Q1}
Is there a knot $k$ in $S^3$ such that $r(S^3-N(k))<g(S^3-N(k))$?  How about a prime knot $k$?
\end{question}

It is conceivable that one can use the methods in this paper to produce a composite knot $k$ in $S^3$ whose exterior has rank smaller than Heegaard genus.  But it is much harder to find a prime-knot example.

\begin{question}\label{Q2}
Among all hyperbolic 3-manifolds $M$ with $r(M)<g(M)$, what is the minimal value for $r(M)$? 
\end{question}

By Proposition~\ref{Prank}, the rank of $\pi_1(M)$ in our construction is $3g+3$ with $g\ge 3$.  Note that the genus of the non-orientable surface $F_K$ in our construction does not need to be at least 3.  So we can choose $g(F_1)=g(F_2)=3$ and choose $F_K$ to be a M\"{o}bius band.  This gives an example $M$ with $r(M)=(3+3+1)+3=10$ and $g(M)=11$.  It is conceivable that one can modify this construction to get a manifold with smaller rank, but it is not clear how small it can be.  

Note that by Proposition~\ref{Prank}, the rank $r(M)$ equals the dimension of $H_1(M;\mathbb{Z}_2)$ for some surgery slopes.  
Namazi informed the author that, for such manifolds, one can apply the proof in \cite{NS} to show that if one caps off $\partial M$ using a handlebody and via a high power of a generic pseudo-Anosov map, then the resulting closed 3-manifold $\widehat{M}$ has the same rank as $M$.  So for the closed hyperbolic 3-manifold $\widehat{M}$ in our construction, $r(\widehat{M})$ can be as low as 10.

This construction of gluing a handlebody immediately brings another interesting question:

\begin{question}
Is there an analogue of Theorem~\ref{TL5} for the rank of fundamental group? 
\end{question}

The following question is more specific.

\begin{question}\label{Q4}
Let $M_1$ and $M_2$ be compact 3-manifolds with connected boundary and $\partial M_1\cong\partial M_2$. Let $\phi\colon \partial M_1\to \partial M_2$ be a homeomorphism and let $M$ be the closed manifold obtained by gluing $M_1$ to $M_2$ via $\phi$.  If $\phi$ is sufficiently complicated, then is it true that $r(M)=r(M_1)+r(M_2)-g(\partial M_i)$?
\end{question}

Question~\ref{Q4} is true if both $M_1$ and $M_2$ are handlebodies \cite{NS}.  However, the question is unknown if only one of the two manifolds is a handlebody, and it is not even known in the case of Dehn filling, i.e., $M_2$ is a solid torus.

Another interesting question related to Question~\ref{Q2} is whether the minimal value for $r(M)$ can be 2 for hyperbolic 3-manifolds.

\begin{question}
Let $M$ be a hyperbolic 3-manifold with $r(M)=2$.  Is $g(M)=2$?
\end{question}

We have shown in Theorem~\ref{Tclosed} that the discrepancy $g(M)-r(M)$ can be arbitrarily large for hyperbolic 3-manifolds.  But our construction of boundary connected sum does not change the ratio $\frac{r(M)}{g(M)}$.  

\begin{question}\label{Q6}
How small can the ratio $\frac{r(M)}{g(M)}$ be?  Can the infimum of the ratio $\frac{r(M)}{g(M)}$ be zero for 3-manifolds? 
\end{question}

In 3-manifold topology, questions on finite covering spaces are always difficult to answer.

\begin{question}\label{Q7}
Does every closed hyperbolic 3-manifold have a finite-sheeted cover $\widetilde{M}$ with $r(\widetilde{M})<g(\widetilde{M})$? 
\end{question}

Note that if the Virtually Fibered Conjecture is true, then by \cite{So}, every closed hyperbolic 3-manifold has a finite cover $M'$ with $r(M')=g(M')$.  
Question~\ref{Q6} and Question~\ref{Q7} are also related to the Heegaard gradient defined by Lackenby \cite{La2}.

\end{document}